  \theoremstyle{plain}
\newtheorem{thm}{Theorem}[section]
\newtheorem*{thm*}{Theorem}
\newtheorem*{mthm*}{Main Theorem}
\newtheorem{prop}[thm]{Proposition}
\newtheorem{cor}[thm]{Corollary}
\newtheorem*{mcor*}{Main Corollary}
\newtheorem{lem}[thm]{Lemma}
\newtheorem{claim}[thm]{Claim}
\newtheorem*{cor*}{Corollary}
\newtheorem*{obs*}{Observation}
\newtheorem*{sum*}{Summary}
\newtheorem*{claim*}{Claim}
  \theoremstyle{definition}
\newtheorem*{qn*}{Question}
  \theoremstyle{remark}
\newtheorem*{rmk}{Remark}
\renewcommand*\l@section[2]{%
  \ifnum \c@tocdepth >\z@
    \addpenalty\@secpenalty
    \addvspace{1.0em \@plus\p@}%
    \setlength\@tempdima{2em}%
    \begingroup
      \parindent \z@ \rightskip \@pnumwidth
      \parfillskip -\@pnumwidth
      \leavevmode \bfseries
      \advance\leftskip\@tempdima
      \hskip -\leftskip
      #1\nobreak\hfil
      \nobreak\hb@xt@\@pnumwidth{\hss #2%
                                 \kern-\p@\kern\p@}\par
    \endgroup
  \fi}
\renewcommand*\l@subsection{\@dottedtocline{2}{2em}{2.5em}}
\newcommand{\fakeenv}{} 
\newenvironment{restate}[2]  
{
  \renewcommand{\fakeenv}{#2} 
  \theoremstyle{plain}
  \newtheorem*{\fakeenv}{#1~\ref{#2}} 
  \begin{\fakeenv}
}
{
  \end{\fakeenv}
}
\newcommand*{\defeq}{\mathrel{\vcenter{\baselineskip0.5ex \lineskiplimit0pt \hbox{\scriptsize.}\hbox{\scriptsize.}}}=}
\title{Limit trees for free group automorphisms: universality}
\author{Jean Pierre Mutanguha\thanks{\emph{Email:} {\tt \href{mailto:jpmutang@ias.edu}{jpmutang@ias.edu}}, \emph{Web address:} {\tt \url{https://mutanguha.com}} \newline Institute for Advanced Study, Princeton, NJ, USA}}
\begin{document}
\maketitle

\begin{abstract} To any free group automorphism, we associate a universal (cone of) limit tree(s) with three defining properties: 
first, the tree has a minimal isometric action of the free group with trivial arc stabilizers;
second, there is a unique expanding dilation of the tree that represents the free group automorphism;
and finally, the loxodromic elements are exactly the elements that weakly limit to dominating attracting laminations under forward iteration by the automorphism.
So the action on the tree detects the automorphism's dominating exponential dynamics.

As a corollary, our previously constructed limit pretree that detects the exponential dynamics is canonical.
We also characterize all very small trees that admit an expanding homothety representing a given automorphism.
In the appendix, we prove a variation of Feighn--Handel's recognition theorem for atoroidal outer automorphisms.
\end{abstract}

\renewcommand{\thefootnote}{\fnsymbol{footnote}} 
\footnotetext{\emph{MSC Codes} 20F65, 20E05, 20E08, 20E36}     
\renewcommand{\thefootnote}{\arabic{footnote}}

\section*{Introduction}

We previously constructed a limit pretree that detects the exponential dynamics for an arbitrary free group automorphism~\cite{Mut24}.
In this sequel, we prove the construction is canonical.
This completes the existence and uniqueness theorem for a free group automorphism's limit pretree.
Recall that if we record all the compact geodesics in an $\mathbb R$-tree but forget their lengths, then the resulting structure is a pretree;
briefly, a pretree is a set with a structure that encodes the notion of closed intervals satisfying certain axioms.
Pretrees are the baseline of our constructions; for instance,
($\mathbb R$-)trees will be defined as pretrees with convex metrics, and pseudotrees as pretrees with a certain hierarchy of convex pseudometrics.

In~\cite{Mut24}, we motivated the existence and uniqueness theorem of a limit pretree by describing it as a free group analogue to the Nielsen--Thurston theory for surface homeomorphisms, which in turn can be seen as the surface analogue to the Jordan canonical form for linear maps.
We now give our own motivation for this result.

\subsection*{Universal representation of an endomorphism}

It feels rather odd to discuss my personal motivation while using the communal ``we''; 
excuse me as I break this convention a bit for this section.
In my doctoral thesis, I extended Brinkmann's hyperbolization theorem to mapping tori of free group endomorphisms. 
This required studying the dynamics of endomorphisms.
Along the way, I proved that injective endomorphisms have canonical representatives.
More precisely, suppose $\phi\colon F \to F$ is an injective endomorphism of a finitely generated free group;
then there is:
\begin{enumerate}
\item a minimal simplicial $F$-action on a simplicial tree~$T$ with trivial edge stabilizers;
\item a {$\phi$-equivariant} expanding embedding $f\colon T \to T$ (unique up to isotopy); and
\item an element in~$F$ is $T$-elliptic if and only if one of its forward $\phi$-iterates is conjugate to an element in a {$[\phi]$-periodic} free factor of~$F$.
\end{enumerate}
Existence of the limit free splitting (i.e.~$T$ with its $F$-action) for the {outer class}~$[\phi]$ was the core of my thesis (see also~\cite[Theorem~3.4.5]{Mut21}).
Universality follows from {bounded cancellation}: any other simplicial tree~$T'$  satisfying these three condition will be uniquely equivariantly isomorphic to~$T$~\cite[Proposition~3.4.6]{Mut21}.

In a way, the limit free splitting detects and filters the ``nonsurjective dynamics'' of the (outer) endomorphism. 
When~$\phi\colon F \to F$ is an automorphism, then~$T$ is a singleton and the free splitting provides no new information.
On the other extreme, the $F$-action on~$T$ can be free;
in this case, let $\Gamma \defeq F \backslash T$ be the quotient graph.
Then the outer endomorphism~$[\phi]$ is represented by a unique expanding {immersion} $[f]\colon \Gamma \to \Gamma$ and $[\phi]$~is \emph{expansive} --- such outer endomorphisms are characterized by the absence of $[\phi]$-periodic (conjugacy classes of) nontrivial free factors~\cite[Corollary~3.4.8]{Mut21}.
The most important thing is that the expanding immersion~$[f]$ has nice dynamics and greatly simplifies the study of expansive outer endomorphisms.

After completing my thesis, I found myself in a paradoxical situation: 
I had a better ``understanding'' of nonsurjective endomorphisms than automorphisms --- the main obstacle to studying the dynamics of nonsurjective endomorphisms was understanding the dynamics of automorphisms. 
The na\"ive expectation (when I started my thesis) had been that nonsurjective endomorphisms have more complicated dynamics as they are not invertible.
The current project was born out of an obligation to correct this imbalance.

\subsection*{Universal representation of an automorphism}

What follows is a direct analogue of the above discussion in the setting of automorphisms.
The main theorem of~\cite{Mut24} produces an action that detects and filters the ``exponential'' dynamics of an automorphism.
Specifically, suppose $\phi\colon F \to F$ is an automorphism of a finitely generated free group.
Then there is:
\begin{enumerate}
\item a minimal {rigid} $F$-action on a {real pretree}~$T$ with trivial arc stabilizers;
\item a $\phi$-equivariant ``expanding'' pretree-automorphism $f \colon T \to T$; and
\item an element in~$F$ is $T$-elliptic if and only if it {grows polynomially with respect to~$[\phi]$}.
\end{enumerate}
The pair of the pretree~$T$ and its rigid $F$-action is called a \emph{(forward) limit pretree} for the outer automorphism~$[\phi]$.
The theorem is stated properly in Chapter~\ref{SecTopDomLimit} as Theorem~\ref{thm-limitpseudotree}.
When~$[\phi]$ is {polynomially growing}, then \emph{the} limit pretree is a singleton (and hence unique) but provides no new information.
We are mainly interested in {exponentially growing}~$[\phi]$ as their limit pretrees are not singletons.
On the other hand, the $F$-action on a limit pretree is free if and only if~$[\phi]$ is \emph{atoroidal}, i.e.~there are no $[\phi]$-periodic (conjugacy classes of) nontrivial elements~\cite[Corollary~III.5]{Mut24}.
As with expanding immersions and expansive outer endomorphisms, the expanding ``homeomorphism''~$[f]$ (on the quotient space~$F \backslash T$) has dynamics that could facilitate the study of atoroidal outer automorphisms.

Unlike the endomorphism case, uniqueness of limit pretrees requires a more involved argument. 
It was remarked in the epilogue of~\cite{Mut24} that the only source of indeterminacy in the existence proof was~\cite[Proposition~III.2]{Mut24};
this proposition is restated in Section~\ref{SubsecGrowthLimit} as Proposition~\ref{prop-limittree} and a proof is sketched in Sections~\ref{SubsecLimittrees} and~\ref{SubsecLimittrees2}.
The main result of this paper is a universal version of the proposition.
It can also be thought of as an existence and uniqueness theorem for an action that detects and filters the ``dominating'' exponential dynamics of an outer automorphism:

\begin{mthm*}[Theorems~\ref{thm-dominatinglimitexist}--\ref{thm-dominatingexpandunique}] {~}

Let $\phi \colon F \to F$ be an automorphism of a finitely generated free group and $\{\mathcal A_j^{dom}[\phi]\}_{j=1}^k$ a (possibly empty) subset of $[\phi]$-orbits of {dominating attracting laminations} for~$[\phi]$.

Then there is:
\begin{enumerate}
\item a minimal {factored $F$-tree} $(Y,\Sigma_{j=1}^k \delta_j)$ with trivial arc stabilizers;
\item a unique $\phi$-equivariant {expanding dilation} $f \colon (Y, \Sigma_{j=1}^k \delta_j) \to (Y, \Sigma_{j=1}^k \delta_j)$; and
\item for $1 \le j \le k$, a nontrivial element in~$F$ is \emph{$\delta_j$-loxodromic} if and only if its forward $\phi$-iterates have axes that {weakly limit} to~$\mathcal A_j^{dom}[\phi]$;
\end{enumerate}
moreover, the factored $F$-tree $(Y,\Sigma_{j=1}^k \delta_j)$ is unique up to a unique equivariant dilation.
\end{mthm*}

\noindent Thus the factored tree (up to rescaling of its factors~$\delta_j$) is a universal construction for outer automorphisms of free groups, and we call it \emph{the complete dominating (resp. topmost) tree} if we consider the whole set of orbits of dominating (resp. topmost) attracting laminations. 
As a corollary, the previously constructed limit pretrees are independent of the choices made in the proof of Theorem~\ref{thm-limitpseudotree}, i.e.~\emph{the} limit pretree is canonical (Corollary~\ref{cor-limitpretreeunique}).
Let us now briefly define the emphasized terms in the theorem's statement.

An \emph{$F$-tree} is an ($\mathbb R$-)tree with an isometric $F$-action.
Informally, an $F$-tree is \emph{factored} if its metric has been equivariantly decomposed as a sum $\sum_{j=1}^k \delta_j$ of pseudometrics.
For a factored $F$-tree $(Y,\Sigma_{j=1}^k \delta_j)$, an element in~$F$ is \emph{$\delta_i$-loxodromic} if it is it is $Y$-loxodromic and its axis has positive $\delta_i$-diameter.
An equivariant homeomorphism $(T, \Sigma_{j=1}^k d_j) \to (Y,\Sigma_{j=1}^k \delta_j)$ of factored $F$-trees is a \emph{dilation} if it is a homothety of each pair of \emph{factors}~$d_j$ and~$\delta_j$;
a dilation is \emph{expanding} if each factor-homothety is expanding.

A \emph{lamination} in~$F$ is a nonempty closed subset in the {space of lines} in~$F$.
A sequence of lines (e.g.~axes) \emph{weakly limits} to a lamination if some subsequence converges to the lamination.
Any~$[\phi]$ has a finite set of \emph{attracting laminations} which is empty if and only if~$[\phi]$ is polynomially growing;
this set is partially ordered by inclusion and has an order-preserving $[\phi]$-action.
The maximal elements of the partial order are called \emph{topmost}.
An attracting lamination~$A$ for~$[\phi]$ has an associated \emph{stretch factor}~$\lambda(A)$;
it is \emph{dominating} if any distinct attracting lamination~$A'$ for~$[\phi]$ containing~$A$ has a strictly smaller stretch factor $\lambda(A') < \lambda(A)$.
Topmost attracting laminations are vacuously dominating;
moreover, the $[\phi]$-action permutes the dominating attracting laminations.

\begin{rmk}
If one considers a subset~$\{\mathcal A_j^{top}[\phi]\}_{j=1}^k$ of $[\phi]$-orbits of topmost attracting laminations, then we prove the {topmost tree} has the additional property that its factor-pseudometrics are pairwise \emph{mutually singular}: for each~$i$, there is an element that is $\delta_i$-loxodromic but $\delta_j$-elliptic for $j \neq i$ (see Section~\ref{SubsecTopmost}).
We highlight this feature by using the notation $(Y, \oplus_{j=1}^k \delta_j)$ for topmost trees.
\end{rmk}

\noindent \textbf{Some applications of universal representations.}
Fix an automorphism $\phi \colon F \to F$;
since~$[\phi]$ has a unique equivariant dilation class $[Y, \Sigma_{j=1}^k \delta_j]$ of complete dominating limit trees, any invariant of the class is automatically an invariant of~$[\phi]$.
For instance, the Gaboriau--Levitt index~$i(Y)$ (as defined in~\cite[Chapter~III]{GL95}) is the \emph{dominating forward index for~$[\phi]$}.
In fact, since the limit pretree~$T$ for~$[\phi]$ is canonical, its index~$i(T)$ (defined in~\cite[Appendix~A]{Mut24}) is the \emph{exponential (forward) index for~$[\phi]$};
when~$[\phi]$ is atoroidal, the index~$i(T)$ is closely related to the Gaboriau--Jaeger--Levitt--Lustig index for~$[\phi]$ defined in~\cite[Section~6]{GJLL98}.
Each factor~$\delta_j$ has an associated $F$-tree $(Y_j^{dom}, \delta_j)$;
the pairing of~$\delta_j$ with the orbit of dominating attracting lamination~$\mathcal A_j^{dom}[\phi]$ means~$i(Y_j^{dom})$ is an {index for~$\mathcal A_j^{dom}[\phi]$} respectively.

Our main application is a characterization of minimal $F$-trees with $\phi$-equivariant expanding homotheties:

\begin{mcor*}[Theorem~\ref{thm-unique}]  {~}

Let $\phi \colon F \to F$ be an automorphism and $(Y, \delta)$ a minimal \emph{very small} $F$-tree.
The $F$-tree $(Y, \delta)$ admits a $\phi$-equivariant expanding homothety if and only if it is equivariantly isometric to the dominating tree for~$[\phi]$ with respect to a subset of $[\phi]$-orbits of dominating attracting laminations with the same stretch factor.
\end{mcor*}

In the appendix, we prove a variation of Feighn--Handel's recognition theorem for atoroidal outer automorphisms.


\subsection*{Some historical context}
This paper continues Gaboriau--Levitt--Lustig's philosophy of prioritizing limit trees in their alternative proof of the Scott conjecture~\cite{GLL98}. In particular, our paper relies only on the existence of irreducible train tracks~\cite[Section~1]{BH92} but none of the typical {splitting paths} analysis of relative train tracks \cite{BFH00, FH11}.
Zlil Sela gave another dendrogical proof the conjecture (now Bestvina--Handel's theorem) that used {Rips' theorem} in place of train track technology~\cite{Sel96}.
Fr\'ed\'eric Paulin gave yet another dendrological proof that avoids both train tracks and Rips' theorem~\cite{Pau97}.

About the same time, Bestvina--Fieghn--Handel used train tracks and trees to prove fully irreducible (outer) automorphisms have universal limit trees~\cite{BFH97}.
They used this to give a short dendrological proof of a special case of the Tits Alternative for~$\operatorname{Out}(F)$;
their later proof of the general case was much more involved due to the lack of such a universal limit construction~\cite{BFH00}.
Universal limit trees have been indispensable for studying fully irreducible automorphisms.
In principle, a universal construction of limit trees for all automorphisms would lead to a dendrological proof of the Tits alternative and extend much of the theory for  fully irreducible automorphisms to arbitrary automorphisms.
Speaking of dendrological proofs of the Tits alternative, we mention that Camille Horbez gave such a proof with a very different approach~\cite{Hor14}.

Continuing the work started in~\cite{BFH00}, Feighn--Handel defined and proved the existence of {completely split relative train tracks (CTs)} in~\cite[Section 4]{FH11};
they use CTs to characterize abelian subgroup of $\operatorname{Out}(F)$~\cite{FH09}.
The main obstacle when working with topological representatives is that they are not canonical, which can make defining invariants of the outer automorphism quite technical.
This is the difficulty that we had to deal with in this paper;
however, now that it is done, we can use our new universal representatives to define other invariants rather easily.
A minor inconvenience when working with CTs is that they are only proven to exist for some (uniform) iterate of the outer automorphism;
we were very careful (perhaps to a fault) in this paper to ensure our universal representatives exist for all outer automorphisms. 
Finally, a subtle advantage to our approach is that we find universal representatives for automorphisms and not just outer automorphisms!

\smallskip
In a sequel to~\cite{Sel96}, Sela used limit trees and Rips' theorem to give a canonical hierarchical decomposition of the free group~$F$ that is invariant under a given atoroidal automorphism~\cite{Sel95}.
This second paper was never published and a third announced paper that extends the canonical decomposition to arbitrary automorphisms was never released even as a preprint (as far as we know).
We remark that the limit trees used in that paper were not (or rather, were never proven to be) canonical/universal.
Perhaps, one could combine Sela's canonical decomposition with Bestvina--Feighn--Handel's work to give a universal construction of limit trees for atoroidal automorphisms --- our approach is independent of Sela's work and applies more generally to exponentially growing automorphisms.
Conversely, we suspect that a careful study of the structure of our topmost trees might recover Sela's canonical hierarchical decomposition.

Morgan--Shalen introduced the term ``{$\mathbb R$-trees}'' in~\cite{MS84}.
They also defined ``{$\Lambda$-trees}'' for an ordered abelian group~$\Lambda$.
At first glance, the {hierarchy of pseudometrics} on a real pretree (defined in Section~\ref{SubsecPretrees}) looks like a $\Lambda$-tree.
But paths in our constructed hierarchies ``exit'' {infinitesimal} trees through metric completion points;
whereas paths in a $\Lambda$-tree exit at infinity.
Hierarchies appear to be a new construction to the best of our knowledge.

\subsection*{Proof outline for existence of topmost tree (Theorem~\ref{thm-topmostlimitexist})}

One method for constructing limit trees is iterating \emph{expanding irreducible train tracks}.
This is carried out in Section~\ref{SubsecLimittrees} but it has two drawbacks: exponentially growing automorphisms do not always have expanding irreducible train tracks; and even when they do, the point stabilizers of the corresponding limit tree are not canonical as they can change with the choice of train tracks.
We handle the first obstacle in Section~\ref{SubsecLimittrees2} by constructing a limit tree $(Y_1, \delta_1)$ using a descending sequence of irreducible train tracks, where only the last train track is expanding. 
Such descending sequences always exist for exponentially growing automorphisms.

Next, we construct in Section~\ref{SubsecAssembHier} a pretree with an $F$-action whose point stabilizers are canonical.
Set $G_1 \defeq F$, and let $\mathcal G_2$ be the $[\phi]$-invariant subgroup system determined by the point stabilizers of~$G_1$ acting on~$Y_1$.
By restricting~$[\phi]$ to~$\mathcal G_2$ and inductively repeating the construction, we get a descending sequence of limit forests $(\mathcal Y_i, \delta_i)_{i=1}^n$.
Each limit forest $(\mathcal Y_i, \delta_i)$ has ($[\phi]$-orbits of) attracting laminations~$\mathcal A_i[\phi]$ for~$[\phi]$ that are forward limits of $\mathcal Y_i$-loxodromic elements in~$\mathcal G_i$.
Starting with $X^{(1)} = Y_1$, equivariantly replace the points in~$X^{i}$ fixed by~$\mathcal G_{i+1}$ with the pretrees~$\mathcal Y_{i+1}$ to produce~$X^{(i+1)}$ for $i < n$.
The limit pretree~$T = X^{(n)}$ has canonical point stabilizers: the maximal polynomially growing subgroups.

Everything we have mentioned so far is a rehash of~\cite{Mut24}.
From the blow-up construction, the limit pretree~$T$ inherits an $F$-invariant hierarchy $(\delta_i)_{i=1}^n$ of convex pseudometrics --- the pseudometric~$\delta_i$ is defined on maximal $\mathcal G_i$-invariant convex subsets of~$T$ of $\delta_{i-1}$-diameter~$0$.
The theorem is finally proven in Section~\ref{SubsecTopmost}.
The new insight for this proof: if attracting laminations~$\mathcal A_i[\phi]$ are topmost, then the $\mathcal G_i$-invariant pseudometric~$\delta_i$ can be extended to an $F$-invariant convex pseudometric, still denoted~$\delta_i$, on~$T$.
Let $\{\mathcal A_{\iota(j)}[\phi]\}_{j=1}^k$ be a subset of topmost attracting laminations.
The sum of the corresponding $F$-invariant pseudometrics on~$T$, denoted $\oplus_{j=1}^k \delta_{\iota(j)}$, is an $F$-invariant convex pseudometric on~$T$.
Let~$Y$ be the partition of~$T$ into its maximal subsets of $\oplus_{j=1}^k \delta_{\iota(j)}$-diameter~$0$;
as these subsets are convex, $Y$ inherits a pretree structure from~$T$.
The pseudometric $\oplus_{j=1}^k \delta_{\iota(j)}$ on~$T$ induces a convex metric, also denoted $\oplus_{j=1}^k \delta_{\iota(j)}$, on~$Y$.
The metric space $(Y, \oplus_{j=1}^k \delta_{\iota(j)})$ is our topmost tree. 
This concludes the outline.

\smallskip
At the end of Section~\ref{SubsecDominating}, we prove universality.
The proof relies on Chapter~\ref{SecConverge}: variations of Bestvina--Feighn--Handel's convergence criterion~\cite {BFH97}; 
it boils down to bounded cancellation and Perron--Frobenius theory.

\smallskip
\noindent We use the results of~\cite{Mut24} as black boxes and the two papers can be read in any order.

\bigskip
\noindent \textbf{Acknowledgments.} 
I thank Gilbert Levitt for encouraging me to include the characterization of expanding forests and the referee for improving my exposition.
This material is based upon work supported by the National Science Foundation under Grant No. DMS-1926686 at the Institute for Advanced Study. 

\tableofcontents

\newpage
\section{Preliminaries}\label{SecPrelims}

In this paper,~$F$ denotes a free group with $2 \le \operatorname{rank}(F) < \infty$. 
Subscripts never indicate the rank but instead are used as indices.
For inductive arguments, we also work with a \underline{free group system of finite type}: disjoint union $\bigsqcup_{j \in J} F_j$ of nontrivial finitely generated free groups~$F_j$ indexed by a possibly empty finite set~$J$.
In this paper,~$\mathcal F$ is always a free group system of finite type with some component~$F_j$ that is not cyclic.

\subsection{Group systems and actions}\label{SubsecSystems}

Nearly all statements and results about groups and connected spaces that we are interested in still hold when ``connectivity'' is relaxed and we work with ``systems'' componentwise.
In general (almost categorical) terms, a \underline{system of [?-objects]} is a disjoint union~$\mathcal O = \bigsqcup_{j \in J} O_j$ of [?-objects]~$O_j$ indexed by some set~$J$.
An \underline{[?-isomorphism] of systems}~$\psi \colon \mathcal O \to \mathcal O'$ is a bijection~$\sigma \colon J \to J'$ of the corresponding indexing sets and a union of [?-isomorphisms] $\psi_j \colon O_j \to O_{\sigma \cdot j}'$.
The calligraphic font is reserved for systems.

In more concrete terms, here are some basic concepts that will show up in the paper:
\begin{enumerate}
\item an \underline{isomorphism of group systems} $\psi\colon \mathcal G \to \mathcal G'$ is a bijection whose restriction to any component~$G_j \subset \mathcal G$ is a group isomorphism of components;
for group systems, we always assume (for convenience) components are nontrivial if the system is nonempty.
\item two isomorphisms of group systems $\psi,\psi'\colon \mathcal G \to \mathcal G'$ are in the same \underline{outer class}~$[\psi]$ if the component isomorphisms $\psi_j,\psi_j' \colon G_j \to G_{\sigma \cdot j}'$ differ only by post-composition with an inner automorphism of~$G_{\sigma \cdot j}'$ for all $j \in J$.
\item a \underline{metric on a set system}~$\mathcal X$ is a disjoint union of metrics~$d_j\colon X_j \times X_j \to \mathbb R_{\ge 0}$ on the components~$X_j \subset \mathcal X$.
\item for a group system~$\mathcal G$ indexed by~$J$ and object system~$\mathcal O$ indexed by~$J'$, a \underline{$\mathcal G$-action} on~$\mathcal O$ (or \underline{$\mathcal G$-object system~$\mathcal O$}) is a union of component $G_j$-actions on~$O_{\beta \cdot j}$ for some bijection $\beta\colon J \to J'$.

\item for an automorphism of a group system $\psi \colon \mathcal G \to \mathcal G$ and a $\mathcal G$-object system~$\mathcal O$, the \underline{$\psi$-twisted} $\mathcal G$-object system~$\mathcal O \psi$ is given by precomposing the component $G_{\sigma \cdot j}$-action on~$O_{\beta\sigma \cdot j}$ with the component isomorphism $\psi_j\colon G_j \to G_{\sigma \cdot j}$ to get a $G_j$-object $O_{\beta\sigma \cdot j}$.
\end{enumerate}


\subsection{Pretrees, trees, and hierarchies}\label{SubsecPretrees}

Pretrees are what arises when one wants to discuss ``treelike'' objects without reference to a metric or topology.
In this paper, the pretrees are the ``primitive'' objects and metrics/topologies are additional structures on the pretree --- think of it the same way a Riemannian metric is a compatible addition to a manifold's smooth structure.

Fix a set~$T$;
an \underline{interval function on~$T$} is a function~$[\cdot, \cdot]\colon T \times T \to \mathcal P(T)$, where~$\mathcal P(T)$ is the power set of~$T$, that satisfies the following axioms:
for all $p,q,r \in T$,
\begin{enumerate}
\item \label{axiom-pretree-symmetry} (symmetric) $[p,q] = [q,p]$ contains $\{p, q\}$;
\item \label{axiom-pretree-thin} (thin) $[p,r] \subset [p,q] \cup [q,r]$; and
\item \label{axiom-pretree-linear} (linear) if $r \in [p,q]$ and $q \in [p,r]$, then $q = r$.
\end{enumerate}
\noindent A \underline{pretree} is a pair $(T, [\cdot, \cdot])$ of a nonempty set~$T$ and an interval function $[\cdot, \cdot]$ on~$T$.

The subsets $[p,q] \subset T$ are called \emph{closed intervals} and they should be thought of as the points between~$p$ and~$q$ (inclusive).
We can similarly define \emph{open} (resp. \emph{half-open}) intervals by excluding both (resp. exactly one) of~$\{p,q\}$.
Generally, ``interval'' (with no qualifier) refers to any of the three types of intervals we  have defined.
An interval is \underline{degenerate} if it is empty or a singleton.
We usually omit the interval function and denote a pretree by~$T$.
Note that the real line~$\mathbb R$ is a pretree.

Any subset $S \subset T$ of a pretree inherits an interval function: $[u,v]_S \defeq [u,v] \cap S$ for all $u,v \in S$.
A subset $C \subset T$ is \underline{convex} if~$[p,q] \subset C$ for all $p,q \in C$;
or equivalently, $[\cdot, \cdot]_C$ is the restriction of $[\cdot, \cdot]$ to $C \times C \subset T \times T$.
A \underline{system of pretrees} is a set system~$\mathcal T = \bigsqcup_{j \in J} T_j$ and a disjoint union of interval functions on~$T_j$; 
we call these systems \emph{pretrees} for short.

Let $(T, [\cdot, \cdot])$ and $(T', [\cdot, \cdot]')$ be pretrees.
A \underline{pretree-isomorphism} is a bijection $f\colon T \to T'$ satisfying $f([p,q]) = [f(p), f(q)]'$ for all $p,q \in T$.
Similarly, a \underline{pretree-automorphism} of $(T, [\cdot, \cdot])$ is a pretree-isomorphism $g \colon (T, [\cdot, \cdot]) \to (T, [\cdot, \cdot])$.
A pretree is \underline{real} if its closed intervals are pretree-isomorphic to closed intervals of~$\mathbb R$.
By definition, the real line~$\mathbb R$ is a real pretree.
Note that being real is a property of a pretree, not an added structure like a metric!
An \underline{arc} of a real pretree~$T$ is a nonempty union of an ascending chain of nondegenerate intervals.
A real pretree is \underline{degenerate} if it is a singleton; and a system of real pretrees is \underline{degenerate} if {all} components are degenerate.

\smallskip
Fix a real pretree~$T$; a \underline{convex pseudometric} on~$T$ is a function $d\colon T \times T \to\mathbb R_{\ge 0}$ satisfying the following axioms:
for all $p,q,r \in T$,
\begin{enumerate}
\item \label{axiom-pmetric-symmetry} (symmetric) $d(p,q) = d(q,p)$;
\item \label{axiom-pmetric-convex} (convex) $d(p,r) = d(p,q) + d(q,r)$ if $q \in [p,r]$; and
\item \label{axiom-pmetric-cont} (continuous) $d(p,q) = 2\, d(p,q')$ for some $q' \in [p,q]$.
\end{enumerate}

For any given convex pseudometric~$d$ on~$T$, the preimage~$d^{-1}(0) \subset T \times T$ is an equivalence relation on the real pretree~$T$ such that each equivalence class is convex
 and the set~$T_d$ of equivalence classes inherits a real pretree structure.
A \underline{convex metric} on~$T$ is a convex pseudometric whose equivalence relation~$d^{-1}(0)$ is the equality relation on~$T$.
A \underline{(metric) tree} (or $\mathbb R$-tree) is a real pretree with a convex metric; a \underline{forest} is a system of trees.
For example, the real line~$\mathbb R$ is a tree with the {standard metric} $d_{\mathrm{std}}(p,q) \defeq |p-q|$.
Note that a convex pseudometric~$d$ on a real pretree~$T$ induces a convex metric, still denoted~$d$, on the real pretree~$T_d$;
we refer to the tree $(T_d, d)$ as the \underline{associated tree}.

A \underline{$\lambda$-homothety} of trees $h \colon (T, d) \to (Y, \delta)$ is a pretree-isomorphism $h \colon T \to Y$ that uniformly scales the metric~$d$ by~$\lambda$: 
\[ \delta(h(p),h(q)) = \lambda \, d(p,q) ~ \text{for all} ~ (p,q) \in \operatorname{dom}(d)= T \times T; \]
equivalently, $h^*\delta = \lambda d$, where $h^*\delta$ is the \emph{pullback of~$\delta$ via~$h$}.
A \underline{homothety} is a $\lambda$-homothety for some $\lambda > 0$;
it is \underline{expanding} (resp. an \underline{isometry}) if $\lambda > 1$ (resp. $\lambda = 1$).
An isometry $\iota \colon (T,d) \to (T, d)$ is \underline{elliptic} if it fixes a point of~$T$; 
otherwise, it is \underline{loxodromic} and acts by a nontrivial translation on its \emph{axis}, the unique $\iota$-invariant arc of $(T, d)$ isometric to $(\mathbb R,d_{\mathrm{std}})$;
the \underline{translation distance}~$\|\iota\|_d \in \mathbb R_{\ge 0}$ is~$0$ if~$\iota$ is elliptic and equal to the displacement of points in $\iota$'s axis if~$\iota$ is loxodromic.
These definitions extend componentwise to forests.

\smallskip
Let~$d_1$ be a nonconstant convex pseudometric on~$T$ and $d_{i+1} \colon d_i^{-1}(0) \to \mathbb R_{\ge 0}$ a nonconstant disjoint union of convex pseudometrics for $1 \le i < n$. 
The sequence~$(d_i)_{i=1}^n$ will be known as an \underline{$n$-level hierarchy} of convex pseudometrics on~$T$; 
We will say just \emph{hierarchies} for short.
A hierarchy~$(d_i)_{i=1}^n$ has \underline{full support} if~$d_n$ is a disjoint union of convex metrics.
A \underline{pseudotree} is a pair $(T, (d_i)_{i=1}^n)$ of a real pretree and a hierarchy with full support; a \underline{pseudoforest} is a system of pseudotrees.
A \underline{$(\lambda_i)_{i=1}^n$-homothety} of $n$-level pseudoforests $h \colon (\mathcal T, (d_i)_{i=1}^n) \to (\mathcal Y, (\delta_i)_{i=1}^n)$ is a system of pretree-isomorphisms $h \colon \mathcal T \to \mathcal Y$ that scales each pseudometric~$d_i$ by~$\lambda_i$: 
\[ \delta_i(h(p),h(q)) = \lambda_i \, d_i(p,q) ~ \text{for all} ~ i \ge 1 ~ \text{and} ~ (p,q) \in \operatorname{dom}(d_i);\]
a \underline{homothety} is a $(\lambda_i)_{i=1}^n$-homothety for some $\lambda_i > 0$; it is \underline{expanding} (resp. \underline{isometry}) if each $\lambda_i > 1$ (resp. each $\lambda_i = 1$).
As with trees, an isometry of a pseudotree is either \underline{elliptic} (fixes a point) or \underline{loxodromic} (translates a ``pseudoaxes'').
Hierarchies and pseudoforests are the fundamental (perhaps novel) tool in this paper.
They are first used in Chapter~\ref{SecTopDomLimit}.

\subsection{Simplicial actions and train tracks}\label{SubsecTrains}

For a pretree~$T$, a \underline{direction} at $p \in T$ is a maximal subset~$D_p \subset T \setminus \{ p \}$ not separated by~$p$, i.e.~$p \notin [q,r]$ for all $q,r \in D_p$.
A \underline{branch point} is a point with at least three directions, and a \underline{branch} is a direction at a branch point.
An \underline{endpoint} is a point with at most one direction.
A \emph{simple pretree} is a pretree whose closed intervals are finite subsets.
A pretree~$T$ is \underline{simplicial} if it is real, its subset~$V$ of branch points and endpoints is a simple pretree, and no convex proper subset contains~$V$;
a \underline{vertex} is a point in~$V$.
An \underline{(open) edge} in a simplicial pretree~$T$ is a maximal convex subset $e \subset T$ that contains no vertex.
By construction, edges are open intervals; the corresponding closed intervals in~$T$ are called \underline{closed edges}.

\begin{rmk} Being simplicial is a property of a pretree, not an added structure!
Besides that, our definition of a simplicial pretree is more general (with one exception) than the standard definition of a \emph{simplicial tree} and has the advantage that it is independent of any choice of metric/topology. 
See~\cite[Interlude]{Mut24} for a discussion on this distinction. 
The one exception: \underline{the real line~$\mathbb R$ is not a simplicial pretree!} 
\end{rmk}

An \emph{$F$-pretree} is a pretree with an $F$-action by pretree-automorphisms.
An \emph{$F$-pseudotree} is a pair of a real $F$-pretree and an {$F$-invariant} hierarchy with full support;
equivalently, an $F$-pseudotree (resp. $F$-tree) is a pseudotree with an isometric $F$-action.
An $F$-pseudotree or $F$-tree is \underline{minimal} if the underlying $F$-pretree has no proper nonempty $F$-invariant convex subset;
in this case, the underlying $F$-pretree has no endpoints.
We mostly consider minimal $F$-pseudotrees with trivial arc (pointwise) stabilizers.

Suppose an $F$-pseudotree $(T, (d_i)_{i=1}^n)$ has trivial arc stabilizers.
For any nontrivial subgroup $G \le F$, the \underline{characteristic convex subset} (of~$T$) for~$G$ is the unique minimal nonempty $G$-invariant convex subset $T(G) \subset T$.
In an $F$-tree $(T, d)$ with trivial arc stabilizers, the restriction of~$d$ to~$T(G)$ is a $G$-invariant convex metric, still denoted~$d$;
the minimal $G$-tree $(T(G), d)$ is the \underline{characteristic subtree} (of $(T, d)$) for~$G$. 

\begin{rmk}We do not really need an isometric action to define characteristic convex subsets and minimality.
All we need is the $F$-action on the real pretree~$T$ to be \emph{rigid/non-nesting}: no closed interval is sent properly into itself by the $F$-action~\cite[Section~II.2]{Mut24}.
While rigid actions are central to~\cite{Mut24}, they are superseded by isometric actions in this paper.
\end{rmk}

An $F$-pretree~$T$ is \underline{simplicial} if~$T$ is simplicial and admits an $F$-invariant convex metric~$d$;
equivalently, a simplicial $F$-pretree is a simplicial pretree with a rigid $F$-action.
Any simplicial $F$-pretree has an open cone (over a finite dimensional open simplex) worth of $F$-invariant convex metrics (up to an equivariant isometry {isotopic} to the identity map).
The definitions given so far extend componentwise to systems.

\smallskip
Let~$\mathcal T$ and~$\mathcal T'$ be simplicial pretrees and $f\colon\mathcal T \to \mathcal T'$ a \emph{tight cellular map}, i.e.~a function that maps vertices to vertices and the restriction to any closed edge is a \emph{pretree-embedding}, i.e.~a pretree-isomorphism onto its image.
For any choice of convex metrics~$d, d'$ on~$\mathcal T, \mathcal T'$ respectively, there is a unique map $(\mathcal T, d) \to (\mathcal T', d')$ that is {linear} on edges and {isotopic} to~$f$;
whenever a choice of convex metrics is made, we implicitly replace~$f$ with this map.

Let~$\mathcal T$ be a \underline{free splitting} of~$\mathcal F$, i.e.~minimal simplicial $\mathcal F$-pretrees with trivial edge stabilizers,
and suppose $\psi \colon \mathcal F \to \mathcal F$ is an automorphism of a free group system.
The \emph{$\psi$-twisted} free splitting~$\mathcal T\psi$ is the same real pretrees~$\mathcal T$ but the original simplicial $\mathcal F$-action is precomposed with~$\psi$.
A \underline{(relative) topological representative} for~$\psi$ is a \emph{$\psi$-equivariant} tight cellular map~$f\colon \mathcal T \to \mathcal T$ on a nondegenerate free splitting~$\mathcal T$ of~$\mathcal F$: $\psi$-equivariance means $f(x \cdot p) = \psi(x) \cdot f(p)$ for all $x \in \mathcal F$ and $p \in \mathcal T$,
or equivalently, $f\colon \mathcal T \to \mathcal T \psi$ is equivariant.
Given a topological representative $f \colon \mathcal T \to \mathcal T$ for~$\psi$, we let~$[f]$ denote the induced map on the quotient $\mathcal F \backslash \mathcal T$;
we say $[f]$ is a \emph{topological representative} for the outer class~$[\psi]$.
A \underline{(relative) train track} for~$\psi$ is a topological representative~$\tau\colon \mathcal T \to \mathcal T$ for~$\psi$ whose iterates~$\tau^m~(m \ge 1)$ are topological representatives for~$\phi^m$; or equivalently, whose iterates~$\tau^m$ restrict to pretree-embeddings on closed edges.

For any free splitting~$\mathcal T$ of~$\mathcal F$, Bass-Serre theory gives a uniform bound on the number of $\mathcal F$-orbits of edges (linear in $\operatorname{rank}(\mathcal F)$) and relates the vertices with nontrivial stabilizers in a (componentwise) connected fundamental domain to a (possibly empty) \emph{free factor system}~$\mathcal F[\mathcal T]$ of~$\mathcal F$ --- take this as the working definition of {free factor systems}. 
The theory also gives a uniform bound on the {complexity} (e.g.~ranks) of free factor systems.
A free factor system~$\mathcal F[\mathcal T]$ \emph{proper} if~$\mathcal F[\mathcal T] \neq \mathcal F$;
equivalently, $\mathcal F[\mathcal T]$ is proper if and only if~$\mathcal T$ is not degenerate.
Any proper free factor system of~$\mathcal F$ has strictly lower complexity than~$\mathcal F$.
The \underline{trivial} free factor system of~$\mathcal F$ is the (possibly empty) free factor system consisting of the cyclic $\mathcal F$-components;
it is always proper since we assume~$\mathcal F$ has a noncyclic component.

\begin{rmk} We will abuse notation and write \underline{$\mathcal F[\mathcal T] = \mathcal F[\mathcal T']$} for two free splittings~$\mathcal T, \mathcal T'$ of~$\mathcal F$ when we mean: an element of~$\mathcal F$ is $\mathcal T$-elliptic if and only if it is $\mathcal T'$-elliptic.
\end{rmk}

Fix an automorphism $\psi\colon \mathcal F \to \mathcal F$ and a topological representative $f\colon \mathcal T \to \mathcal T$ for~$\psi$.
By $\psi$-equivariance of~$f$, the proper free factor system~$\mathcal F[\mathcal T]$ is \emph{$[\psi]$-invariant} --- again, we can take this as the definition of $[\psi]$-invariance for proper free factor systems. Form a nonnegative integer square matrix~$A[f]$ whose rows and columns are indexed by the $\mathcal F$-orbits of edges in~$\mathcal T$; and the entry at row-$[e]$ and column-$[e']$ is given by the number of $e$-translates in the interval~$f(e')$, where $e, e'$ are edges in~$T$.
The topological representative~$f$ is \underline{irreducible} if the matrix~$A[f]$ is {irreducible}; 
or equivalently, if, for any pair of edges $e, e'$ in~$\mathcal T$, a translate of~$e$ is contained~$f^m(e')$ for some~$m = m(e,e') \ge 1$.
It is a foundational theorem of Bestvina--Handel that automorphisms have irreducible train tracks.

\begin{thm}[cf.~{\cite[Theorem~1.7]{BH92}}]\label{thm-irredtt} Let $\psi\colon \mathcal F \to \mathcal F$ be an automorphism of a free group system and $\mathcal Z$ a $[\psi]$-invariant proper free factor system of~$\mathcal F$. 
Then there is an irreducible train track~$\tau\colon \mathcal T \to \mathcal T$ for~$\psi$, where the components of~$\mathcal Z$ are $\mathcal T$-elliptic.
\end{thm}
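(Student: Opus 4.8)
The plan is to relativize the Bestvina--Handel construction of irreducible train tracks, preceded by a reduction that replaces $\mathcal Z$ by a maximal invariant free factor system. I would first consider the poset of all $[\psi]$-invariant proper free factor systems of $\mathcal F$ for which the components of $\mathcal Z$ are elliptic, ordered by coarsening. This poset is nonempty (it contains $\mathcal Z$) and has bounded height, since complexity strictly increases under coarsening and is uniformly bounded; pick a maximal element $\mathcal W$. Maximality means there is no $[\psi]$-invariant free factor system strictly between $\mathcal W$ and $\mathcal F$, i.e.\ $[\psi]$ is \emph{irreducible relative to $\mathcal W$}. Since any train track for $\psi$ with the $\mathcal W$-components elliptic automatically has the $\mathcal Z$-components elliptic, I may replace $\mathcal Z$ by $\mathcal W$ and assume from now on that $[\psi]$ is irreducible relative to $\mathcal Z$.

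Next I would produce a starting topological representative and then pass to a minimizer. Let $\mathcal T_0$ be the free splitting of $\mathcal F$ with $\mathcal F[\mathcal T_0] = \mathcal Z$, so that its edges have trivial stabilizers and the $\mathcal Z$-components are vertex stabilizers; since $[\psi]$ is $\mathcal Z$-invariant it permutes the conjugacy classes of $\mathcal Z$-components, and choosing compatible vertex images and sending each edge of a fundamental domain to a tight edge path builds a $\psi$-equivariant tight cellular map $f_0\colon \mathcal T_0 \to \mathcal T_0$, a topological representative for $\psi$ with $\mathcal Z$ elliptic. Among all topological representatives $f\colon\mathcal T\to\mathcal T$ for $\psi$ with the components of $\mathcal Z$ elliptic, I would take one, call it $\tau$, that minimizes lexicographically the pair $\bigl(\lambda(A[f]),\ \#\{\mathcal F\text{-orbits of edges of }\mathcal T\}\bigr)$, where $\lambda(A[f])$ is the Perron--Frobenius eigenvalue of the transition matrix. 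A minimizer exists because the number of edge orbits of a free splitting with $\mathcal Z$ elliptic is bounded linearly in $\operatorname{rank}(\mathcal F)$, so $A[f]$ ranges over nonnegative integer matrices of bounded size with $\lambda(A[f])\ge 1$, and the spectral radii of such matrices lying in any bounded interval form a finite set.

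I would then argue that $\tau$ is forced to be an irreducible train track. If $A[\tau]$ were reducible, there would be a proper nonempty $\tau$-invariant subgraph $\Delta\subsetneq\mathcal T$ (the union of a closed set of edge orbits); together with the $\mathcal Z$-components, $\Delta$ determines a $[\psi]$-invariant free factor system $\mathcal W_\Delta$ with the $\mathcal Z$-components elliptic, and $\mathcal W_\Delta\neq\mathcal F$ because $\mathcal T$ is minimal and $\Delta$ is proper, so irreducibility relative to $\mathcal Z$ forces $\mathcal W_\Delta=\mathcal Z$. But then $\Delta$ carries no free factor beyond the vertex groups, so it is an invariant forest with trivial edge stabilizers; collapsing it and re-tightening gives a topological representative for $\psi$, still with $\mathcal Z$ elliptic, with the same $\lambda$ and strictly fewer edge orbits, contradicting minimality. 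Hence $A[\tau]$ is irreducible; if $\lambda(A[\tau])=1$ then $A[\tau]$ is the permutation matrix of a single cycle, $\tau$ sends edges to edges, and $\tau$ is automatically a train track. If $\lambda(A[\tau])>1$ and some iterate $\tau^m$ failed to restrict to a pretree-embedding on a closed edge, then, exactly as in Bestvina--Handel, there would be illegal backtracking at a $\tau$-periodic vertex, and subdividing, folding the two cancelling directions, collapsing any resulting invariant forest with trivial edge stabilizers, and tightening would yield a topological representative for $\psi$ with $\mathcal Z$ still elliptic whose transition matrix has Perron--Frobenius eigenvalue strictly below $\lambda(A[\tau])$, again contradicting minimality. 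Therefore every $\tau^m$ restricts to a pretree-embedding on each closed edge, so $\tau$ is an irreducible train track for $\psi$ with the components of $\mathcal Z$ elliptic.

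The hard part will be the third step: running the Bestvina--Handel moves (subdivision, folding, collapsing, tightening) in the relative, free-group-system setting while verifying at each move that the map remains $\psi$-equivariant, the components of $\mathcal Z$ remain elliptic, and $\lambda$ does not increase, and establishing the quantitative estimate that folding at an illegal turn strictly decreases $\lambda$ when $\lambda>1$. By comparison, the reduction in the first step, the construction of $f_0$, and the existence of a minimizer in the second step should be routine.
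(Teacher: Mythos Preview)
The paper does not give its own proof of this statement; it simply cites \cite[Theorem~1.7]{BH92} and refers to \cite[Theorem~I.1]{Mut21b} for the adaptation to free group systems. Your sketch is exactly the standard relative Bestvina--Handel argument those references carry out, and it is correct.
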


\noindent The proof outline of~\cite[Theorem~I.1]{Mut24} explains how to deduce the theorem as currently stated from the cited theorem.

\smallskip
Suppose $\psi\colon \mathcal F \to \mathcal F$ is an automorphism with an irreducible topological representative  $f\colon \mathcal T \to \mathcal T$.
Perron--Frobenius theory implies the matrix~$A[f]$ has a unique real eigenvalue~$\lambda = \lambda[f] \ge 1$ with a unique positive left eigenvector~$\nu[f]$ whose entries sum to~$1$.
From the eigenvector~$\nu[f]$, we get an $\mathcal F$-invariant convex metric~$d_f$ on~$\mathcal T$ (well-defined up to an equivariant isometry isotopic to the identity map).
The restriction of~$f$ to any edge is a {$\lambda$-homothetic embedding} with respect to~$d_f$;
the metric~$d_f$ is the \underline{eigenmetric} (on~$\mathcal T$) for~$[f]$.
If $\lambda =  1$, then~$f$ is a $\psi$-equivariant {simplicial automorphism} of~$\mathcal T$.

\subsection{Growth types and limit trees}\label{SubsecGrowthLimit}

Since the introduction of train tracks, it has been standard to construct {limit forests} by iterating an expanding irreducible train track (Section~\ref{SubsecLimittrees}).
Unfortunately, such a construction is not canonical as it can depend on the initial train track.
The main idea of the paper: patch together a ``descending'' sequence of limit trees to get a limit pseudoforest and inductively ``normalize'' its hierarchy into a canonical limit pseudoforest.

Fix a free group system~$\mathcal G$ of finite type (unlike~$\mathcal F$, all components of $\mathcal G$ can be cyclic), an automorphism $\psi \colon \mathcal G \to \mathcal G$, and a metric free splitting $(\mathcal T,d)$ of~$\mathcal G$ whose free factor system $\mathcal Z \defeq \mathcal F[\mathcal T]$ is $[\psi]$-invariant.
An element $x \in \mathcal G$ \underline{$[\psi]$-grows exponentially} rel.~$d$ with \underline{rate $\lambda_x$} if it is $\mathcal T$-loxodromic and the limit inferior of the sequence $\left(m^{-1} \log \| \psi^m(x) \|_d\right)_{m \ge 0}$ is $\log \lambda_x > 0$.
If an element $[\psi]$-grows exponentially rel.~$d$, then it $[\psi]$-grows exponentially rel.~$d'$ with the same rate for any metric free splitting~$(\mathcal T',d')$ of~$\mathcal G$ with $\mathcal F[\mathcal T'] = \mathcal Z$;
say the element \underline{$[\psi]$-grows exponentially} rel.~$\mathcal Z$.
An element $x\in \mathcal G$ \underline{$[\psi]$-grows polynomially} rel.~$\mathcal Z$ with \underline{degree~$< n$} if the sequence $\left(m^{-n} \| \psi^m(x)\|_d\right)_{n \ge 0}$ converges to~$0$.
Any element of~$\mathcal G$ $[\psi]$-grows either exponentially or polynomially rel.~$\mathcal Z$ \cite[Corollary~III.4]{Mut24}.
The {growth type} of an element is preserved when passing to invariant subgroup systems of finite type.

The automorphism~$\psi$ is \underline{exponentially growing} rel.~$\mathcal Z$ if some element $[\psi]$-grows exponentially rel.~$\mathcal Z$;
otherwise,~$\psi$ is \underline{polynomially growing} rel.~$\mathcal Z$.
The growth type of an outer class~$[\psi]$ is also well-defined.
The ``rel.~$\mathcal Z$'' in our terminology may be omitted when $\mathcal Z$ is trivial.
The next proposition deals with the first obstacle:

\begin{prop}[cf.~{\cite[Proposition~III.2]{Mut24}}]\label{prop-limittree} Let $\psi \colon \mathcal F \to \mathcal F$ be an automorphism of a free group system and $\mathcal Z$ a $[\psi]$-invariant proper free factor system.
Then there is a:
\begin{enumerate}
\item a minimal $\mathcal F$-forest $(\mathcal Y, \delta)$ with trivial arc stabilizers for which~$\mathcal Z$ is elliptic; and
\item a unique $\psi$-equivariant expanding homothety $h \colon (\mathcal Y, \delta) \to (\mathcal Y, \delta)$.
\end{enumerate}
The forest $(\mathcal Y, \delta)$ is degenerate if and only if~$[\psi]$ is polynomially growing rel.~$\mathcal Z$.
\end{prop}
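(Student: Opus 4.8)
The plan is to construct $(\mathcal Y, \delta)$ by an iterated-train-track limit when $\psi$ is exponentially growing, and to take it degenerate otherwise; the real content is showing that the construction terminates with an \emph{expanding} homothety, and that this homothety is unique.

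\textbf{Existence.} First I would dispose of the polynomially growing case: if $[\psi]$ is polynomially growing rel.~$\mathcal Z$, take $(\mathcal Y,\delta)$ to be the degenerate $\mathcal F$-forest (a point in each component), on which $\psi$ acts by the identity homothety with $\lambda = 1$; vacuously $\mathcal Z$ is elliptic. So assume $[\psi]$ is exponentially growing rel.~$\mathcal Z$. By Theorem~\ref{thm-irredtt} there is an irreducible train track $\tau \colon \mathcal T \to \mathcal T$ for $\psi$ with the components of $\mathcal Z$ being $\mathcal T$-elliptic. Equip $\mathcal T$ with the eigenmetric $d_\tau$; then $\tau$ is a $\lambda$-homothetic embedding on each edge with $\lambda = \lambda[\tau] \ge 1$, and because some element grows exponentially rel.~$\mathcal Z$, irreducibility forces $\lambda > 1$ (if $\lambda = 1$, $\tau$ is a simplicial automorphism and nothing in $\mathcal F$ grows exponentially rel.~$\mathcal F[\mathcal T] = \mathcal Z$, a contradiction). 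Now form the limit forest: define $\delta_m \defeq \lambda^{-m}(\tau^m)^* d_\tau$ on $\mathcal T$, observe $\delta_{m+1} = \lambda^{-(m+1)}(\tau^{m+1})^*d_\tau = \lambda^{-1}(\tau^*\delta_m)$, and use \emph{bounded cancellation} for the train track map to show the sequence of $\mathcal F$-actions (equivalently, the translation-length functions $x \mapsto \|\psi^m(x)\|_{d_\tau}/\lambda^m$) converges; this is the standard "Outer limits" construction and is carried out in Section~\ref{SubsecLimittrees}. The limiting pseudometric, after collapsing its null set, yields a minimal $\mathcal F$-forest $(\mathcal Y,\delta)$ with trivial arc stabilizers (trivial arc stabilizers passes to the limit because cancellation is uniformly bounded), on which $\psi$ is represented by the expanding homothety $h$ of ratio $\lambda$ induced by $\tau$; and $\mathcal Z$ is $\mathcal Y$-elliptic since its components were $\tau$-elliptic at every finite stage. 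The one subtlety is that exponentially growing automorphisms need not admit an \emph{expanding} irreducible train track outright — this is precisely the first obstacle flagged in the text; I would handle it by passing to a descending sequence of irreducible train tracks in which only the terminal one is expanding (Section~\ref{SubsecLimittrees2}), running the limit construction relative to that terminal stratum, and checking the resulting forest is nondegenerate exactly because something grows exponentially rel.~$\mathcal Z$.

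\textbf{Uniqueness of the homothety.} Suppose $h, h' \colon (\mathcal Y,\delta) \to (\mathcal Y,\delta)$ are both $\psi$-equivariant expanding homotheties, of ratios $\lambda, \lambda'$. Then $g \defeq h^{-1} \circ h'$ is a $\psi^{-1}\psi = \mathrm{id}$-equivariant — i.e.\ genuinely $\mathcal F$-equivariant — homothety of $(\mathcal Y,\delta)$ of ratio $\lambda'/\lambda$. An $\mathcal F$-equivariant homothety of a minimal $\mathcal F$-forest with trivial arc stabilizers that is not an isometry would rescale every translation length by the same factor $\lambda'/\lambda \ne 1$, which is impossible (translation lengths of a fixed nontrivial $\mathcal F$-element are preserved by any $\mathcal F$-equivariant self-map of the forest); hence $\lambda = \lambda'$ and $g$ is an $\mathcal F$-equivariant isometry. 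By minimality and trivial arc stabilizers, such a $g$ must be the identity on each component's axis-union, hence on all of $\mathcal Y$ — so $h = h'$. (If one prefers: $g$ fixes the translation-length function, and a minimal $\mathcal F$-forest with trivial arc stabilizers is rigid enough that its isometry group acts freely away from the identity on the relevant configuration, forcing $g = \mathrm{id}$.)

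\textbf{The degeneracy dichotomy.} The forward direction is built into the construction: if $[\psi]$ is polynomially growing rel.~$\mathcal Z$ we chose $(\mathcal Y,\delta)$ degenerate. Conversely, if $[\psi]$ is exponentially growing rel.~$\mathcal Z$, then the element $x$ witnessing this has $\|\psi^m(x)\|_{d_\tau}/\lambda^m$ bounded below away from $0$ along a subsequence (by the definition of exponential growth with rate $\lambda_x$ and the fact that $\lambda_x = \lambda$ for the maximal stratum), so its limiting translation length in $(\mathcal Y,\delta)$ is positive; hence $\mathcal Y$ has a loxodromic element and is nondegenerate.

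\textbf{Main obstacle.} I expect the genuinely hard step to be the first one — producing a limit forest that is \emph{expanding} when no expanding irreducible train track exists — because it requires the descending-sequence machinery and a careful argument that passing to the limit rel.\ the terminal expanding stratum does not collapse the whole forest; the convergence itself (bounded cancellation) and the uniqueness argument are comparatively routine.
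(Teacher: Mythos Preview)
Your proposal follows the paper's approach closely: iterate an irreducible train track (or a descending sequence thereof) to produce the limit forest, and invoke Culler--Morgan rigidity for uniqueness of the homothety. This is exactly what the paper sketches in Sections~\ref{SubsecLimittrees} and~\ref{SubsecLimittrees2}.

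One genuine slip: you write ``$\mathcal F[\mathcal T] = \mathcal Z$'' and deduce $\lambda > 1$ from exponential growth rel.~$\mathcal Z$. Theorem~\ref{thm-irredtt} only guarantees that $\mathcal Z$ is $\mathcal T$-elliptic, i.e.\ $\mathcal Z$ is carried by $\mathcal F[\mathcal T]$; equality can fail, and the witness to exponential growth may sit inside $\mathcal F[\mathcal T]\setminus\mathcal Z$, so $\lambda[\tau]=1$ is perfectly possible. You immediately acknowledge this (``the one subtlety\ldots'') and point to the descending sequence, which is the correct fix---but the paragraph as written asserts a false implication before retracting it. The paper handles this cleanly by \emph{starting} with the descending sequence $(\tau_i)_{i=1}^n$, defining polynomial growth rel.~$\mathcal Z$ to mean $\lambda[\tau_n]=1$, and only then iterating (on the blow-up $\mathcal T^\circ$) in the exponential case. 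Your uniqueness argument is the content of \cite[Theorem~3.7]{CM87}, which is what the paper cites; and your nondegeneracy argument (some loxodromic survives in the limit) is correct, though the paper instead observes directly that $\pi$ is an isometric embedding on edges by the train track property.
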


\noindent 
The constructed $\mathcal F$-forest $(\mathcal Y, \delta)$ is the \emph{limit forest for~$[\psi]$ rel.~$\mathcal Z'$}, for some $[\psi]$-invariant proper free factor system~$\mathcal Z'$ that supports~$\mathcal Z$ (see Sections~\ref{SubsecLimittrees} and~\ref{SubsecLimittrees2}).
Unfortunately, these limit forests depend on the choice of~$\mathcal Z'$;
our goal is to give a canonical contruction.

Given the central tool (hierarchies) and objective (universal limit trees), we outline again how these two fit together.
Gaboriau--Levitt's index theory~\cite{GL95} gives a uniform bound on the complexity of the {point stabilizers system}~$\mathcal G[\mathcal Y]$ for a minimal $\mathcal F$-forest $(\mathcal Y,\delta)$ with trivial arc stabilizers --- this is a partial generalization of Bass--Serre theory.
When~$\mathcal Y$ is not degenerate, the subgroup system~$\mathcal G[\mathcal Y]$ has strictly lower {complexity} than~$\mathcal F$.
This allows us to induct on complexity (see Chapter~\ref{SecTopDomLimit}).

\smallskip
Suppose the automorphism $\psi \colon \mathcal F \to \mathcal F$ has a nondegenerate limit forest $(\mathcal Y_1,\delta_1)$ with nontrivial point stabilizers;
the system of stabilizers $\mathcal G \defeq \mathcal G[\mathcal Y]$ has strictly smaller complexity than~$\mathcal F$.
By $\psi$-equivariance of $\lambda_1$-homothety $h_1\colon (\mathcal Y_1, \delta_1) \to (\mathcal Y_1, \delta_1)$, the $\mathcal F$-orbits of points with nontrivial stabilizers are permuted by~$[h_1]$, the subgroup system~$\mathcal G$ is {$[\psi]$-invariant}, and the restriction of~$\psi$ to~$\mathcal G$ determines a unique outer automorphism~$[\varphi]$ of~$\mathcal G$.

Suppose $\varphi \colon \mathcal G \to \mathcal G$ has a nondegenerate limit forest $(\mathcal Y_2, \delta_2)$ with stretch factor~$\lambda_2$. 
Using the blow-up construction from~\cite{Mut24}, we equivariantly blow-up~$\mathcal Y_1$ with respect to $h_i \colon \mathcal Y_i \to \mathcal Y_i ~ (i = 1,2)$ to get real pretrees~$\mathcal T$ with a minimal rigid $\mathcal F$-action and a $\psi$-equivariant ``$\mathcal F$-expanding'' pretree-isomorphism~$f\colon \mathcal T \to \mathcal T$ induced by~$h_1$ and~$h_2$.
In fact, the blow-up construction implies the $\mathcal F$-pretrees~$\mathcal T$ inherit an $\mathcal F$-invariant $2$-level hierarchy $(\delta_1, \delta_2)$ with full support and~$f$ is an expanding homothety with respect to this hierarchy.
So we have a \emph{limit pseudoforest} $(\mathcal T, (\delta_1,\delta_2))$ for~$[\psi]$ (see Section~\ref{SubsecAssembHier}).
Under what conditions can we construct an $\mathcal F$-invariant convex metric on~$\mathcal T$ from $(\delta_1, \delta_2)$?
The heart of the paper is the following observation: the two limit forests $(\mathcal Y_i, \delta_i)$ are paired with {attracting laminations}~$\mathcal L_i^+[\psi]$ partially ordered by inclusion;
an $\mathcal F$-invariant convex metric on~$\mathcal T$ can be naturally constructed from $(\delta_1, \delta_2)$ if $\mathcal L_2^+[\psi]$ is not in $\mathcal L_1^+[\psi]$ (see Section~\ref{SubsecTopmost}) or $\lambda_1 < \lambda_2$ (see Section~\ref{SubsecDominating})!

\subsection{Bounded cancellation and laminations}\label{SubsecBCandLines}

Suppose a minimal $\mathcal F$-forest $(\mathcal Y, \delta)$ is \underline{very small}, i.e.~nontrivial arc stabilizers are maximal cyclic subgroups and the fixed point subset for a nontrivial elliptic element is an arc.
Let~$(\mathcal T, d)$ be a metric free splitting of~$\mathcal F$ and $[\cdot,\cdot]_T$ (resp. $[\cdot, \cdot]_Y$)  denote the interval function for $\mathcal T$ (resp. $\mathcal Y$).
A map $f \colon (\mathcal T,d) \to (\mathcal Y, \delta)$ is \underline{piecewise-linear (PL)} if the restriction to any closed edge is a linear map; 
an equivariant PL-map exists if and only if $\mathcal T$-elliptic elements in~$\mathcal F$ are $\mathcal Y$-elliptic.
Equivariant PL-maps $(\mathcal T,d) \to (\mathcal Y, \delta)$ are surjective and Lipschitz since the isometric $\mathcal F$-action on $(\mathcal Y, \delta)$ is minimal and there are only finitely many $\mathcal F$-orbits of edges in~$\mathcal T$;
$1$-Lipschitz maps are also known as \underline{metric maps}.
Generally, if~$\mathcal T, \mathcal Y$ are free splittings of~$\mathcal F$, then an equivariant function $f \colon \mathcal T \to \mathcal Y$ is a \underline{(simplicial) PL-map} if its restrictions to any closed edge is isotopic to a linear map with respect to some/any $\mathcal F$-invariant convex metrics~$d, \delta$ on~$\mathcal T, \mathcal Y$ respectively.

\begin{lem}[bounded cancellation]\label{lem-bcl}
Let $f\colon (\mathcal T,d) \to (\mathcal Y, \delta)$ be an equivariant PL-map.
For some constant $C[f] \ge 0$ and all points $p, q \in \mathcal T$, the image $f([p,q]_T)$ is in the $C[f]$-neighbourhood of the interval $[f(p), f(q)]_{Y}$. 
\end{lem}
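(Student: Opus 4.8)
The plan is to reduce to the standard bounded cancellation lemma for free groups. Fix $\mathcal F$-invariant convex metrics so that $f \colon (\mathcal T, d) \to (\mathcal Y, \delta)$ is linear on edges, Lipschitz with some constant $L = L[f]$, and equivariant. First I would treat a single component, so assume $\mathcal T, \mathcal Y$ are connected (trees) with an isometric $F$-action, $F$ free; the general statement follows by taking $C[f]$ to be the maximum over the finitely many components. The key reduction: since there are finitely many $F$-orbits of edges in $\mathcal T$, it suffices to bound, uniformly over representatives $e$ of the finitely many edge-orbits and over all $p, q$ with $e \subset [p,q]_T$, how far $f(e)$ can be from $[f(p),f(q)]_Y$. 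By equivariance and the finiteness of orbits, this reduces to bounding $d_\delta(f(v), [f(p),f(q)]_Y)$ for $v$ ranging over the finitely many vertex-orbit representatives.

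Next I would set up the cancellation estimate combinatorially. Pick a basis / finite fundamental domain for $F \curvearrowright \mathcal T$; any geodesic $[p,q]_T$ decomposes into edges, and walking along it the image path $f([p,q]_T)$ in $\mathcal Y$ is a concatenation of the edge-images $f(e_1), f(e_2), \dots$, each of $\delta$-length at most $L$ times the $d$-length of a single edge, hence at most some uniform $\Delta$. The image $[f(p),f(q)]_Y$ is the geodesic between the endpoints. The only way a point of $f([p,q]_T)$ escapes a neighborhood of $[f(p),f(q)]_Y$ is through ``backtracking'': the image path leaves and returns to a common point (or near-common point) of $\mathcal Y$. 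I would argue that the amount of backtracking at any stage is controlled: if the image path at two parameter values $s < t$ returns to the same point of $\mathcal Y$, then the portion of $[p,q]_T$ between them maps into a bounded set, and — using that the $F$-action on $\mathcal Y$ has no global fixed end and $[p,q]_T$ is embedded — this forces a bound on $t - s$, hence on the $\delta$-length of the excursion.

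The cleanest route is the classical one: realize $f$ as a homotopy-equivalence-type map and invoke Cooper's bounded cancellation, or run the ``no Nielsen path is too long'' style argument directly. Concretely, suppose for contradiction that for every $N$ there are $p_N, q_N, r_N$ with $r_N \in [p_N, q_N]_T$ but $\delta(f(r_N), [f(p_N),f(q_N)]_Y) > N$. Translating by $F$ we may assume $r_N$ lies in a fixed fundamental domain, so after passing to a subsequence $f(r_N)$ lies in a fixed bounded region of $\mathcal Y$. Then the geodesics $[f(p_N), f(r_N)]_Y$ and $[f(r_N), f(q_N)]_Y$ share an initial segment of length $> N$ (that is what the large distance to $[f(p_N),f(q_N)]_Y$ means), and pulling this back along the Lipschitz map $f$ produces two long subsegments of $[p_N, r_N]_T$ and $[r_N, q_N]_T$ whose $f$-images fellow-travel; using that arc stabilizers are controlled (or simply that the subsegments are long and embedded in $\mathcal T$) and equivariance, one extracts a nontrivial $g \in F$ and a long $g$-invariant-up-to-error segment, contradicting properness/discreteness of $F \curvearrowright \mathcal Y$.

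\textbf{Main obstacle.} The genuinely delicate point is the last step: converting ``long fellow-traveling image subpaths'' into an actual group element and deriving a contradiction, since $\mathcal Y$ need only be very small (arc stabilizers are maximal cyclic, not trivial) and $f$ need not be an immersion or even injective. One must be careful that the fellow-traveling is in $\mathcal Y$ while the combinatorics happens in $\mathcal T$, and that the passage between them loses only a multiplicative Lipschitz factor; making the ``extract $g$'' step rigorous is where Cooper's original argument (or the version in Bestvina–Feighn–Handel) does real work, and I would cite that rather than reprove it. Everything else — the reduction to connected $\mathcal T$, to finitely many edge/vertex orbits, and the Lipschitz bound on edge-images — is routine bookkeeping.
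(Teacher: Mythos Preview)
Your approach diverges from the paper's and has a genuine gap precisely at the step you flag as the main obstacle. The contradiction argument ends by ``contradicting properness/discreteness of $F \curvearrowright \mathcal Y$'', but $(\mathcal Y,\delta)$ is only assumed very small: it may be a nonsimplicial $\mathbb R$-tree with dense orbits, so there is no properness or discreteness to contradict. Likewise, ``extract a nontrivial $g$ from two long subsegments whose $f$-images fellow-travel'' works when the target is simplicial (finitely many edge-orbits force overlapping edges to be $F$-translates), but for an $\mathbb R$-tree target there is no combinatorial pigeonhole available, and $f$ is only Lipschitz, not coarsely injective, so fellow-traveling in~$\mathcal Y$ does not pull back to anything useful in~$\mathcal T$. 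Citing Cooper does not close the gap: Cooper's lemma is for maps between simplicial trees.

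The paper's proof (following \cite[Lemma~3.1]{BFH97}) avoids this by a two-step reduction. First, if the target is simplicial, factor the metric part of~$f$ as a finite composition of equivariant edge collapses and Stallings folds followed by a metric homeomorphism; collapses and the homeomorphism have cancellation constant~$0$, each fold contributes the length of the folded segment, and these constants are subadditive under composition of metric maps. This yields the explicit bound $C[f] < \operatorname{Lip}(f)\cdot\operatorname{vol}(\mathcal T,d)$. Second, for a general very small target, approximate~$f$ by equivariant simplicial PL-maps~$f_\epsilon$ with $\operatorname{Lip}(f_\epsilon) < \operatorname{Lip}(f)+\epsilon$ (via~\cite{BF94} or~\cite{Hor17}); if bounded cancellation failed for~$f$ with the constant above, it would fail for~$f_\epsilon$ with a constant exceeding $\operatorname{Lip}(f_\epsilon)\cdot\operatorname{vol}(\mathcal T,d)$ for small~$\epsilon$, contradicting the simplicial case. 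The fold factorization is what replaces your ``extract~$g$'' step, and the approximation is what bridges to $\mathbb R$-trees; both are missing from your outline.
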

\noindent 
Such a~$C[f]$ is a \underline{cancellation constant} for~$f$.
This proof is due to Bestvina--Feighn--Handel.
\begin{proof}[Sketch of proof~{\cite[Lemma~3.1]{BFH97}}]
Let $\operatorname{Lip}(f)$ be the Lipschitz constant and $\operatorname{vol}(\mathcal T, d)$ the {volume$\pmod{\mathcal F}$}.
Then $f = g \circ h$ for some equivariant $\operatorname{Lip}(f)$-homothety~$h$ and metric PL-map~$g$.
Suppose~$f$ is \emph{simple}: its target is a metric free splitting with free factor system $\mathcal F[\mathcal T]$.
Then~$g$ factors as finitely many equivariant edge collapses and Stallings folds followed by an equivariant metric homeomorphism.
The homeomorphism and each edge collapse have cancellation constants 0.
A fold has a cancellation constant given by the length of folded segment.
Finally, the metric PL-map~$g$ has a cancellation constant since cancellation constants are (sub)additive over compositions of metric maps.
As cancellation constants are preserved by precomposition with homeomorphisms, the PL-map~$f = g \circ h$ has a cancellation constant $C[f] < \operatorname{Lip}(f)\operatorname{vol}(\mathcal T, d)$.

Otherwise, the PL-map~$f$ is not simple.
For a contradiction, suppose the image $f([p,q]_T)$ is not in the $\operatorname{Lip}(f)\operatorname{vol}(\mathcal T, d)$-neighbourhood of $[f(p), f(q)]_Y$ for some $p, q \in \mathcal T$.
Let $\delta(f(r_0), [f(p), f(q)]_Y) > \operatorname{Lip}(f)\operatorname{vol}(\mathcal T, d) + \epsilon_0$ for some $\epsilon_0 > 0$ and point $r_0 \in [p,q]_T$.
For any $\epsilon > 0$, the PL-map~$f$ is {approximated} by an equivariant simple PL-map~$f_\epsilon$ with $\operatorname{Lip}(f_\epsilon) < \operatorname{Lip}(f) + \epsilon$ and $C[f_\epsilon] \ge \operatorname{Lip}(f)\operatorname{vol}(\mathcal T, d) + \epsilon_0$ (see~\cite[Theorem~6.1]{Hor17}).
By the previous paragraph, $C[f_\epsilon] < \operatorname{Lip}(f_\epsilon)\operatorname{vol}(\mathcal T, d)$ for $\epsilon > 0$.
So $C[f_\epsilon] < \operatorname{Lip}(f)\operatorname{vol}(\mathcal T, d) +\epsilon_0$ for small enough $\epsilon > 0$ --- a contradiction.
\end{proof}
\begin{rmk}The results in this section apply to $\psi$-equivariant PL-maps $g \colon (\mathcal T, d) \to (\mathcal T, d)$ for any automorphism $\psi \colon \mathcal F \to \mathcal F$: view~$g$ as an equivariant PL-map $(\mathcal T, d) \to (\mathcal T\psi, d)$ instead.
\end{rmk}

A \underline{line} in a forest is an arc that is isometric to~$(\mathbb R, d_{\mathrm{std}})$;
a \underline{ray} in a forest is an arc that is isometric to $(\mathbb R_{\ge 0}, d_{\mathrm{std}})$ and its \underline{origin} is the point corresponding to~$0$ under the isometry.
Two rays are \emph{end-equivalent} if their intersection is a ray;
an \underline{end} of a forest is an end-equivalence class of rays in the forest.
Note that there is a natural bijection between the set of lines in a forest and set of unordered pairs of distinct ends in the same component of the forest.
For simplicial $\mathcal F$-pretrees~$\mathcal T$, the notions of line/ray/end are well-defined for the cone of $\mathcal F$-invariant convex metrics on~$\mathcal T$.

\begin{cor}[{cf.~\cite[Lemma~3.4]{GJLL98}}]\label{cor-tightenings} {~}

Let $f\colon (\mathcal T,d) \to (\mathcal Y, \delta)$ be an equivariant PL-map.
\begin{enumerate}
\item\label{cor-tightenings-dichotomy} For any ray~$\rho$ in $(\mathcal T, d)$ with origin~$p_0$, the image~$f(\rho)$ is either bounded or in the $C[f]$-neighbourhood of a unique ray $f_*(\rho) \subset f(\rho)$ with origin~$f(p_0)$;
moverover, if $\rho, \rho'$ represent the same end~$e$ and $f(\rho)$ is unbounded, then so is $f(\rho')$ and $f_*(\rho), f_*(\rho')$ are end-equivalent --- denote their end-equivalence class by~$f_*(e)$.
\item\label{cor-tightenings-lineproj} For any line~$l$ in $(\mathcal T, d)$,~$f(l)$ is in a $C[f]$-neighbourhood of a unique line~$f_*(l) \subset f(l)$ if both ends of~$l$ have unbounded $f$-images.
\item\label{cor-tightenings-preimage} For any end~$\epsilon$ of $(\mathcal Y, \delta)$, there is a unique end~$f^*(\epsilon)$ of $(\mathcal T, d)$ with $\epsilon = f_*(f^*(\epsilon))$.
\end{enumerate}
\end{cor}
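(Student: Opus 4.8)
The plan is to derive all three statements directly from the bounded cancellation constant $C = C[f]$ produced by Lemma~\ref{lem-bcl}, applied to nested finite subintervals. First I would set up the basic mechanism behind item~\eqref{cor-tightenings-dichotomy}. Write the ray as an increasing union $\rho = \bigcup_n [p_0, p_n]_T$ with $p_n \to \infty$ along~$\rho$. For each $n$, Lemma~\ref{lem-bcl} says $f([p_0,p_n]_T)$ lies in the $C$-neighbourhood of $[f(p_0), f(p_n)]_Y$, so in particular $f(p_m)$ is within $C$ of $[f(p_0), f(p_n)]_Y$ whenever $m \le n$. Two cases: either the numbers $\delta(f(p_0), f(p_n))$ stay bounded, in which case (using that $f$ is Lipschitz, hence $f(\rho)$ is the union of the $C$-neighbourhoods of the bounded intervals $[f(p_0),f(p_n)]_Y$) the image $f(\rho)$ is bounded; or $\delta(f(p_0), f(p_n)) \to \infty$. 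In the unbounded case, the points $f(p_n)$ are eventually within $C$ of the ray based at $f(p_0)$ obtained as the "direction limit" of the arcs $[f(p_0), f(p_n)]_Y$. Concretely, I would show that the closed intervals $[f(p_0), f(p_n)]_Y$ are eventually nested: since each $[f(p_0), f(p_m)]_Y$ is within $C$ of $[f(p_0),f(p_n)]_Y$ for $m\le n$, and the $Y$-side is a genuine tree, the union $\bigcup_n [f(p_0), f(p_n)]_Y$ contains a unique ray $f_*(\rho)$ with origin $f(p_0)$ (take the intersection of all the intervals $[f(p_n), \text{(far end)}]$... more cleanly: in an $\mathbb R$-tree the set $\{x : x \in [f(p_0), f(p_n)]_Y \text{ for all large } n\}$ is an arc from $f(p_0)$, and it is unbounded because $\delta(f(p_0),f(p_n))\to\infty$ forces its length to be infinite once one checks the Gromov-product estimates survive the additive error $C$). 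Then every point of $f(\rho)$ is within $C$ of $f_*(\rho)$, and $f_*(\rho) \subset f(\rho)$ by construction. Uniqueness of $f_*(\rho)$ is immediate since two distinct rays from the same origin in a tree diverge, contradicting both being $C$-fellow-travelled by $f(\rho)$.

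For item~\eqref{cor-tightenings-lineproj}, write the line as $l = \bigcup_n [q_{-n}, q_n]_T$ with the two ends exhausted by $q_{\pm n}$. By hypothesis both sub-rays $\rho^+ = \bigcup [q_0, q_n]_T$ and $\rho^- = \bigcup [q_0, q_{-n}]_T$ have unbounded image, so by~\eqref{cor-tightenings-dichotomy} we get rays $f_*(\rho^\pm)$ from $f(q_0)$, each $C$-fellow-travelled by the corresponding half of $f(l)$. Applying Lemma~\ref{lem-bcl} to the intervals $[q_{-n}, q_n]_T$, the image $f([q_{-n},q_n]_T)$ stays within $C$ of $[f(q_{-n}), f(q_n)]_Y$; combining with the ray statements shows $f_*(\rho^+)$ and $f_*(\rho^-)$ are end-equivalent to the two ends of $[f(q_{-n}),f(q_n)]_Y$, hence have distinct limiting ends, so their union (after possibly deleting the finite overlap near $f(q_0)$) is a line $f_*(l)$. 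That $f(l)$ lies in the $C$-neighbourhood of $f_*(l)$ follows by covering $f(l) = \bigcup f([q_{-n},q_n]_T)$ and using Lemma~\ref{lem-bcl} plus the fact that $[f(q_{-n}),f(q_n)]_Y$ converges (in the Hausdorff sense on bounded sets) to $f_*(l)$. Uniqueness is again tree-geometry: two distinct lines in an $\mathbb R$-tree cannot both $C$-fellow-travel a common connected set of infinite diameter in both directions.

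For item~\eqref{cor-tightenings-preimage}, fix an end $\epsilon$ of $(\mathcal Y,\delta)$ and a ray $\sigma$ in $\mathcal Y$ representing it based at a point in $f(\mathcal T)$; using that $f$ is surjective and PL (so preimages of points are finite unions of edges and vertices), I would pull $\sigma$ back step by step: lift the origin, then inductively lift longer and longer initial subintervals of $\sigma$ through $f$, using at each stage that $f$ restricted to the relevant finite subtree of $\mathcal T$ is surjective onto a neighbourhood of the current piece of $\sigma$. This produces a ray (or at least an unbounded arc) $\tau$ in $\mathcal T$ with $f(\tau)$ Hausdorff-close to $\sigma$, hence $f(\tau)$ unbounded, so by~\eqref{cor-tightenings-dichotomy} $f_*$ of the end of $\tau$ equals $\epsilon$; set $f^*(\epsilon)$ to be that end. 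For uniqueness, if two ends $e_1 \neq e_2$ of $\mathcal T$ had $f_*(e_i) = \epsilon$, pick rays $\rho_i$ representing them; then $\rho_1 \cup \rho_2$ contains a line $l$ through both ends (they are distinct), and applying~\eqref{cor-tightenings-lineproj} gives a line $f_*(l)$ whose two ends are $f_*(e_1) = f_*(e_2) = \epsilon$ — impossible, since a line has two distinct ends. Hence $f^*(\epsilon)$ is unique.

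The main obstacle I expect is making the "direction limit" argument in item~\eqref{cor-tightenings-dichotomy} fully rigorous, i.e.\ verifying that the additive error $C$ genuinely controls the eventual nesting of the intervals $[f(p_0),f(p_n)]_Y$ and that the resulting fellow-travelled set is an honest ray (unbounded, and actually contained in $f(\rho)$ rather than just in its union-of-neighbourhoods). The cleanest route is to work with Gromov products in the $\mathbb R$-tree $\mathcal Y$: bounded cancellation says the Gromov product $(f(p_m)\,|\,f(p_n))_{f(p_0)}$ is within $C$ of $\min$ of the analogous quantities along the path, forcing these products to tend to infinity, which is exactly the condition for $(f(p_n))$ to converge to an end; one then identifies the fellow-travelling ray as the one determined by that end and truncates to land inside $f(\rho)$. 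Everything downstream (items \eqref{cor-tightenings-lineproj} and \eqref{cor-tightenings-preimage}) is then a bookkeeping exercise in $\mathbb R$-tree geometry, with the only mild subtlety in~\eqref{cor-tightenings-preimage} being the lifting of $\sigma$, which uses surjectivity and local finiteness of the equivariant PL-map rather than bounded cancellation.
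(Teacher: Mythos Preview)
Your arguments for parts~(1) and~(2) are essentially the paper's and are correct; the Gromov-product reformulation you propose for~(1) is equivalent to the paper's explicit inductive choice of points $s_n \in [f(p_0), f(p_n)]_Y$, and the containment $f_*(\rho) \subset f(\rho)$ is automatic since $[f(p_0), f(p_n)]_Y \subset f([p_0,p_n]_T)$ by connectedness.

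Part~(3) has a real gap in the existence half. Your plan is to ``lift longer and longer initial subintervals of $\sigma$'' to produce a connected arc $\tau \subset \mathcal T$, but a PL-map has no path-lifting property: the preimage of a subinterval of~$\sigma$ need not contain any arc mapping onto it, and nothing forces successive partial lifts to cohere into a single ray. Your diagnosis that this step ``uses surjectivity and local finiteness \dots\ rather than bounded cancellation'' is exactly backwards. The paper abandons arc-lifting entirely: it picks points $q_n$ marching out along~$\sigma$, uses surjectivity only to lift each $q_n$ to \emph{some} $p_n \in \mathcal T$ with no coherence imposed, and then uses bounded cancellation plus the Lipschitz constant~$K$ to show
\[
d\bigl(p_0,\,[p_n, p_{n+m}]_T\bigr) \;\ge\; K^{-1}\bigl(\delta(s_0, q_n) - C\bigr) \;\longrightarrow\; \infty.
\]
Bounded cancellation is precisely what keeps $f([p_n, p_{n+m}]_T)$ from folding back near $s_0 = f(p_0)$, so that the (a~priori unrelated) points $p_n$ are forced to converge to a single end of~$\mathcal T$. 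Your uniqueness argument via part~(2) is fine and matches the paper.
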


\begin{proof}[Sketch of proof] {~}

\noindent (\ref{cor-tightenings-dichotomy}): Let $\rho$ be a ray in $(\mathcal T,d)$, $p_{0} \in \rho$ its origin, $f(\rho)$ unbounded, and $s_0 = f(p_0)$.
Use Figure~\ref{fig - ray} for reference.
By bounded cancellation and the Lipschitz property,~$f(\rho)$ has at most one end of $(\mathcal Y, \delta)$.
For some $n \ge 0$, assume $s_{n} \in [s_0, f(p)]_Y$ for all $p \in \rho \setminus [p_0, p_{n}]_T$.
Set $C \defeq C[f]$.
Since $f(\rho)$ is unbounded, \[ \delta(s_0, f(p_{n+1})) > 2\, \delta(s_0, s_{n}) + C\] for some $p_{n+1} \in \rho \setminus [p_{0},p_{n}]_T$.
Pick $s_{n+1} \in [s_0, f(p_{n+1})]_Y$ satisfying $\delta(s_0, s_{n+1}) > 2\,\delta(s_0, s_{n})$ and $\delta(s_{n+1},f(p_{n+1})) > C$;
so $s_{n} \in [s_0, s_{n+1}]_Y$.
By bounded cancellation, the interval $[s_0,s_{n+1}]_Y \subset f([p_0, p_{n+1}]_T)$ is disjoint from $f(\rho \setminus [p_0,p_{n+1}]_T)$.
So the union $\bigcup_{n \ge 0} [s_0, s_n]_Y$ is a ray~$f_*(\rho)$ in~$f(\rho)$ with origin~$s_0$.
By construction,~$f(\rho)$ is in the $C$-neighbourhood of~$f_*(\rho)$.
Any bounded neighbourhood of a ray contains a unique ray, up to end-equivalence.

\begin{figure}[ht]
 \centering 
 \includegraphics{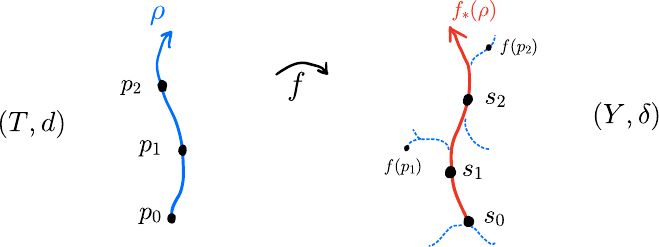}
 \caption{The ray $f_*(\rho)$ with origin $s_0 = f(p_0)$ is built inductively in the image $f(\rho)$.}
 \label{fig - ray}
\end{figure}

\smallskip
\noindent (\ref{cor-tightenings-lineproj}): Represent both ends of~$l$ with rays $\rho^\pm \subset l$ sharing the same origin.
By Part~\ref{cor-tightenings-dichotomy} and bounded cancellation, we have rays~$f_*(\rho^\pm)$ representing unique distinct ends~$\epsilon^\pm$ of $(\mathcal Y, \delta)$;
moreover, $f(l)=f(\rho^-)\cup f(\rho^+)$ is in the $C$-neighbourhood of $f_*(\rho^-) \cup f_*(\rho^+) \subset f(l)$.
Let $f_*(l) \subset f_*(\rho^-) \cup f_*(\rho^+)$ be the line determined by the ends~$\epsilon^\pm$.
Then~$f(l)$ is in the $C$-neighbourhood of~$f_*(l)$.
Any bounded neighbourhood of a line contains a unique line.

\smallskip
\noindent (\ref{cor-tightenings-preimage}): The argument is almost the same.
Let~$\rho'$ be a ray representing~$\epsilon$ and $s_0 = q_0$ its origin.
Pick points $q_{n+1}, s_{n+1} \in \rho'$ with $\delta(s_0, s_{n+1}) > 2\,\delta(s_0, s_{n})$, $\delta(s_0, q_{n+1}) > 2\, \delta(s_0, s_n) + C$, and $\delta(s_{n+1},q_{n+1}) > C$.
Since $f\colon \mathcal T \to \mathcal Y$ is surjective, we can lift~$q_n$ to $p_n \in T$.
By bounded cancellation and $K$-Lipschitz property, the distance $d(p_0, [p_n, p_{n+m}]_T) > \frac{1}{K} \delta(s_0,s_n)$.
Thus $(p_n)_{n\ge 0}$ determines an end~$e$ of~$(\mathcal T,d)$ with unbounded $f$-image.
Let~$\rho$ be a ray representing~$e$ with origin~$p_0$.
As $\rho' \subset f(\rho)$ by construction, we get $f_*(\rho) = \rho'$ by Part~\ref{cor-tightenings-dichotomy}. By Part~\ref{cor-tightenings-lineproj}, the end~$e$ is the unique end with $f_*(e) = \epsilon$, and we denote it by~$f^*(\epsilon)$.
\end{proof}

The corollary defines the equivariant \underline{lifting} (resp. \underline{projecting}) function~$f^*$ (resp.~$f_*$), where the domain~$\operatorname{dom}(f^*)$ of~$f^*$ is the set of lines in $(\mathcal Y, \delta)$ and the domain~$\operatorname{dom}(f_*)$ of~$f_*$ is the set of lines in $(\mathcal T, d)$ whose ends both have unbounded $f$-images.
Note that the image~$\operatorname{im}(f^*)$ is~$\operatorname{dom}(f_*)$;
moreover,~$f^*$ and~$f_*$ are inverses of each other.
Both lifting and projecting functions are \emph{canonical}:
$f^* = g^*$ and $f_* = g_*$ for any equivariant maps $f,g \colon (\mathcal T, d) \to (\mathcal Y, \delta)$ since $f,g$ will be a bounded $\delta$-distance from some equivariant PL-map;
for lack of better notation, we still denote the functions by~$f^*, f_*$ despite this independence.

Alternatively, we view~$f^*$ and~$f_*$ as functions on the sets of $\mathcal F$-orbits of lines.
We can equip these sets with a natural topology.
The set $\mathbb R(\mathcal Y, \delta)$ of $\mathcal F$-orbits of lines in $(\mathcal Y, \delta)$ has the following topology: for any $p,q \in \mathcal Y$, let $U[p,q]$ be the $\mathcal F$-orbit of lines that contain a translate of $[p,q]$;
the collection $\{ U[p,q]:p,q \in \mathcal Y \}$ is a basis for the \underline{space of ($\mathcal F$-orbits of) lines}.
This space is well-defined for the equivariant homothetic class of $(\mathcal Y, \delta)$.
The space of lines is also well-defined for the free splitting~$\mathcal T$ and denoted $\mathbb R(\mathcal T)$.

\begin{claim}\label{claim-liftembed}
The canonical lifting function $f^* \colon \mathbb R(\mathcal Y, \delta) \to \mathbb R(\mathcal T)$ is a topological embedding.
\end{claim}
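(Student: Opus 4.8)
The plan is to verify the two requirements of a topological embedding separately: first, that $f^*$ is injective; second, that $f^*$ is both continuous and open onto its image. Injectivity is immediate from the corollary, since $f^*$ and $f_*$ are mutually inverse on $\operatorname{im}(f^*) = \operatorname{dom}(f_*)$: if $f^*(\ell_1) = f^*(\ell_2)$ in $\mathbb R(\mathcal T)$, applying $f_*$ recovers $\ell_1 = \ell_2$. The content is therefore entirely in comparing the two topologies.

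For continuity and openness, the strategy is to translate basic open sets on each side through the canonical projecting map $f_\ast$ at the level of representative intervals, using bounded cancellation (Lemma~\ref{lem-bcl}) to control how $f$ and its ``inverse'' move intervals around. Fix a cancellation constant $C = C[f]$ and a Lipschitz constant $K = \operatorname{Lip}(f)$ for an equivariant PL representative. For \textbf{continuity}: given a basic open set $U[u,v] \subset \mathbb R(\mathcal Y,\delta)$ with $u,v\in\mathcal Y$, we want to exhibit a basic open set $V$ of $\mathbb R(\mathcal T)$ with $f^*(V \cap \operatorname{im}(f^*)) \subset U[u,v]$ — actually it is cleaner to show $(f_\ast)^{-1}(U[u,v])$ is open, i.e.\ that a line $\ell$ in $\mathcal T$ with $[u,v]$ (a translate of) contained in $f_\ast(\ell)$ has a neighbourhood of lines with the same property. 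The point is that if $[u,v] \subset f_*(\ell)$, then by the construction of $f_*$ in Corollary~\ref{cor-tightenings} there is a sub-interval $[p,q] \subset \ell$ whose image $f([p,q])$ contains $[u,v]$ comfortably in its interior — concretely, push the endpoints out far enough (distance $> C$ beyond) so that bounded cancellation guarantees $[u,v]$ survives the tightening $f([p,q]) \rightsquigarrow [f(p),f(q)]$ and hence lies in $f_*(\ell')$ for every line $\ell'$ through $[p,q]$. Thus $U[p,q] \cap \operatorname{im}(f^*)$ maps into $U[u,v]$ under $f_*$, giving continuity of $f_\ast$ on its domain, equivalently continuity of $(f^*)^{-1}$.

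For \textbf{openness} of $f^*$ (equivalently, continuity of $f_* = (f^*)^{-1}$ on $\operatorname{im}(f^*)$ run the other way, i.e.\ that $f^*$ of a basic open set is open in $\operatorname{im}(f^*)$): given $U[p,q] \subset \mathbb R(\mathcal T)$ and a line $\ell$ in $\mathcal T$ containing a translate of $[p,q]$ with both ends having unbounded image, we must find $u,v \in \mathcal Y$ so that $U[u,v] \cap \operatorname{im}(f^*) \subset f^*(U[p,q])$. Here one uses surjectivity and the lifting construction of Corollary~\ref{cor-tightenings}(\ref{cor-tightenings-preimage}): choose $u,v$ on $f_*(\ell)$ far enough apart that any line $\ell''$ in $\mathcal Y$ through $[u,v]$ has $f^*(\ell'')$ forced to run through a translate of $[p,q]$ — again this is a bounded-cancellation estimate, now applied to the lifting side, where one controls the lift of an interval up to the $K$-Lipschitz and $C$-cancellation errors. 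The subtlety to handle with care is the restriction to $\operatorname{im}(f^*) = \operatorname{dom}(f_*)$ throughout, since $f_\ast$ is only defined on lines whose two ends both have unbounded $f$-image; every neighbourhood estimate must be intersected with this set, and one should check it is (relatively) well-behaved — but this is automatic because $f^*$ is a bijection onto it and we only ever test against lines already known to be in the image.

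The main obstacle I expect is not the topological bookkeeping but making the bounded-cancellation estimates \emph{uniform} in the right direction: on the lifting side one needs that a single choice of $[u,v]$ downstairs pins down a translate of $[p,q]$ upstairs for \emph{all} lines through $[u,v]$ simultaneously, and this requires carefully tracking that the lift of a long-enough interval is determined up to the fixed error $C + (\text{something})\cdot K$, independent of which line is being lifted. This is exactly the kind of estimate packaged by Corollary~\ref{cor-tightenings} together with Lemma~\ref{lem-bcl}, so once those are invoked correctly the argument should close; I would expect the write-up to consist mostly of choosing the constants in the correct order and citing the corollary, rather than any genuinely new inequality.
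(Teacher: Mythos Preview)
Your overall architecture is right --- injectivity is free from Corollary~\ref{cor-tightenings}, and both remaining directions are bounded-cancellation estimates --- but the bookkeeping has gotten tangled and one direction is under-argued.

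\medskip
\noindent\textbf{The labels are swapped.} Continuity of $f^*\colon \mathbb R(\mathcal Y,\delta)\to\mathbb R(\mathcal T)$ means preimages of opens in $\mathbb R(\mathcal T)$ are open in $\mathbb R(\mathcal Y,\delta)$; openness onto the image means $f^*$ carries opens in $\mathbb R(\mathcal Y,\delta)$ to relatively open sets in $\operatorname{im}(f^*)\subset\mathbb R(\mathcal T)$. Your paragraph headed ``continuity'' shows that $(f_*)^{-1}(U[u,v])$ is open for $u,v\in\mathcal Y$ --- and you yourself conclude it as ``continuity of $(f^*)^{-1}$''. That is openness of $f^*$, not continuity. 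Conversely your ``openness'' paragraph starts from $U[p,q]\subset\mathbb R(\mathcal T)$ and seeks $u,v\in\mathcal Y$; that is the continuity direction. Several expressions do not typecheck as written (e.g.\ ``$f^*(V\cap\operatorname{im}(f^*))$'' with $V\subset\mathbb R(\mathcal T)$, and ``$U[u,v]\cap\operatorname{im}(f^*)$'' with $u,v\in\mathcal Y$).

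\medskip
\noindent\textbf{The openness direction (your ``continuity'' paragraph) is fine and matches the paper.} Given $[u,v]_Y\subset f_*(\ell)$, pick $[p,q]_T\subset\ell$ so that $[f(p),f(q)]_Y$ contains $[u,v]_Y$ with a $C$-margin; bounded cancellation then gives $f_*(\ell')\supset[u,v]_Y$ for every $\ell'\in U[p,q]\cap\operatorname{im}(f^*)$. This is exactly the paper's second paragraph.

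\medskip
\noindent\textbf{The continuity direction (your ``openness'' paragraph) has a real step missing.} You assert that a long enough $[u,v]_Y\subset\gamma$ forces $f^*(\gamma')\supset[p,q]_T$ for every $\gamma'\supset[u,v]_Y$, and defer this to Corollary~\ref{cor-tightenings}. But that corollary lifts \emph{ends}, not finite intervals: it gives no direct control on where $f^*(\gamma')$ sits relative to $f^*(\gamma)$ when $\gamma,\gamma'$ share only a segment. The substantive claim you need is: if $\ell,\ell'\in\operatorname{im}(f^*)$ and $f_*(\ell),f_*(\ell')$ share a long interval in $\mathcal Y$, then $\ell$ and $\ell'$ themselves share a long interval in $\mathcal T$. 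This is true (a tree argument using bounded cancellation on the bridge between $\ell$ and $\ell'$), but it is the heart of this direction and should be stated and proved, not attributed to the corollary.

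The paper sidesteps a direct open-set argument here and instead checks continuity via closed sets: given closed $\Lambda\subset\mathbb R(\mathcal T)$ and $[\gamma]$ in the closure of $f_*(\Lambda\cap\operatorname{im}(f^*))$, it shows $f^*[\gamma]\in\Lambda$. The same overlap fact appears, but now transparently as the intersection $I_l\defeq f^*(\gamma)\cap l$, whose $f$-image is shown (by bounded cancellation) to cover a long subinterval of $\gamma$; letting the interval exhaust $\gamma$ exhausts $f^*(\gamma)$ by segments of lines in $\Lambda$. Your approach would also work once you prove the overlap lemma, but the paper's formulation isolates the geometric content more cleanly.
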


\noindent Henceforth, we identify $\mathbb R(\mathcal Y, \delta)$ with a subspace of $\mathbb R(\mathcal T)$ using the canonical embedding~$f^*$.

\begin{proof}[Sketch of proof]
We first prove the injection~$f^*$ is continuous.
Let $\Lambda \subset \mathbb R(\mathcal T)$ be a closed subset and $\Lambda_f \defeq \Lambda \cap \operatorname{im}(f^*)$.
Suppose~$[\gamma]$ is in the closure of~$f_*(\Lambda_f)$ in $\mathbb R(\mathcal Y,\delta)$.
For continuity, it is enough to show $f^*[\gamma] \in \Lambda$.
Fix a long interval $I_\gamma \subset \gamma$;
then $I_\gamma \subset [f(p), f(q)]_Y$ for some $p,q \in f^*(\gamma)$.
As~$[\gamma]$ is in the closure of~$f_*(\Lambda_f)$, the interval $I_\gamma \subset \gamma$ is in the line~$f_*(l)$ for some $[l] \in \Lambda_f$. 
By bounded cancellation, the $f$-image of the intersection $I_l \defeq f^*(\gamma) \cap l$ contains a long interval in~$I_\gamma$. 
As the interval~$I_\gamma$ will exhaust~$\gamma$, the interval~$I_l$ exhausts~$f^*(\gamma)$;
in particular, any interval in~$f^*(\gamma)$ is contained in~$l$ for some $[l] \in \Lambda$.
So $f^*[\gamma]$ is in the closed subset~$\Lambda$.

We finally prove $f^*\colon \mathbb R(\mathcal Y, \delta) \to \operatorname{im}(f^*)$ is an open map, where the image $\operatorname{im}(f^*) \subset \mathbb R(\mathcal T)$ has the subspace topology.
Suppose $p,q\in \mathcal Y$  and $[\gamma] \in U[p,q]$, i.e.~a line~$\gamma$ in $(\mathcal Y, \delta)$ contains an interval~$[p,q]_Y$.
There is an interval~$[u,v]_T \subset f^*(\gamma)$ whose $f$-image covers a long neighbourhood of~$[p,q]_Y$.
By bounded cancellation, any line~$f^*(\gamma')$ containing~$[u,v]_T$ will have an $f_*$-image~$\gamma'$ containing~$[p,q]_Y$.
So $f^*[\gamma] \in U[u,v] \cap \operatorname{im}(f^*) \subset f^*(U[p,q])$.
As $[\gamma] \in U[p,q]$ was arbitrary, the image $f^*(U[p,q])$ is open in~$\operatorname{im}(f^*)$.
\end{proof}

Now assume~$\mathcal T'$ is a free splitting of~$\mathcal F$ with $\mathcal F[\mathcal T] = \mathcal F[\mathcal T']$ and let $f\colon \mathcal T \to \mathcal T'$ be an equivariant PL-map. 
The folds in the factorization of~$f$ never identify points in the same $\mathcal F$-orbit.
For~$[l] \in \mathbb R(\mathcal T)$, each end of~$l$ has unbounded $f$-image, i.e.~$\operatorname{dom}(f_*) = \mathbb R(\mathcal T)$;
so $f_* \colon \mathbb R(\mathcal T) \to \mathbb R(\mathcal T')$ is a canonical homeomorphism (with inverse~$f^*$).
Similarly, if $g \colon \mathcal T \to \mathcal T$ is a $\psi$-equivariant PL-map for some automorphism $\psi \colon \mathcal F \to \mathcal F$, then $g_*, g^* \colon \mathbb R(\mathcal T)  \to \mathbb R(\mathcal T)$ are canonical homeomorphisms for~$[\psi]$.

\begin{rmk}
We use ambiguous terminology and say ``line'' when we mean a line or an $\mathcal F$-orbit of a line;
our notation remains distinct: ``$l$'' is always a line, while ``$[l]$'' is its $\mathcal F$-orbit.
\end{rmk}

A \underline{lamination} in $(\mathcal Y, \delta)$ (resp. $\mathcal T$) is a nonempty closed subset of $\mathbb R(\mathcal Y, \delta)$ (resp. $\mathbb R(\mathcal T)$); 
when the $\mathcal F$-forest in question is clear, we say \emph{lamination} with no qualifier.
An element of a lamination is called a \underline{leaf};
a \underline{leaf segment} of a lamination~$\Lambda$ is a nondegenerate closed interval in a line representing a leaf of~$\Lambda$.
A lamination is \underline{minimal} if each leaf is dense in the lamination;
a lamination is \underline{perfect} if it has no isolated leaves.

Let $[l]$ be a line and $\Lambda$ a lamination in $\mathbb R(\mathcal Y, \delta)$ (or~$\mathbb R(\mathcal T)$).
A sequence~$[l_m]_{m \ge 0}$ in the space of lines \underline{weakly limits} to~$[l]$ if some subsequence converges to~$[l]$; we say~$[l]$ is a \emph{weak limit} of the sequence.
The sequence~$[l_m]_{m \ge 0}$ \underline{weakly limits} to~$\Lambda$ if it weakly limits to every leaf of~$\Lambda$.
The ``weak'' terminology is used to highlight that the space of lines is not Hausdorff --- a convergent sequence may have multiple limits!

More generally, a sequence $[p_m, q_m]_{m \ge 0}$ of intervals \emph{converges} to~$[l]$ if, for any closed interval $[a,b] \subset l$, $[p_m, q_m]$ contains a translate of~$[a,b]$ for $m \gg 1$ (i.e.~for large enough~$m$) --- precisely, there is an $M[a,b] \ge 1$ such that $U[a, b]$ contains $U[p_m,q_m]$ for $m \ge M[a,b]$.
Again, a sequence of intervals \underline{weakly limits} to~$[l]$ if some subsequence converges to~$[l]$ and it \underline{weakly limits} to~$\Lambda$ if it weakly limits to every leaf of~$\Lambda$.

\newpage
\section{Limit forests}\label{SecLimits}

In this chapter, we sketch the proof of Proposition~\ref{prop-limittree} (existence of limit forests) and, in the process, introduce stable laminations.
The first half deals with limit forests for expanding irreducible train tracks; then, in the second half, we extend the results to all limit forests.

\subsection{Constructing limit forests (1)}\label{SubsecLimittrees}

This is a summary of~\cite[Appendix]{GLL98};
the reader is invited to read that appendix.

Fix an automorphism $\psi \colon \mathcal F \to \mathcal F$ with an expanding irreducible train track $\tau \colon \mathcal T \to \mathcal T$.
Set~$\lambda \defeq \lambda[\tau] > 1$ and let~$d_\tau$ be the eigenmetric on~$\mathcal T$ for~$[\tau]$.
For $m \ge 0$, let~$d_m$ be the pullback of~$\lambda^{-m}d_\tau$ via~$\tau^m$: 
\[ d_m(p,q) \defeq \lambda^{-m} d_\tau(\tau^m(p), \tau^m(q)) \le d_\tau(p,q) \quad \text{for } p,q \text{ in a component of } T. \]
By definition, the pullback~$d_m$ is an $\mathcal F$-invariant (not necessarily convex) pseudometric on~$\mathcal T$ whose quotient metric space is equivariantly isometric to $(\mathcal T\psi^m, \lambda^{-m}d_\tau)$.
The $\lambda$-Lipschitz property for~$\tau$ with respect to~$d_\tau$ implies the sequence of pseudometrics~$d_m$ is (pointwise) monotone decreasing.
The limit~$d_\infty$ is an $\mathcal F$-invariant pseudometric on~$\mathcal T$, the quotient metric space $(\mathcal T_\infty, d_\infty)$ is an $\mathcal F$-forest, and the $\psi$-equivariant $\lambda$-Lipschitz train track~$\tau$ induces a $\psi$-equivariant $\lambda$-homothety~$h \colon (\mathcal T_\infty, d_\infty) \to (\mathcal T_\infty, d_\infty)$;
in particular, the equivariant metric surjection $\pi \colon (\mathcal T, d_\tau) \to (\mathcal T_\infty, d_\infty)$  {semiconjugates}~$\tau$ to~$h$:~$\pi \circ \tau = h \circ \pi$.

As~$\tau$ is a train track, the restriction of~$\pi$ to any edge of~$\mathcal T$ is an isometric embedding.
So~$\mathcal T_\infty$ is not degenerate.
In fact, the $\pi$-image of any edge of~$\mathcal T$ can be extended to an axis for a $\mathcal T_\infty$-loxodromic element in~$\mathcal F$. 
Thus the $\mathcal F$-forest $(\mathcal T_\infty, d_\infty)$ is minimal, and the uniqueness of~$h$ follows from~\cite[Theorem~3.7]{CM87}.
Finally, the minimal $\mathcal F$-forest $(\mathcal T_\infty, d_\infty)$ has trivial arc stabilizers.
This sketches the first case of Proposition~\ref{prop-limittree}.
The $\mathcal F$-forest $(\mathcal Y_\tau, d_\infty) \defeq (\mathcal T_\infty, d_\infty)$ is the \underline{(forward) limit forest} for~$[\tau]$.

\subsection{Stable laminations (1)}\label{SubsecLaminations}
The first part of this section is mostly adapted from Section~1 of~\cite{BFH97}.
The following definition of \emph{stable} laminations is from~\cite[Definition~1.3]{BFH97}.

Fix an automorphism $\psi \colon \mathcal F \to \mathcal F$ with an expanding irreducible train track $\tau \colon \mathcal T \to \mathcal T$.
Set $\lambda \defeq \lambda[\tau] > 1$, let~$d_\tau$ be the eigenmetric on~$\mathcal T$ for~$[\tau]$, and pick an edge $e \subset \mathcal T$.
Expanding irreducibility implies the interval $\tau^k(e)$ contains at least three translates of~$e$ for some $k \ge 1$.
By the intermediate value theorem,~$\tau^k(p) = x \cdot p$ for some $x \in \mathcal F$ and $p \in e$.
Recall that edges are open intervals;
since the restriction of $x^{-1} \cdot \tau^k$ to the edge~$e$ is an expanding $\lambda^k$-homothetic embedding $e \to \mathcal T$ (with respect to~$d_\tau$) that fixes~$p$ and has~$e$ in its image, we can extend~$e$ to a line~$l_p \subset \mathcal T$ by iterating $x^{-1} \cdot \tau^k$.
By construction, the restriction of $x^{-1} \cdot \tau^k$ to~$l_p$ is a $\lambda^k$-homothety $l_p \to l_p$ with respect to the eigenmetric~$d_\tau$ for~$[\tau]$;
the $\mathcal F$-orbit~$[l_p]$ is an \underline{eigenline} of~$[\tau^k]$ based at~$[p]$ (in $\mathcal F \backslash \mathcal T$).
A \underline{stable $\mathcal T$-lamination}~$\Lambda^+$ for~$[\tau]$ is the closure of an eigenline of~$[\tau^k]$ for some $k \ge 1$. 
By $\phi$-equivariance of~$\tau$, the restriction of~$\tau$ to~$l$ representing a leaf of a stable lamination~$\Lambda^+$ is a $\lambda$-homothetic embedding.
In fact,~$[\tau]$ maps eigenlines to eigenlines, and the image $\tau_*(\Lambda^+) \defeq \{ [\tau(l)] : [l] \in \Lambda^+\}$ is also a stable lamination for~$[\tau]$.

As the {transition matrix}~$A[\tau]$ is irreducible, it is a block transitive permutation matrix, and the ``first return'' matrix for each block is \emph{primitive}, i.e.~has a positive power.
There is a bijective correspondence between the stable laminations for~$[\tau]$ and the blocks of~$A[\tau]$.
In particular, there are finitely many stable laminations for~$[\tau]$, these laminations are pairwise disjoint, and~$\tau_*$ transitively permutes them~\cite[Lemma~1.2]{BFH97}.
By finiteness, their union~$\mathcal L^+[\tau]$ is a lamination and is called the \emph{system of stable laminations} for~$[\tau]$.

\subsubsection {Quasiperiodic lines}\label{Subsubsec-quasiproj}

A line~$[l]$ in an $\mathcal F$-forest is \emph{periodic} if it is the axis for the conjugacy class of some loxodromic element of~$\mathcal F$.
A line~$[l]$ is \underline{quasiperiodic} in an $\mathcal F$-forest if any closed interval~$I$ in~$l$ has an assigned number $L(I) \ge 0$ such that any interval in~$l$ of length~$L(I)$ contains a translate of~$I$;
periodic lines are quasiperiodic.
If~$[l]$ is a quasiperiodic line, then any leaf of its closure~$\Lambda$ is quasiperiodic and hence dense in~$\Lambda$ (exercise), i.e.~$\Lambda$ is minimal.
If~$[l]$ is quasiperiodic but not periodic, then no leaf of its closure~$\Lambda$ is isolated (exercise), i.e.~$\Lambda$ is also perfect.

\begin{rmk}
When the $F$-action on a free splitting~$T$ is free, then our definition of quasiperiodicity is equivalent to~\cite[Definition~1.7]{BFH97}; however, our definition is weaker when the action is not free.
\end{rmk}

\begin{lem}[cf.~{\cite[Proposition~1.8]{BFH97}}]\label{lem-stableminperf} 
The eigenlines of~$[\tau^k]$ are quasiperiodic but not periodic for $k \ge 1$.
Thus the stable laminations for~$[\tau]$ are minimal and perfect.
\end{lem}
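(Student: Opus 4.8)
The plan is to prove the two asserted properties of an eigenline $[l_p]$ of $[\tau^k]$ --- that it is quasiperiodic and that it is not periodic --- and then to invoke the two facts recorded just above the lemma (a quasiperiodic line has minimal closure; a quasiperiodic but non-periodic line has perfect closure). Since each stable lamination $\Lambda^+$ for $[\tau]$ is by definition the closure of such an eigenline, this gives the final sentence. Throughout, write $\lambda := \lambda[\tau] > 1$, let $d_\tau$ be the eigenmetric, let $\bar e$ be the closed edge containing the chosen edge $e$, set $g := x^{-1}\tau^k$ (so $g$ restricts to a $\lambda^k$-homothety $l_p \to l_p$ fixing $p \in \operatorname{int}(e)$), and recall that $l_p = \bigcup_{n \ge 0} g^n(\bar e)$ is an increasing union of closed intervals. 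The argument follows Bestvina--Feighn--Handel.

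For quasiperiodicity I would exploit the self-similar tiling of $l_p$ induced by $g$. Because $g$ carries each closed edge of $l_p$ to a concatenation of closed edges and is equivariant for an automorphism of $\mathcal F$ (namely $y \mapsto x^{-1}\psi^k(y)x$), for every $m \ge 0$ the translates of the intervals $g^m(\bar e')$, for $\bar e'$ a closed edge of $l_p$, form a \emph{level-$m$ tiling} of $l_p$; each level-$(m+1)$ tile is a union of level-$m$ tiles, and each level-$m$ tile has $d_\tau$-length at most $\lambda^{km}\ell$, where $\ell$ is the maximal edge length. Irreducibility of $A[\tau]$ --- via primitivity of the first-return matrix of the block containing $[e]$ --- supplies an $N \ge 1$ such that $\tau^{kN}(\bar e')$ contains a translate of every closed edge in that block; applying $g^m$ then shows that every level-$(m+N)$ tile contains a translate of every level-$m$ tile. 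Now let $I$ be a leaf segment of $\Lambda^+$. Since $\Lambda^+$ is the closure of $\{[l_p]\}$, every segment of every leaf is a translate of a segment of $l_p$, so we may assume $I \subset l_p$, and then $I \subset g^{n_0}(\bar e)$ for some $n_0$ because the $g^n(\bar e)$ exhaust $l_p$. Put $L(I) := 2\lambda^{k(n_0+N)}\ell$: any subinterval of $l_p$ of $d_\tau$-length at least $L(I)$ contains some level-$(n_0+N)$ tile in full, and that tile contains a translate of $g^{n_0}(\bar e) \supseteq I$. Hence $[l_p]$ is quasiperiodic.

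For non-periodicity, suppose $l_p$ is periodic, say the axis of a loxodromic $w_0 \in \mathcal F$ that is not a proper power, and set $t := \|w_0\|_{d_\tau} > 0$. From $\tau^k(l_p) = x\cdot l_p$ and $\psi^k$-equivariance of $\tau^k$, the element $x^{-1}\psi^k(w_0)x$ stabilizes $l_p$ setwise; since $l_p$ contains an edge and edge stabilizers are trivial, the setwise stabilizer of $l_p$ is $\langle w_0\rangle$, so $x^{-1}\psi^k(w_0)x = w_0^{s}$ for some $s \in \mathbb Z$. Comparing translation distances along $l_p$ and $x\cdot l_p$ and using that $\tau^k|_{l_p}$ multiplies $d_\tau$ by $\lambda^k$ gives $\|\psi^k(w_0)\|_{d_\tau} = \lambda^k t$, while $\psi^k(w_0) \sim w_0^{s}$ gives $\|\psi^k(w_0)\|_{d_\tau} = |s|\,t$; hence $|s| = \lambda^k \ge 2$. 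This is impossible: $\psi^k \in \operatorname{Aut}(\mathcal F)$ sends the non-power $w_0$ to an element $\psi^k(w_0)$ that is again not a proper power, and an element that is not a proper power yet is conjugate to $u^{s}$ for a non-power $u$ must have $|s| = 1$ (it lies in a maximal cyclic subgroup conjugate to $\langle u\rangle$). So $|s| \le 1$, a contradiction, and $l_p$ is not periodic.

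The main obstacle will be the quasiperiodicity half: organizing the self-similar tiling of the eigenline cleanly and extracting a single recurrence constant $N$ uniformly from primitivity of the transition matrix. Once that bookkeeping is in place, quasiperiodicity is just the counting estimate above, and the non-periodicity half is short after one isolates the elementary fact that a free-group automorphism cannot carry an element to a proper power of a conjugate of itself.
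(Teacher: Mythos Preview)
Your proof is correct and follows essentially the same approach as the paper. The quasiperiodicity argument is the same idea dressed in slightly different language: the paper chooses $L_0$ so that any interval of that length contains an edge, then uses primitivity of the block of $A[\tau^k]$ to find $m'$ with every $\tau^{km'}(\bar e')$ containing a translate of $e$, concluding that any interval of length $\lambda^{k(m+m')}L_0$ contains a translate of $I\subset \tau^{km}(e)$ --- your level-$m$ tiling and constant $N$ package the same counting. For non-periodicity the paper is terser: from $[\psi^k]$-invariance of $\langle w_0\rangle$ and the fact that $\psi$ is an automorphism it concludes $w_0$ is $[\psi]$-periodic, which contradicts the exponential growth forced by the axis being an eigenline; your explicit computation $|s|=\lambda^k$ together with ``automorphisms preserve non-powers'' is the same contradiction unpacked (note that if $\lambda^k\notin\mathbb Z$ the equation $|s|=\lambda^k$ is already impossible, and otherwise $|s|\ge 2$ triggers your non-power argument).
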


\begin{proof}
There is a length~$L_0$ such that any interval in~$\mathcal T$ of length~$L_0$ contains an edge.
Fix an $\mathcal F$-orbit~$[I]$ of intervals in an eigenline~$[l]$ of~$[\tau^k]$.
By construction,~$I$ is contained in~$\tau^{km}(e)$ for some edge~$e$ in~$\mathcal T$ and integer $m \ge 0$.
As the blocks in~$A[\tau^k]$ are primitive, there is an integer $m' \ge 1$ such that $\tau^{km'}(e')$ contains a translate of~$e$ for any edge~$e'$ in~$l$.
Altogether, any interval in~$l$ of length $\lambda[\tau]^{k(m+m')} L_0$ contains a translate of~$I$.
This proves quasiperiodicity.

Now assume, for a contradiction, that the eigenline~$[l]$ were periodic, i.e.~$l$ is an axis for a $\mathcal T$-loxodromic element~$x \in \mathcal F$.
By construction, the $\mathcal F$-orbit~$[l]$ is $\tau^k$-invariant and hence the cyclic subgroup~$\langle x \rangle$ is $[\psi^k]$-invariant.
So~$x$ is $[\psi]$-periodic as~$\psi$ is an automorphism;
yet~$x$ must $[\psi]$-grow exponentially since its axis is an eigenline and~$\tau$ is expanding.
\end{proof}

Fix an equivariant PL-map $f \colon (\mathcal T, d) \to (\mathcal Y, \delta)$ and canonically embed $\mathbb R(\mathcal Y, \delta)$ into $\mathbb R(\mathcal T)$ via~$f^*$ (Claim~\ref{claim-liftembed}).
If a quasiperiodic line $[l] \in \mathbb R(\mathcal T)$ is in the subspace $\mathbb R(\mathcal Y, \delta) = \operatorname{im}(f^*)$, then so its closure~$\Lambda$ in $\mathbb R(\mathcal T)$ (exercise).
Returning to limit forests, the equivariant metric PL-map $\pi \colon (\mathcal T, d_\tau) \to (\mathcal Y_\tau, d_\infty)$ restricts to an isometric embedding on the leaves of~$\mathcal L^+[\tau]$;
therefore, the stable lamination~$\mathcal L^+[\tau]$ is in $\mathbb R(\mathcal Y_\tau, d_\infty) \subset \mathbb R(\mathcal T)$.

\subsubsection {Characterizing loxodromics}

Let $(\mathcal Y_\tau, d_\infty)$ be the limit forest for~$[\tau]$, $h: (\mathcal Y_\tau, d_\infty) \to (\mathcal Y_\tau, d_\infty)$ the unique $\psi$-equivariant $\lambda$-homothety, and $\pi\colon (\mathcal T, d_\tau) \to (\mathcal Y_\tau, d_\infty)$ the constructed equivariant metric PL-map.
By Lemma~\ref{lem-bcl}, the map $\tau \colon (\mathcal T, d_\tau) \to (\mathcal T, d_\tau)$ has a cancellation constant $C \defeq C[\tau]$.
Set $C' \defeq \frac{C}{\lambda-1}$ and denote the interval functions for~$\mathcal T$  by $[\cdot, \cdot]$.
The sequence of equivariant metric maps $\tau^m\colon (\mathcal T, d_\tau) \to (\mathcal T\psi^m, \lambda^{-m}d_\tau)$ have cancellation constants $\sum_{i=1}^m \lambda^{-i}C \le C'$;
so their {limit}~$\pi$ has a cancellation constant $C[\pi] \defeq C'$.

Let~$P \subset \mathcal Y_\tau$ be $\mathcal F$-orbit representatives of points with nontrivial stabilizers.
Define the subgroup system $\mathcal G[\mathcal Y_\tau] \defeq \bigsqcup_{p \in P} G_p$, where~$G_p \defeq \operatorname{Stab}_{\mathcal F}(p)$ is the stabilizer in~$\mathcal F$ of~$p \in P$.
As the action on~$\mathcal Y_\tau$ has trivial arc stabilizers, the system~$\mathcal G[\mathcal Y_\tau]$ is \underline{malnormal}: each component is malnormal (as a subgroup of the appropriate component of~$\mathcal F$) and conjugates of distinct components (in the same component of~$\mathcal F$) have trivial intersections.
The $\psi$-equivariance of homothety~$h$ implies~$\mathcal G[\mathcal Y_\tau]$ is $[\psi]$-invariant.
By Gaboriau--Levitt's index theory, the complexity of~$\mathcal G[\mathcal Y_\tau]$ is strictly less than that of~$\mathcal F$~\cite[Theorem~III.2]{GL95}.
In particular,~$\mathcal G[\mathcal Y_\tau]$ has finite type:~$P$ is finite, and each component~$G_p$ is finitely generated.
The restriction of~$\psi$ to~$\mathcal G[\mathcal Y_\tau]$ determines a unique outer automorphism of the system.

We now characterize the elliptic/loxodromic elements in~$\mathcal F$ in the limit forest~$(\mathcal Y_\tau, d_\infty)$:

\begin{prop}[cf.~{\cite[Proposition~1.6]{BFH97}}]\label{prop-limitloxodromics}
Let~$\psi\colon \mathcal F \to \mathcal F$ be an automorphism, $\tau \colon \mathcal T \to \mathcal T$ an expanding irreducible train track for~$\psi$, and $(\mathcal Y_\tau, d_\infty)$ the limit forest for~$[\tau]$.

If~$x \in \mathcal F$ is a $\mathcal T$-loxodromic element, then the following statements are equivalent:
\begin{enumerate}
\item the element~$x$ is $\mathcal Y_\tau$-loxodromic;
\item the element~$x$ $[\psi]$-grows exponentially rel.~$\mathcal T$: $\underset{m \to \infty} \lim \, \frac{1}{m} \log \| \psi^m(x)\|_{\mathcal T} = \log \lambda[\tau]$; and
\item the $\mathcal T$-axis for~$\psi^m(x)$ has an arbitrarily long leaf segment of~$\mathcal L^+[\tau]$ for $m \gg 1$.
\end{enumerate}
The restriction of~$\psi$ to the $[\psi]$-invariant subgroup system~$\mathcal G[\mathcal Y_\tau]$ of $\mathcal Y_\tau$-point stabilizers has constant growth rel.~$\mathcal T$:
$\{ \| \psi^m(x)\|_{\mathcal T} : m \ge 0 \}$ is bounded for all $x \in \mathcal G[\mathcal Y_\tau]$.
\end{prop}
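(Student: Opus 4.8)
The plan is to establish the equivalence $(1)\Leftrightarrow(2)\Leftrightarrow(3)$ for a $\mathcal T$-loxodromic element $x$ and then deduce the final assertion about $\mathcal G[\mathcal Y_\tau]$ almost immediately. The bridge between all three conditions is the semiconjugating metric PL-map $\pi\colon(\mathcal T,d_\tau)\to(\mathcal Y_\tau,d_\infty)$ with $\pi\circ\tau=h\circ\pi$ and its cancellation constant $C[\pi]=C'$. First I would observe that, because $\tau$ restricts to a $\lambda$-homothetic embedding on each edge and $\pi$ restricts to an isometric embedding on each edge, for any point $p$ in an edge the interval $[p,\tau^m(p)]$ carries a $d_\tau$-length that, after division by $m$, converges to $\log\lambda[\tau]$ in exponential scale; translation length satisfies $\|\psi^m(x)\|_{d_\tau}=\|\tau^m\circ x\circ\tau^{-m}\|$-type estimates via bounded cancellation. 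Concretely, tightening the image $\tau^m(\mathrm{axis}_{\mathcal T}(x))$ using Corollary~\ref{cor-tightenings} shows $\|\psi^m(x)\|_{\mathcal T}=\lambda^m\|x\|_{\mathcal T}+O(1)$ when the axis of $x$ has long leaf segments of $\mathcal L^+[\tau]$, which is how I would get $(3)\Rightarrow(2)$; conversely exponential growth of translation length forces, via the pigeonhole/quasiperiodicity argument of Lemma~\ref{lem-stableminperf}, that long subsegments of $\psi^m(x)$'s axis agree with leaves of a stable lamination, giving $(2)\Rightarrow(3)$.

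For $(1)\Leftrightarrow(2)$ I would use the monotone limit $d_\infty=\lim_m \lambda^{-m}\tau^{m*}d_\tau$ directly: $\|x\|_{d_\infty}=\lim_m \lambda^{-m}\|\psi^m(x)\|_{d_\tau}$. Hence $x$ is $\mathcal Y_\tau$-loxodromic (i.e.\ $\|x\|_{d_\infty}>0$) exactly when $\|\psi^m(x)\|_{d_\tau}$ grows like $\lambda^m$, which by the dichotomy ``exponential or polynomial'' from \cite[Corollary~III.4]{Mut21b} and the fact that $\lambda[\tau]$ is the only available exponential rate (transition matrix irreducible) is precisely condition $(2)$. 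The subtlety here is ruling out $\|x\|_{d_\infty}>0$ while the growth rate is strictly smaller than $\log\lambda[\tau]$ — this cannot happen because $\lambda^{-m}\|\psi^m(x)\|_{d_\tau}$ is a decreasing sequence with positive limit, so $\|\psi^m(x)\|_{d_\tau}\ge c\lambda^m$ for all $m$; combined with bounded cancellation (which gives the matching upper bound $\|\psi^m(x)\|_{d_\tau}\le C''\lambda^m$ by controlling cancellation in iterated tightenings) we get the exact rate $\log\lambda[\tau]$.

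For the final statement: if $x\in\mathcal G[\mathcal Y_\tau]$, i.e.\ $x$ stabilizes a point of $\mathcal Y_\tau$, then either $x$ is $\mathcal T$-elliptic — in which case $\{\psi^m(x)\}$ is carried by the $[\psi]$-invariant free factor system $\mathcal F[\mathcal T]$ and has bounded $d_\tau$-translation length because an elliptic element of a free splitting has zero translation length and $\psi$ permutes the relevant orbits — or $x$ is $\mathcal T$-loxodromic, in which case $x$ is $\mathcal Y_\tau$-elliptic, so by $(1)\Leftrightarrow(2)$ the element $x$ does \emph{not} $[\psi]$-grow exponentially rel.\ $\mathcal T$; by the exponential/polynomial dichotomy it grows polynomially, but a polynomially growing $\mathcal T$-loxodromic element whose iterates stay in a fixed free splitting actually has \emph{bounded} $d_\tau$-translation length. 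This last point is the one I expect to require the most care: a priori ``polynomial growth'' allows unbounded linear or higher-degree growth of translation length, so I would need to invoke that $x$ lies in the point stabilizer $G_p$ and that the induced action of $\mathcal G[\mathcal Y_\tau]$ is itself realized on $\mathcal T$ with $\mathcal G[\mathcal Y_\tau]$-loxodromic axes whose $\tau$-images tighten boundedly — equivalently, that no eigenline of $[\tau^k]$ meets the characteristic subtree $\mathcal T(G_p)$, so iterating $\tau$ on $\mathcal T(G_p)$ contracts rather than stretches, forcing $\|\psi^m(x)\|_{d_\tau}\le\|x\|_{d_\tau}+C'$ for all $m$. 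Establishing that ``constant'' (not merely ``polynomial'') bound from the structure of $\mathcal G[\mathcal Y_\tau]$ and bounded cancellation is, I expect, the main obstacle in the argument.
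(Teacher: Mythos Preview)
Your argument for $(1)\Leftrightarrow(2)$ via the monotone limit $\|x\|_{d_\infty}=\lim_m\lambda^{-m}\|\psi^m(x)\|_{d_\tau}$ is serviceable, though one must check that translation length does pass to the limit (the paper avoids this by arguing more directly). The implications $(2)\Leftrightarrow(3)$ are sketched too vaguely to evaluate; your claim that long leaf segments give $\|\psi^m(x)\|_{\mathcal T}=\lambda^m\|x\|_{\mathcal T}+O(1)$ is not obviously true and is not how the paper proceeds.

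The genuine gap is in the final ``constant growth'' assertion, and your proposed resolution is incorrect. You suggest that eigenlines avoid the characteristic subtree $\mathcal T(G_p)$ and that $\tau$ ``contracts rather than stretches'' there; neither is true. The map $\tau$ is a $\lambda$-homothetic embedding on \emph{every} edge of $\mathcal T$, including those in $\mathcal T(G_p)$, and leaves of $\mathcal L^+[\tau]$ may very well pass through $\mathcal T(G_p)$. The actual mechanism is combinatorial: if $x$ has axis $l$ and $N$ is the number of vertices in a fundamental domain of $x$ on $l$, then by the train track property the fundamental domain of $\psi^m(x)$ on $\tau_*^m(l)$ is covered by at most $N+1$ leaf segments for \emph{every} $m$, since $\tau$ does not create new vertices along edge-images. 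What must then be shown is that, when $x$ is $\mathcal Y_\tau$-elliptic, each such leaf segment has $d_\tau$-length uniformly bounded (by $2C'=\tfrac{2C[\tau]}{\lambda-1}$). The paper gets this from a dichotomy on whether $\pi(\tau_*^m(l))$ ever has $d_\infty$-diameter exceeding $2C'$: if not, a short contrapositive argument (if some leaf segment had length $>2C'$, bounded cancellation and the homothety would force $\pi$-images of later iterates to grow without bound) gives the uniform leaf-segment bound, whence $\|\psi^m(x)\|_{d_\tau}\le 2C'(N+1)$ for all $m$. This fixed-$N$ covering combined with the leaf-segment bound is the missing idea; without it you cannot upgrade ``polynomial'' to ``bounded''.
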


\begin{proof}
Let $\lambda \defeq \lambda[\tau] > 1$, $C \defeq C[\tau]$ a cancellation constant for $\tau\colon(\mathcal T,d_\tau) \to (\mathcal T,d_\tau)$,
and $C' \defeq \frac{C}{\lambda - 1}$ a cancellation constant for $\pi\colon (\mathcal T, d_\tau) \to (\mathcal Y_\tau, d_\infty)$.
Fix a line~$l$ in~$\mathcal T$, and let $\pi\colon (\mathcal T, d_\tau) \to (\mathcal Y_\tau, d_\infty)$ be the constructed equivariant metric PL-map.

\smallskip
\noindent \underline{Case 1:} $d_\infty(\pi(p), \pi(q)) > 2C' + L$ for some $k \ge 0$, $p,q \in \tau_*^k(l)$, and $L > 0$.
By definition of~$d_\infty$ (construction of~$\pi$), $d_\tau(\tau^m(p),\tau^m(q)) > \lambda^m(2C' + L)$ for $m \gg 1$.
Pick $m \gg 1$ and $r_m, s_m \in [\tau^m(p), \tau^m(q)]$ so that $d_\tau(\tau^m(p), r_m), d_\tau(s_m, \tau^m(q)) > \lambda^m C'$ and $d_\tau(r_m, s_m) > \lambda^m L$.
By bounded cancellation (for~$\tau^m$), the interval $I_m \defeq [r_m, s_m]$ is disjoint from $\tau^m(\tau_*^k(l) \setminus[p,q])$ in $(T, d_\tau)$.
So~$I_m$ is an interval in~$\tau_*^{m+k}(l)$.

Let $N \defeq N(p,q)$ be the number of vertices in the interval~$(p,q)$. 
Then~$I_m$ is covered by~$N+1$ leaf segments (of~$\mathcal L^+[\tau]$) as~$\tau$ is a train track.
By the pigeonhole principle,~$I_m$ (and hence~$\tau_*^{m+k}(l)$) contains a leaf segment with $d_\tau$-length $> \frac{\lambda^m L}{N+1}$;
therefore, the line~$\tau_*^n(l)$ in~$\mathcal T$ contains arbitrarily long leaf segments for $m \gg 1$.

\smallskip
\noindent \underline{Case 2:} $\pi(\tau_*^m(l))$ has diameter $\le 2C'$ for all $m \ge 0$.
We claim that any leaf segment in the line~$\tau_*^m(l)~(m\ge 0)$ has $d_\tau$-length $ \le 2C'$.
For the contrapositive, suppose some~$\tau_*^m(l)$ has a leaf segment with $d_\tau$-length $L > 2C'$.
By the train track property and bounded cancellation,~$\tau_*^{m+1}(l)$ has a leaf segment with $d_\tau$-length $\ge \lambda L - 2C > L$.
By induction, $\pi(\tau_*^{m+m'}(l))$ has diameter $\ge \lambda^{m'} (L - 2C')$ for $m' \ge 0$ and $\lambda^{m'} (L - 2C') > 2C'$ for $m' \gg 1$.

\smallskip
We finally return to the proof of the proposition.
Fix a $\mathcal T$-loxodromic element $x \in \mathcal F$
and let~$l \subset \mathcal T$ be its axis;
in particular,~$\pi(l)$ is $x$-invariant by equivariance of~$\pi$.
As~$\tau$ is $\lambda$-Lipschitz with respect to~$d_\tau$, $\underset{m \to \infty} \limsup \, \frac{1}{m} \log \| \psi^m(x)\|_{d_\tau} \le \log \lambda$.

\smallskip
\noindent \emph{Case--i:}
$d_\infty(\pi(p), \pi(q)) > 2C'$ for some $k \ge 0$ and $p,q \in \tau_*^k(l)$.
The line~$\tau_*^m(l)$, the axis for~$\phi^m(x)$ in~$\mathcal T$, contains an arbitrarily long leaf segment for $m \gg 1$ by the Case~1 analysis.
By bounded cancellation (for~$\pi$), some nondegenerate interval~$I$ in $[\pi(p), \pi(q)]_{\infty}$ is disjoint from $\pi(\tau_*^k(l) \setminus [p,q])$.
Since~$\tau_*^k(l)$ is the axis for~$\psi^k(x)$, it contains disjoint translates $[p,q]$, $\psi^k(x^{-n}) \cdot [p,q]$, $\psi^k(x^{n}) \cdot [p,q]$ for some~$n \gg 1$.
Then~$\psi^k(x^{-n}) \cdot I $ and $\psi^k(x^{n}) \cdot I$ are in distinct components of $\mathcal Y_\tau \setminus I$ and~$\psi^k(x)$ is $\mathcal Y_\tau$-loxodromic.
Since $\|\cdot\|_{d_\infty} \le \| \cdot \|_{d_\tau}$ and $\|\psi(\cdot)\|_{d_\infty} = \lambda \|\cdot\|_{d_\infty}$, we get $ \log \lambda \le \underset{m \to \infty} \liminf \, \frac{1}{m} \log \| \psi^m(x)\|_{d_\tau}$ and~$x$ is $\mathcal Y_\tau$-loxodromic.
Finally, $\log \lambda = \underset{m \to \infty} \lim \, \frac{1}{m} \log \| \psi^m(x)\|_{\mathcal T}$ since~$d_\tau$ and the combinatorial metric on~$\mathcal T$ are bilipschitz.

\smallskip
\noindent \emph{Case--ii:}
$\pi(\tau_*^m(l))$ has diameter $\le 2C'$ for all $m \ge 0$.
Any leaf segment in~$\tau_*^m(l)~(m \ge 0)$ have $d_\tau$-length $\le 2C'$ by Case~2 analysis.
Let~$N$ be the number of vertices in a fundamental domain of $x$ acting on~$l$.
By the train track property, the fundamental domain of~$\tau_*^m(l)$ is covered by~$N+1$ leaf segments and $\|\psi^m(x)\|_{\mathcal T} \le K \|\psi^m(x)\|_{d_\tau} \le 2C'K(N+1)$ for some $K \ge 1$ and all $m \ge 0$.
But~$x$ acts on~$\mathcal Y_\tau$ by an isometry, and~$\pi(l) \subset \mathcal Y_\tau$ is $x$-invariant;
so~$x$ must be $\mathcal Y_\tau$-elliptic.
\end{proof}

We now introduce the notion of \emph{factored} forests.
Suppose the stable laminations~$\mathcal L^+[\tau]$ have components $\Lambda_i^+~(1 \le i \le k)$.
The $\mathcal F$-orbits of edges in~$\mathcal T$ can be partitioned into \emph{blocks}~$\mathcal B_i$ indexed by the components $\Lambda_i^+ \subset \mathcal L^+[\tau]$.
For $p,q \in \mathcal T$, let~$d_\tau^{(i)}$ be the $d_\tau$-length of the intersection of the interval $[p,q]$ and the subforest spanned by~$\mathcal B_i$;
this defines an $\mathcal F$-invariant convex pseudometric~$d_\tau^{(i)}$ on~$\mathcal T$.
The metric~$d_\tau$ is a sum of the pseudometrics~$d_\tau^{(i)}$, denoted $\Sigma_{i=1}^k d_\tau^{(i)}$;
we call $\Sigma_{i=1}^k d_\tau^{(i)}$ a \underline{factored} $\mathcal F$-invariant convex metric and $(\mathcal T, \Sigma_{i=1}^k d_\tau^{(i)})$ a \underline{factored} $\mathcal F$-forest.
This factored metric is special: the \emph{factors} $d_\tau^{(i)}~(1 \le i \le k)$ are pairwise \underline{mutually singular}: for $i \neq j$, there are intervals (e.g.~the leaf segments) with positive $d_\tau^{(i)}$-length and 0 $d_\tau^{(j)}$-length --- such factored pseudometrics will be denoted by $\oplus_{i=1}^k d_\tau^{(i)}$ to invoke the idea of independence in direct sums.
The limit pseudometrics~$d_\infty^{(i)}$ are pairwise mutually singular since~$\pi$ is surjective and isometric on leaf segments;
thus $d_\infty = \oplus_{i=1}^k d_\infty^{(i)}$.
The next lemma is the cornerstone of our universality result:

\begin{restate}{Lemma}{lem-convergence}[{cf.~\cite[Lemma~3.4]{BFH97}}] 
Let~$\psi\colon \mathcal F \to \mathcal F$ be an automorphism,~$\tau \colon \mathcal T \to \mathcal T$ an expanding irreducible train track for~$\psi$ with eigenmetric~$d_\tau$, $(\mathcal Y_\tau, d_\infty)$ the limit forest for~$[\tau]$, and $\lambda \defeq \lambda[\tau]$.

If $(\mathcal T, d_\tau) \to (\mathcal Y, \delta)$ is an equivariant PL-map and the $k$-component lamination~$\mathcal L^+[\tau]$ is in $\mathbb R(\mathcal Y, \delta) \subset \mathbb R(\mathcal T)$, then the sequence $(\mathcal Y \psi^{mk}, \lambda^{-mk} \delta)_{m \ge 0}$ converges to $(\mathcal Y_\tau, \oplus_{i=1}^k c_i \, d_\infty^{(i)})$, where $d_\infty = \oplus_{i=1}^k \, d_\infty^{(i)}$ and $c_i > 0$.
\end{restate}

\begin{rmk} Factored $\mathcal F$-forests are needed for this lemma when $k \ge 2$; 
moreover, the sequence $(\mathcal Y \psi^m, \lambda^{-m}\delta)_{m \ge 0}$ will not converge in general (but is asymptotically periodic) when $k \ge 2$.
Convergence is in the subspace of \emph{translation distance functions} in $\mathbb R_{\ge 0}^{\mathcal F}$ with the product topology.
\end{rmk}

We give the proof in Section~\ref{SubsecConverge}.
In particular, if $\tau'\colon \mathcal T' \to \mathcal T'$ is another expanding irreducible  train track for~$\psi$ and $\mathcal F[\mathcal T'] = \mathcal F[\mathcal T]$, then the limit forest for~$[\tau']$ is equivariantly homothetic to~$(\mathcal Y_\tau, d_\infty)$ --- set $(\mathcal Y, \delta) \defeq (\mathcal T', d_{\tau'})$, apply the lemma, then observe that the sequence~$(c_i)_{i=1}^k$ must be constant in this case.
A minimal very small $\mathcal F$-forest $(\mathcal Y, \delta)$ is an \underline{expanding forest} for~$[\psi]$ like~$\mathcal Y_\tau$ if it is nondegenerate and there is:

\begin{enumerate}
\item a $\psi$-equivariant expanding homothety $(\mathcal Y, \delta) \to (\mathcal Y, \delta)$; and
\item an equivariant PL-map $(\mathcal T, d_\tau) \to (\mathcal Y, \delta)$.
\end{enumerate}

\begin{cor}\label{cor-unique}
Let~$\psi\colon \mathcal F \to \mathcal F$ be an automorphism,~$\tau \colon \mathcal T \to \mathcal T$ an expanding irreducible train track for~$\psi$, and $(\mathcal Y_\tau, d_\infty)$ the limit forest for~$[\tau]$.
Any expanding forests for~$[\psi]$ like~$\mathcal Y_\tau$ is uniquely equivariantly homothetic to $(\mathcal Y_\tau, d_\infty)$.
\end{cor}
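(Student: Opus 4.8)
The plan is to run the argument sketched just after Lemma~\ref{lem-convergence} for two irreducible train tracks, after supplying two missing inputs: that the lamination $\mathcal L^+[\tau]$ is carried by $(\mathcal Y,\delta)$, and (when $\mathcal L^+[\tau]$ has several components) that the scaling constants appearing in that lemma's conclusion all coincide. Let $f\colon(\mathcal T,d_\tau)\to(\mathcal Y,\delta)$ be the given equivariant PL-map and $g\colon(\mathcal Y,\delta)\to(\mathcal Y,\delta)$ the given $\psi$-equivariant $\mu$-homothety, $\mu>1$. Since $\mathcal T$ is cocompact, $g\circ f$ and $f\circ\tau$ --- both $\psi$-equivariant maps $(\mathcal T,d_\tau)\to(\mathcal Y,\delta)$ --- lie a bounded $\delta$-distance apart, so $\tau_*\circ f^*=f^*\circ g_*$ and the subspace $\mathbb R(\mathcal Y,\delta)=\operatorname{im}(f^*)\subset\mathbb R(\mathcal T)$ is $\tau_*$-invariant. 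Also, since $\mathcal T$-elliptic elements are $\mathcal Y$-elliptic, every $\mathcal Y$-loxodromic $x$ is $\mathcal T$-loxodromic, and $\mu^m\|x\|_\delta=\|\psi^m(x)\|_\delta\le\operatorname{Lip}(f)\,\|\psi^m(x)\|_{d_\tau}$ forces $x$ to $[\psi]$-grow exponentially rel.~$\mathcal T$; by Proposition~\ref{prop-limitloxodromics}, $x$ is then $\mathcal Y_\tau$-loxodromic (and $\mu\le\lambda$).

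The heart of the argument is the claim $\mathcal L^+[\tau]\subset\mathbb R(\mathcal Y,\delta)$. By minimality and perfectness of each component of $\mathcal L^+[\tau]$ (Lemma~\ref{lem-stableminperf}), transitivity of $\tau_*$ on its components, and the $\tau_*$-invariance of $\mathbb R(\mathcal Y,\delta)$, it suffices to exhibit one eigenline of $[\tau^k]$ in $\operatorname{im}(f^*)=\operatorname{dom}(f_*)$. Suppose none exists. Then every eigenline has a ray with bounded $f$-image (Corollary~\ref{cor-tightenings}), so by quasiperiodicity every leaf segment of its lamination has uniformly bounded $f$-image, and $f\tau\approx gf$ propagates this, with a larger bound, to every leaf segment of $\mathcal L^+[\tau]$. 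Then for any $\mathcal Y_\tau$-loxodromic $x$, a fundamental domain of $\psi^m(x)$ on its $\mathcal T$-axis is a concatenation of at most $N(x)$ leaf segments of $\mathcal L^+[\tau]$ with $N(x)$ independent of $m$ (as in the proof of Proposition~\ref{prop-limitloxodromics}), so $\|\psi^m(x)\|_\delta$ stays bounded and $x$ is $\mathcal Y$-elliptic. Thus every $\mathcal Y_\tau$-loxodromic element would be $\mathcal Y$-elliptic; combined with ``$\mathcal Y$-loxodromic $\Rightarrow\mathcal Y_\tau$-loxodromic'' from the previous paragraph, $(\mathcal Y,\delta)$ would have no loxodromic element, impossible for a minimal nondegenerate forest. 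Hence $\mathcal L^+[\tau]\subset\mathbb R(\mathcal Y,\delta)$.

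Now Lemma~\ref{lem-convergence} gives that, with $k$ the number of components of $\mathcal L^+[\tau]$, the sequence $(\mathcal Y\psi^{mk},\lambda^{-mk}\delta)_{m\ge0}$ converges to $(\mathcal Y_\tau,\oplus_{i=1}^k c_i\,d_\infty^{(i)})$, each $c_i>0$. On the other hand $g^m$ is an equivariant isometry $(\mathcal Y,\mu^m\delta)\to(\mathcal Y\psi^m,\delta)$, so that sequence equals $(\mathcal Y,(\mu/\lambda)^{mk}\delta)_{m\ge0}$; since the limit is nondegenerate, $\mu=\lambda$, the sequence is constant, and $(\mathcal Y,\delta)\cong(\mathcal Y_\tau,\oplus_i c_i\,d_\infty^{(i)})$ equivariantly by \cite{CM87}. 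Transporting $g$ across this isometry yields a $\psi$-equivariant expanding homothety of $(\mathcal Y_\tau,\oplus_i c_i\,d_\infty^{(i)})$; precomposing with $\pi\colon\mathcal T\to\mathcal Y_\tau$ shows it lies a bounded $d_\infty$-distance from $h$, so on each leaf of a factor $\Lambda_i^+$ it stretches $d_\infty$ by the same factor as $h$ does. Since $h$ permutes the (mutually singular) factors $d_\infty^{(i)}$ by the $k$-cycle $\sigma$ induced by $\tau_*$, scaling each by $\lambda$, this forces $c_{\sigma(i)}=c_i$ for all $i$, hence all $c_i$ equal a common $c>0$. So $(\mathcal Y,\delta)$ is equivariantly homothetic to $(\mathcal Y_\tau,d_\infty)$. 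For uniqueness, two such equivariant homotheties have equal scale factors (compare translation lengths), hence differ by an equivariant self-isometry of the minimal nondegenerate forest $(\mathcal Y_\tau,d_\infty)$, which is the identity by \cite[Theorem~3.7]{CM87}.

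I expect the main obstacle to be establishing $\mathcal L^+[\tau]\subset\mathbb R(\mathcal Y,\delta)$ --- ruling out that the PL-map $f$ collapses the lamination directions --- which forces one to pit quasiperiodicity of the eigenlines against the exponential stretching imposed by the homothety $g$; everything after that is bookkeeping with Lemma~\ref{lem-convergence}, translation length functions, and \cite{CM87}.
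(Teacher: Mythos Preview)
Your proof is correct and follows the same overall strategy as the paper: establish $\mathcal L^+[\tau]\subset\mathbb R(\mathcal Y,\delta)$, invoke Lemma~\ref{lem-convergence}, then use the $\psi$-equivariant homothety to force $\mu=\lambda$ and all $c_i$ equal, with uniqueness from \cite{CM87}. The one substantive difference is in how you show the lamination is carried by $(\mathcal Y,\delta)$: the paper argues \emph{directly} --- pick any $\mathcal Y$-loxodromic $x$, choose $m\gg 1$ so that $s^m\|x\|_\delta>2C[f]\cdot N$ where $N$ bounds the number of leaf segments in a fundamental domain, and apply pigeonhole to find a leaf segment $[q,r]$ with $\delta(f(q),f(r))>2C[f]$, which by bounded cancellation and quasiperiodicity forces the containing leaf into $\operatorname{dom}(f_*)$ --- whereas you argue \emph{by contradiction}, showing that collapse of all eigenlines would make every $\mathcal Y_\tau$-loxodromic element $\mathcal Y$-elliptic. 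Both work; the paper's route is shorter and avoids the implication ``$\mathcal Y$-loxodromic $\Rightarrow\mathcal Y_\tau$-loxodromic'' you set up in your first paragraph (which is then unused elsewhere). Your ``$f\tau\approx gf$ propagates'' remark is also redundant once you assume no eigenline of any component lies in $\operatorname{dom}(f_*)$: each component already has its own eigenline and hence its own uniform bound. For the $c_i$ step, note that rather than transporting $g$ and comparing to $h$, the paper simply observes that since $(\mathcal Y,\delta)\cong(\mathcal Y\psi,s^{-1}\delta)$, shifting the convergent sequence by one step in $m$ (not $k$) must leave the limit invariant up to scale, which immediately yields $s=\lambda$ and $c_{\sigma(i)}=c_i$.
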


\begin{proof}
Let $(\mathcal Y, \delta)$ be an expanding forest for~$[\psi]$ like~$\mathcal Y_\tau$, $f \colon (\mathcal T, d_\tau) \to (\mathcal Y, \delta)$ an equivariant PL-map with cancellation constant $C \defeq C[f]$, $g \colon (\mathcal Y, \delta) \to (\mathcal Y, \delta)$ the $\psi$-equivariant expanding $s$-homothety, $x \in \mathcal F$ a $\mathcal Y$-loxodromic element.
By equivariance of~$f$, the element~$x$ is $\mathcal T$-loxodromic with axis $l_x \subset \mathcal T$.
Let $[p_0, x \cdot p_0] \subset l_x$ be (the closure of) a fundamental domain of~$x$ acting on~$l_x$.
The interval $[p_0,x \cdot p_0]$ is a concatenation of $N \ge 1$ leaf segments (of $\mathcal L^+[\tau]$).
Choose $m \gg 1$ so that $\| \psi^m(x) \|_{\delta} = s^m \| x \|_\delta > 2C N$.
Note that the action of $\psi^m(x)$ on its axis has a fundamental domain $[p_m, \psi^m(x) \cdot p_m]$ covered by~$N$ leaf segments as~$\tau$ is a train track.
So $\delta(f(p_m), f(\psi^m(x) \cdot p_m)) > 2C N$ and, by the pigeonhole principle, $[p_m, \psi^m(x) \cdot p_m]$ contains a leaf segment $[q, r]$ with $\delta(f(q), f(r)) > 2C$.

Let $l \supset [q,r]$ represent some leaf $[l] \in \mathcal L^+[\tau]$.
Bounded cancellation implies the components of $l \setminus [q,r]$ have $f$-images with disjoint closures.
By quasiperiodicity of~$[l]$ and equivariance of~$f$, both ends of~$l$ have unbounded $f$-images, i.e.~$[l] \in \operatorname{dom}(f_*) = \mathbb R(\mathcal Y, \delta)$ (Corollary~\ref{cor-tightenings}, Claim~\ref{claim-liftembed}).
Finally, the closure of~$[l]$ in~$\mathbb R(\mathcal T)$, a component $\Lambda_i^+ \subset \mathcal L^+[\tau]$, is a subset of $\mathbb R(\mathcal Y, \delta)$ by quasiperiodicity of~$[l]$.
Note that the $\psi$-equivariant homothety~$g$ induces a homeomorphism $g_* \colon \mathbb R(\mathcal Y, \delta) \to \mathbb R(\mathcal Y, \delta)$ that is the restriction of the homeomorphism $\tau_* \colon \mathbb R(\mathcal T) \to \mathbb R(\mathcal T)$.
So $\mathcal L^+[\tau] \subset \mathbb R(\mathcal Y, \delta)$ since~$\tau_*$ acts transitively on the~$k$ components of~$\mathcal L^+[\tau]$.
Set $\lambda \defeq \lambda[\tau]$;
by Lemma~\ref{lem-convergence}, the sequence $(\mathcal Y\psi^{mk}, \lambda^{-mk}\delta)_{m \ge 0}$ converges to the factored $\mathcal F$-forest $(\mathcal Y_\tau, \oplus_{i=1}^k c_i \, d_\infty^{(i)})$ for some $c_i > 0$.
Yet $(\mathcal Y, \delta)$ is equivariantly isometric to $(\mathcal Y\psi, s^{-1} \delta)$;
thus $s = \lambda$, $c_i = c_{i+1}~(i<k)$, $(\mathcal Y, \delta)$ is equivariantly isometric to $(\mathcal Y_\tau, c_1 \, d_\infty)$, and the equivariant isometry is unique~\cite[Theorem~3.7]{CM87}.
\end{proof}

\subsubsection {Iterated turns}\label{Subsubsec-iteratedturns}

We have already shown how iterating an edge in~$\mathcal T$ by the train track~$\tau$ produces the system of stable laminations~$\mathcal L^+[\tau]$.
Later, we will consider how~$\mathcal L^+[\tau]$ determines laminations in (a free splitting of) the subgroup system~$\mathcal G[\mathcal Y_\tau]$.

Let~$\mathcal T'$ be a free splitting of~$\mathcal F$ whose free factor system~$\mathcal F[\mathcal T']$ is trivial.
Then there is an equivariant PL-map $f \colon (\mathcal T', d') \to (\mathcal T, d_\tau)$.
Let~$\gamma$ be a line in $(\mathcal Y_\tau, d_\infty)$, $\pi^*(\gamma)$ its lift to $(\mathcal T, d_\tau)$, and $f^*(\pi^*(\gamma))$ its lift to $(\mathcal T', d')$.
Denote the ends of~$\gamma$ by $\varepsilon_i~(i = 1,2)$.
Let $T \subset \mathcal T$ be the component containing~$\pi^*(\gamma)$, and $T' \subset \mathcal T'$, $Y_\tau \subset \mathcal Y_\tau$, and $F \subset \mathcal F$ be the matching components.
Denote the first return maps of~$\tau$, $h$, and~$\psi$ on~$T$, $Y_\tau$, and~$F$ by~$\tilde \tau$, $\tilde h$, and~$\varphi$ respectively.
For the rest of the section, redefine~$\lambda$ to be the stretch factor of the expanding homothety~$\tilde h$. 

Suppose~$\circ$ is a point on the line~$\gamma$ with a nontrivial stabilizers $G_\circ \defeq \operatorname{Stab}_F(\circ)$.
Let~$d_i~(i = 1,2)$ be the direction at~$\circ$ containing~$\varepsilon_i$.
By Gaboriau--Levitt index theory, $\tilde h^j(\circ) = y \cdot \circ$ and $\tilde h^j(d_i) = ys_i \cdot d_i$ for some  $y \in F$, $s_i \in G_\circ$, and minimal $j \ge 1$.
Since~$F$ acts on~$Y_\tau$ with trivial arc stabilizers, the elements $ys_1, ys_2$ are unique and $s_1^{-1}s_2 \in G_\circ$ is independent of the chosen $y \in F$.

Set $y_0 \defeq \epsilon$ to be the trivial element and $y_{m+1} \defeq \varphi^{mj}(ys_1)y_m$ for $m \ge 0$.
Let~$T'(G)$ be the characteristic convex subset of~$T'$ for a nontrivial subgroup $G \le F$.
Since~$T'$ is simplicial, the characteristic convex subset~$T'(G)$ is {closed}, and we have the {closest point retraction} $T' \to T'(G)$;
it extends uniquely to the {ends-completions}.
Let~$q_{i,m}'$ be the closest point projection of~$f^*(\pi^*(\tilde h_*^{mj}(\varepsilon_i)))$ to~$T'(\varphi^{mj}(G_\circ))$.
Set $\tau_\circ \defeq (ys_1)^{-1} \cdot \tilde \tau^{j}$ and $h_\circ \defeq (ys_1)^{-1} \cdot \tilde h^j$ to be $\psi_\circ$-equivariant maps for an automorphism $\psi_\circ \colon F \to F$ in the outer class~$[\varphi^j]$.
As~$h_\circ$ fixes~$\circ$, 
we get $\psi_\circ(G_\circ) = G_\circ$ and $y_{m}^{-1} \cdot T'(\varphi^{mj}(G_\circ))$ is the characteristic convex subset for $\psi_\circ^{m}(G_\circ) = G_\circ$.
Thus $q_{i,m} \defeq y_{m}^{-1} \cdot q_{i,m}'$ is in~$T'(G_\circ)$ for $i = 1,2$ and $m \ge 0$. 
The interval $[q_{1,m}, q_{2,m}]$  in~$T'(G_\circ)$, i.e.~the closest point projection of~$f^*(\pi^*(h_\circ^m(\gamma)))$, is the \underline{turn} in~$f^*(\pi^*(h_\circ^m(\gamma)))$ about~$T'(G_\circ)$.

Since $h_\circ(d_1) = d_1$, the ends $h_{\circ *}^m(\varepsilon_1)~(m \ge 0)$ are in fact ends of~$d_1$.
If $h_{\circ *}(\varepsilon_1) = \varepsilon_1$, then the sequence $(q_{1,m})_{m \ge 0}$ is constant.
Otherwise, the ends $h_{\circ *}^m(\varepsilon_1)$ are distinct for $m \ge 0$.
Let~$\gamma_{1,m}$ be the line in~$d_1$ determined by~$h_{\circ *}^{m+1}(\varepsilon_1)$ and~$h_{\circ *}^m(\varepsilon_1)$.
As~$h_\circ$ is an expanding homothety, the distance $d_\infty(\circ, \gamma_{1,m}) > 0$ from~$\circ$ to~$\gamma_{1,m}$ grows exponentially in~$m$.
So $d_\infty(\circ, \gamma_{1,M_1}) > 2C[\pi \circ f]$ for some minimal $M_1 \ge 0$, and the line $f^*(\pi^*(\gamma_{1,m}))$ is disjoint from~$T'(G_\circ)$ for $m \ge M_1$ by bounded cancellation (see Figure~\ref{fig - disjoint line}). 
In particular, the ends~$f^*(\pi^*(h_{\circ *}^{m+1}(\varepsilon_1)))$ and~$f^*(\pi^*(h_{\circ *}^m(\varepsilon_1)))$ have the same closest point projection to~$T'(G_\circ)$ and the sequence $(q_{1,m})_{m \ge M_1}$ is constant.

\begin{figure}[ht]
 \centering 
 \includegraphics{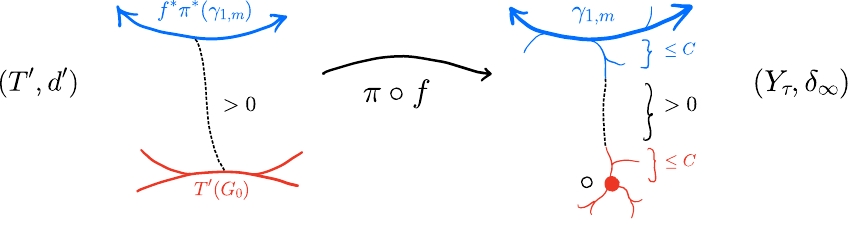}
 \caption{For $m \ge M_1$, the line $f^*(\pi^*(\gamma_{1,m}))$ cannot intersect~$T'(G_\circ)$.}
 \label{fig - disjoint line}
\end{figure}

Since $h_\circ(d_2) = s_1^{-1}s_2 \cdot d_2$, the ends $f^*(\pi^*(h_{\circ *}^{m+1}(\varepsilon_2)))$ and $\psi_\circ^{m}(s_1^{-1}s_2) \cdot f^*(\pi^*(h_{\circ *}^{m}(\varepsilon_2)))$ have the same closest point projection to~$T'(G_\circ)$ for $m \gg 1$ by similar bounded cancellation reasoning, i.e.~$q_{2,m+1} = \psi_\circ^m(s_1^{-1}s_2)\cdot q_{2,m}$ for some minimal $M_2 \ge 0$ and all $m \ge M_2$.

Set $M = \max(M_1, M_2)$.
The sequence $[q_{1,M+m}, q_{2,M+m}]_{m \ge 0}$ of intervals is well-defined for the line~$\gamma$ and point $\circ \in \gamma$ as~$M_1$ and~$M_2$ were chosen minimally.
An \emph{iterated turn over~$T'(G_\circ)$ rel.~$\left.\psi_\circ\right|_{G_\circ}$} is any such sequence of intervals. 
More generally, we define an \underline{iterated turn} over~$T$ rel.~$\varphi$:
pick arbitrary points $p_i \in T~(i=1,2)$ and elements $x_i \in F$;
set $p_{i,0} \defeq p_i$ and $p_{i,m+1} \defeq \varphi^m(x_i)\cdot p_{i,m}$ for $m \ge 0$;
the sequence $[p_{1,m}, p_{2,m}]_{m \ge 0}$ is the iterated turn denoted by $(p_1, p_2: x_1, x_2; \varphi)_{T}$. 
Any iterated turn $(p_1, p_2: x_1, x_2; \varphi)_{T}$ translates to a unique \emph{normal} form $(p_1, p_2: \epsilon, x_1^{-1}x_2; \tilde \varphi)_{T}$ with $\tilde \varphi \colon y \mapsto x_1^{-1}\varphi(y)x_1$.

\smallskip
We now characterize the growth of an iterated turn over~$T$ rel.~$\varphi$:

\begin{prop}\label{prop-limitturns}
Let~$\psi\colon \mathcal F \to \mathcal F$ be an automorphism, $\tau \colon \mathcal T \to \mathcal T$ an expanding irreducible train track for~$\psi$ with eigenmetric~$d_\tau$, and $(\mathcal Y_\tau, d_\infty)$ the limit forest for~$[\tau]$.
Choose a nondegenerate component~$T \subset \mathcal T$, corresponding components $F \subset \mathcal F$, $Y_\tau \subset \mathcal Y_\tau$, and a positive iterate~$\psi^k$ that preserves~$F$.
Let $\tilde h \colon (Y_\tau, d_\infty) \to (Y_\tau, d_\infty)$ be the $\varphi$-equivariant $\lambda$-homothety, where~$\varphi$ is in the outer automorphism~$[\left.\psi^k\right|_F]$ and $\lambda \defeq (\lambda[\tau])^k$.
Finally, for $i = 1,2$, pick $p_i \in T$ and $x_i \in F$.

\smallskip
The point~$p_{i,m} \defeq \varphi^{-1}(x_i)\cdots \varphi^{-m}(x_i) \cdot p_i$ in $(T\varphi^m, \lambda^{-m}d_\tau)$ converges to~$\star_i$ in $(\overline{Y}_\tau, d_\infty)$ as $m \to \infty$, where~$\star_i$ is the unique fixed point of $x_i^{-1} \cdot \tilde h$ in the metric completion $(\overline Y_\tau, d_\infty)$;
concretely: 
\[ \lim_{m \to \infty} \lambda^{-m} d_\tau(p_{1,m}, p_{2,m}) 
= d_\infty(\star_1, \star_2).\]

If $x_1^{-1}x_2$ fixes~$\star_1$, then $\star_1 = \star_2$ and the $m^{th}$~term $[p_{1,m}, p_{2,m}]$ of the iterated turn $(p_1, p_2: x_1, x_2; \varphi)_T$ has $d_\tau$-length $ \le (m+1)A$ for some constant $A \ge 1$.
Otherwise, $\star_1 \neq \star_2$ and the iterated turn has arbitrarily long leaf segments of~$\mathcal L^+[\tau]$.
\end{prop}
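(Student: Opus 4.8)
The plan is to reduce everything to the dynamics of the expanding homothety $\tilde h$ on the limit forest, by pushing the iterated turn into $(Y_\tau,d_\infty)$ through a rescaled version of the semiconjugating map. For each $m$ set $\Pi_m \defeq \tilde h^{-m}\circ\pi\colon (T\varphi^m,\lambda^{-m}d_\tau)\to (Y_\tau,d_\infty)$, where $\pi$ is the restriction to $T$ of the equivariant metric PL-map of Section~\ref{SubsecLimittrees} semiconjugating $\tilde\tau$ to $\tilde h$. Since $\pi$ is $F$-equivariant and $\tilde h^{-m}$ is $\varphi^{-m}$-equivariant, $\Pi_m$ is equivariant out of the $\varphi^m$-twisted domain; since $\pi$ is isometric on edges and $1$-Lipschitz with cancellation constant $C[\pi]$ (Lemma~\ref{lem-bcl}) and $\tilde h^{-m}$ is a $\lambda^{-m}$-homothety, $\Pi_m$ is isometric on edges (hence on leaf segments of $\mathcal L^+[\tau]$), $1$-Lipschitz, and has cancellation constant $\lambda^{-m}C[\pi]$. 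Next I would note that $x_i^{-1}\tilde h$ is a $\lambda$-homothety of the complete $\mathbb R$-tree $(\overline Y_\tau,d_\infty)$, so its inverse $\tilde h^{-1}x_i$ is a $\lambda^{-1}$-contraction; the Banach fixed-point theorem then produces the unique fixed point $\star_i$ of $x_i^{-1}\tilde h$ with $(\tilde h^{-1}x_i)^m(q)\to\star_i$ and $d_\infty\bigl((\tilde h^{-1}x_i)^m(q),\star_i\bigr)\le\lambda^{-m}d_\infty(q,\star_i)$ for all $q$. Reading $p_{i,m}$ as the indicated point of the twisted space and using equivariance together with the identity $(\tilde h^{-1}x_i)(z)=\varphi^{-1}(x_i)\cdot\tilde h^{-1}(z)$, one computes $\Pi_m(p_{i,m})=(\tilde h^{-1}x_i)^m(\pi(p_i))$; hence $\Pi_m(p_{1,m})\to\star_1$ and $\Pi_m(p_{2,m})\to\star_2$ at the geometric rate $O(\lambda^{-m})$.

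The second ingredient is a linear bound on the number $N_m$ of illegal turns of $\tau$ in the interval $[p_{1,m},p_{2,m}]$. In the untwisted action $p_{i,m}=\varphi^{m-1}(x_i)\cdots x_i\cdot p_i=\varphi^{m-1}(x_i)\cdot p_{i,m-1}$, while $\tilde\tau(p_{i,m-1})=\varphi^{m-1}(x_i)\cdots\varphi(x_i)\cdot\tilde\tau(p_i)$, so $d_\tau\bigl(\tilde\tau(p_{i,m-1}),p_{i,m}\bigr)=D_i$ where $D_i\defeq d_\tau(\tilde\tau(p_i),x_ip_i)$ is independent of $m$. Thus $[p_{1,m},p_{2,m}]$ is obtained from the tightening of $\tilde\tau([p_{1,m-1},p_{2,m-1}])$ by moving each endpoint a bounded distance. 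Since $\tau$ is a train track, $\tilde\tau$ carries legal paths to legal paths, so tightening $\tilde\tau([p_{1,m-1},p_{2,m-1}])$ produces no more illegal turns than $[p_{1,m-1},p_{2,m-1}]$ has (a turn created by cancellation lies in the interior of a $\tilde\tau$-image of a legal subpath, hence is legal), and moving an endpoint a distance $\le D_i$ changes the illegal-turn count by at most a constant $E_i$. Hence $N_m\le N_{m-1}+E_1+E_2$, so $N_m=O(m)$, and $[p_{1,m},p_{2,m}]$ is a concatenation of $N_m+1=O(m)$ maximal legal subpaths, i.e.\ leaf segments of $\mathcal L^+[\tau]$ (cf.\ the proof of Proposition~\ref{prop-limitloxodromics}).

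Assembling the two: $\Pi_m$ is $1$-Lipschitz, so $\lambda^{-m}d_\tau(p_{1,m},p_{2,m})\ge d_\infty(\Pi_m p_{1,m},\Pi_m p_{2,m})\to d_\infty(\star_1,\star_2)$; and since $\Pi_m$ is isometric on each of the $N_m+1$ leaf segments covering $[p_{1,m},p_{2,m}]$, the gap $\lambda^{-m}d_\tau(p_{1,m},p_{2,m})-d_\infty(\Pi_m p_{1,m},\Pi_m p_{2,m})$ is at most $2N_m\lambda^{-m}C[\pi]$ (the backtracking of $\Pi_m([p_{1,m},p_{2,m}])$ off the geodesic $[\Pi_m p_{1,m},\Pi_m p_{2,m}]$ is confined to the $\le N_m$ junctions, each of depth $\le\lambda^{-m}C[\pi]$ by bounded cancellation), which is $O(m\lambda^{-m})\to 0$. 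This gives $\lim_m\lambda^{-m}d_\tau(p_{1,m},p_{2,m})=d_\infty(\star_1,\star_2)$, and, together with $\Pi_m(p_{i,m})\to\star_i$, the asserted convergence in $(\overline Y_\tau,d_\infty)$. For the dichotomy: if $x_1^{-1}x_2$ fixes $\star_1$ then $x_2^{-1}\tilde h$ also fixes $\star_1$, so $\star_1=\star_2$ by uniqueness; feeding the contraction estimate $d_\infty(\Pi_m p_{i,m},\star_i)\le\lambda^{-m}d_\infty(\pi(p_i),\star_i)$ into the folding bound gives $\lambda^{-m}d_\tau(p_{1,m},p_{2,m})\le\lambda^{-m}\cdot O(1)+O(m\lambda^{-m})$, i.e.\ $d_\tau(p_{1,m},p_{2,m})\le(m+1)A$ for a suitable $A\ge 1$. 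Otherwise $\star_1\neq\star_2$, so $d_\tau(p_{1,m},p_{2,m})=d_\infty(\star_1,\star_2)\,\lambda^m(1+o(1))\to\infty$; since this interval is covered by $N_m+1=O(m)$ leaf segments, the pigeonhole principle exhibits leaf segments of $d_\tau$-length $\ge d_\infty(\star_1,\star_2)\,\lambda^m/(N_m+1)\to\infty$.

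The main obstacle is the linear bound $N_m=O(m)$ on illegal turns. The endpoints $p_{i,m}$ move off in the word metric at the (forward) exponential rate of $\varphi$, so counting edges only gives $O(\lambda^m)$ edges in $[p_{1,m},p_{2,m}]$, which is useless against the $\lambda^{-m}$ rescaling; it is precisely the fact that the number of \emph{illegal} turns grows only linearly — because the iterated turn differs from an honest $\tilde\tau$-iterate by bounded end-perturbations and illegal turns are non-increasing under a train-track map — that both pins the limit down to exactly $d_\infty(\star_1,\star_2)$ and upgrades the polynomial-case growth from merely subexponential to the claimed linear bound. The delicate step inside this is the bookkeeping of cancellation created at illegal turns when tightening $\tilde\tau$-images of paths, which should be handled exactly as in the proof of Proposition~\ref{prop-limitloxodromics}.
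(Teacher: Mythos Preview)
Your proof is correct and proceeds along the same overall arc as the paper's --- contraction mapping for $\star_i$, a linear bound on the number of leaf segments in $[p_{1,m},p_{2,m}]$, and pigeonhole for the dichotomy --- but the execution differs in two places worth noting.

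For the linear bound on leaf pieces, the paper writes down an explicit covering of $[p_{1,m},p_{2,m}]$ by $2m+1$ intervals (translates of $[x_i\cdot p_i,\tilde\tau(p_i)]$ together with the middle $[\tilde\tau^m(p_1),\tilde\tau^m(p_2)]$), each of which is a $\tilde\tau$-iterate of a fixed interval and hence covered by a fixed number $N+1$ of leaf segments; this gives the count $(2m+1)(N+1)$. Your recursive estimate $N_m\le N_{m-1}+E_1+E_2$ reaches the same $O(m)$ bound and is arguably more conceptual. One small caveat: the clause ``a turn created by cancellation lies in the interior of a $\tilde\tau$-image of a legal subpath, hence is legal'' is not literally true --- after cancellation the surviving turn need not be legal. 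What \emph{is} true, and suffices, is that the tightened geodesic $[\tilde\tau(p_{1,m-1}),\tilde\tau(p_{2,m-1})]$ meets each legal geodesic $\tilde\tau(\sigma_j)$ in a (connected, legal) subsegment, so it is covered by at most $N_{m-1}+1$ legal pieces; this yields $N_m\le N_{m-1}+E_1+E_2$ without the incorrect sentence.

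For the limit and the polynomial bound, the paper separates the ``middle'' interval from the $2m$ ``tail'' intervals and runs an $\epsilon$-argument; in the case $\star_1=\star_2$ it argues by contrapositive that a long leaf segment would persist under iteration. Your route --- combining the folding estimate $\lambda^{-m}d_\tau(p_{1,m},p_{2,m})-d_\infty(\Pi_mp_{1,m},\Pi_mp_{2,m})\le 2N_m\lambda^{-m}C[\pi]$ with the explicit contraction rate $d_\infty(\Pi_mp_{i,m},\star_i)\le\lambda^{-m}d_\infty(\pi(p_i),\star_i)$ --- dispatches both the limit and the linear bound in one stroke. That folding estimate is justified exactly by the tree identity $\sum_j d_\infty(a_{j-1},a_j)-d_\infty(a_0,a_n)=2\sum_{j=1}^{n-1}d_\infty(a_j,[a_{j-1},a_n])$ together with bounded cancellation for $\pi$ applied to each $[q_{j-1},q_n]\subset[p_{1,m},p_{2,m}]$, which gives $d_\infty(a_j,[a_{j-1},a_n])\le C[\pi]$; this is a cleaner packaging than the paper's argument.
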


\noindent The limit $[\star_1, \star_2] \subset \overline{Y}_\tau$ of an iterated turn is independent of the points $p_1, p_2 \in T$.
Thus we introduce the notion of an \underline{\emph{algebraic} iterated turn} over~$F$ rel.~$\varphi$, denoted $(x_1, x_2; \varphi)_F$.

\begin{proof}
Let $p_1, p_2 \in T$, $x_1, x_2 \in F$, and $\pi\colon (\mathcal T, d_\tau) \to (\mathcal Y_\tau, d_\infty)$ be the constructed equivariant metric PL-map.
For $i = 1,2$, set $p_{i,0} \defeq p_i$ and $p_{i,m+1} \defeq \varphi^{m}(x_i) \cdot p_{i,m}$ for $m \ge 0$. 
Recall that $T$ and $T\varphi^m$ are the same pretrees but with different actions;
thus, in~$T\varphi^m$, we have $p_{i,m} = \varphi^{-1}(x_i)\cdots \varphi^{-m}(x_i) \cdot p_i$ for $m \ge 0$.
As $\pi \colon (T, d_\tau) \to (Y_\tau, d_\infty)$ is an equivariant metric PL-map, so is the composition
\[
\pi_m \colon (T\varphi^m, \lambda^{-m}d_\tau) \overset{\pi}\longrightarrow (Y_\tau\varphi^m, \lambda^{-m}d_\infty) \overset{\tilde h^{-m}}\longrightarrow (Y_\tau, d_\infty).
\]
The point~$p_i$ in $(T, d_\tau)$ projects (via~$\pi$) to~$\pi(p_i)$ in $(Y_\tau, d_\infty)$; the point~$p_{i,m}$ in $(T \varphi^m, \lambda^{-m}d_\tau)$ projects (via~$\pi_m$) to 
\[ \begin{aligned}
\pi_m(p_{i,m}) & \defeq \tilde h^{-m}(\pi(p_{i,m})) \\
&= \varphi^{-1}(x_i)\cdots \varphi^{-m}(x_i) \cdot \tilde h^{-m}(\pi(p_{i})) & \quad & (\, p_{i,m},p_i \in T\varphi^m\,)\\
&= (x_i^{-1} \cdot \tilde h)^{-m}(\pi(p_{i})) & & (\,p_i \in T\,)
\end{aligned}\]
in $(Y_\tau, d_\infty)$ for $m \ge 1$ --- in the last line, $x_i^{-1} \cdot \tilde h$ is a $\lambda$-homothety $(Y_\tau, d_\infty) \to (Y_\tau, d_\infty)$.
Since $(x_i^{-1} \cdot \tilde h)^{-1}$ is contracting, the point~$\pi_m(p_{i,m})$ converges (as $m \to \infty$) to the unique fixed point~$\star_i$ of~$(x_i^{-1}\cdot \tilde h)^{-1}$ (and~$x_i^{-1} \cdot \tilde h$) in the metric completion $(\overline{Y}_\tau, d_\infty)$ by the contraction mapping theorem; 
note that $x_1^{-1}x_2 \cdot \star_1 = \star_1$ if and only if $\star_1 = \star_2$.
Thus the $\pi_m$-projection of the point~$p_{i,m}$ in~$(T\varphi^{m}, \lambda^{-m} d_\tau)$ converges (as $m \to \infty$) to~$\star_i$ in $(\overline{Y}_\tau, d_\infty)$;
in particular, 
\[ \lim_{m \to \infty} \lambda^{-m} d_\infty(\pi(p_{1,m}), \pi(p_{2,m})) = \lim_{m \to \infty} d_\infty(\pi_m(p_{1,m}), \pi_m(p_{2,m})) = d_\infty(\star_1, \star_2). \]

Let $\tilde \tau \colon T \to T$ be the $\varphi$-equivariant translate of a component of~$\tau^k$.
The interval $[p_{1,m}, p_{2,m}] \subset T$, the $m^{th}$ term in $(p_1, p_2:x_1,x_2;\varphi)_T$, is covered by these $2m+1$ intervals:
\[ \begin{aligned}
&\varphi^{m-1}(x_1) \cdots \varphi(x_1) \cdot [x_1 \cdot p_1, \tilde \tau(p_1)], \ldots,~\varphi^{m-1}(x_1) \cdot [\tilde \tau^{m-2}(x_1 \cdot p_1), \tilde \tau^{m-1}(p_1)],\\
&[\tilde \tau^{m-1}(x_1\cdot p_1), \tilde \tau^{m}(p_1)],~[\tilde \tau^{m}(p_1), \tilde \tau^{m}(p_2)],~[\tilde \tau^{m}(p_2), \tilde \tau^{m-1}(x_2 \cdot p_2)], \\
&\varphi^{m-1}(x_2) \cdot [\tilde \tau^{m-1}(p_2), \tilde \tau^{m-2}(x_2 \cdot p_2)],\ldots,~\varphi^{m-1}(x_2) \cdots \varphi(x_2) \cdot [\tilde \tau(p_2), x_2\cdot p_2].
\end{aligned} \]
Set $D \defeq \max\{ d_\tau(x_i \cdot p_i, \tilde \tau(p_i) : i = 1,2 \}$ and $D' \defeq \frac{D}{\lambda - 1}$.

Recall that $\underset{m' \to \infty}\lim \lambda^{-m'} d_\tau(\tilde \tau^{m'}(p_{1,m}), \tilde \tau^{m'}(p_{2,m})) = d_\infty(\pi(p_{1,m}), \pi(p_{2,m}))$.
For $m' \ge 0$, we get a similar covering of $[p_{1,m+m'}, p_{2,m+m'}]$ by $2m'+1$ intervals with the ``middle'' $[\tilde \tau^{m'}(p_{1,m}), \tilde \tau^{m'}(p_{2,m})]$.
Since $\tilde \tau$ is $\lambda$-Lipschitz with respect to~$d_\tau$, the sum of the $d_\tau$-lengths of all intervals but the middle in this covering is $\le \lambda^{m'} 2D'$.
By the triangle inequality,
\[  \lambda^{-(m+m')} \left| d_\tau(p_{1,m+m'}, p_{2,m+m'}) - d_\tau(\tilde \tau^{m'}(p_{1,m}), \tilde \tau^{m'}(p_{2,m})) \right| \le \lambda^{-m} 2D'. \]
Fix $\epsilon > 0$; then $\lambda^{-m} 2D' < \epsilon$ and $\left| \lambda^{-m} d_\infty(\pi(p_{1,m}), \pi(p_{2,m})) - d_\infty(\star_1, \star_2) \right| < \epsilon$ for some $m \gg 1$.

Similarly, 
\[ \begin{aligned} 
\lambda^{-m}\left| \lambda^{-m'} d_\tau(\tilde \tau^{m'}(p_{1,m}), \tilde \tau^{m'}(p_{2,m})) - d_\infty(\pi(p_{1,m}), \pi(p_{2,m})) \right| &< \epsilon \\
\text{and } 
\left|\lambda^{-(m+m')} d_\tau(p_{1,m+m'}, p_{2,m+m'}) - d_\infty(\star_1, \star_2)\right| &< 3\epsilon \text{ for } m' \gg 1, \\ 
 \text{i.e.}~
 \lim_{m \to \infty} \lambda^{-m} d_\tau(p_{1,m}, p_{2,m}) = d_\infty(\star_1, \star_2).&
 \end{aligned} \]

Let $N(u,v)$ be the number of vertices in an interval $(u,v) \subset T$;
set $N$ to be the maximum of $N(p_1,p_2)$, $N(x_1 \cdot p_1, \tilde \tau(p_1))$, and $N(\tilde \tau(p_2), x_2 \cdot p_2)$.
As~$\tilde \tau$ is a train track, the interval $[p_{1,m}, p_{2,m}]$ is covered by $(2m+1)(N+1)$ leaf segments.

\smallskip
Suppose $\star_1 = \star_2$.
We claim that any leaf segment (of~$\mathcal L^+[\tau]$) in $[p_{1,m}, p_{2,m}]$ has uniformly (in~$m\ge 0$) bounded $d_\tau$-length --- this implies $[p_{1,m}, p_{2,m}]$ has $d_\tau$-length $\le (2m+1)(N+1)B$ for some bounding constant $B \ge 1$.
We mimic Case~2 from the proof of Proposition~\ref{prop-limitloxodromics}.
For the contrapositive, suppose some term $[p_{1,m}, p_{2,m}]$ has a leaf segment with $d_\tau$-length $L > 2(C[\pi] + D')$.
By the train track property, bounded cancellation, and interval covering, $[p_{1, m+m'}, p_{2, m+m'}]$ has a leaf segment with $d_\tau$-length $\ge \lambda^{m'} (L - 2C[\pi] - 2D')$ for $m' \ge 0$; in $(T\varphi^{m+m'}, \lambda^{-(m+m')}d_\tau)$, $[p_{1, m+m'}, p_{2, m+m'}]$ has length $ \ge \lambda^{-m}(L-2C[\pi] - 2D')$.
In the limit (as $m' \to \infty$), $d_\infty(\star_1, \star_2) \ge \lambda^{-m}(L-2C[\pi] - 2D') > 0$.

Suppose $\star_1 \neq \star_2$.
Set $L  \defeq \frac{1}{2} d_\infty(\star_1, \star_2) > 0$;
then $\lambda^{-m}d_\tau(p_{1,m}, p_{2,m}) > L$ for some $m \gg 1$.
By the pigeonhole principle, the interval $[p_{1,m}, p_{2,m}]$ has a leaf segment with $d_\tau$-length $\frac{\lambda^{m}L}{(2m+1)(N+1)}$, which can be arbitrarily large (in~$m$).
\end{proof}

\subsubsection {Nested iterated turns}\label{Subsubsec-nestedturns}

The first part of the previous subsection explains how a line in $(\mathcal Y_\tau, d_\infty)$ determines algebraic iterated turns over~$\mathcal G[\mathcal Y_\tau]$.
We now give a similar discussion for an iterated turn over~$\mathcal T'$. 

Recall how $f, T, T', Y_\tau, F, \tilde \tau$, $\tilde h$, and~$\varphi$ were chosen and~$\lambda$ was redefined in the previous subsection.
Pick points $p_1', p_2' \in T'$ and elements $x_1, x_2 \in F$.
Set $T_m' \defeq T'\varphi^m$, $T_m \defeq T\varphi^m$, $p_{i,0}' \defeq p_i'$, $p_{i,m}' \defeq \varphi^{-1}(x_i)\cdots\varphi^{-m}(x_i) \cdot p_i'$ in~$T_m'$, and $p_{i,m} = f(p_{i,m}')$ for $m \ge 1$ and $i = 1,2$.
By Proposition~\ref{prop-limitturns}, the point~$p_{i,m}$ in $(T_m, \lambda^{-m} d_\tau)$ converges (as $m\to\infty$) to~$\star_i$, the unique fixed point of $x_i^{-1}\cdot \tilde h$ in the metric completion $(\overline{Y}_\tau, d_\infty)$.
The $\lambda$-homothety $h_i \defeq x_i^{-1} \cdot \tilde h$ is $\varphi_i$-equivariant for some automorphism $\varphi_i \colon F \to F$ in the outer class~$[\varphi]$.
Set $G_1 \defeq \operatorname{Stab}_{F}(\star_1)$.

\smallskip
\noindent \emph{Case--a:} $s \defeq x_1^{-1}x_2 \in G_1$.
Suppose~$G_1$ is not trivial, and let~$a_{i,m}$ be the {closest point projection} of~$p_{i,m}'$ to~$T'(\varphi^m(G_1))$ for $m \ge 0$.
As $\tilde h(\star_1) = x_1 \cdot \star_1$ and~$\tilde h$ is $\varphi$-equivariant, we get $T'(\varphi^{m+1}(G_1)) = \varphi^m(x_1) \cdot T'(\varphi^m(G_1))$, $a_{1,m+1} = \varphi^m(x_1) \cdot a_{1,m}$, and 
\[\begin{aligned} a_{2,m+1} &= \varphi^m(x_1) \varphi^m(s) \cdot a_{2,m} \\
&= \varphi^m(x_1)\cdots \varphi(x_1) x_1 \varphi_1^m(s)\cdots \varphi_1^m(s)s \cdot a_{2,0} \quad \text{ for } m \ge 0. \end{aligned}\]
Thus the closest point projection to~$T'(\varphi^m(G_1))$ of the $m^{th}$ term of the given iterated turn $(p_1',p_2':x_1,x_2;\varphi)_{T'}$ is a translate of the $m^{th}$ term in $(a_{1,0}, a_{2,0} : \epsilon, s;\left.\varphi_1\right|_{G_1})_{T'(G_1)}$, where $m \ge 0$ and~$\epsilon$ is the trivial element.
%
%
Hence we have an algebraic iterated turn $(\epsilon, s;\left.\varphi_1\right|_{G_1})_{G_1}$ that is well-defined for the algebraic iterated turn $(x_1, x_2;\varphi)_F$.

\begin{figure}[ht]
 \centering 
 \includegraphics{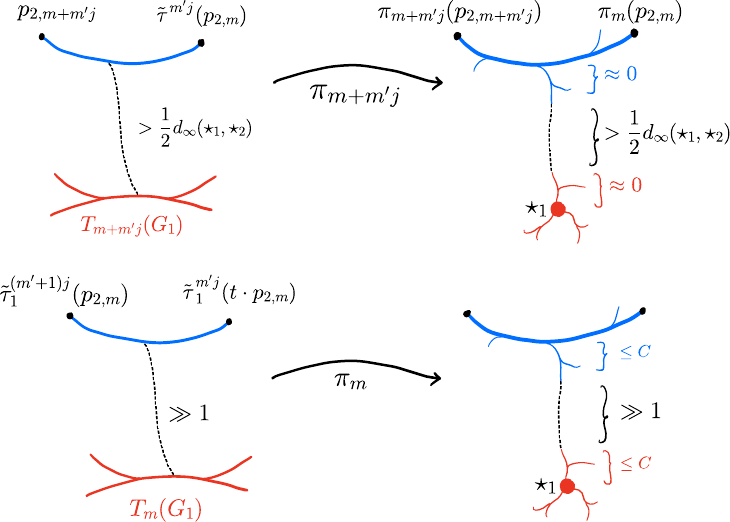}
 \caption{The two figures illustrating certain closest point projections are the same.}
 \label{fig - far lines}
\end{figure}

\smallskip
\noindent \emph{Case--b:} $\star_1 \neq \star_2$. 
Suppose~$G_1$ is not trivial --- the argument is symmetric if $\operatorname{Stab}_F(\star_2)$ is not trivial --- and let~$d$ be the direction at~$\star_1$ containing~$\star_2$. 
By Gaboriau--Levitt index theory, $h_1^j(d) = t \cdot d$ for some $t \in G_1$ and minimal $j \ge 1$.
For $m \gg 1$, $\pi_m(p_{2,m}) =  h_2^{-m}(\pi(p_2))$ is in the direction~$d$ since $h_2^{-m}(\pi(p_2)) \to \star_2$ in $(\overline{Y}_\tau, d_\infty)$.
For $m \gg 1$ and $m' \ge 0$, the interval $[p_{2, m+m'j}, \tilde \tau^{m'j}(p_{2,m})]$ in $(T_{m+m'j}, \lambda^{-m-m'j}d_\tau)$ is disjoint from~$T_{m+m'j}(G_1)$ by bounded cancellation (see Figure~\ref{fig - far lines}, top); or equivalently, the interval $[p_{2,m+m'j}, \tilde \tau^{m'j}(p_{2,m})]$ in~$T_m$ is disjoint from~$T_m(\varphi^{m'j}(G_1))$.
In fact, the $\lambda^{-m}d_\tau$-distance in~$T_m$ from $[p_{2,m+m'j}, \tilde \tau^{m'j}(p_{2,m})]$ to~$T_m(\varphi^{m'j}(G_1))$ can be arbitrarily large for $m' \gg 1$.

Set $z_0 \defeq \epsilon$ and $z_{m'+1} \defeq \varphi^{m'}(x_1)z_{m'}$.
Let $b_{i,m'}'~(i=1,2)$ be the closest point projection of~$p_{i,m+m'j}'$ to~$T_m'(\varphi^{m'j}(G_1)) = z_{m'j} \cdot T_m'(G_1)$ and set $b_{i,m'} \defeq z_{m'j}^{-1} \cdot b_{i,m'}'$ in $T_m'(G_1)$.
Following the definitions, $z_{m'j}^{-1} \cdot p_{1,m+m'j}' = p_{1,m}'$ in $T_m'$ and $z_{m'j}^{-1} \cdot \tilde \tau^{m'j} = \tau_1^{m'j}$ in~$T_m$, where $\tau_1 \defeq x_1^{-1} \cdot \tilde \tau$;
in particular, $b_{1,m'} = b_{1,0}$ for $m' \ge 0$.
Since $h_1^j(d) = t \cdot d$,  bounded cancellation implies the $\lambda^{-m}d_\tau$-distance in~$T_m$ from $[\tau_1^{(m'+1)j}(p_{2,m}), \varphi_1^{m'j}(t) \cdot \tau_1^{m'j}(p_{2,m})]$ to~$T_m(G_1)$ is arbitrarily large for $m' \gg 1$ (see Figure~\ref{fig - far lines}, bottom).

So $[z_{(m'+1)j}^{-1} \cdot p_{2,m+(m'+1)j}, \varphi_1^{m'j}(t)z_{m'j}^{-1} \cdot p_{2,m+m'j}]$ is arbitrarily far from~$T_m(G_1)$ by transitivity.
By bounded cancellation, $[z_{(m'+1)j}^{-1} \cdot p_{2,m+(m'+1)j}', \varphi_1^{m'j}(t)z_{m'j}^{-1}\cdot p_{2, m+m'j}']$ is disjoint from~$T_m'(G_1)$ for $m' \gg 1$, i.e.~$b_{2,m'+1} = \varphi_1^{m'j}(t) \cdot b_{2,m'}$ for $m' \gg 1$.
%
%
Thus, for some $M' \gg 1$, the sequence $[b_{1,M'+m'}, b_{2,M'+m'}]_{m'\ge 0}$ is an iterated turn over~$T_m'(G_1)$ rel.~$\varphi_1^j|_{G_1}$, denoted $(b_{1,M'}, b_{2,M'}: \epsilon, t;\varphi_1^j|_{G_1})_{T_m'(G_1)}$.
The corresponding algebraic iterated turn $(\epsilon, t; \varphi_1^j|_{G_1})_{G_1}$ is well-defined for $(x_1, x_2;\varphi)_F$.

\smallskip
Now suppose $\circ \in (\star_1, \star_2)$ has a nontrivial stabilizer $G_\circ \defeq \operatorname{Stab}_F(\circ)$.
Let~$d_i~(i = 1,2)$ be the direction at~$\circ$ containing~$\star_i$.
By index theory again,~$h_1^{\,l}(\circ) = x \cdot \circ$ and $h_1^{\,l}(d_i) = xs_i \cdot d_i$ for some  $x \in F$, $s_i \in G_\circ$, and minimal $l \ge 1$.
Since~$F$ acts on~$Y_\tau$ with trivial arc stabilizers, the elements $xs_1, xs_2$ are unique and $s_1^{-1}s_2 \in G_\circ$ is independent of the chosen $x \in F$.
For $m \gg 1$, $\pi_m(p_{i,m})$ is in the direction~$d_i$ since $\pi_m(p_{i,m}) \to \star_i$.
A variation of the bounded cancellation argument used in the preceding paragraphs proves the following.
For $m, m' \gg 1$,
the interval $[p_{i,m+m'l}, \tilde \tau^{m'l}(p_{i,m})]$ in $(T_m, \lambda^{-m}d_\tau)$ is far from~$T_m(\varphi^{m'l}(G_\circ))$.

Set $y_0 \defeq \epsilon$, $y_{m'+1} \defeq \varphi_\circ^{m'}(x)y_{m'}$, $\tau_{\circ} \defeq x^{-1} \cdot \tau_1^{\,l}$, and $h_{\circ} \defeq x^{-1} \cdot h_1^{\,l}$ to be $\varphi_{\circ}$-equivariant maps for an automorphism $\varphi_{\circ} \colon F \to F$ in the outer class~$[\varphi_1^j]$.
Let $c_{i,m'}' \in T_m'(\varphi^{m'l}(G_\circ))$ be the closest point projection of~$p_{i,m+m'l}'$ and set $c_{i,m'}'' \defeq z_{m'l}^{-1} \cdot c_{i,m'}' \in T_m'(\varphi_1^{m'l}(G_\circ))$.
Then $c_{i,m'} \defeq y_{m'}^{-1} \cdot c_{i,m'}'' \in T_m'(G_\circ)$ is the closest point projection of $y_{m'}^{-1}z_{m'l}^{-1}\cdot p_{i,m+m'l}'$. 
Since $h_\circ(d_i) = s_i \cdot d_i$, the interval $[\tau_\circ^{m'+1}(p_{i,m}), \varphi_\circ^{m'}(s_i)\cdot \tau_\circ^{m'}(p_{i,m})]$ is arbitrarily far from~$T_m(G_\circ)$ for $m' \gg 1$.
By transitivity, $[y_{m'+1}^{-1}z_{(m'+1)l}^{-1} \cdot p_{i,m+(m'+1)l}, \varphi_\circ^{m'}(s_i)y_{m'}^{-1}z_{m'}^{-1} \cdot p_{i,m+m'l}]$ is arbitrarily far from~$T_m(G_\circ)$.
As before, $[y_{m'+1}^{-1}z_{(m'+1)l}^{-1}\cdot p_{i,m+(m'+1)l}', \varphi_\circ^{m'}(s_i)y_{m'}^{-1}z_{m'}^{-1}\cdot p_{i, m+m'l}']$ is disjoint from~$T_m'(G_\circ)$ for $m' \gg 1$, i.e.~$c_{i,m'+1} = \varphi_\circ^{m'}(s_i)\cdot c_{i,m'}$ for $m' \gg 1$.
Thus, for some $M'' \gg 1$, the sequence $[c_{1,M''+m'}, c_{2,M''+m'}]_{m'\ge 0}$ is an iterated turn over~$T_m'(G_\circ)$ rel.~$\left.\varphi_\circ\right|_{G_\circ}$: $(c_{1,M''}, c_{2,M''}: s_1, s_2; \left.\varphi_\circ\right|_{G_\circ} )_{T_m'(G_\circ)}$.
It is a ``translate'' of the normalized iterated turn: $(c_{1,M''}, c_{2,M''}:\epsilon, s_1^{-1}s_2; \left.\varphi_\circ\right|_{G_\circ})_{T_m'(G_\circ)}$.
The corresponding algebraic iterated turn $(\epsilon, s_1^{-1}s_2; \left.\varphi_\circ\right|_{G_\circ})_{G_\circ}$ is well-defined for $(x_1, x_2;\varphi)_F$ and $\circ \in (\star_1, \star_2)$.

\subsection{Coordinate-free laminations}\label{SubsecCoordfreelams}

We have only defined the stable laminations for an expanding irreducible train track~$[\tau]$ representing~$[\psi]$. 
The free splitting~$\mathcal T$ of~$\mathcal F$ can be seen as a {coordinate system}, and we need a {coordinate-free} definition of stable laminations that applies to all outer automorphisms. 

Fix a proper free factor system~$\mathcal Z$ of~$\mathcal F$ and consider the set~$scv(\mathcal F, \mathcal Z)$ of all free splittings~$\mathcal T'$ of~$\mathcal F$ with $\mathcal F[\mathcal T'] = \mathcal Z$, i.e.~an element of~$\mathcal F$ is $\mathcal T'$-elliptic if and only if it is conjugate to an element of~$\mathcal Z$;
this set with some natural partial order is the \emph{spine of relative outer space}~\cite{CV86}.
For any pair of free splittings $\mathcal T_1, \mathcal T_2 \in scv(\mathcal F, \mathcal Z)$, there are \underline{changes of coordinates}, equivariant PL-maps $\mathcal T_1 \rightleftarrows \mathcal T_2$.
We saw in the discussion following Claim~\ref{claim-liftembed} that a change of coordinates $f\colon \mathcal T_1 \to \mathcal T_2$ induces a canonical homeomorphism $f_* \colon \mathbb R(\mathcal T_1) \to \mathbb R(\mathcal T_2)$ on the space of lines.
Denote the canonical homeomorphism class of $\mathbb R(\mathcal T_1) \cong \mathbb R(\mathcal T_2)$ by $\mathbb R(\mathcal F, \mathcal Z)$.
If~$\mathcal Z$ is the trivial free factor system of~$\mathcal F$, then we denoted the canonical homeomorphism class by $\mathbb R(\mathcal F)$ instead.

\smallskip
Fix an automorphism~$\psi \colon \mathcal F \to \mathcal F$ and a $[\psi]$-invariant proper free factor system~$\mathcal Z$.
Let $\psi_* \colon \mathbb R(\mathcal F, \mathcal Z) \to \mathbb R(\mathcal F, \mathcal Z)$ be the {canonical} induced homeomorphism on the space of lines: $f_* \circ g_{1*} = g_{2*} \circ f_*$ for any $\mathcal T_1, \mathcal T_2 \in scv(\mathcal F, \mathcal Z)$, equivariant PL-map $f \colon \mathcal T_1 \to \mathcal T_2$, and $\psi$-equivariant PL-maps $g_i \colon \mathcal T_i \to \mathcal T_i~(i=1,2)$.
A line $[l] \in \mathbb R(\mathcal F, \mathcal Z)$ \underline{weakly $\psi_*$-limits} to a lamination~$\Lambda \subset \mathbb R(\mathcal F, \mathcal Z)$ if the sequence $(\psi_*^n[l])_{n \ge 0}$ weakly limits to~$\Lambda$.

A coordinate-free definition of stable laminations boils down to characterizing the lines of a stable $\mathcal T$-lamination for~$[\tau]$ in a way that is independent of coordinates.
For the rest of the section, assume there is an equivariant PL-map $(\mathcal T, d) \to (\mathcal Y, \delta)$ and consider the canonical embedding $\mathbb R(\mathcal Y, \delta) \subset \mathbb R(\mathcal T)$.
Note that a lamination $\Lambda \subset \mathbb R(\mathcal Y, \delta)$ is contained in a canonical lamination~$\mathcal L \subset \mathbb R(\mathcal T)$: set~$\mathcal L$ to be the closure of~$\Lambda$ in $\mathbb R(\mathcal T)$.

\begin{claim*}[cf.~{\cite[Lemma~1.9(2)]{BFH97}}]
A line is quasiperiodic in $\mathbb R(\mathcal Y, \delta)$ if it is quasiperiodic in~$\mathbb R(\mathcal T)$.
{\rm(exercise)} \qed
\end{claim*}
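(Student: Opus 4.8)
The plan is to transfer quasiperiodicity across the equivariant PL-map $f\colon(\mathcal T,d)\to(\mathcal Y,\delta)$ by means of bounded cancellation. Write $C\defeq C[f]$ for a cancellation constant (Lemma~\ref{lem-bcl}) and $K\defeq\operatorname{Lip}(f)$. The geometric core is an elementary fact about $\mathbb R$-trees that I would isolate first: if $S\subseteq\mathcal Y$ is a closed convex subset and $\sigma$ is a geodesic segment every point of which lies within $C$ of $S$, then $\sigma$ factors as $\beta^{-}\ast\sigma_{0}\ast\beta^{+}$ with $\sigma_{0}\subseteq S$ and each $\beta^{\pm}$ a geodesic of length $\le C$ meeting $S$ only at its terminal point. (In a tree a geodesic that exits the convex subtree $S$ cannot re-enter it, so $\sigma\cap S$ is a single subsegment and its two complements are short ``exit'' segments, of length $\delta(\text{endpoint},S)\le C$.) In particular, any subsegment of $\sigma$ both of whose endpoints are more than $C$ from the ends of $\sigma$ is entirely contained in $S$.

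Now let $\gamma$ be a line of $(\mathcal Y,\delta)$ whose lift $l\defeq f^{*}(\gamma)$ is quasiperiodic in $\mathbb R(\mathcal T)$. Fix a closed interval $J\subset\gamma$ and enlarge it to a closed interval $\hat J\subset\gamma$ that extends $J$ by $2C$ on each side (possible since $\gamma$ is bi-infinite). Since $\gamma=f_{*}(l)\subseteq f(l)$ and both ends of $l$ have unbounded $f$-image (Corollary~\ref{cor-tightenings}), preimages of bounded sets under $f|_{l}$ are bounded; as $\hat J\subseteq f(l)$ we may therefore choose a finite subinterval $\hat I\subset l$ with $f(\hat I)\supseteq\hat J$. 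Let $L(\hat I)\ge 0$ be the number assigned to $\hat I$ by the quasiperiodic line $l$, and set $L(J)\defeq K\cdot L(\hat I)$.

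To verify that $L(J)$ witnesses quasiperiodicity of $\gamma$, take any subinterval $J'=[\gamma_{1},\gamma_{2}]\subset\gamma$ with $\delta(\gamma_{1},\gamma_{2})\ge L(J)$, pick preimages $a',b'\in l$ with $f(a')=\gamma_{1}$ and $f(b')=\gamma_{2}$, and put $I'\defeq[a',b']\subseteq l$. Then $f(I')$ is connected and contains $\gamma_{1},\gamma_{2}$, so $J'\subseteq f(I')$; by bounded cancellation, $f(I')$ lies in the $C$-neighbourhood of $J'$. Since $d(a',b')\ge K^{-1}\delta(\gamma_{1},\gamma_{2})\ge L(\hat I)$, quasiperiodicity of $l$ yields $x\in\mathcal F$ with $x\cdot\hat I\subseteq I'$; then $f(x\cdot\hat I)=x\cdot f(\hat I)\supseteq x\cdot\hat J\supseteq x\cdot J$ by equivariance, while $x\cdot\hat I\subseteq I'$ forces $x\cdot\hat J\subseteq f(I')$, hence $x\cdot\hat J$ lies in the $C$-neighbourhood of $J'$. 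As $x\cdot J$ sits inside the geodesic segment $x\cdot\hat J$ with margin $2C>C$ at each end, the $\mathbb R$-tree fact above (with $S=J'$) gives $x\cdot J\subseteq J'$. Thus every subinterval of $\gamma$ of $\delta$-length at least $L(J)$ contains a translate of $J$, i.e. $\gamma$ is quasiperiodic.

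The one step that requires care is the passage from ``$x\cdot J$ is within $C$ of $J'$'' to ``$x\cdot J\subseteq J'$''; that is exactly what the built-in margin $2C$ and the excursion fact are arranged to deliver. Everything else is routine bookkeeping with Corollary~\ref{cor-tightenings}, Lemma~\ref{lem-bcl}, and equivariance of $f$ — and no separate argument for periodic $l$ is needed, since periodic lines are quasiperiodic.
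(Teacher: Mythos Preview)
Your argument is correct. The paper leaves this claim as an exercise with no proof given, and your approach---pulling back an interval of $\gamma$ via $f^*$, padding by a bounded-cancellation margin, applying quasiperiodicity upstairs, then pushing forward and trimming via the tree-excursion fact---is exactly the intended route using Lemma~\ref{lem-bcl} and Corollary~\ref{cor-tightenings}.
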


\noindent So quasiperiodicity is a well-defined property for a line in $\mathbb R(\mathcal F, \mathcal Z)$;
moreover, the induced homeomorphism $\psi_* \colon \mathbb R(\mathcal F, \mathcal Z) \to \mathbb R(\mathcal F, \mathcal Z)$ preserves quasiperiodicity for any automorphism $\psi\colon \mathcal F \to \mathcal F$ that preserves~$\mathcal Z$ (up to conjugacy).

\smallskip
Suppose there is an expanding irreducible train track $\tau \colon \mathcal T \to \mathcal T$ for~$\psi$ with $\mathcal F[\mathcal T] = \mathcal Z$.
Recall that the eigenlines of~$[\tau^k]$ (for some~$k \ge 1$) are constructed by iterating an expanding edge;
more precisely, an eigenline~$[l]$ of~$[\tau^k]$ is the union $\bigcup_{n \ge 1} \tau^{kn}(\mathcal F \cdot e)$ for some edge~$e \subset l$.
The leaf segments $\tau^{kn}(e)$ determine a neighbourhood basis for~$[l]$ in the space of lines.

For a line $[l] \in \mathbb R(\mathcal F, \mathcal Z)$, a subset~$U \subset \mathbb R(\mathcal F, \mathcal Z)$ is a \emph{$\psi_*^k$-attracting neighbourhood} of~$[l]$ if $\psi_*^k(U) \subset U$ and $\{\psi_{*}^{kn}(U):n \ge 1\}$ is a neighbourhood basis for~$[l]$ in the space of lines.
A \underline{stable lamination} for~$[\psi]$ rel.~$\mathcal Z$ is the closure of a quasiperiodic line in $\mathbb R(\mathcal F, \mathcal Z)$ with a $\psi_*^k$-attracting neighbourhood for some $k \ge 1$.
Note that the homeomorphism $\psi_* \colon \mathbb R(\mathcal F, \mathcal Z) \to \mathbb R(\mathcal F, \mathcal Z)$ permutes the stable laminations for~$[\psi]$ rel.~$\mathcal Z$ and, by Lemma~\ref{lem-stableminperf}, each stable $\mathcal T$-lamination for~$[\tau]$ is identified with some stable lamination for~$[\psi]$ rel.~$\mathcal Z$.
Let~$\mathcal L^+_{\mathcal Z}[\psi]$ be the union of stable laminations for~$[\psi]$ rel.~$\mathcal Z$.

\begin{lem}[cf.~{\cite[Lemma~1.12]{BFH97}}]\label{lem-coordinatefreestable}
Let~$\psi\colon \mathcal F \to \mathcal F$ be an automorphism, $\tau \colon \mathcal T \to \mathcal T$ an expanding irreducible train track for~$\psi$, and $\mathcal Z \defeq \mathcal F[\mathcal T]$.
The stable laminations~$\mathcal L^+[\tau]$ for~$[\tau]$ are identified with the stable laminations~$\mathcal L^+_{\mathcal Z}[\psi]$ for~$[\psi]$ rel.~$\mathcal Z$.
\end{lem}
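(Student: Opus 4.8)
The plan is to establish the two inclusions of laminations by a direct comparison of the combinatorial ``edge-iteration'' description of $\mathcal L^+[\tau]$ with the coordinate-free description via attracting neighbourhoods. Throughout I set $\mathcal Z \defeq \mathcal F[\mathcal T]$, and I use that there is always an equivariant PL-map between any two free splittings in $scv(\mathcal F,\mathcal Z)$, so that the space of lines $\mathbb R(\mathcal F,\mathcal Z)$ and the canonical homeomorphism $\psi_*$ are well-defined; I will do all the work inside the fixed coordinate system $\mathcal T$ and then invoke canonicity to transport the conclusion.

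First I would show that every stable $\mathcal T$-lamination $\Lambda_i^+ \subset \mathcal L^+[\tau]$ is a stable lamination for $[\psi]$ rel.~$\mathcal Z$. By Lemma~\ref{lem-stableminperf}, $\Lambda_i^+$ is the closure of a quasiperiodic (non-periodic) eigenline $[l]$ of $[\tau^k]$ for a suitable $k \ge 1$, and by construction $[l] = \bigcup_{n \ge 1} \tau^{kn}(\mathcal F\cdot e)$ for an expanding edge $e \subset l$, so that the leaf segments $\{\tau^{kn}(e)\}_{n\ge 0}$ form a neighbourhood basis for $[l]$ in $\mathbb R(\mathcal T)$. It remains to exhibit a $\psi_*^k$-attracting neighbourhood of $[l]$. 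The natural candidate is $U \defeq U[p,q]$, the $\mathcal F$-orbit of lines containing a translate of a sufficiently long subsegment $[p,q] \subset e$ chosen so that $\tau^k([p,q])$ contains a translate of $[p,q]$ in its interior (expanding irreducibility, plus the fact that $\tau^k(e)$ contains at least three translates of $e$, guarantees such a choice). Then $\psi_*^k(U) \subset U$ because $\tau^k$ maps any line through a translate of $[p,q]$ to a line through a translate of $\tau^k([p,q]) \supset [p,q]$; bounded cancellation (Lemma~\ref{lem-bcl}) is what ensures this passes correctly to $f_*$-images and $\mathcal F$-orbits. That $\{\psi_*^{kn}(U)\}_{n\ge 1}$ is a neighbourhood basis for $[l]$ follows because $\psi_*^{kn}(U) \subset U[\tau^{kn}(p),\tau^{kn}(q)]$ and the segments $\tau^{kn}([p,q])$ exhaust $l$ by quasiperiodicity, as in the cited~\cite[Lemma~1.9]{BFH97}. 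This gives $\mathcal L^+[\tau] \subset \mathcal L^+_{\mathcal Z}[\psi]$, and since $\tau_*$ acts transitively on the components $\Lambda_i^+$ and $\psi_*$ restricts to $\tau_*$ on this set, the $\psi_*$-orbit structure matches.

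For the reverse inclusion, let $\Lambda$ be any stable lamination for $[\psi]$ rel.~$\mathcal Z$: the closure in $\mathbb R(\mathcal F,\mathcal Z)$ of a quasiperiodic line $[l]$ admitting a $\psi_*^k$-attracting neighbourhood $U$. Working in $\mathcal T$, pick a long leaf segment $[a,b] \subset l$; since $\{\psi_*^{kn}(U)\}$ is a neighbourhood basis, $U[a,b] \supset \psi_*^{kn}(U)$ for $n \gg 1$, hence the orbit $U$ itself contains lines through arbitrarily long subsegments of $l$, and in particular $[l] \in \overline{\psi_*^{kn}(U)}$ forces $[l]$ to be a weak $\psi_*^k$-limit of any line in $U$. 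I would now run this attraction against the train track: choosing a line $l' \in U$ that contains an edge of $\mathcal T$ and iterating $\tau^k$, bounded cancellation shows that $\tau^{kn}_*(l')$ is, for $n \gg 1$, within a uniform distance of the line obtained by $\tau^{k}$-iterating some edge — i.e.\ its leaf segments of length $L$ all get covered, up to the cancellation constant, by translates of $\tau^{kN}(e)$ for the finitely many edges $e$ of $\mathcal T$ and large $N$. Combining this with the fact that $[l]$ is a weak limit of $\tau^{kn}_*(l')$ and that $[l]$ is quasiperiodic (hence minimal, by the exercise after Lemma~\ref{lem-stableminperf}), every leaf segment of $[l]$ must be a translate of a subsegment of $\tau^{kN}(e)$ for some edge $e$ and some $N$ — which is precisely the statement that $[l]$ is an eigenline of $[\tau^{kN'}]$ for a suitable multiple, so its closure is one of the $\Lambda_i^+$. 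This yields $\mathcal L^+_{\mathcal Z}[\psi] \subset \mathcal L^+[\tau]$.

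The main obstacle I expect is the reverse inclusion, and specifically the step where one must pin down that an abstract line attracted to by $\psi_*^k$ is actually built out of $\tau$-iterates of edges. The delicacy is that the attracting-neighbourhood hypothesis is purely topological in $\mathbb R(\mathcal F,\mathcal Z)$, whereas the eigenline description is metric/combinatorial in the fixed $\mathcal T$; the bridge between them is bounded cancellation (Lemma~\ref{lem-bcl}) applied to the iterates $\tau^{kn}$, whose cancellation constants are uniformly bounded by $C' = C[\tau]/(\lambda[\tau]-1)$ as recorded in Section~\ref{SubsecLaminations}. One has to be careful that the argument only uses that \emph{some} power of $\tau$ has the relevant edge in the image (Perron--Frobenius primitivity of the blocks of $A[\tau]$, already used in Lemma~\ref{lem-stableminperf}), so that the matching of $\psi_*$-orbits of stable laminations with blocks of $A[\tau]$ is exact rather than up to a further iterate. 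Once that combinatorial identification is in place, quasiperiodicity and minimality do the rest, and the passage back to arbitrary coordinates in $scv(\mathcal F,\mathcal Z)$ is immediate from the canonicity of $\psi_*$ and of the embeddings $\mathbb R(\mathcal Y,\delta) \subset \mathbb R(\mathcal T)$.
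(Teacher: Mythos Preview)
Your forward inclusion is fine and matches what the paper takes for granted from the eigenline construction.

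For the reverse inclusion, the paper takes a different and shorter route. Rather than a direct combinatorial analysis, it picks a $\mathcal T$-loxodromic element $x$ whose axis lies in the attracting neighbourhood $U$. The attracting-neighbourhood property (together with non-periodicity of $[l]$) forces the translation length $\|\psi^{km}(x)\|_{d_\tau}$ to eventually grow: the axis of $\psi^{km}(x)$ lies in $\psi_*^{km}(U)$, hence contains ever-longer segments of the non-periodic line $l$, so its period must grow. Proposition~\ref{prop-limitloxodromics} then does all the work: growing translation length means $x$ is $\mathcal Y_\tau$-loxodromic, and the same proposition says the axis of $x$ weakly $\psi_*$-limits to $\mathcal L^+[\tau]$. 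Since $[l]$ is the attractor of $U$, it is a weak limit of these axes and hence a leaf of $\mathcal L^+[\tau]$.

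Your direct argument has a genuine gap at the sentence ``bounded cancellation shows that $\tau^{kn}_*(l')$ is, for $n \gg 1$, within a uniform distance of the line obtained by $\tau^{k}$-iterating some edge.'' This is not what bounded cancellation gives: $\tau_*^{kn}(l')$ is covered by leaf segments, but the number of leaf segments needed to cover a subinterval of length $L$ in $\tau_*^{kn}(l')$ is not bounded independently of $L$ and $n$ unless you control the combinatorics of $l'$. If you take $l'$ arbitrary, a long segment of $[l]$ sitting inside $\tau_*^{kn}(l')$ could a priori be a concatenation of many short leaf segments, and you have not ruled this out. The standard fix is to take $l'$ periodic (the axis of some loxodromic $x$), so that a fundamental domain is covered by a fixed number $N+1$ of leaf segments; but then you must show the fundamental domain is long enough to contain your target segment of $l$ --- i.e., that $\|\psi^{kn}(x)\|$ grows. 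That growth step is exactly what the paper isolates, and once you have it, Proposition~\ref{prop-limitloxodromics} finishes immediately without the remaining pigeonhole/quasiperiodicity manoeuvres you sketch.
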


\noindent So~$\mathcal L^+_{\mathcal Z}[\psi]$ is a lamination system whose finitely many components are the stable laminations for~$[\psi]$ rel.~$\mathcal Z$, and these are transitively permuted by $\psi_* \colon \mathbb R(\mathcal F, \mathcal Z) \to \mathbb R(\mathcal F, \mathcal Z)$.

\begin{proof}[Sketch of proof]
Suppose a quasiperiodic line~$[l]$ in~$\mathcal T$ has a $\tau_*^k$-attracting neighbourhood~$U$ for some $k \ge 1$.
This forces any $\mathcal T$-loxodromic conjugacy class~$[x]$ with axis in~$U$ to have a translation distance that (eventually) grows under forward $[\psi^k]$-iteration.
In particular, the conjugacy class~$[x]$ is $\mathcal Y_\tau$-loxodromic, and the line~$[l]$, a weak $\psi_*^k$-limit of the $\mathcal T$-axis for~$[x]$, is a leaf in~$\mathcal L^+[\tau]$ by Proposition~\ref{prop-limitloxodromics}.
\end{proof}

The stable laminations $\mathcal L_{\mathcal Z}^+[\psi]$ are in the subspace $\mathbb R(\mathcal Y_\tau, d_\infty) \subset \mathbb R(\mathcal F, \mathcal Z)$.

\subsection{Constructing limit forests (2)}\label{SubsecLimittrees2}

This chapter has thus far focused on automorphims with expanding irreducible train tracks.
For the rest of the chapter, we extend our focus to all automorphisms.

Fix an automorphism $\psi \colon \mathcal F \to \mathcal F$, set $\mathcal F_1 \defeq \mathcal F$, $\psi_1 \defeq \psi$, and let $\mathcal Z$ be a $[\psi_1]$-invariant proper free factor system.
By Theorem~\ref{thm-irredtt}, there is an irreducible train track $\tau_1 \colon \mathcal T_1 \to \mathcal T_1$ for~$\psi_1$.
By $\psi_1$-equivariance of~$\tau_1$, the nontrivial vertex stabilizers of~$\mathcal T_1$ determine a $[\psi_1]$-invariant proper free factor system $\mathcal F_2 \defeq \mathcal F[\mathcal T_1]$.
The restriction of~$\psi_1$ to~$\mathcal F_2$ determines a unique outer class of automorphisms $\psi_2 \colon \mathcal F_2 \to \mathcal F_2$.
We can repeatedly apply Theorem~\ref{thm-irredtt} to $\psi_{i+1}~(i\ge 1)$ as long as $\lambda[\tau_i] = 1$ and $\mathcal F_{i+1} \defeq \mathcal F[\mathcal T_i]$ contains a noncyclic component.
Bass-Serre theory implies this process must stop;
we end up with a maximal sequence $(\tau_i)_{i=1}^n$ of irreducible train tracks with $\lambda[\tau_i] = 1$ for $1 \le i < n$ --- such a maximal sequence is called a \underline{descending sequence} of irreducible train tracks for~$[\psi]$ rel.~$\mathcal Z$.

This leads to our {working definition} of growth type: $[\psi]$~is polynomially growing rel.~$\mathcal Z$ if and only if $\lambda[\tau_n] = 1$~\cite[Proposition~III.1]{Mut24}.
For automorphisms that are polynomially growing rel.~$\mathcal Z$, define the limit forest to be degenerate.

\smallskip
Suppose~$[\psi]$ is exponentially growing rel.~$\mathcal Z$ and $\left(\tau_i \colon \mathcal T_i \to \mathcal T_i\right)_{i=1}^n$ is a descending sequence of irreducible train tracks for~$[\psi]$ rel.~$\mathcal Z$.
Sections~\ref{SubsecLimittrees}--\ref{SubsecLaminations} already cover the case $n = 1$, so we may assume $n > 1$ for the rest of the chapter.
Set $\lambda \defeq \lambda[\tau_n] > 1$, $\mathcal T_n^\circ \defeq \mathcal T_n$, $\tau_n^\circ \defeq \tau_n$, and $d_n^\circ$ the eigenmetric on~$\mathcal T_n^\circ$ for~$\tau_n^\circ$.
For $1 \le i < n$, we inductively form an equivariant \emph{simplicial blow-up}~$\mathcal T_{i}^\circ$ of~$\mathcal T_{i}$ rel.~$\mathcal T_{i+1}^\circ$:
the vertices with nontrivial stabilizers are equivariantly replaced by copies of corresponding components of~$\mathcal T_{i+1}^\circ$ and arbitrary vertices in~$\mathcal T_{i+1}^\circ$ are chosen as attaching points for the edges of~$\mathcal T_i$.
Let~$\tau_i^\circ \colon \mathcal T_i^\circ \to \mathcal T_i^\circ$ be the topological representative for~$\psi_i$ induced by~$\tau_i$ and~$\tau_{i+1}^\circ$.
As $\tau_i$ is a simplicial automorphism, we can make~$\tau_i^\circ$ a $\lambda$-Lipschitz map by assigning the same large enough length to the edges of~$\mathcal T_i$ in the blow-up~$\mathcal T_i^\circ$ when extending the metric~$d_{i+1}^\circ$ on~$\mathcal T_{i+1}^\circ$ to a metric~$d_i^\circ$ on~$\mathcal T_i^\circ$.
The topological representative $\tau^\circ \defeq \tau_i^\circ$ on $\mathcal T^\circ \defeq \mathcal T_1^\circ$ is an \underline{equivariant blow-up} of the descending sequence $(\tau_i)_{i=1}^n$.
Set $d^\circ \defeq d_1^\circ$ and identify $(\mathcal T_i^\circ, d_i^\circ)$ with the characteristic subforest of $(\mathcal T^\circ, d^\circ)$ for~$\mathcal F_{i}$.
We will abuse terminology and refer to~$d^\circ$ as the {eigenmetric} as well.
Translates of edges in $\mathcal T^\circ$ coming from~$\mathcal T_i$ form the \underline{$i^{th}$~stratum of~$\mathcal T^\circ$}: the $n^{th}$~stratum is \underline{exponential} while the rest are \underline{(relatively) polynomial}.

As in Section~\ref{SubsecLimittrees}, the maps $\tau^{\circ m}\colon(\mathcal T^\circ, d^\circ) \to (\mathcal T^\circ \psi^m, \lambda^{-m}d^\circ)$ converge (as $m\to \infty$) to an equivariant metric surjection $\pi^\circ \colon (\mathcal T^\circ, d^\circ) \to (\mathcal Y, \delta)$. 
The map~$\tau^\circ$ induces a $\psi$-equivariant $\lambda$-homothety $h \colon (\mathcal Y, \delta) \to (\mathcal Y, \delta)$ and $\pi^\circ$ semiconjugates~$\tau^\circ$ to~$h$.
By restricting to~$\mathcal T_i^\circ$, we have also constructed an equivariant metric surjection $\pi_i^\circ \colon (\mathcal T_i^\circ, d^\circ) \to (\mathcal Y_i, \delta)$ and $\psi_i$-equivariant $\lambda$-homothety~$h_i$ on $(\mathcal Y_i, \delta)$ for $2 \le i \le n$.

The $\mathcal F_n$-forest $(\mathcal Y_n, \delta)$ is the limit forest for~$[\tau_n^\circ]$; 
so it is a nondegenerate minimal $\mathcal F_n$-forest with trivial arc stabilizers.
For induction, assume $(\mathcal Y_i, \delta)$ is a nondegenerate minimal $\mathcal F_i$-forest with trivial arc stabilizers for $2 \le i \le n$.
Equivariantly collapse~$\mathcal T_2^\circ$ in $(\mathcal T^\circ, d^\circ)$ to get the $\mathcal F$-forest $(\mathcal T_1, d_1)$.
For $m \ge 0$, the metric free splitting $(\mathcal T^\circ\psi^m, \lambda^{-m} d^\circ)$ is an equivariant {metric} blow-up of $(\mathcal T_1 \psi^m, \lambda^{-m} d_1)$ rel. $(\mathcal T_2^\circ\psi_2^{m}, \lambda^{-m} d^\circ)$.
Since $\tau_1 \colon (\mathcal T_1, d_1) \to (\mathcal T_1 \psi, d_1)$ is an equivariant isometry, the limit $(\mathcal Y, \delta)$ is equivariantly isometric to an equivariant metric blow-up of $(\mathcal T_1, d_1)$ rel.~$(\mathcal Y_2, \delta)$ whose top stratum (edges coming from~$\mathcal T_1$) have then been equivariantly collapsed, also known as a \emph{graph of actions (with degenerate skeleton)} --- more details are given in the next subsection.
Thus $(\mathcal Y, \delta)$ is a nondegenerate minimal $\mathcal F$-forest with trivial arc stabilizers.
See~\cite[Theorem~IV.1]{Mut24} for a direct construction of $(\mathcal Y, \delta)$ as a graph of actions.
%
%
This sketches the general case of Proposition~\ref{prop-limittree}.
The $\mathcal F$-forest $(\mathcal Y,\delta)$ is the \underline{limit forest} for~$[\tau_i]_{i=1}^n$.

\subsubsection {Decomposing limit forests}

We now give a hierarchical decomposition of the limit forest $(\mathcal Y, \delta)$ and its space of lines.

Choose an iterate~$[\tau_1^{k'}]$ that fixes all $\mathcal F$-orbits of branches in~$\mathcal T_1$.
Pick an edge~$e$ in~$\mathcal T_1$ and one of its endpoints~$p$.
Replace~$\psi^{k'}$ with an automorphism in its outer class~$[\psi^{k'}]$ if necessary and assume~$\tau_1^{k'}$ fixes~$p$ and~$e$.
Identify $(\mathcal T^\circ \psi^{mk'}, \lambda^{-mk'} d^\circ)$ with an equivariant metric blow-up of $(\mathcal T_1, \lambda^{-mk'} d_1)$ rel.~$(\mathcal T_2^\circ \psi_2^{mk'}, \lambda^{-mk'} d^\circ)$ for $m \ge 0$, then let~$p_m \in \mathcal T_2^\circ\psi_2^{mk'}$ be the attaching point of~$e$ to~$\mathcal T_2^\circ \psi_2^{mk'}$ corresponding to the endpoint~$p$.
Since $\tau_1^{k'}$ fixes~$e$ and~$p$, we get $p_m = p_0$ for $m \ge 1$.
As in the first part of the proof for Proposition~\ref{prop-limitturns}, the sequence $(p_m)_{m \ge 0}$ converges to the unique fixed point~$\star$ of~$h_2^{k'}$ in the metric completion $(\overline{\mathcal Y}_2, \delta)$.
So, in the description of $(\mathcal Y, \delta)$ as a graph of actions, the edge~$e$ is collapsed and identified with~$\star$.
Thus the closure~$\widehat{\mathcal Y}_2$ of~$\mathcal Y_2$ in $(\mathcal Y, \delta)$ is the union of~$\mathcal Y_2$ with the $\mathcal F_2$-orbits of attaching points~$\star$ as the pair $(e, p)$ ranges over the $\mathcal F$-orbit representatives~$e$ of edges and their endpoints~$p$.
For the same reasons, we inductively get a similar description of the closure~$\widehat{\mathcal Y}_{i+1}$ of~$\mathcal Y_{i+1}$ in $(\mathcal Y_i, \delta)$ for $2 \le i < n$.

\begin{rmk}
Constructing $(\mathcal Y, \delta)$ directly by iterating~$\tau^\circ$ allows us to lift metric properties of $(\mathcal Y,\delta)$ to dynamical properties of~$\tau^\circ$ through the semiconjugacy $\pi^\circ \circ \tau^\circ = h \circ \pi^\circ$;
this viewpoint is used in the Section~\ref{SubsecLaminations2}.
On the other hand, constructing $(\mathcal Y, \delta)$ directly as we did in~\cite[Theorem~IV.1]{Mut24} (and sketched in this subsection) gives us a nice structural description of intervals in the limit forest.
This is explained in the next subsection and will be a key component of Chapter~\ref{SecTopDomLimit}!
\end{rmk}

For $1 < i \le n$, any two translates of $\mathcal T_i^\circ \subset \mathcal T^\circ$ by elements of~$\mathcal F$ either coincide or are disjoint by construction.
This induces a canonical closed embedding of $\mathbb R(\mathcal F_{i}, \mathcal Z) $ into $\mathbb R(\mathcal F, \mathcal Z)$ (exercise).
Similarly, any two intersecting translates of $\mathcal Y_i \subset \mathcal Y$ by elements of~$\mathcal F$ either coincide or have degenerate intersection.
This also induces a canonical closed embedding $\mathbb R(\mathcal Y_i, \delta) \subset \mathbb R(\mathcal Y, \delta)$.
Finally, the constructed equivariant metric map~$\pi^\circ$ induces a canonical embedding of the topological pair $(\mathbb R(\mathcal Y, \delta), \mathbb R(\mathcal Y_i, \delta))$ into $(\mathbb R(\mathcal F, \mathcal Z), \mathbb R(\mathcal F_i, \mathcal Z))$.

\subsubsection {Intervals in limit forests}

Here is an inductive description of intervals in the limit forest $(\mathcal Y, \delta)$ in terms of the limit forest for~$[\tau_n^\circ]$.
For $1 \le i \le n$, the characteristic subforest $(\mathcal Y_i, \delta)$ of $(\mathcal Y, \delta)$ for~$\mathcal F_{i}$ is the limit forest for~$[\tau_j]_{j=i}^n$.
For $1 < i \le n$, let~$\widehat{\mathcal Y}_i$ be the closure of~$\mathcal Y_i$ in $(\mathcal Y_{i-1}, \delta)$.

It follows from the blow-up (and collapse) description of~$\mathcal Y_{i-1}$ that its closed intervals are finite concatenations of closed intervals in translates of~$\widehat{\mathcal Y}_i$. 
As shown in the previous subsection, the $\mathcal F_{i}$-orbits~$[p]$ of points in $\widehat{\mathcal Y}_i \setminus \mathcal Y_i$ are fixed by the extension of~$h_i^{k'}$ to~$\widehat{\mathcal Y}_i$ for some $k' \ge 1$.
As $p \notin \mathcal Y_i$, it has exactly one direction~$d_p$ in~$\widehat{\mathcal Y}_i$.
This direction's $\mathcal F_{i}$-orbit~$[d_p]$ is also fixed (setwise) by the expanding homothety~$h_i^{k'}$, and~$d_p$ determines a \underline{singular eigenray}~$\rho_p \subset \widehat{\mathcal Y}_i$ of~$[h_i^{k'}]$ based at~$p$.
For any point $q \in {\mathcal Y}_i$, the closed interval $[p,q] \subset \widehat{\mathcal Y}_i$ is a concatenation of an initial segment of the singular eigenray~$\rho_p$ and a closed interval in~${\mathcal Y}_i$;
therefore, closed intervals in~${\mathcal Y}_{i-1}$ are finite concatenations of translates of closed intervals in~${\mathcal Y}_i$ and initial segments of singular eigenrays of~$[h_i^{k'}]$ for some $k' \ge 1$.

Let $\mathcal L_{\mathcal Z}^+[\psi_n] = \mathcal L^+[\tau_n]$ be the $k$-component stable laminations for $[\tau_n^\circ] = [\tau_n]$ and $\oplus_{j=1}^k \delta_j$ the factored $\mathcal F_n$-invariant convex metric on~$\mathcal Y_n$ indexed by components $\Lambda_j^+ \subset \mathcal L_{\mathcal Z}^+[\psi_n]$.
By the inductive description of intervals in~$\mathcal Y$ and the fact~$h_n^k$ is a $\lambda^k$-homothety with respect to each factor~$\delta_j$, we get:~$\delta_j$ equivariantly extends to~$\mathcal Y$; $\delta = \oplus_{j=1}^k \delta_j$ is a factored $\mathcal F$-invariant convex metric on~$\mathcal Y$; and~$h^k$ is a $\lambda^k$-homothety with respect to each factor~$\delta_j$.

\smallskip
The lamination $\mathcal L_{\mathcal Z}^+[\psi_n] \subset \mathbb R(\mathcal Y_n, \delta)$ can be seen as a $(\mathcal Y, \delta)$-lamination since $\mathbb R(\mathcal Y_n, \delta)$ is a closed subspace of $\mathbb R(\mathcal Y, \delta)$.
Note that closed edges of $\mathcal T_n = \mathcal T_n^\circ$ are leaf segments (of~$\mathcal L_{\mathcal Z}^+[\psi_n]$);
thus any closed interval in~$\mathcal T_n^\circ$ is a finite concatenation of leaf segments.
As the equivariant PL-map $\pi_n^\circ \colon (\mathcal T_n^\circ, d^\circ) \to (\mathcal Y_n, \delta)$ is surjective and isometric on leaf segments, we get: 

\begin{lem}\label{lem-leafsegments}
Let~$\tau_n \colon \mathcal T_n \to \mathcal T_n$ be an expanding irreducible train track and~$(\mathcal Y_n, \delta)$ its limit forest.
Any closed interval in~$\mathcal Y_n$ is a finite concatenation of leaf segments of~$\mathcal L^+[\tau_{n}]$.\qed
\end{lem}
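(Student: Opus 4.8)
The plan is to transport the simplicial structure of $\mathcal T_n$ across the surjective equivariant metric PL-map $\pi \colon (\mathcal T_n, d_{\tau_n}) \to (\mathcal Y_n, \delta)$ constructed in Section~\ref{SubsecLimittrees}. First I would record two observations about $\mathcal T_n$ itself. Since $\tau_n$ is an expanding irreducible train track, the eigenline construction of Section~\ref{SubsecLaminations} applies to \emph{any} edge $e \subset \mathcal T_n$: for a suitable $k \ge 1$ the interval $\tau_n^k(e)$ contains at least three translates of $e$, and iterating the resulting partial homothety extends the closed edge to an eigenline of $[\tau_n^k]$, whose closure is a component of $\mathcal L^+[\tau_n]$; hence every closed edge of $\mathcal T_n$ is a leaf segment of $\mathcal L^+[\tau_n]$. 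Since $\mathcal T_n$ is simplicial, every closed interval in $\mathcal T_n$ is a finite concatenation of closed sub-edges, and a nondegenerate closed subinterval of a leaf segment is again a leaf segment, so every closed interval in $\mathcal T_n$ is a finite concatenation of leaf segments.

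Next I would record how $\pi$ acts on leaves. By the discussion at the end of Section~\ref{SubsecLaminations}, $\pi$ restricts to an isometric embedding on each leaf $l$ of $\mathcal L^+[\tau_n]$. Hence $\pi(l)$ is an isometrically embedded copy of $(\mathbb R, d_{\mathrm{std}})$, i.e.\ a line in $\mathcal Y_n$, and in fact $\pi(l) = \pi_*(l)$, since the line $\pi_*(l)$ produced by Corollary~\ref{cor-tightenings}(\ref{cor-tightenings-lineproj}) is already contained in $\pi(l)$. Thus $\pi(l)$ represents the leaf of $\mathcal L^+[\tau_n] \subset \mathbb R(\mathcal Y_n, \delta)$ corresponding to $[l]$, and $\pi$ carries every leaf segment $I \subset l$ to a leaf segment $\pi(I) \subset \pi(l)$ of $\mathcal L^+[\tau_n]$ in $\mathcal Y_n$ (nondegeneracy is preserved because $\pi|_l$ is an isometry).

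With these in hand, the lemma should follow quickly. Given a closed interval $[a,b] \subset \mathcal Y_n$, I would use surjectivity of $\pi$ to pick $p, q \in \mathcal T_n$ with $\pi(p) = a$ and $\pi(q) = b$; since $\pi$ is continuous and $\mathcal Y_n$ is an $\mathbb R$-tree, the connected set $\pi([p,q])$ contains $[a,b] = [\pi(p), \pi(q)]$. Writing $[p,q] = I_1 \cup \dots \cup I_m$ as a finite concatenation of leaf segments gives $[a,b] \subset \pi(I_1) \cup \dots \cup \pi(I_m)$, a finite union of leaf segments of $\mathcal L^+[\tau_n]$. Subdividing $[a,b]$ at the finitely many endpoints of the $\pi(I_j)$ that lie in $[a,b]$ writes $[a,b]$ as a finite concatenation of subintervals, each contained in some $\pi(I_j)$; discarding the degenerate pieces exhibits $[a,b]$ as a finite concatenation of leaf segments of $\mathcal L^+[\tau_n]$, as required.

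I do not expect a genuine obstacle. The two points that call for care are the claim that $\pi$ maps leaf segments \emph{onto} leaf segments rather than merely into a bounded neighbourhood of one --- this is exactly where one uses that $\pi$ is isometric, not just bounded-cancellation-close, on leaves of $\mathcal L^+[\tau_n]$ --- and the routine combinatorial bookkeeping that turns a finite cover of $[a,b]$ by closed leaf segments into a genuine finite concatenation.
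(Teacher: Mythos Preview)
Your proof is correct and follows exactly the paper's approach. The paper's own argument is the single sentence preceding the lemma statement (closed edges of $\mathcal T_n$ are leaf segments, so closed intervals in $\mathcal T_n$ are finite concatenations of leaf segments, and $\pi$ is surjective and isometric on leaf segments); you have simply fleshed out the details, including the final bookkeeping step that the paper leaves implicit.
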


This lemma no longer holds when $n \ge 2$ and we consider closed intervals in~$\widehat{\mathcal Y}_n$.
To account for this, let \underline{$n^{th}$ level leaf blocks} in~$\mathcal Y$ be leaf segments.
By the lemma, any interval of~$\mathcal Y_n$ is a finite concatenation of~$n^{th}$ level leaf blocks.

Inductively define the \underline{$(i-1)^{st}$ level leaf blocks} in~$\mathcal Y~(1 < i \le n)$ to be the $i^{th}$ level leaf blocks or (translates of) closed intervals in singular eigenrays~$\rho \subset \widehat{\mathcal Y}_i$ of $[h_{i}]$-iterates.
By the earlier description of intervals and induction hypothesis, any interval of~$\mathcal Y_{i-1}$ is a finite concatenation of $(i-1)^{st}$ level leaf blocks.
The $1^{st}$ level leaf blocks are simply \underline{leaf blocks} of~$\mathcal L_{\mathcal Z}^+[\psi_{n}]$.
Altogether, we have a generalization of Lemma~\ref{lem-leafsegments} in terms of leaf blocks:

\begin{lem}\label{lem-leafblocks}
Let~$(\tau_i \colon \mathcal T_i \to \mathcal T_i)_{i=1}^n$ be a descending sequence of irreducible train tracks for an automorphism $\psi \colon \mathcal F \to \mathcal F$ and~$(\mathcal Y, \delta)$ the corresponding limit forest.
Any closed interval in~$\mathcal Y$ is a finite concatenation of leaf blocks of~$\mathcal L_{\mathcal Z}^+[\psi_{n}]$, where $\mathcal Z \defeq \mathcal F[\mathcal T_n]$. \qed
\end{lem}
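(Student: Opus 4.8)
The plan is to prove, by downward induction on~$i$ from~$n$ to~$1$, the refined statement that every closed interval in the characteristic subforest $(\mathcal Y_i, \delta)$ of $(\mathcal Y, \delta)$ for~$\mathcal F_i$ is a finite concatenation of $i^{th}$ level leaf blocks. The lemma is then the case $i = 1$, since $\mathcal Y = \mathcal Y_1$ and $1^{st}$ level leaf blocks are by definition exactly the leaf blocks of~$\mathcal L^+_{\mathcal Z}[\psi_n]$.

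For the base case $i = n$ I would invoke Lemma~\ref{lem-leafsegments}, recalling its mechanism: since $\tau_n\colon \mathcal T_n \to \mathcal T_n$ is an expanding train track, all of its iterates restrict to pretree-embeddings on closed edges, so iterating an expanding edge produces an eigenline of some~$[\tau_n^k]$ having that edge as a leaf segment; hence every closed edge of $\mathcal T_n^\circ = \mathcal T_n$ is a leaf segment of~$\mathcal L^+[\tau_n]$, and every closed interval of~$\mathcal T_n^\circ$ — a finite concatenation of (sub)edges — is a finite concatenation of leaf segments. The constructed equivariant metric PL-map $\pi_n^\circ\colon (\mathcal T_n^\circ, d^\circ) \to (\mathcal Y_n, \delta)$ is surjective and restricts to an isometric embedding on each leaf segment, so (using $\mathcal L^+[\tau_n]\subset\mathbb R(\mathcal Y_n,\delta)$ via the lifting embedding) it carries leaf segments to leaf segments of the corresponding lamination in~$\mathbb R(\mathcal Y_n,\delta)$. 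Lifting arbitrary $a, b \in \mathcal Y_n$ to $\tilde a, \tilde b \in \mathcal T_n^\circ$, the image $\pi_n^\circ([\tilde a, \tilde b])$ contains~$[a,b]$ and is a finite concatenation of leaf segments, hence so is the sub-interval~$[a,b]$ (a nondegenerate closed interval of a line representing a leaf is again a leaf segment).

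For the inductive step, assume the claim for~$\mathcal Y_i$ with $1 < i \le n$ and let~$\widehat{\mathcal Y}_i$ be the closure of~$\mathcal Y_i$ in~$\mathcal Y_{i-1}$. By the blow-up-and-collapse description of~$\mathcal Y_{i-1}$ recalled in Section~\ref{SubsecLimittrees2} (a graph of actions with degenerate skeleton whose vertex actions are the translates of~$\widehat{\mathcal Y}_i$), any two translates of~$\widehat{\mathcal Y}_i$ either coincide or meet in at most a point, and every closed interval of~$\mathcal Y_{i-1}$ is a finite concatenation of closed intervals each contained in a single translate of~$\widehat{\mathcal Y}_i$; so it suffices to decompose a closed interval $[p,q]$ lying inside~$\widehat{\mathcal Y}_i$. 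Here I use that $\widehat{\mathcal Y}_i \setminus \mathcal Y_i$ consists of points with a unique direction in~$\widehat{\mathcal Y}_i$, each such point~$p$ determining a singular eigenray~$\rho_p$ of some~$[h_i^{k'}]$ based at~$p$. In particular $\mathcal Y_i$ is convex in~$\widehat{\mathcal Y}_i$, and according to which of $p, q$ lie in~$\mathcal Y_i$ the interval $[p,q]$ is a concatenation of an initial segment of~$\rho_p$ (when $p \notin \mathcal Y_i$), a closed interval in~$\mathcal Y_i$, and an initial segment of~$\rho_q$ (when $q \notin \mathcal Y_i$). By definition, initial segments of singular eigenrays of~$[h_i]$-iterates are $(i-1)^{st}$ level leaf blocks, and by the inductive hypothesis closed intervals in~$\mathcal Y_i$ are finite concatenations of $i^{th}$ level leaf blocks, hence of $(i-1)^{st}$ level leaf blocks a fortiori. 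Thus $[p,q]$, and therefore every closed interval of~$\mathcal Y_{i-1}$, is a finite concatenation of $(i-1)^{st}$ level leaf blocks, completing the induction and the lemma.

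The main obstacle is not the bookkeeping above but the structural input it rests on: the description of~$\widehat{\mathcal Y}_i$ as~$\mathcal Y_i$ together with \emph{terminal} singular eigenrays, and of~$\mathcal Y_{i-1}$ as a graph of actions with degenerate skeleton over these. Showing that the attaching points of the (to-be-collapsed) $\mathcal T_{i-1}$-edges stabilize to a fixed point of~$h_i^{k'}$ in the metric completion~$(\overline{\mathcal Y}_i, \delta)$ — so that each point of $\widehat{\mathcal Y}_i \setminus \mathcal Y_i$ is genuinely terminal with a well-defined singular eigenray — is the delicate part; I expect to handle it via the contraction-mapping argument in the first half of Proposition~\ref{prop-limitturns}, applied to an iterate~$\tau_{i-1}^{k'}$ fixing an edge and one of its endpoints, together with the identification of $(\mathcal Y, \delta)$ as the relevant graph of actions supplied by Proposition~\ref{prop-limittree}.
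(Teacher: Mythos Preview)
Your proposal is correct and follows essentially the same approach as the paper: a downward induction on~$i$ with base case Lemma~\ref{lem-leafsegments}, and inductive step using the graph-of-actions description of~$\mathcal Y_{i-1}$ to reduce to intervals in~$\widehat{\mathcal Y}_i$, then splitting these into initial segments of singular eigenrays plus intervals in~$\mathcal Y_i$. The paper's own argument is in fact exactly the two paragraphs preceding the lemma statement (hence the \qed), relying on the structural description of intervals established just before; your identification of the contraction-mapping input for the attaching points is likewise what the paper uses in the ``Decomposing limit forests'' subsection.
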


\subsection{Stable laminations (2)}\label{SubsecLaminations2}

Fix an automorphism $\psi\colon \mathcal F \to \mathcal F$ with an invariant proper free factor system~$\mathcal Z'$.
Let $\left(\tau_i \colon \mathcal T_i \to \mathcal T_i\right)_{i=1}^n$ be a descending sequence of irreducible train tracks rel.~$\mathcal Z'$ with $\lambda[\tau_n] > 1$, $(\mathcal Y,\delta)$ be the limit forest for~$[\tau_i]_{i=1}^n$,~$\mathcal T^\circ$ an equivariant blow-up of free splittings $(\mathcal T_i)_{i=1}^n$ with eigenmetric~$d^\circ$,
and $\mathcal Z \defeq \mathcal F[\mathcal T^\circ]$. 
The characteristic convex subsets of~$\mathcal T^\circ$ for~$\mathcal F_n \defeq \mathcal F[\mathcal T_{n-1}]$ are identified with the free splitting~$\mathcal T_n$.

\begin{claim}\label{claim-anystable}
The stable laminations~$\mathcal L^+_{\mathcal Z}[\psi_{n}]$ for~$[\psi_n]$ in $\mathbb R(\mathcal F_{n}, \mathcal Z)$ are identified with the stable laminations~$\mathcal L^+_{\mathcal Z}[\psi]$ for~$[\psi]$ in $\mathbb R(\mathcal F, \mathcal Z)$.
\end{claim}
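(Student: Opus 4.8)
Since $\tau_n\colon\mathcal T_n\to\mathcal T_n$ is an expanding irreducible train track for $\psi_n$ whose free factor system is $\mathcal Z$, Lemma~\ref{lem-coordinatefreestable} already identifies $\mathcal L^+[\tau_n]$ with $\mathcal L^+_{\mathcal Z}[\psi_n]$ inside $\mathbb R(\mathcal F_n,\mathcal Z)$. So the plan is to prove that $\mathcal L^+[\tau_n]$, pushed forward along the canonical closed embedding $\mathbb R(\mathcal F_n,\mathcal Z)\hookrightarrow\mathbb R(\mathcal F,\mathcal Z)$ induced by the inclusion $\mathcal T_n\subset\mathcal T^\circ$, equals the system of stable laminations $\mathcal L^+_{\mathcal Z}[\psi]$ for $[\psi]$ rel.~$\mathcal Z$. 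Throughout I will realize $\psi_*$ on $\mathbb R(\mathcal F,\mathcal Z)=\mathbb R(\mathcal T^\circ)$ by the blow-up representative $\tau^\circ\colon\mathcal T^\circ\to\mathcal T^\circ$; after replacing $\psi$ inside its outer class if necessary, $\tau^\circ$ restricts to $\tau_n$ on the characteristic subtree $\mathcal T_n$, so $\psi_*$ restricts to $\psi_{n*}=\tau_{n*}$ on the closed $\psi_*$-invariant subspace $\mathbb R(\mathcal F_n,\mathcal Z)$, and the analogue of the Claim preceding Lemma~\ref{lem-coordinatefreestable} lets quasiperiodicity pass between $\mathbb R(\mathcal F,\mathcal Z)$ and this subspace.

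\textbf{The inclusion $\mathcal L^+[\tau_n]\subseteq\mathcal L^+_{\mathcal Z}[\psi]$.} Let $[l]$ be an eigenline of $[\tau_n^k]$, realized as an increasing union of leaf segments $\tau_n^{km}(J_0)~(m\ge1)$; it is quasiperiodic, so its closure $\Lambda^+$ is the same whether computed in $\mathbb R(\mathcal F_n,\mathcal Z)$ or in $\mathbb R(\mathcal F,\mathcal Z)$. I will exhibit a $\psi_*^k$-attracting neighbourhood of $[l]$ in $\mathbb R(\mathcal F,\mathcal Z)$. After enlarging $k$ and passing to a leaf segment $J\subset l$ that is long relative to the fixed constant $C[\tau^{\circ k}]$, let $V$ be the open set $U[J]$ of $\mathcal F$-orbits of lines of $\mathcal T^\circ$ that contain a translate of $J$. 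For any line of $\mathcal T^\circ$ through $J$, tightening its $\tau^{\circ k}$-image produces no cancellation inside $\tau^{\circ k}(J)=\tau_n^k(J)$ because $J$ is a legal path for the train track $\tau_n$; bounded cancellation removes at most $C[\tau^{\circ k}]$ near each of the two junctions, and the uniform quasiperiodicity of $\mathcal L^+[\tau_n]$ (Lemma~\ref{lem-stableminperf}) ensures a translate of $J$ survives in the remainder of $\tau_n^k(J)$. Hence $\tau^{\circ k}_*(V)\subseteq V$ and, more generally, $\tau^{\circ km}_*(V)\subseteq U[\tau_n^{km}(J)]$; since the segments $\tau_n^{km}(J)$ exhaust $l$, the family $\{\tau^{\circ km}_*(V)\}_{m\ge1}$ is a neighbourhood basis for $[l]$ (which $\tau^{\circ k}_*$ fixes as $[l]$ is an eigenline). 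Thus $[l]$ is quasiperiodic in $\mathbb R(\mathcal F,\mathcal Z)$ with a $\psi_*^k$-attracting neighbourhood, so $\Lambda^+$ is a stable lamination for $[\psi]$ rel.~$\mathcal Z$.

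\textbf{The inclusion $\mathcal L^+_{\mathcal Z}[\psi]\subseteq\mathcal L^+[\tau_n]$.} Let $\Lambda$ be a stable lamination for $[\psi]$ rel.~$\mathcal Z$, the closure of a quasiperiodic line $[l']\in\mathbb R(\mathcal F,\mathcal Z)$ with a $\psi_*^k$-attracting neighbourhood $U$. Choose a $\mathcal T^\circ$-loxodromic conjugacy class $[x]$ whose axis lies in $U$; exactly as in the sketch of Lemma~\ref{lem-coordinatefreestable}, the attracting property forces $[x]$ to grow exponentially rel.~$\mathcal Z$, and $[l']$ is a weak $\psi_*^k$-limit of the $\mathcal T^\circ$-axes of the $\psi^{km}(x)$. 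I now invoke the structure of the blow-up: the $n^{th}$ stratum of $\mathcal T^\circ$ is exponential and the lower strata are polynomial, and Proposition~\ref{prop-limitloxodromics} applied inside the $\mathcal T_n$-translates, together with the iterated-turn analysis of Proposition~\ref{prop-limitturns} for the polynomial strata, shows that the $\mathcal T_n$-crossings in a fundamental domain of $\psi^{km}(x)$ have $d^\circ$-length $\asymp\lambda^{km}$ while the polynomial contribution is $o(\lambda^{km})$. Hence after rescaling the polynomial pieces vanish, so every long sub-interval of the quasiperiodic line $[l']$ lies inside a single $\mathcal T_n$-crossing and is a concatenation of $\tau_n$-edges, i.e.\ a leaf segment of $\mathcal L^+[\tau_n]$ (cf.\ Lemma~\ref{lem-leafblocks}). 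Therefore $[l']$ lies in a single translate of $\mathcal T_n$, so $[l']\in\mathbb R(\mathcal F_n,\mathcal Z)$; there $U\cap\mathbb R(\mathcal F_n,\mathcal Z)$ is a $\psi_{n*}^k$-attracting neighbourhood of $[l']$ (intersect the neighbourhood basis $\{\psi_*^{km}(U)\}$ with the closed $\psi_*$-invariant subspace) and $[l']$ is quasiperiodic, so $[l']$ is a leaf of $\mathcal L^+_{\mathcal Z}[\psi_n]=\mathcal L^+[\tau_n]$. Thus $\Lambda\subseteq\mathcal L^+[\tau_n]$, completing the identification.

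\textbf{Main obstacle.} The delicate step is the localization in the second inclusion: turning ``$\psi^{km}(x)$ grows exponentially'' into ``the weak limit $[l']$ is carried by a single $\mathcal T_n$-translate''. This needs a quantitative estimate showing that the $d^\circ$-length contributed by the polynomial strata in a fundamental domain of $\psi^{km}(x)$ is $o(\lambda^{km})$, so that it is invisible in the rescaled weak limit; this is exactly what the iterated-turn growth estimates of Proposition~\ref{prop-limitturns} provide. A secondary point, in the first inclusion, is verifying that a translate of the leaf segment $J$ really survives tightening of the $\tau^{\circ k}$-image of an arbitrary line of $\mathcal T^\circ$ through $J$ — this uses legality of leaf segments for $\tau_n$ to preclude internal cancellation together with the uniform quasiperiodicity constant of $\mathcal L^+[\tau_n]$ to absorb the bounded end-effects, which is why $k$ and $J$ must be chosen with care.
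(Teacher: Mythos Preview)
Your argument works in outline but is far more elaborate than the paper's, and one step is not quite justified as written.

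\textbf{Comparison with the paper.} The paper's sketch is a one-liner: since $\lambda[\tau_i]=1$ for $i<n$, collapsing $\mathcal F\cdot\mathcal T_n$ in $\mathcal T^\circ$ yields a free splitting of $\mathcal F$ with free factor system $\mathcal F_n$ on which $\tau^\circ$ induces a \emph{simplicial automorphism}. A simplicial automorphism permutes finitely many edge orbits, so no birecurrent line in $\mathbb R(\mathcal F,\mathcal F_n)$ can have a $\psi_*^k$-attracting neighbourhood. Hence any quasiperiodic line in $\mathbb R(\mathcal F,\mathcal Z)$ with an attracting neighbourhood must lie in the closed subspace $\mathbb R(\mathcal F_n,\mathcal Z)$, and there it is a stable lamination for $[\psi_n]$. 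This completely bypasses growth estimates, bounded cancellation, and the iterated-turn machinery you invoke.

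\textbf{On your second inclusion.} Your localization step has a gap. You write that ``after rescaling the polynomial pieces vanish, so every long sub-interval of $[l']$ lies inside a single $\mathcal T_n$-crossing.'' But weak limits involve no rescaling: $[l']$ being a weak limit of the axes of $\psi^{km}(x)$ means every finite interval of $l'$ eventually appears in those axes, full stop. The correct deduction uses quasiperiodicity of $[l']$: if $l'$ crossed a polynomial-stratum edge, there would be an $L$ such that every length-$L$ interval of $l'$ contains a polynomial edge; then (via the attracting neighbourhood) the axis of $\psi^{km}(x)$ would eventually have a polynomial edge in every length-$L$ window, forcing at least $\|\psi^{km}(x)\|_{d^\circ}/L$ polynomial edges per fundamental domain --- exponential, contradicting the polynomial count. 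Also, Propositions~\ref{prop-limitloxodromics} and~\ref{prop-limitturns} are stated for a single expanding train track; the polynomial bound on lower strata in the descending-sequence blow-up that you need is really the content of the later Theorem~\ref{thm-limitturns} (though it follows directly from the fact that each $\tau_i$, $i<n$, is a simplicial automorphism, so does not create circularity).

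Your first inclusion is fine; it is just the observation (left implicit in the paper) that attracting neighbourhoods in $\mathbb R(\mathcal F_n,\mathcal Z)$ lift to attracting neighbourhoods in $\mathbb R(\mathcal F,\mathcal Z)$ along the closed embedding.
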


\noindent Note that $\mathcal L_{\mathcal Z}^+[\psi] = \mathcal L_{\mathcal Z}^+[\psi_n]$ is in the subspace $\mathbb R(\mathcal Y_n, \delta) \subset \mathbb R(\mathcal Y, \delta) \subset \mathbb R(\mathcal F, \mathcal Z)$.

\begin{proof}[Sketch of proof]
Since $\lambda[\tau_i]=1$ for $i < n$, no quasiperiodic line in $\mathbb R(\mathcal F, \mathcal F_{n})$ has a $\psi_*^k$-attracting neighbourhood for any $k \ge 1$.
Thus any stable lamination for~$[\psi]$ in $\mathbb R(\mathcal F, \mathcal Z)$ is contained in $\mathbb R(\mathcal F_{n}, \mathcal Z)$ and corresponds to a stable lamination for~$[\psi_{n}]$.
\end{proof}

We generalize Proposition~\ref{prop-limitturns} by characterizing limits of iterated turns over~$\mathcal T^\circ$:

\begin{thm}\label{thm-limitturns}
Let~$\psi\colon \mathcal F \to \mathcal F$ be an automorphism with an invariant proper free factor system~$\mathcal Z'$, $\left(\tau_i \colon \mathcal T_i \to \mathcal T_i\right)_{i=1}^n$ a descending sequence of irreducible train tracks rel.~$\mathcal Z'$ with $\lambda[\tau_n]>1$, $(\mathcal Y, \delta)$ the limit forest for~$[\tau_i]_{i=1}^n$, and~$\mathcal T^\circ$ an equivariant blow-up of free splittings~$(\mathcal T_i)_{i=1}^n$ with eigenmetric~$d^\circ$.
Choose a nondegenerate component~$T^\circ \subset \mathcal T^\circ$, corresponding components $F \subset \mathcal F$, $Y \subset \mathcal Y$, and a positive iterate~$\psi^k$ that preserves~$F$.
Let $\tilde h \colon (Y, \delta) \to (Y, \delta)$ be the $\varphi$-equivariant $\lambda$-homothety, where~$\varphi$ is in the outer class~$[\left.\psi^k\right|_F]$ and $\lambda \defeq (\lambda[\tau_n])^k$.
Finally, for $\iota = 1,2$, pick $p_\iota \in T^\circ$ and $x_\iota \in F$.

\smallskip
The point~$p_{\iota,m} \defeq \varphi^{-1}(x_\iota)\cdots \varphi^{-m}(x_\iota) \cdot p_\iota$ in $(T^\circ\varphi^m, \lambda^{-m}d^\circ)$ converges to~$\star_\iota$ in $(\overline{Y}, \delta)$ as $m \to \infty$, where~$\star_\iota$ is the unique fixed point of $x_\iota^{-1} \cdot \tilde h$ in the metric completion $(\overline Y, \delta)$.


If $x_1^{-1}x_2$ fixes~$\star_1$, then $\star_1 = \star_2$ and the term $[p_{1,m}, p_{2,m}]~(m \ge 0)$ of the iterated turn $(p_1, p_2: x_1, x_2; \varphi)_{T^\circ}$ has $d^\circ$-length $ \le \alpha(m)$ for some (degree~$n$) polynomial~$\alpha$.
Otherwise, $\star_1 \neq \star_2$ and the iterated turn weakly limits to a component of~$\mathcal L_{\mathcal Z}^+[\psi]$, where $\mathcal Z \defeq \mathcal F[\mathcal T^\circ]$.
\end{thm}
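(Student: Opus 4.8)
The plan is to run the argument of Proposition~\ref{prop-limitturns} with the blow-up data $(\mathcal T^\circ, d^\circ)$, $\tau^\circ$, $\pi^\circ$, $(\mathcal Y, \delta)$, $\tilde h$ substituted for $(\mathcal T, d_\tau)$, $\tau$, $\pi$, $(\mathcal Y_\tau, d_\infty)$, $\tilde h$, and to absorb the extra polynomial strata by inducting on the length $n$ of the descending sequence (the base case $n=1$ being Proposition~\ref{prop-limitturns}). First I would prove the convergence statement verbatim: the composition $\pi^\circ_m \colon (T^\circ\varphi^m, \lambda^{-m}d^\circ) \xrightarrow{\pi^\circ} (Y\varphi^m, \lambda^{-m}\delta) \xrightarrow{\tilde h^{-m}} (Y, \delta)$ is an equivariant metric PL-map, $\pi^\circ_m(p_{\iota,m}) = (x_\iota^{-1}\cdot\tilde h)^{-m}(\pi^\circ(p_\iota))$, and since $x_\iota^{-1}\cdot\tilde h$ is a $\lambda$-homothety of $(Y,\delta)$ with $\lambda>1$ its inverse is a contraction of the complete space $(\overline Y,\delta)$, so $\pi^\circ_m(p_{\iota,m})$ converges to the unique fixed point $\star_\iota$ of $x_\iota^{-1}\cdot\tilde h$ (and $x_1^{-1}x_2\cdot\star_1 = \star_1$ iff $\star_1 = \star_2$); covering $[p_{1,m},p_{2,m}]$ by $2m+1$ intervals and using that $\tau^\circ$ is $\lambda$-Lipschitz with respect to $d^\circ$ yields $\lim_{m} \lambda^{-m}d^\circ(p_{1,m},p_{2,m}) = \delta(\star_1,\star_2)$.

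For the case $\star_1 \neq \star_2$: then $\delta(\star_1,\star_2)>0$, so $[\star_1,\star_2]$ has a nondegenerate subinterval $[a,b]$ with $a,b\in\mathcal Y$ and $\{a,b\}\cap\{\star_1,\star_2\}=\emptyset$; since $\mathcal Y$ is obtained by gluing copies of $(\mathcal Y_n,\delta)$ at points (all polynomial strata collapse in the graph-of-actions description) and intervals in $\mathcal Y_n$ are finite concatenations of leaf segments of $\mathcal L^+[\tau_n]$ by Lemma~\ref{lem-leafsegments} (cf.\ Lemma~\ref{lem-leafblocks}), $[a,b]$ contains a leaf segment of $\mathcal L^+_{\mathcal Z}[\psi] = \mathcal L^+[\tau_n]$ of positive $\delta$-length $\ell_1$. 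Because $\pi^\circ_m(p_{\iota,m})\to\star_\iota$ with $\star_1\neq\star_2$, for $m\gg1$ the interval $[\pi^\circ_m(p_{1,m}),\pi^\circ_m(p_{2,m})]$ contains $[a,b]$; untwisting by $\tilde h^m$ (a $\lambda$-homothety carrying leaf segments to leaf segments) then shows $[\pi^\circ(p_{1,m}),\pi^\circ(p_{2,m})]\subset(\mathcal Y,\delta)$ contains a leaf segment of $\delta$-length $\lambda^m\ell_1$, and since $\pi^\circ$ is isometric on leaf segments of $\mathcal L^+[\tau_n]$ with cancellation constant $C[\pi^\circ]$, the interval $[p_{1,m},p_{2,m}]\subset\mathcal T^\circ$ contains a leaf segment of $\mathcal L^+[\tau_n]$ of $d^\circ$-length at least $\lambda^m\ell_1 - 2C[\pi^\circ]\to\infty$. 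Thus the iterated turn has arbitrarily long leaf segments of $\mathcal L^+[\tau_n]$; as there are finitely many components, each minimal (Lemma~\ref{lem-stableminperf}), a subsequence on which these long segments lie in one component exhibits the iterated turn as weakly limiting to a component of $\mathcal L^+[\tau_n] = \mathcal L^+_{\mathcal Z}[\psi]$ (Claim~\ref{claim-anystable}).

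For the case $\star_1=\star_2=:\star$: replacing the iterated turn by its normal form I may assume $x_1=\epsilon$, so $p_{1,m}=p_1$ is constant, $w:=x_1^{-1}x_2$ fixes $\star$ by hypothesis, and the $m$-th term is a translate of $[p_1, u_m\cdot p_2]$ with $u_m = \varphi^{m-1}(w)\cdots\varphi(w)\,w$ ($\varphi$ now the twisted automorphism). Here $\star\in(\overline Y,\delta)$ is fixed by $G:=\operatorname{Stab}_F(\star)$; since a loxodromic isometry of an $\mathbb R$-tree fixes no completion point, every element of $G$ is $\mathcal Y$-elliptic, hence grows polynomially rel $\mathcal Z$ (by the limit forest's defining property, generalizing Proposition~\ref{prop-limitloxodromics}, together with the fact that $\lambda[\tau_n]$ is the only exponential rate rel $\mathcal Z$ because the descending sequence is maximal), necessarily with degree $<n$ as there are $n-1$ polynomial strata (cf.\ \cite{Mut21b}). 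A sub-additive estimate of $d^\circ(p_0, u_m\cdot p_0)$ in the metric free splitting $(\mathcal T^\circ,d^\circ)$, which detects growth rel $\mathcal Z=\mathcal F[\mathcal T^\circ]$ and in which $w\in G$ has degree-$(<n)$ polynomial growth, then bounds it by $\sum_{j=0}^{m-1}O(j^{n-1}) = O(m^n)$, so $d^\circ(p_{1,m},p_{2,m}) = O(m^n)$, as required.

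I expect the main obstacle to lie in this last case: making the sub-additive estimate for $d^\circ(p_0, u_m\cdot p_0)$ rigorous --- controlling not only the translation lengths $\|\varphi^j(w)\|_{d^\circ}$ but also the basepoint-to-axis contributions so that the accumulated displacement is genuinely polynomial of degree $n$ --- and establishing the "$\operatorname{Stab}_F(\star)$ grows polynomially rel $\mathcal Z$ with degree $<n$" statement uniformly (including when $\star\notin\mathcal Y$), which rests on the defining properties of the limit forest imported from \cite{Mut21b}. A robust fallback for the $\star_1=\star_2$ case is to avoid the algebraic estimate entirely and instead run the nested-iterated-turn analysis of Section~\ref{Subsubsec-nestedturns} over $\mathcal T^\circ$, reducing to an iterated turn over $T^\circ(G)$ for the strictly shorter descending sequence carried by $G$ and inducting, with the topmost (simplicial) stratum contributing the extra degree.
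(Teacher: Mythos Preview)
Your convergence argument is fine and matches the paper's. The difficulties are in the two cases, where your approach diverges from the paper's and both diverge in ways that create real problems.

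\textbf{The $\star_1=\star_2$ case: circularity.} Your primary argument needs that every element of $\operatorname{Stab}_F(\star)$ is $\mathcal Y$-elliptic and hence $[\psi]$-grows polynomially rel.~$\mathcal Z$ with degree $<n$. But that equivalence is exactly Theorem~\ref{thm-limitloxodromics}, whose proof (Cases~1 and~2 there) invokes Theorem~\ref{thm-limitturns}. So the argument is circular as stated, and the appeal to \cite{Mut21b} does not save it: the degree-$(<n)$ polynomial bound for $\mathcal Y$-elliptics is proven \emph{in this paper}, downstream of the theorem you are proving. Even granting the growth bound, the sub-additive displacement estimate for $u_m=\varphi^{m-1}(w)\cdots w$ is, as you note, not straightforward.

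\textbf{The $\star_1\neq\star_2$ case: the lifting step.} Your step ``$\pi^\circ$ is isometric on leaf segments, so $[p_{1,m},p_{2,m}]$ contains a leaf segment of length $\lambda^m\ell_1-2C[\pi^\circ]$'' is not justified. Knowing that $[\pi^\circ(p_{1,m}),\pi^\circ(p_{2,m})]$ contains a long leaf segment~$\sigma$ in some $\mathcal Y_n$-component does not directly produce a single long leaf segment in $[p_{1,m},p_{2,m}]\subset\mathcal T^\circ$: the interval in~$\mathcal T^\circ$ passes through polynomial strata edges and possibly many $\mathcal T_n$-components, and bounded cancellation alone does not tell you that the portion mapping onto~$\sigma$ is a single leaf segment rather than a concatenation of short ones.

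\textbf{What the paper does instead.} The paper proves one structural counting fact that handles both cases uniformly: because $\tilde\tau^\circ$ is built from a descending sequence with $\lambda[\tau_i]=1$ for $i<n$, each of the $2m+1$ covering intervals $[\tilde\tau^{\circ(l-1)}(x_\iota\cdot p_\iota),\tilde\tau^{\circ l}(p_\iota)]$ and $[\tilde\tau^{\circ m}(p_1),\tilde\tau^{\circ m}(p_2)]$ is covered by at most $\alpha(l)$ polynomial-strata edges and leaf segments of $\mathcal L_{\mathcal Z}^+[\psi]$, where $\alpha$ has degree $n-1$. Summing gives a degree-$n$ polynomial bound~$\beta(m)$ on the number of such pieces covering $[p_{1,m},p_{2,m}]$. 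Then: if $\star_1=\star_2$, the contrapositive argument from Proposition~\ref{prop-limitturns} (verbatim, now using $C[\pi^\circ]$) gives a \emph{uniform} bound~$B$ on the $d^\circ$-length of every leaf segment appearing, so the total length is $\le \beta(m)B$; if $\star_1\neq\star_2$, the total $d^\circ$-length exceeds $\lambda^m L$ while the polynomial strata contribute at most $\beta(m)B'$, so pigeonhole produces a leaf segment of length $\ge \frac{\lambda^m L-\beta(m)B'}{\beta(m)}\to\infty$, and quasiperiodicity finishes. This avoids both the circularity and the lifting issue. Your fallback idea (induct via nested iterated turns) is in the right spirit but is more work than the direct count.
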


\noindent 
An iterated turn $[p_{1,m}, p_{2,m}]_{m \ge 0}$ weakly limits to a component~$\Lambda^+ \subset \mathcal L_{\mathcal Z}^+[\psi]$ if the term $[p_{1,m}, p_{2,m}]$ contains a leaf segment of~$\Lambda^+$ with arbitrarily large $d^\circ$-length as $m \to \infty$.

\begin{proof}[Sketch of proof]
Let $\tilde \tau^{\circ} \colon (T^\circ, d^\circ) \to (T^\circ, d^\circ)$ be the $\varphi$-equivariant $\lambda$-Lipschitz topological representative induced by the irreducible train tracks~$(\tau_i)_{i=1}^n$ and $\pi^\circ \colon (T^\circ, d^\circ) \to (Y, \delta)$ the equivariant metric map constructed using $\tilde \tau^\circ$-iteration.
Even though~$\pi^\circ$ may fail to be a PL-map, it still has a cancellation constant $C[\pi^\circ] \ge 0$ as a limit of equivariant metric maps with uniformly bounded cancellation constants.
The proof of the first part is the same as in Proposition~\ref{prop-limitturns} using~$\pi^\circ$,~$\tilde \tau^{\circ}$, and the $\varphi$-equivariant $\lambda$-homothety~$\tilde h$.

The interval $[p_{1,m}, p_{2,m}] \subset T^\circ$, a term in the sequence $(p_1, p_2:x_1,x_2;\varphi)_{T^\circ}$, is covered by certain $2m+1$ intervals as in the proof of Proposition~\ref{prop-limitturns}.
Since~$\tilde \tau^{\circ}$ is induced by a descending sequence $(\tau_i)_{i=1}^n$ of irreducible train tracks, the intervals $[\tilde \tau^{\circ (l-1)}(x_1 \cdot p_1), \tilde \tau^{\circ l}(p_1)]$, $[\tau^{\circ l}(p_1), \tilde \tau^{\circ l}(p_2)]$, and $[\tilde \tau^{\circ l}(p_2), \tilde \tau^{\circ (l-1)}(x_2 \cdot p_2)]$ are covered by~$\alpha(l)$ polynomial strata edges and leaf segments (of~$\mathcal L_{\mathcal Z}^+[\psi]$) for some degree~$(n-1)$ polynomial~$\alpha$.
So the interval $[p_{1,m}, p_{2,m}]$ is covered by $\alpha(m) + \sum_{l=1}^{m} 2\alpha(l)$ polynomial strata edges and leaf segments (of~$\mathcal L_{\mathcal Z}^+[\psi]$).
Note that $\alpha(m) + \sum_{l=1}^{m} 2\alpha(l) \le \beta(m)$ for some degree~$n$ polynomial~$\beta$.

\smallskip
Assume $\star_1 = \star_2$, where $\star_\iota$ is the unique fixed point of $\tilde h_\iota \defeq x_\iota^{-1} \cdot \tilde h$ in metric completion $(\overline Y, \delta)$ for $\iota = 1,2$.
The proof given in Proposition~\ref{prop-limitturns} implies there is a uniform bound on the $d^\circ$-length of leaf segments in $[p_{1,m}, p_{2,m}]$.
Consequently, the $d^\circ$-length of $[p_{1,m}, p_{2,m}]$ is $\le \beta(m) B$ for some constant $B \ge 1$.

Assume $\star_1 \neq \star_2$.
Set $L \defeq \frac{1}{2} \delta(\star_1, \star_2) > 0$;
then $\delta(\tilde h_1^{-m}(\pi^\circ(p_1)), \tilde h_2^{-m}(\pi^\circ(p_2))) > L$ 
and $d^\circ(p_{1,m}, p_{2,m}) > \lambda^{m} L$ for $m \gg 1$.
The contribution of polynomial strata to the $d^\circ$-length of $[p_{1,m}, p_{2,m}]$ is at most~$\beta(m) B'$ for some constant $B' \ge 1$;
the exponential stratum edges intersecting the interval are covered by~$\beta(m)$ leaf segments.
By the pigeonhole principle, the interval $[p_{1,m}, p_{2,m}]$, a term in the iterated turn $(p_1, p_2: x_1, x_2; \varphi)_{T^\circ}$, has a leaf segment of $d^\circ$-length $\ge \frac{\lambda^m L - \beta(m)B'}{\beta(m)} \gg 1$.
Quasiperiodicity implies the iterated turn weakly limits to a component of~$\mathcal L_{\mathcal Z}^+[\psi]$.
\end{proof}

\begin{rmk}
The argument given in Subsection~\ref{Subsubsec-nestedturns} applies in this general context involving a descending sequence of irreducible train tracks; 
it describes how an iterated turn over~$\mathcal F$ determines (nested) iterated turns over~$\mathcal G[\mathcal Y]$.
\end{rmk}

As in Proposition~\ref{prop-limitloxodromics}, we can characterize the elements in~$\mathcal F$ that are $\mathcal Y$-loxodromic:


\begin{thm}\label{thm-limitloxodromics}
Let~$\psi\colon \mathcal F \to \mathcal F$ be an automorphism with an invariant proper free factor system~$\mathcal Z'$, $\left(\tau_i \colon \mathcal T_i \to \mathcal T_i\right)_{i=1}^n$ a descending sequence of irreducible train tracks rel.~$\mathcal Z'$ with $\lambda \defeq \lambda[\tau_n] > 1$, $(\mathcal Y, \delta)$ the limit forest for~$[\tau_i]_{i=1}^n$, $\mathcal T^\circ$ an equivariant blow-up of free splittings~$(\mathcal T_i)_{i=1}^n$, 
and $\mathcal Z \defeq \mathcal F[\mathcal T^\circ]$.

If $x \in \mathcal F$ is a $\mathcal T^\circ$-loxodromic element, then the following statements are equivalent:
\begin{enumerate}
\item\label{thm-limitlox-cond-lox} the element~$x$ is $\mathcal Y$-loxodromic;
\item\label{thm-limitlox-cond-exp} the element~$x$ $[\psi]$-grows exponentially rel.~$\mathcal Z$ with rate~$\lambda$; and
\item\label{thm-limitlox-cond-lim} the axis for the conjugacy class~$[x]$ in $\mathbb R(\mathcal F, \mathcal Z)$ weakly $\psi_*$-limits to~$\mathcal L^+_{\mathcal Z}[\psi]$.
\end{enumerate}
The restriction of~$\psi$ to the $[\psi]$-invariant subgroup system~$\mathcal G[\mathcal Y]$ of $\mathcal Y$-point stabilizers is polynomially growing rel.~$\mathcal Z$ with degree~$< n$.
\end{thm}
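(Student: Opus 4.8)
\textbf{Proof plan for Theorem~\ref{thm-limitloxodromics}.}
The plan is to reduce each statement to the already-established case of a single expanding irreducible train track (Proposition~\ref{prop-limitloxodromics}) together with the iterated-turn analysis of Theorem~\ref{thm-limitturns}, using the blow-up structure of $\mathcal T^\circ$ and $(\mathcal Y,\delta)$. First I would fix a $\mathcal T^\circ$-loxodromic element $x\in\mathcal F$ with axis $l\subset\mathcal T^\circ$ and recall from the construction of $(\mathcal Y,\delta)$ in Section~\ref{SubsecLimittrees2} that $l$ decomposes (up to translates) into finitely many pieces per fundamental domain: closed intervals in translates of the exponential stratum $\mathcal T_n^\circ=\mathcal T_n$, closed intervals in the relatively polynomial strata, and turns between consecutive such pieces. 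Under forward iteration by $\tilde\tau^\circ$ (the $\varphi$-equivariant $\lambda$-Lipschitz representative induced by $(\tau_i)_{i=1}^n$), the images of the polynomial-stratum subsegments grow at most polynomially (degree $<n$), whereas a subsegment in the exponential stratum grows by exactly $\lambda$ per iteration; the turns between pieces are precisely iterated turns over $\mathcal T^\circ$ of the kind controlled by Theorem~\ref{thm-limitturns}. This is the dichotomy that drives everything.

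Next I would prove (\ref{thm-limitlox-cond-lox})$\Leftrightarrow$(\ref{thm-limitlox-cond-lim}). If some subsegment of $l$ in the exponential stratum survives into $\mathcal Y$ with positive $\delta$-length — equivalently $\pi^\circ_*$ applied to (the leaf in) that subsegment lies in $\mathbb R(\mathcal Y_n,\delta)$ — then, exactly as in Case~1/Case--i of Proposition~\ref{prop-limitloxodromics}, $\tilde\tau^{\circ m}(l)$ contains arbitrarily long leaf segments of $\mathcal L^+[\tau_n]=\mathcal L_{\mathcal Z}^+[\psi]$, so the $\mathcal F$-orbit of the axis weakly $\psi_*$-limits to $\mathcal L_{\mathcal Z}^+[\psi]$ and, by bounded cancellation for $\pi^\circ$ (which has a cancellation constant as a limit of metric maps with uniformly bounded constants), $x$ is $\mathcal Y$-loxodromic. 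Conversely, if no exponential subsegment survives, then every $\tilde\tau^{\circ m}$-image of the fundamental domain of $l$ is covered by boundedly-plus-polynomially-many leaf segments of bounded $d^\circ$-length together with polynomial-stratum edges; this forces the $\pi^\circ$-image of $l$ to have bounded diameter, so $x$ is $\mathcal Y$-elliptic, and it cannot weakly $\psi_*$-limit to $\mathcal L_{\mathcal Z}^+[\psi]$ since the leaf segments appearing in $\tilde\tau^{\circ m}(l)$ stay bounded (here I invoke Theorem~\ref{thm-limitturns} to handle the turns: a turn contributes arbitrarily long leaf segments exactly when its two endpoint-eigenpoints $\star_1,\star_2$ differ, which is the same condition as an exponential subsegment surviving across that turn).

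For (\ref{thm-limitlox-cond-lox})$\Leftrightarrow$(\ref{thm-limitlox-cond-exp}): since $\tilde\tau^\circ$ is $\lambda$-Lipschitz with respect to $d^\circ$, we get $\limsup_m \tfrac1m\log\|\psi^m(x)\|_{\mathcal Z}\le\log\lambda$ always; and $\|\psi^m(x)\|_{\delta}=\lambda^m\|x\|_\delta$ when $x$ is $\mathcal Y$-loxodromic, with $\|\cdot\|_\delta\le\|\cdot\|_{d^\circ}$, giving the matching lower bound for the liminf, hence exponential growth rel.~$\mathcal Z$ with rate $\lambda$. If instead $x$ is $\mathcal Y$-elliptic, the covering estimate from the previous paragraph — the fundamental domain of $\tilde\tau^{\circ m}(l)$ is covered by $\le\beta(m)$ leaf segments of uniformly bounded $d^\circ$-length plus $\le\beta(m)$ polynomial-stratum edges, for some degree-$n$ polynomial $\beta$ — shows $\|\psi^m(x)\|_{\mathcal Z}$ grows at most polynomially, so $x$ does not grow exponentially. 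Finally, for the last sentence about $\mathcal G[\mathcal Y]$: an element $g$ of a point stabilizer $G_p\le\mathcal F$ fixes a point of $\mathcal Y$, so it is $\mathcal Y$-elliptic; by the $\mathcal Y$-elliptic covering bound it grows at most polynomially of degree $<n$ rel.~$\mathcal Z$, and since $\mathcal G[\mathcal Y]$ is $[\psi]$-invariant of finite type the growth type is preserved (by~\cite[Corollary~III.4]{Mut21b}), giving polynomial growth rel.~$\mathcal Z$ with degree $<n$ for the restriction.

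The main obstacle I anticipate is making precise the claim that the relatively polynomial strata contribute only polynomial (degree $<n$) growth while correctly bookkeeping the interaction between successive strata through the blow-up hierarchy — in particular, that iterating $\tilde\tau^\circ$ does not accidentally create new long leaf segments out of polynomial-stratum material or out of the singular eigenrays $\rho_p\subset\widehat{\mathcal Y}_i$. This is exactly where Theorem~\ref{thm-limitturns} (the $\alpha(l)$ and $\beta(m)$ polynomial covering counts, obtained by induction on $n$ from the descending sequence of irreducible train tracks) does the heavy lifting, so the bulk of the work is assembling that bound per fundamental domain of $l$ and tracking that the exponential-stratum subsegments are the unique source of exponential growth and of long $\mathcal L_{\mathcal Z}^+[\psi]$-leaf segments.
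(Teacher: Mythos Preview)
Your plan is essentially the paper's proof, using the same ingredients (Proposition~\ref{prop-limitloxodromics} for the base case, Theorem~\ref{thm-limitturns} for the iterated turns, the $\lambda$-Lipschitz bound for the $\limsup$, and bounded cancellation for~$\pi^\circ$). The one organizational difference worth noting: rather than decomposing the fundamental domain of $l\subset\mathcal T^\circ$ into pieces from \emph{all} strata simultaneously, the paper first singles out the unique level~$i$ for which $x$ is (up to conjugacy) $\mathcal T_i$-loxodromic, projects the axis to~$\mathcal T_i$, and observes that the segments $w_{j,m}$ between consecutive $\mathcal T_i$-edges are iterated turns over~$\mathcal T_{i+1}^\circ$ --- a blow-up of a descending sequence of length~$n-i$. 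A single application of Theorem~\ref{thm-limitturns} at that level (after passing to an iterate so that~$[\tau_i^{k'}]$ fixes all $\mathcal T_i$-orbits of edges and vertices) then gives the clean dichotomy: either some limit interval~$w_{j,0}^*\subset\widehat{\mathcal Y}_{i+1}$ is nondegenerate, so $x$ is $\mathcal Y$-loxodromic and the turn weakly limits to~$\mathcal L_{\mathcal Z}^+[\psi]$; or all are degenerate, so the turn lengths are bounded by a degree-$(n-i)$ polynomial and $x$ is $\mathcal Y$-elliptic. This level-$i$ framing is exactly what buys the sharp degree $<n$ in the last sentence --- your ``degree-$n$ polynomial~$\beta$'' would need to be tightened to degree $n-i\le n-1$, which comes for free once the level is isolated. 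Your phrasing ``some subsegment of~$l$ in the exponential stratum survives'' is also slightly misleading, since when $i<n$ the initial axis~$l$ may have no intersection with~$\mathcal T_n$ whatsoever; the paper's formulation via the limit intervals~$w_{j,0}^*$ sidesteps this.
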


\begin{proof}[Sketch of proof]
Set $\lambda \defeq \lambda[\tau_n]$, $\mathcal F_1 \defeq \mathcal F$, and $\mathcal F_{i+1} \defeq \mathcal F[\mathcal T_{i}]$ for $1 \le i < n$.
Under the canonical embedding $\mathbb R(\mathcal F_i, \mathcal Z) \subset \mathbb R(\mathcal F, \mathcal Z)$, we identify the stable laminations $\mathcal L^+_{\mathcal Z}[\psi]$ and~$\mathcal L^+_{\mathcal Z}[\psi_i]$.
Let~$\mathcal T^\circ$ be an equivariant blow-up of free splittings~$(\mathcal T_i)_{i=1}^n$ and $\mathcal T_i^\circ \subset \mathcal T^\circ$ the characteristic convex subsets for~$\mathcal F_i$.
Suppose $x \in \mathcal F_1$ is a $\mathcal T^\circ$-loxodromic element.
The equivalence between Conditions~\ref{thm-limitlox-cond-lox}--\ref{thm-limitlox-cond-lim} is given by Proposition~\ref{prop-limitloxodromics} if~$x$ is conjugate to an element of $\mathcal F_{n}$.
Assume $n \ge 2$ and, up to conjugacy,~$x \in \mathcal F_i$ is $\mathcal T_i$-loxodromic for some~$i < n$.

Recall that $\tau^\circ \colon (\mathcal T^\circ, d^\circ) \to (\mathcal T^\circ, d^\circ)$ is a $\psi$-equivariant $\lambda$-Lipschitz topological representative induced by the irreducible train tracks~$(\tau_i)_{i=1}^n$ and $\pi^\circ \colon (\mathcal T^\circ, d^\circ) \to (\mathcal Y, \delta)$ is the constructed equivariant metric map.
In particular, $\underset{m \to \infty}\limsup \, \frac{1}{m} \log \| \psi^m(x)\|_{d^\circ} \le \log \lambda$.

Suppose~$[\tau_i^{k'}]$ (for some $k' \ge 1$) fixes all $\mathcal F_i$-orbits of vertices and edges in~$\mathcal T_i$.
Let $l^\circ \subset \mathcal T_i^\circ$ be the axis for $x \in \mathcal F_i$.
The axis~$l^\circ$ projects to the axis~$l$ of~$x$ in~$\mathcal T_i$;
write~$l$ as a biinfinite concatenation of edges $\cdots e_{-1} \cdot e_0 \cdot e_1 \cdots$ and identify $e_j \subset \mathcal T_i$ with its lift to~$\mathcal T_i^\circ$.
For $m \ge 0$ and any integer~$j$, let~$w_{j,m}$ be the closed interval in~$\mathcal T_i^\circ$ between (lifts of)~$\tau_i^{m}(e_j)$ and~$\tau_i^{m}(e_{j+1})$;
in fact, $w_{j,m}$ is in a component of $\mathcal F_i \cdot \mathcal T_{i+1}^\circ \subset \mathcal T_i^\circ$.
Since~$[\tau_i^{k'}]$ fixes the $\mathcal F_i$-orbits $[e], [e']$ and the vertex of~$\mathcal T_i$ between them, the sequence $(w_{j,mk'+r})_{m \ge 0}$, up to translation, is an iterated turn over~$\mathcal T_{i+1}^\circ$ rel.~$\psi_{i+1}^{k'}$ for $0 \le r < k'$;
by Theorem~\ref{thm-limitturns}, the iterated turn limits to an interval~$w_{j,r}^*$ in a translate of a component of $\widehat{\mathcal Y}_{i+1} \subset \mathcal Y_i$.

The intervals~$w_{j,m}, w_{j+1,m}$ are always in distinct components of $\mathcal F_i \cdot \mathcal T_{i+1}^\circ$;
therefore, the limit intervals $w_{j,r}^*, w_{j+1,r}^*$ have degenerate intersection.
By the equivariance of the limits, the union $l_* \defeq \bigcup_j w_{j,0}^*$ is an $x$-invariant arc.
If some limit interval~$w_{j,0}^*$ is not degenerate, then~$x$ is $\mathcal Y_i$-loxodromic and~$l_*$ is its $\mathcal Y_i$-axis;
otherwise,~$l_*$ is degenerate and~$x$ is $\mathcal Y_i$-elliptic.

\smallskip
\noindent \underline{Case 1:}
$x$ is $\mathcal Y_i$-loxodromic, i.e.~some limit interval~$w_{j,0}^*$ is not degenerate.
By Theorem~\ref{thm-limitturns}, the iterated turn~$(w_{j,mk'})_{m \ge 0}$ over~$\mathcal T_{i+1}^\circ$ rel.~$\psi_{i+1}^{k'}$ weakly limits to a component of~$\mathcal L_{\mathcal Z}^+[\psi_{i+1}]$.
So $[l^\circ] \in \mathbb R(\mathcal F, \mathcal Z)$ weakly $\psi_*^{k'}$-limits to a component of~$\mathcal L_{\mathcal Z}^+[\psi]$.
Finally,~$[l^\circ]$ weakly $\psi_*$-limits to~$\mathcal L_{\mathcal Z}^+[\psi]$ since~$\psi_*$ acts transitively on the components of~$\mathcal L_{\mathcal Z}^+[\psi]$.
As~$\pi^\circ$ is an equivariant metric map, $\|\cdot\|_{\delta} \le \| \cdot \|_{d^\circ}$ and $\log \lambda \le \underset{m \to \infty} \liminf \, \frac{1}{m} \log \| \psi^m(x)\|_{d^\circ}$.

\smallskip
\noindent \underline{Case 2:}
$x$ is $\mathcal Y_i$-elliptic, i.e.~each limit interval~$w_{j,0}^*$ is degenerate.
By Theorem~\ref{thm-limitturns}, the interval~$w_{j,mk}$ has $d^\circ$-length is bounded above by some degree~$(n-i)$ polynomial (in~$m$).
Thus $\|\psi^{mk}(x)\|_{d^\circ}$ is bounded above by a degree~$(n-i)$ polynomial.
By $\psi$-equivariance of the homothety~$h_i$, the elements $\psi(x), \ldots, \psi^{k-1}(x)$ are $\mathcal Y_i$-elliptic as well.
The same argument implies $\|\psi^{m}(x)\|_{d^\circ}$ is bounded above by a degree~$(n-i)$ polynomial.
\end{proof}

We conclude the chapter by stating the extension of Lemma~\ref{lem-convergence} to all limit forests:

\begin{restate}{Lemma}{lem-convergence2}
Let~$\psi\colon \mathcal F \to \mathcal F$ be an automorphism, $\mathcal Z'$ a $[\psi]$-invariant proper free factor system, $\left(\tau_i \colon \mathcal T_i \to \mathcal T_i\right)_{i=1}^n$ a descending sequence of irreducible train tracks for~$[\psi]$ rel.~$\mathcal Z'$ with $\lambda \defeq \lambda[\tau_n] > 1$, $(\mathcal Y, \delta)$ the limit forest for~$[\tau_i]_{i=1}^n$, $(\mathcal Y', \delta')$ a minimal $\mathcal F$-forest with trivial arc stabilizers, and $\mathcal Z \defeq \mathcal F[\mathcal T_n]$.

If~$\mathcal Z$ is $\mathcal Y'$-elliptic and the $k$-component lamination~$\mathcal L_{\mathcal Z}^+[\psi]$ is in $\mathbb R(\mathcal Y', \delta') \subset \mathbb R(\mathcal F, \mathcal Z)$, then the limit of $(\mathcal Y' \psi^{mk}, \lambda^{-mk} \delta')_{m \ge 0}$ is $(\mathcal Y, \oplus_{j=1}^k c_j \, \delta_j)$, where  $\delta = \oplus_{j=1}^k \, \delta_j$ and $c_j > 0$.
\end{restate}

Again, we postpone the proof to Section~\ref{SubsecConverge2}.
If~$(\tau_i')_{i=1}^{n'}$ is another descending sequence for~$[\psi]$ with $\mathcal F[\mathcal T_{n'}'] = \mathcal Z$, then its limit forest $(\mathcal Y', \delta')$ is equivariantly homothetic to $(\mathcal Y, \delta)$; 
therefore, $(\mathcal Y, \delta)$ is \emph{the} \underline{limit forest} for~$[\psi]$ rel.~$\mathcal Z$ (up to rescaling of~$\delta$). 
A nondegenerate minimal very small $\mathcal F$-forest $(\mathcal Y', \delta')$ is an \underline{expanding forest} for~$[\psi]$ rel.~$\mathcal Z$ if:

\begin{enumerate}
\item the $\mathcal F$-forest $(\mathcal Y'\psi, \delta')$ is equivariantly isometric to $(\mathcal Y', s \delta')$ for some $s > 1$; and
\item the free factor system~$\mathcal Z$ is $\mathcal Y'$-elliptic.
\end{enumerate}
\begin{cor}\label{cor-unique2}
Let~$\psi\colon \mathcal F \to \mathcal F$ be an automorphism and $\left(\tau_i \colon \mathcal T_i \to \mathcal T_i\right)_{i=1}^n$ a descending sequence of irreducible train tracks for~$[\psi]$ with $\lambda[\tau_n] > 1$.
Any expanding forests for~$[\psi]$ rel.~$\mathcal F[\mathcal T_n]$ is the limit forest for~$[\psi]$ rel.~$\mathcal F[\mathcal T_n]$.
\end{cor}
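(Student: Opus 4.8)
The plan is to run the same argument as in Corollary~\ref{cor-unique}, with the single expanding train track replaced by an equivariant blow-up $\mathcal T^\circ$ of the descending sequence $(\tau_i)_{i=1}^n$ (with its $\lambda$-Lipschitz topological representative $\tau^\circ$, where $\lambda\defeq\lambda[\tau_n]$) and with Lemma~\ref{lem-convergence} replaced by Lemma~\ref{lem-convergence2}. Set $\mathcal Z\defeq\mathcal F[\mathcal T_n]=\mathcal F[\mathcal T^\circ]$, let $(\mathcal Y,\delta)$ be the limit forest for $[\psi]$ rel.\ $\mathcal Z$ with $\delta=\oplus_{j=1}^k\delta_j$, and let $(\mathcal Y',\delta')$ be an expanding forest for $[\psi]$ rel.\ $\mathcal Z$ with its $\psi$-equivariant expanding $s$-homothety $g$ (unique by~\cite[Theorem~3.7]{CM87}). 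Since $\mathcal Z$ is $\mathcal Y'$-elliptic there is an equivariant PL-map $f\colon(\mathcal T^\circ,d^\circ)\to(\mathcal Y',\delta')$ with a cancellation constant $C\defeq C[f]$ (Lemma~\ref{lem-bcl}); use $f^*$ to identify $\mathbb R(\mathcal Y',\delta')$ with the subspace $\operatorname{dom}(f_*)=\operatorname{im}(f^*)$ of $\mathbb R(\mathcal F,\mathcal Z)$ (Claim~\ref{claim-liftembed}), recalling that this subspace contains the closure of any quasiperiodic line it contains. By canonicality of the projecting functions, $g$ induces $g_*=\psi_*|_{\mathbb R(\mathcal Y',\delta')}$, so $\psi_*$ preserves $\mathbb R(\mathcal Y',\delta')$. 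Exactly as in Corollary~\ref{cor-unique}, the whole argument then reduces to proving the inclusion $\mathcal L_{\mathcal Z}^+[\psi]\subset\mathbb R(\mathcal Y',\delta')$.

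For that inclusion, fix a $\mathcal Y'$-loxodromic element $x\in\mathcal F$ (it exists because $\mathcal Y'$ is nondegenerate and minimal). Since $\mathcal Z$ is $\mathcal Y'$-elliptic, $x$ is not conjugate into $\mathcal Z$, hence $x$ is $\mathcal T^\circ$-loxodromic; since $f$ is Lipschitz, $\|\psi^m(x)\|_{d^\circ}\ge\operatorname{Lip}(f)^{-1}\|\psi^m(x)\|_{\delta'}=\operatorname{Lip}(f)^{-1}s^m\|x\|_{\delta'}$, so $x$ $[\psi]$-grows exponentially rel.\ $\mathcal Z$ with rate $\ge s>1$. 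As the restriction of $\psi$ to $\mathcal G[\mathcal Y]$ grows polynomially rel.\ $\mathcal Z$ (Theorem~\ref{thm-limitloxodromics}), $x$ cannot be $\mathcal Y$-elliptic; so $x$ is $\mathcal Y$-loxodromic, $[\psi]$-grows with rate exactly $\lambda$ rel.\ $\mathcal Z$ (in particular $s\le\lambda$), and — again by Theorem~\ref{thm-limitloxodromics} — the axis for $[x]$ weakly $\psi_*$-limits to $\mathcal L_{\mathcal Z}^+[\psi]$. I would then mimic the pigeonhole step of Corollary~\ref{cor-unique}: using Lemma~\ref{lem-leafblocks} to write a fundamental domain of the $\mathcal T^\circ$-axis of $\psi^m(x)$ as a finite concatenation of leaf blocks, and separating the polynomial-stratum leaf blocks — whose $f$-images contribute only an amount growing polynomially in $m$ to the $\delta'$-length $\|\psi^m(x)\|_{\delta'}=s^m\|x\|_{\delta'}$ of the $\mathcal Y'$-axis — from the exponential-stratum leaf segments, in order to extract, for $m\gg1$, a single leaf segment $[q,r]$ of $\mathcal L_{\mathcal Z}^+[\psi]$ inside the axis with $\delta'(f(q),f(r))>2C$. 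Bounded cancellation then makes the two components of $l\setminus[q,r]$ (where $[l]\in\mathcal L_{\mathcal Z}^+[\psi]$ contains $[q,r]$) have $f$-images with disjoint closures, so quasiperiodicity of $[l]$ and equivariance of $f$ force both ends of $l$ to have unbounded $f$-image, i.e.\ $[l]\in\operatorname{dom}(f_*)=\mathbb R(\mathcal Y',\delta')$; the component $\Lambda_j^+$ containing $[l]$ then lies in $\mathbb R(\mathcal Y',\delta')$ by the quasiperiodic-closure property, and transitivity of $\psi_*$ on the $k$ components gives $\mathcal L_{\mathcal Z}^+[\psi]\subset\mathbb R(\mathcal Y',\delta')$.

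With the inclusion in hand, Lemma~\ref{lem-convergence2} applies: $(\mathcal Y'\psi^{mk},\lambda^{-mk}\delta')_{m\ge0}$ converges to the nondegenerate factored forest $(\mathcal Y,\oplus_{j=1}^k c_j\,\delta_j)$ with $c_j>0$. But $(\mathcal Y'\psi^{mk},s^{-mk}\delta')\cong(\mathcal Y',\delta')$ equivariantly isometrically by the defining property of an expanding forest, so $(\mathcal Y'\psi^{mk},\lambda^{-mk}\delta')\cong(\mathcal Y',(s/\lambda)^{mk}\delta')$; nondegeneracy of the limit forces $s\ge\lambda$, hence (with $s\le\lambda$ from above) $s=\lambda$. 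Then the sequence is the constant $(\mathcal Y',\delta')$, so $(\mathcal Y',\delta')\cong(\mathcal Y,\oplus_j c_j\delta_j)$ equivariantly isometrically; applying $\psi$ and using that the $\psi$-equivariant homothety $h$ of $\mathcal Y$ permutes the $k$ factors $\delta_j$ cyclically forces $c_1=\cdots=c_k$, so $(\mathcal Y',\delta')\cong(\mathcal Y,c_1\delta)$ with the equivariant isometry unique by~\cite[Theorem~3.7]{CM87}. Thus $(\mathcal Y',\delta')$ is the limit forest for $[\psi]$ rel.\ $\mathcal F[\mathcal T_n]$.

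The main obstacle I expect is the pigeonhole step: unlike in Corollary~\ref{cor-unique} — where the train track property makes a fundamental domain literally a bounded concatenation of leaf segments — here one must use the leaf-block decomposition of Lemma~\ref{lem-leafblocks} and control the $f$-image of the polynomial-stratum leaf blocks, so that the exponential growth $s^m\|x\|_{\delta'}$ of the $\mathcal Y'$-axis is seen to be carried by the leaf segments of $\mathcal L_{\mathcal Z}^+[\psi]$. This bookkeeping is essentially the one already carried out in the proofs of Theorem~\ref{thm-limitturns} and Lemma~\ref{lem-convergence2} (separating the polynomial strata from the exponential stratum); note that the Lipschitz bound $s\le\lambda$ and the convergence lemma are only invoked after the inclusion is established, so there is no circularity.
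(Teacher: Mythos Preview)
Your approach mirrors the paper's almost exactly, but you miss one hypothesis that the paper explicitly supplies. Lemma~\ref{lem-convergence2} requires the target forest $(\mathcal Y',\delta')$ to have \emph{trivial arc stabilizers}, while the definition of an expanding forest rel.~$\mathcal Z$ only asks for $(\mathcal Y',\delta')$ to be \emph{very small}. The paper closes this gap as follows: a very small $\mathcal F$-forest decomposes as a graph of actions whose skeleton is nondegenerate precisely when the forest does not have dense orbits~\cite{Lev94}; any $\psi$-equivariant homothety would have to be an isometry on a nondegenerate skeleton, so the existence of an expanding $s$-homothety forces dense orbits; and very small forests with dense orbits have trivial arc stabilizers~\cite[Lemma~4.2]{LL03}. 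Without this paragraph you cannot invoke Lemma~\ref{lem-convergence2}.

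A smaller point: Lemma~\ref{lem-leafblocks} decomposes intervals in the \emph{limit forest}~$\mathcal Y$, not in~$\mathcal T^\circ$, so it is not the right citation for your pigeonhole step. What you actually need (and what the paper uses) is the decomposition of a fundamental domain in~$\mathcal T^\circ$ into polynomially many polynomial-strata edges and leaf segments of~$\mathcal L_{\mathcal Z}^+[\psi]$, as in the proofs of Theorems~\ref{thm-limitturns} and~\ref{thm-limitloxodromics}. You do point to those proofs in your last paragraph, so the idea is right; the paper just phrases it more directly as: choose $m\gg 1$ so that $\|\psi^m(x)\|_{\delta'}>\alpha(m)(2C[f]+B')$ for a polynomial~$\alpha$ and constant~$B'$ determined by~$x$, then pigeonhole. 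Everything else in your argument --- the extraction of a long leaf segment, the quasiperiodicity step, the transitivity on components, and the endgame with $s=\lambda$ and $c_1=\cdots=c_k$ --- matches the paper's intended proof.
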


\noindent We will end the paper with a complete generalization of this corollary (Theorem~\ref{thm-unique}).

\begin{proof}[Sketch of proof]
Let $(\mathcal Y', \delta')$ be an expanding forest for~$[\psi]$ rel.~$\mathcal Z \defeq \mathcal F[\mathcal T_n]$ and $x \in \mathcal F$ a $\mathcal Y'$-loxodromic element.
The proof is essentially the proof of Corollary~\ref{cor-unique} with two main changes. First, choose $m \gg 1$ so that $\| \psi^m(x) \|_{\delta'} > \alpha(m)(2C[f]+B')$ for some polynomial~$\alpha$ and constant $B' \ge 1$ determined by~$x$;
therefore, a fundamental domain of~$\psi^m(x)$ acting on its axis has a leaf segment $[q, r]$ with $\delta'(f(q), f(r)) > 2C[f]$ by the pigeonhole principle. 
For the second change, we need $(\mathcal Y', \delta')$ to have trivial arc stabilizers in order to conclude the proof by invoking Lemma~\ref{lem-convergence2} instead of Lemma~\ref{lem-convergence}.

The minimal very small $\mathcal F$-forest $(\mathcal Y', \delta')$ has finitely many orbits of branch points~\cite{GL95}; 
it decomposes as some graph of actions whose skeleton is not degenerate in the forest if and only if the forest does not have dense orbits~\cite{Lev94}.
Any $\psi$-equivariant homothety must be an isometry if the skeleton were not degenerate.
Since $(\mathcal Y', \delta')$ admits a $\psi$-equivariant expanding $s$-homothety, the skeleton must be degenerate and the forest has dense orbit.
Very small $\mathcal F$-forests with dense orbits have trivial arc stabilizers~\cite[Lemma~4.2]{LL03}.
\end{proof}

For a nondegenerate minimal $\mathcal F$-forest $(\mathcal Y', \delta')$, the \underline{projective stabilizer} $\operatorname{Stab}[\mathcal Y', \delta']$ is the subgroup of automorphisms $\varphi \colon \mathcal F \to \mathcal F$ for which $\|\varphi(\cdot)\|_{\delta'} = s_\varphi \| \cdot \|_{\delta'}$ for some $s_\varphi > 0$.
The function $\operatorname{SF} \colon \operatorname{Stab}[\mathcal Y', \delta'] \to \mathbb R_{>0}$ that maps $\varphi \mapsto s_\varphi$ is a homomorphism called the \emph{stretch factor homomorphism} --- $\mathbb R_{>0}$ is considered multiplicatively.

\begin{cor}\label{cor-cyclicimage}
Let~$\operatorname{SF} \colon \operatorname{Stab}[\mathcal Y', \delta'] \to \mathbb R_{>0}$ be the stretch factor homomorphism for some nondegenerate minimal very small $\mathcal F$-forest $(\mathcal Y', \delta')$.
The image of~$\operatorname{SF}$ is cyclic.
\end{cor}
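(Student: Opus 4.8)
The plan is to show that $\operatorname{im}(\operatorname{SF})$ is a \emph{discrete} subgroup of the multiplicative group $\mathbb R_{>0}$; since a discrete subgroup of $\mathbb R_{>0}\cong(\mathbb R,+)$ is trivial or infinite cyclic, this proves the corollary. For discreteness it is enough to produce a constant $\lambda_0 > 1$, depending only on $\operatorname{rank}(\mathcal F)$, with $s \ge \lambda_0$ for every $s \in \operatorname{im}(\operatorname{SF})$ with $s > 1$: the ratio of two distinct elements of a subgroup of $\mathbb R_{>0}$ again lies in the subgroup, so such a bound forces the elements of $\operatorname{im}(\operatorname{SF})$ to be multiplicatively $\lambda_0$-separated, hence the subgroup to be discrete.

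So I would fix $\varphi \in \operatorname{Stab}[\mathcal Y', \delta']$ with $s \defeq s_\varphi > 1$ (if there is none the image is trivial). First, $[\varphi]$ is exponentially growing: choosing a $\mathcal Y'$-loxodromic $x \in \mathcal F$ (one exists since $(\mathcal Y', \delta')$ is nondegenerate minimal) and an equivariant Lipschitz map $f\colon(\mathcal T, d)\to(\mathcal Y',\delta')$ from a free splitting with free $\mathcal F$-action, one gets $\|\varphi^m(x)\|_{d}\ge c\, s^{m}\|x\|_{\delta'}$ for a constant $c>0$, so $x$ grows exponentially. Let $(\tau_i \colon \mathcal T_i \to \mathcal T_i)_{i=1}^n$ be a descending sequence of irreducible train tracks for $[\varphi]$ with $\lambda \defeq \lambda[\tau_n] > 1$ (Theorem~\ref{thm-irredtt}), put $\mathcal Z \defeq \mathcal F[\mathcal T_n]$, and recall the limit forest for $[\varphi]$ rel.~$\mathcal Z$ carries an expanding homothety of factor exactly $\lambda$. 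I would then check that $(\mathcal Y', \delta')$ is an expanding forest for $[\varphi]$ rel.~$\mathcal Z$: the identity $\|\varphi(\cdot)\|_{\delta'} = s\|\cdot\|_{\delta'}$ and Culler--Morgan rigidity yield an equivariant isometry $(\mathcal Y'\varphi, \delta') \cong (\mathcal Y', s\delta')$ with $s>1$, while — arguing as in the proof of Corollary~\ref{cor-unique2} — the presence of a $\varphi$-equivariant expanding homothety makes $(\mathcal Y', \delta')$ very small with dense orbits, and $\mathcal Z$ is $\mathcal Y'$-elliptic since its elements grow sub-exponentially under $[\varphi]$ whereas every $\mathcal Y'$-loxodromic grows at the exact rate $s$. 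Corollary~\ref{cor-unique2} then identifies $(\mathcal Y', \delta')$ with the limit forest for $[\varphi]$ rel.~$\mathcal Z$ up to rescaling of the metric, so $s = \lambda = \lambda[\tau_n]$.

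It remains to bound $\lambda[\tau_n]$ away from $1$ uniformly. By Bass--Serre theory $\mathcal T_n$, a free splitting of the free factor system $\mathcal F_n$ (of complexity controlled by $\operatorname{rank}(\mathcal F)$), has at most $N = N(\operatorname{rank}\mathcal F)$ orbits of edges; hence $\lambda[\tau_n]$ is the Perron--Frobenius eigenvalue of the irreducible non-negative integer matrix $A[\tau_n]$ of size $\le N$, and this matrix is not a permutation matrix since $\lambda[\tau_n]>1$. If $\lambda(A)<2$ for such a matrix, all of its entries are bounded: a positive entry $m = A_{ij}$ together with a simple path from $j$ to $i$ in the support digraph (of length $\le N$, by irreducibility) gives a closed walk of some length $\ell \le N$ of weight $\ge m$, whence $\operatorname{tr}(A^{\ell t}) \ge m^{t}$ while $\operatorname{tr}(A^{\ell t}) \le N\lambda(A)^{\ell t} < N\cdot 2^{Nt}$, forcing $m \le 2^{N}$. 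Thus only finitely many irreducible non-negative integer matrices of size $\le N$ have Perron--Frobenius eigenvalue in $(1,2)$, so these eigenvalues admit a lower bound $\lambda_0 \in (1,2]$; since $\lambda[\tau_n]\ge 2>\lambda_0$ otherwise, we get $s = \lambda[\tau_n] \ge \lambda_0$ in every case, which is exactly what was needed.

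\textbf{The main obstacle} is the middle paragraph: confirming that a minimal $\mathcal F$-forest admitting a $\varphi$-equivariant expanding homothety is automatically very small with dense orbits, and that $\mathcal Z = \mathcal F[\mathcal T_n]$ is elliptic in it, so that Corollary~\ref{cor-unique2} applies and pins down $s$ as $\lambda[\tau_n]$. I expect this to go through by combining the graph-of-actions/degenerate-skeleton argument used in the proof of Corollary~\ref{cor-unique2} with the growth dichotomy of Theorem~\ref{thm-limitloxodromics}; the remaining steps are Perron--Frobenius bookkeeping and the elementary structure theory of subgroups of $\mathbb R_{>0}$.
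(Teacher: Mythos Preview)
Your overall strategy --- show $\operatorname{im}(\operatorname{SF})$ is discrete by bounding every stretch factor $>1$ uniformly away from $1$ --- matches the paper exactly, and your Perron--Frobenius paragraph is correct (and more detailed than the paper's one-line appeal to a uniform bound).

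The genuine gap is in your middle paragraph, precisely at the claim that $\mathcal Z = \mathcal F[\mathcal T_n]$ is $\mathcal Y'$-elliptic. Your justification, ``its elements grow sub-exponentially under $[\varphi]$,'' is not correct: Theorem~\ref{thm-limitloxodromics} only controls growth \emph{rel.~$\mathcal Z$}, where elements of $\mathcal Z$ have translation length~$0$ trivially. It says nothing about absolute growth. The restriction $[\varphi_{\mathcal Z}]$ can be exponentially growing, in which case elements of $\mathcal Z$ grow exponentially in any free splitting with free action and may certainly be $\mathcal Y'$-loxodromic. So you cannot apply Corollary~\ref{cor-unique2} to $(\mathcal Y',\delta')$ directly.

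The paper closes this gap by a descent that your argument omits. If $\mathcal Z$ is not $\mathcal Y'$-elliptic, one passes to the nondegenerate characteristic subforest of $(\mathcal Y',\delta')$ for $\mathcal Z$; the restriction $\varphi_{\mathcal Z}$ lies in its projective stabilizer with the \emph{same} stretch factor $s$. One then picks a descending sequence for $[\varphi_{\mathcal Z}]$, obtains a new bottom free factor system, and repeats. Complexity bounds on free factor systems force termination at a level where the bottom free factor system \emph{is} elliptic in the relevant characteristic subforest; only then does Corollary~\ref{cor-unique2} apply and identify $s$ with a Perron--Frobenius eigenvalue of bounded size. Inserting this descent into your second paragraph makes the proof go through.
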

\begin{proof}
Suppose $\operatorname{SF}(\psi) > 1$ for some $\psi \in  \operatorname{Stab}[\mathcal Y', \delta']$.
Then~$\psi$ is exponentially growing since any $\mathcal Y'$-loxodromic element $[\psi]$-grows exponentialy with rate at least $\operatorname{SF}(\psi)$.
Set $\mathcal F_1 \defeq \mathcal F$, $\psi_1 \defeq \psi$, and let $(\mathcal Y_1, \delta_1)$ be the limit forest for~$[\psi_1]$ rel.~some $[\psi_1]$-invariant proper free factor system~$\mathcal F_2$.
If~$\mathcal F_2$ is not $\mathcal Y'$-elliptic, then the restrictions~$\psi_2$ of~$\psi_1$ to~$\mathcal F_2$ are in the projective stabilizer of the nondegenerate characteristic subforest of $(\mathcal Y', \delta')$ for~$\mathcal F_2$ and have the same stretch factor~$\operatorname{SF}(\psi)$.

By repeatedly considering limit forests and taking restrictions, we may assume some free factor system~$\mathcal F_n$ is not $\mathcal Y'$-elliptic while a nested proper free factor system~$\mathcal F_{n+1}$ is for some $n \ge 1$.
Then the characteristic subforest of $(\mathcal Y', \delta')$ for the free factor system~$\mathcal F_n$ is an expanding forest for~$[\psi_n]$ rel.~$\mathcal F_{n+1}$.
By Corollary~\ref{cor-unique2}, this subforest is equivariantly homothetic to the limit forest $(\mathcal Y_n, \delta_n)$ for~$[\psi_n]$ rel.~$\mathcal F_{n+1}$.
In particular, $\operatorname{SF}(\psi)$ is the exponential growth rate for~$[\psi_n]$ rel.~$\mathcal F_{n+1}$ and is bounded away from~$1$ by a uniform constant that depends only on~$\mathcal F$. 
Thus the image of~$\operatorname{SF}$ is discrete, and discrete subgroups of~$\mathbb R_{> 0}$ are cyclic.
\end{proof}

\newpage
\section{Main constructions}\label{SecTopDomLimit}

The limit forest produced by our proof of Proposition~\ref{prop-limittree} is universal for an outer automorphism and some choice of an invariant proper free factor system (Corollary~\ref{cor-unique2}). 
%
Our goal is to remove the latter dependence on an invariant proper free factor system.

\subsection{Assembling limit hierarchies}\label{SubsecAssembHier}

This section first summarizes the main result of the paper's prequel~\cite{Mut24}.
The general strategy follows closely the construction of limit forests sketched in Section~\ref{SubsecLimittrees2}.

Fix an exponentially growing automorphism $\psi\colon \mathcal F \to \mathcal F$ and set $\mathcal G_1 \defeq \mathcal F$, $\psi_1 \defeq \psi$.
By our proof of Proposition~\ref{prop-limittree}, there is a nondegenerate limit forest $(\mathcal Y_1, \delta_1)$ for~$[\psi_1]$ rel.~$\mathcal Z_1$ (some proper free factor system of~$\mathcal G_1$) and a unique $\psi_1$-equivariant expanding $\lambda_1$-homothety $h_1 \colon (\mathcal Y_1, \delta_1) \to (\mathcal Y_1, \delta_1)$.
Thus $\mathcal Y_1$-loxodromic elements in~$\mathcal F$ $[\psi]$-grow exponentially rel.~$\mathcal Z_1$ with rate~$\lambda_1$.
By Gaboriau--Levitt index theory and $\psi_1$-equivariance of~$\tau_1$, the nontrivial point stabilizers of~$\mathcal Y_1$ determine a $[\psi_1]$-invariant malnormal subgroup system $\mathcal G_2 \defeq \mathcal G[\mathcal Y_1]$ with strictly lower complexity than~$\mathcal G_1$.
The restriction of~$\psi_1$ to~$\mathcal G_2$ determines a unique outer class of automorphisms $\psi_2 \colon \mathcal G_2 \to \mathcal G_2$.

We can repeatedly apply Proposition~\ref{prop-limittree} to~$\psi_{i+1}~(i \ge 1)$ as long as~$\psi_{i+1}$ is exponentially growing.
This inductive invokation of Proposition~\ref{prop-limittree} eventually stops since the complexity of~$\mathcal G_i$ is a strictly decreasing (in~$i$) positive integer.
In the end, we have a maximal sequence $(\mathcal Y_i, \delta_i)_{i=1}^n$ of nondegenerate limit forests for~$[\psi_i]$ rel.~$\mathcal Z_i$ each with a unique $\psi_i$-equivariant expanding $\lambda_i$-homothety~$h_i$ on~$(\mathcal Y_i, d_i)$ ---
such a maximal sequence of limit forests is a \underline{descending sequence} of limit forests for~$[\psi]$. 
By construction, an element $x \in \mathcal F$ has a conjugate in~$\mathcal G_{n+1}$ if and only if~$x$ $[\psi]$-grows polynomially!

\smallskip
In Section~\ref{SubsecLimittrees2}, the blow-ups of free splittings $(\mathcal T_i)_{i=1}^n$ were arbitary and done inductively upwards (i.e.~started with $i = n$).
We then used a limiting argument to produce the final limit forest $(\mathcal Y, \delta)$.
For this section, the blow-ups of limit forests $(\mathcal Y_i, \delta_i)_{i=1}^n$ will not be arbitrary but will make use of the expanding homotheties~$(h_i)_{i=1}^n$;
moreover, it will be done inductively downwards (i.e.~starts with $i = 1$) to produce an $\mathcal F$-pseudoforest $(\mathcal T, (\delta_i)_{i=1}^n)$.

Set $(\mathcal X^{(1)}, \delta_1) \defeq (\mathcal Y_1, \delta_1)$ and $g^{(1)} \defeq h_1$. 
For $1 < i \le n$, we inductively construct the \emph{equivariant pseudoforest blow-up} $(\mathcal X^{(i)}, (\delta_j)_{j=1}^i)$ of the $\mathcal F$-pseudoforest $(\mathcal X^{(i-1)}, (\delta_j)_{j=1}^{i-1})$ rel.~the $\mathcal G_i$-forest $(\mathcal Y_i, \delta_i)$ and expanding homotheties~$g^{(i-1)}$ and~$h_i$.
Here is a sketch:

Let $(\overline{\mathcal Y}_i, \delta_i)$ be the metric completion and~$\bar h_i$ the extension to the metric completion.
For $1 \le j < i$, assume that $(\mathcal Y_j, \delta_j)$ is equivariantly isometric to the associated $\mathcal G_j$-forest for the $\mathcal G_j$-invariant convex pseudometric~$\delta_j$ restricted to~$\mathcal X^{(i-1)}(\mathcal G_j)$, the characteristic convex subsets of~$\mathcal X^{(i-1)}$ for~$\mathcal G_j$.
Since the hierarchy $(\delta_j)_{j=1}^{i-1}$ has full support, $(\mathcal X^{(i-1)}(\mathcal G_{i-1}), \delta_{i-1})$ is equivariantly isometric to $(\mathcal Y_{i-1},  \delta_{i-1})$ and the nontrivial point stabilizers of~$\mathcal X^{(i-1)}$ are conjugates in~$\mathcal F$ of $\mathcal G_i$-components.
The points of~$\mathcal X^{(i-1)}$ with nontrivial stabilizers are replaced by corresponding copies of $\overline{\mathcal Y}_i$-components;
this produces a unique set system~$\widehat{\mathcal X}^{(i)}$ with an $\mathcal F$-action that is the \emph{equivariant set blow-up} of $\mathcal X^{(i-1)}$ rel.~$\overline{\mathcal Y}_i$:
it comes with an equivariant injection $\iota_i \colon \overline{\mathcal Y}_i \to \widehat{\mathcal X}^{(i)}$ and an equivariant surjection $\kappa_i \colon \widehat{\mathcal X}^{(i)} \to \mathcal X^{(i-1)}$ that is a bijection on the complement $\widehat{\mathcal X}^{(i)} \setminus \mathcal F \cdot \iota_i(\overline{\mathcal Y}_i)$.
Consequently, there is a unique $\psi$-equivariant induced permutation~$g^{(i)} \colon \widehat{\mathcal X}^{(i)} \to \widehat{\mathcal X}^{(i)}$ induced by~$g^{(i-1)}$ and~$\bar h_i$ --- $\kappa_i$ semiconjugates~$\hat g^{(i)}$ to~$g^{(i-1)}$ while~$\iota_i$ conjugates~$\bar h_i$ to the restriction $\left.g^{(i)}\right|_{\iota_i(\overline{\mathcal Y}_i)}$.

There are plenty of equivariant interval functions~$[\cdot, \cdot]^{(i)}$ on~$\widehat{\mathcal X}^{(i)}$ {compatible} with~$\mathcal X^{(i-1)}$ and~$\mathcal Y_i$ --- \emph{compatibility} means the injection~$\iota_i$ and surjection~$\kappa_i$ map intervals to intervals.
Some compatible $\mathcal F$-pretrees $(\widehat{\mathcal X}^{(i)}, [\cdot, \cdot]^{(i)})$ are real~\cite[Proposition~IV.3]{Mut24} and they naturally inherit an $\mathcal F$-invariant hierarchy~$(\hat \delta_j)_{j=1}^i$ with full support: $(\hat \delta_j)_{j=1}^{i-1}$ is the pullback $\kappa_i^*(\delta_j)_{j=1}^{i-1}$ and~$\hat \delta_i$ is the pushforward $\iota_{i *} \delta_i$ extended equivariantly to the orbit $\mathcal F \cdot \iota_i(\overline{\mathcal Y}_i)$;
moreover, for $1 \le j \le i$, $(\mathcal Y_j, \delta_j)$ is equivariantly isometric to the associated $\mathcal G_j$-forest for the $\mathcal G_j$-invariant convex pseudometric~$\hat \delta_j$ restricted to~$\widehat{\mathcal X}^{(i)}(\mathcal G_j)$. 

\begin{claim*}[{\cite[Theorem~IV.4]{Mut24}}]
Since~$\bar h_i$ is expanding, the permutation~$g^{(i)}$ is a pretree-automorphism for a unique real compatible $\mathcal F$-pretree $(\widehat{\mathcal X}^{(i)}, [\cdot, \cdot]_g^{(i)})$. \qed
\end{claim*}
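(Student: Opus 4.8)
The plan is to parametrize the real compatible $\mathcal F$-pretree structures on $\widehat{\mathcal X}^{(i)}$ by equivariant \emph{attaching data} and then show that exactly one such choice makes $g^{(i)}$ a pretree-automorphism. Recall (this is essentially \cite[Proposition~IV.3]{Mut21b}) that a compatible real interval function $[\cdot,\cdot]^{(i)}$ on $\widehat{\mathcal X}^{(i)}$ is equivalent data to an $\mathcal F$-equivariant family of \emph{attaching maps}: for each point $v\in\mathcal X^{(i-1)}$ with nontrivial stabilizer $G_v$ (a conjugate of a $\mathcal G_i$-component), a $G_v$-equivariant function $a_v$ from the set of directions at $v$ in $\mathcal X^{(i-1)}$ to the completed component $\overline Y_v\subset\overline{\mathcal Y}_i$ carried by $v$, where compatibility forces the interval between a point in direction $d$ and one in direction $d'$ to pass through $[a_v(d),a_v(d')]_{\overline Y_v}$, and reality is an additional constraint on $\{a_v\}$ recorded in \cite[Proposition~IV.3]{Mut21b}. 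Since $\kappa_i$ semiconjugates $g^{(i)}$ to $g^{(i-1)}$ and $\iota_i$ conjugates $\bar h_i$ to $g^{(i)}$ on copies of $\overline{\mathcal Y}_i$-components, the permutation $g^{(i)}$ is a pretree-automorphism for $[\cdot,\cdot]^{(i)}$ if and only if the family $\{a_v\}$ satisfies $g^{(i)}\big(a_v(d)\big)=a_{g^{(i-1)}(v)}\big(g^{(i-1)}(d)\big)$ for all $v$ and all directions $d$ at $v$.

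Next I would solve this equivariance constraint and see that it has a unique solution. Using that $g^{(i-1)}$ permutes the finitely many $\mathcal F$-orbits of blown-up points and, by Gaboriau--Levitt index theory, the finitely many $G_v$-orbits of directions at each $v$, after replacing $g^{(i-1)}$ by a translate of a suitable power I may assume there is a $\psi_\circ$-equivariant map $\Phi_v$ fixing $v$, normalizing $G_v$, and preserving each $G_v$-orbit of directions at $v$; via $\iota_i$ it restricts on the copy $\overline Y_v$ to a translate of a power of $\bar h_i$, hence to an \textbf{expanding} homothety $\bar h_v$ of $\overline Y_v$ --- this is the one place the hypothesis that $\bar h_i$ is expanding is used. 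For a representative direction $d$ in a $G_v$-orbit, $\Phi_v(d)=s\cdot d$ for some $s\in G_v$, and $G_v$-equivariance of $a_v$ turns the constraint into $\bar h_v\big(a_v(d)\big)=s\cdot a_v(d)$, i.e.\ $a_v(d)$ is a fixed point of the expanding homothety $s^{-1}\bar h_v$ of the complete tree $(\overline Y_v,\delta_i)$. The inverse of an expanding homothety is a strict contraction of a complete metric space, so by the contraction mapping theorem (exactly as in the proof of Proposition~\ref{prop-limitturns}) there is a \emph{unique} such fixed point $\star$. I would then \emph{define} $a_v(d):=\star$ and spread this choice out equivariantly over the $\mathcal F$- and $g^{(i)}$-orbit of $(v,d)$, checking it is independent of the chosen representative and the chosen power (the residual ambiguity is by elements acting isometrically, which fix $\star$) and that the resulting $\{a_v\}$ is simultaneously $\mathcal F$- and $g^{(i)}$-equivariant.

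It then remains to verify that $\{a_v\}$ meets the reality criterion of \cite[Proposition~IV.3]{Mut21b}: the forced attaching points being fixed points of expanding homotheties gives the needed control --- a whole infinite direction-orbit under $\Phi_v$ attaches to the single point $\star$, so no chain of attaching points can fail to terminate --- and hence $[\cdot,\cdot]_g^{(i)}$ is a real compatible $\mathcal F$-pretree for which $g^{(i)}$ is a pretree-automorphism by construction. Uniqueness is immediate: any real compatible $\mathcal F$-pretree on $\widehat{\mathcal X}^{(i)}$ making $g^{(i)}$ a pretree-automorphism has attaching data obeying the same equivariance equations, hence is the family $\{a_v\}$ just constructed, hence has interval function $[\cdot,\cdot]_g^{(i)}$.

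The main obstacle I expect is the first step: isolating the precise (bijective) correspondence between compatible \emph{real} interval functions and equivariant attaching data, so that the fixed-point computation genuinely pins down the pretree rather than merely one ``coordinate'' of it, and then verifying reality for the specific family we produce. By contrast, the reduction to a contraction-mapping problem and the contraction argument itself are routine once Proposition~\ref{prop-limitturns} is in hand; the real care goes into the bookkeeping of powers, translates, and orbit representatives needed to make the attaching map globally well-defined. Most of this machinery is available from \cite{Mut21b}, so here it suffices to extract the expanding-homothety input and invoke those structural results.
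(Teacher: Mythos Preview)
The paper does not prove this claim; it is quoted verbatim as \cite[Theorem~IV.4]{Mut21b} and treated as a black box, with only the remark that ``its proof uses Gaboriau--Levitt's index inequality and the contraction mapping theorem.'' Your sketch matches that description: you invoke index theory to reduce to finitely many orbits of (vertex, direction) pairs, then use the contraction mapping theorem on the completed vertex tree to pin down a unique attaching point for each, exactly as the paper's remark indicates and as the analogous computation in Proposition~\ref{prop-limitturns} is carried out. So there is nothing to compare against here beyond that one-line summary, and your outline is consistent with it.

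That said, since you are effectively reconstructing the proof from \cite{Mut21b}, let me flag the step that carries genuine content. Your reduction to the fixed-point equation $\bar h_v(a_v(d)) = s\cdot a_v(d)$ is correct and the contraction argument is clean. The part you correctly identify as the obstacle --- the reality criterion from \cite[Proposition~IV.3]{Mut21b} --- is not dispatched by the sentence you give. Saying ``a whole infinite direction-orbit under $\Phi_v$ attaches to the single point~$\star$'' is not quite the issue: different $G_v$-orbits of directions will attach to \emph{different} fixed points, and reality requires that when you walk along an interval in $\mathcal X^{(i-1)}$ crossing infinitely many blown-up vertices (which can happen, since $\mathcal X^{(i-1)}$ need not be simplicial), the resulting concatenation of $\overline Y_v$-segments has finite total $\delta_i$-length on every closed subinterval. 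This is where Gaboriau--Levitt's index \emph{inequality} (not just the finiteness of orbits) is actually used in \cite{Mut21b}, and your sketch does not yet engage with it. The rest of your bookkeeping --- passing to powers and translates, checking well-definedness across orbit representatives --- is routine, as you say.
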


\begin{rmk}
This is the main technical result of~\cite{Mut24}.
Its proof uses Gaboriau--Levitt's index inequality and the contraction mapping theorem.
\end{rmk}

We now fix the interval function $[\cdot, \cdot]_g^{(i)}$ but omit it for brevity.
By construction, the $\mathcal F$-pseudoforest $(\widehat{\mathcal X}^{(i)}, (\hat \delta_j)_{j=1}^i)$ has trivial arc stabilizers and~$g^{(i)}$ is an expanding homothety with respect to~$(\hat \delta_j)_{j=1}^i$.
Finally, let $\mathcal X^{(i)} \subset \widehat{\mathcal X}^{(i)}$ be the characteristic convex subsets for~$\mathcal F$ and~$(\delta_j)_{j=1}^i$ the restriction of the hierarchy~$(\hat \delta_j)_{j=1}^i$ to~$\mathcal X^{(i)}$, then replace the maps $\iota_i, \kappa_i$, and~$g^{(i)}$ with their restrictions to~$\mathcal X^{(i)}$;
so $({\mathcal X}^{(i)}, (\delta_j)_{j=1}^i)$ is a minimal $\mathcal F$-pseudoforest.

At the $n^{th}$ iteration, we have a minimal $\mathcal F$-pseudoforest $(\mathcal T, (\delta_i)_{i=1}^n) \defeq (\mathcal X^{(n)}, (\delta_i)_{i=1}^n)$ with trivial arc stabilizers, unique for the descending sequence $(\mathcal Y_i, \delta_i)_{i=1}^n$;
the $\psi$-equivariant pretree-automorphism $h \defeq g^{(n)}$ on $(\mathcal T, (\delta_i)_{i=1}^n)$ is a $(\lambda_i)_{i=1}^n$-homothety, where $\lambda_i>1$ is the scaling factor for the homothety~$h_i$;
lastly, an element $x \in \mathcal F$ is $\mathcal T$-elliptic if and only if~$x$ has a conjugate in $\mathcal G_{n+1}$.
The real $\mathcal F$-pretrees~$\mathcal T$ are the \underline{limit pretrees} for $(\mathcal Y_i)_{i=1}^n$ and the $\mathcal F$-pseudoforest $(\mathcal T, (\delta_i)_{i=1}^n)$ is the \underline{limit pseudoforest} for $(\mathcal Y_i, \delta_i)_{i=1}^n$.
To summarize,

\begin{thm}[cf.~{\cite[Theorem~III.3]{Mut24}}]\label{thm-limitpseudotree} Let $\psi \colon \mathcal F \to \mathcal F$ be an automorphism.
Then there is:
\begin{enumerate}
\item a minimal $\mathcal F$-pseudoforest $(\mathcal T, (\delta_i)_{i=1}^n)$ with trivial arc stabilizers;
\item a $\psi$-equivariant expanding homothety $h \colon (\mathcal T, (\delta_i)_{i=1}^n) \to (\mathcal T, (\delta_i)_{i=1}^n)$; and
\item an element~$x \in \mathcal F$ is $\mathcal T$-loxodromic if and only if~$x$ $[\psi]$-grows exponentially.
\end{enumerate}
The real pretrees~$\mathcal T$ are degenerate if and only if~$[\psi]$ is exponentially growing. \qed
\end{thm}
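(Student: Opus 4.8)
The plan is to obtain the theorem as the terminal output of the inductive construction of Section~\ref{SubsecAssembHier}, feeding Proposition~\ref{prop-limittree} and the loxodromic characterizations proved above (Proposition~\ref{prop-limitloxodromics} and Theorem~\ref{thm-limitloxodromics}) into the pseudoforest blow-up machinery, with~\cite[Theorem~IV.4]{Mut21b} as the main technical black box. First I would dispose of the degenerate case: if $[\psi]$ is polynomially growing, take $n \defeq 0$, let $\mathcal{T}$ be the degenerate real pretree system and $h$ the identity; the (empty) hierarchy has full support vacuously, every element of $\mathcal{F}$ is $\mathcal{T}$-elliptic and $[\psi]$-grows polynomially, so (1)--(3) hold trivially and $\mathcal{T}$ is degenerate. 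So assume henceforth that $[\psi]$ is exponentially growing.

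Next I would build a \emph{descending sequence of limit forests}. Set $\mathcal{G}_1 \defeq \mathcal{F}$ and $\psi_1 \defeq \psi$. By Proposition~\ref{prop-limittree} there is a nondegenerate limit forest $(\mathcal{Y}_1,\delta_1)$ for $[\psi_1]$ relative to some proper free factor system, together with a unique $\psi_1$-equivariant expanding $\lambda_1$-homothety $h_1$. Its point-stabilizer system $\mathcal{G}_2 \defeq \mathcal{G}[\mathcal{Y}_1]$ is $[\psi_1]$-invariant, malnormal, and of strictly smaller complexity by Gaboriau--Levitt index theory; let $\psi_2$ be the induced outer automorphism of $\mathcal{G}_2$. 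Iterating Proposition~\ref{prop-limittree} on $\psi_{i+1}$ as long as it is exponentially growing, the strictly decreasing complexity forces termination and yields a maximal sequence $(\mathcal{Y}_i,\delta_i)_{i=1}^{n}$ of nondegenerate limit forests with unique expanding $\lambda_i$-homotheties $h_i$, where $\psi_{n+1} = \psi|_{\mathcal{G}_{n+1}}$ is polynomially growing.

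Then I would assemble the limit pseudoforest by iterated equivariant pseudoforest blow-ups, following Section~\ref{SubsecAssembHier}: starting from $(\mathcal{X}^{(1)},\delta_1) \defeq (\mathcal{Y}_1,\delta_1)$, inductively form $(\mathcal{X}^{(i)},(\delta_j)_{j=1}^{i})$ as the blow-up of $\mathcal{X}^{(i-1)}$ relative to $(\mathcal{Y}_i,\delta_i)$ and the homotheties $g^{(i-1)}$ and $\bar h_i$, where $\bar h_i$ is the expanding extension of $h_i$ to the metric completion. The citation~\cite[Theorem~IV.4]{Mut21b} supplies the unique real compatible $\mathcal{F}$-pretree structure for which the induced permutation $g^{(i)}$ is a pretree-automorphism; set $(\mathcal{T},(\delta_i)_{i=1}^{n}) \defeq (\mathcal{X}^{(n)},(\delta_i)_{i=1}^{n})$ and $h \defeq g^{(n)}$. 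By construction this $\mathcal{F}$-pseudoforest is minimal with trivial arc stabilizers (giving~(1)), and $h$ is a $(\lambda_i)_{i=1}^{n}$-homothety with each $\lambda_i > 1$, hence expanding (giving~(2)); moreover $\mathcal{T}$ is degenerate if and only if $n = 0$ if and only if no conjugacy class grows exponentially, i.e.\ if and only if $[\psi]$ is polynomially growing, which settles the last clause.

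For~(3) I would argue that, by the blow-up bookkeeping, $x \in \mathcal{F}$ is $\mathcal{T}$-elliptic if and only if some conjugate of $x$ lies in $\mathcal{G}_{n+1}$, so it remains to identify that condition with polynomial growth. I would prove this by induction along the sequence, using at each level that the $\mathcal{Y}_i$-loxodromic elements of $\mathcal{G}_i$ are precisely the ones that $[\psi_i]$-grow exponentially while $\mathcal{G}_{i+1}$ consists of the $\mathcal{Y}_i$-elliptic elements (Proposition~\ref{prop-limitloxodromics}, Theorem~\ref{thm-limitloxodromics}), together with the fact that the growth type of a conjugacy class is preserved on passing to invariant subgroup systems of finite type. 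Chaining these equivalences gives: $x$ is $\mathcal{T}$-loxodromic $\iff$ $x$ is $\mathcal{Y}_i$-loxodromic for some $i$ $\iff$ $x$ $[\psi]$-grows exponentially, which is~(3). I expect this last step to be the main obstacle within the present paper's framework: one must reconcile the \emph{combinatorial} description of $\mathcal{T}$-ellipticity coming from the blow-up with the \emph{dynamical} notion of polynomial growth, tracking how growth rates behave under restriction to the point-stabilizer systems at each stage and verifying that the per-level characterizations at the forests $\mathcal{Y}_i$ assemble correctly into a single characterization on the blown-up pretree $\mathcal{T}$. The heaviest purely technical input --- that the expanding equivariant permutation is realized by an (essentially unique) real pretree --- is done in~\cite[Theorem~IV.4]{Mut21b} and is used here as a black box.
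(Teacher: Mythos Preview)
Your proposal is correct and follows essentially the same route as the paper: the theorem carries a \qed\ because its proof \emph{is} the construction in Section~\ref{SubsecAssembHier}, which you have faithfully summarized. One minor remark: you flag item~(3) as the main obstacle, but the paper dispatches it in one line (``By construction, an element~$x \in \mathcal F$ has a conjugate in~$\mathcal G_{n+1}$ if and only if~$x$ $[\psi]$-grows polynomially!''), since the chaining via Theorem~\ref{thm-limitloxodromics} and preservation of growth type under restriction is immediate --- the genuine technical weight is exactly where you place it, in the blow-up step outsourced to~\cite[Theorem~IV.4]{Mut21b}.
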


Without metrics, there is not much one can do to compare limit pretrees.
On the other hand, we do not expect limit pseudoforests to be well-defined (even up to homothety) for a given outer automorphism --- this would be equivalent to the existence of a canonical descending sequence of limit forests.
The new idea is to pick a limit pseudoforest~$(\mathcal T, (\delta_i)_{i=1}^n)$ and {normalize} its hierarchy~$(\delta_i)_{i=1}^n$ using the {attracting laminations} for~$[\psi]$.
For the normalized hierarchy, the associated top level forest will be universal;
in particular, it is independent of any choices made in its construction.

\subsection{Attracting laminations}\label{SubsecAttrLams}

Fix an exponentially growing automorphism $\psi\colon \mathcal F \to \mathcal F$ 
with a descending sequence $(\mathcal Y_i, \delta_i)_{i=1}^n$ of limit forests.
Let $\mathcal G_1 = \mathcal F$, $\mathcal G_{i+1} = \mathcal G[\mathcal Y_i]$, and $[\psi_i]$ be the restriction of~$[\psi]$ to~$\mathcal G_i$ for $i \ge 1$.
Each limit forest $(\mathcal Y_i, \delta_i)$ has matching 
stable laminations~$\mathcal L_{\mathcal Z_i}^+[\psi_i]$ for~$[\psi_i]$ rel.~$\mathcal Z_i$, where~$\mathcal Z_i$ is a $[\psi_i]$-invariant proper free factor system of~$\mathcal G_i$.
By Claim~\ref{claim-liftembed}, $\mathbb R(\mathcal G_i, \mathcal Z_i)$ is canonically identified with a subspace of $\mathbb R(\mathcal G_i)$ via a lifting map.
As~$\mathcal G_{i+1}$ is a malnormal subgroup system of~$\mathcal G_i$, the space of lines~$\mathbb R(\mathcal G_{i+1})$ is canonically identified with a closed subspace of~$\mathbb R(\mathcal G_i)$ (exercise).
By transitivity, $\mathbb R(\mathcal G_n) \subset \mathbb R(\mathcal G_{n-1}) \subset \cdots \subset \mathbb R(\mathcal G_0) = \mathbb R(\mathcal F)$.

Consider this chain of canonical embeddings: $\mathbb R(\mathcal G_i, \mathcal Z_i) \subset \mathbb R(\mathcal G_i) \subset \mathbb R(\mathcal F)$. 
Quasiperiodicity is not preserved by the first embedding but a weaker form of it is.
A line~$[l]$ is \underline{birecurrent} in an $\mathcal F$-forest if any closed interval $I \subset l$ has infinitely many translates contained in both ends of~$l$;
quasiperiodic lines are birecurrent.

An \underline{attracting lamination} for~$[\psi]$ in $\mathbb R(\mathcal F)$ is the closure of a birecurrent line in~$\mathbb R(\mathcal F)$ with a $\psi_*^k$-attracting neighbourhood for some $k\ge 1$.
The set of all attracting laminations for~$[\psi]$ is {canonical} as it is defined using canonical constructs:~$\mathbb R(\mathcal F)$ and the homeomorphism $\psi_* \colon \mathbb R(\mathcal F) \to \mathbb R(\mathcal F)$.
Note that~$\psi_*$ permutes the attracting laminations for~$[\psi]$.

\begin{rmk}
This definition is from~\cite[Definition~3.1.5]{BFH00}.
Shortly, we will define {topmost} attracting laminations as done in~\cite[Section~6]{BFH00}.
\end{rmk}


\begin{lem}[cf.~{\cite[Lemma~3.1.4]{BFH00}}]\label{lem-liftbirecur} Let $f \colon (\mathcal T,d) \to (\mathcal Y, \delta)$ be an equivariant PL-map. 
A line is birecurrent in $\mathbb R(\mathcal Y,\delta)$ if and only if it is birecurrent in~$\mathbb R(\mathcal T)$.
{\rm(exercise)}  \qed
\end{lem}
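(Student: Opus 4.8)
The plan is to work with the canonical identification $\mathbb R(\mathcal Y,\delta)\subset\mathbb R(\mathcal T)$ from Claim~\ref{claim-liftembed}: a line of $\mathbb R(\mathcal Y,\delta)$ is a line $l$ in $(\mathcal T,d)$ of the form $l=f^*(\gamma)$ for a unique line $\gamma=f_*(l)$ in $(\mathcal Y,\delta)$, and the content is to show ``$l$ is birecurrent in $(\mathcal T,d)$'' $\iff$ ``$\gamma$ is birecurrent in $(\mathcal Y,\delta)$''. Fix a cancellation constant $C\defeq C[f]$ for the PL-map $f$ (Lemma~\ref{lem-bcl}). Both directions are instances of the same move: translate a recurrence statement for one line into a recurrence statement for the other by pushing (resp.\ pulling) intervals through $f$ and correcting by $C$.

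For the forward direction, suppose $l$ is birecurrent in $(\mathcal T,d)$ and fix a closed interval $I\subset\gamma$. Since $\gamma\subset f(l)$ by Corollary~\ref{cor-tightenings}(\ref{cor-tightenings-lineproj}), I would choose a closed interval $I_l\subset l$ whose $f$-image contains the closed $2C$-neighbourhood of $I$ in $\gamma$, with $I$ lying strictly between the $\gamma$-images of the two endpoints of $I_l$; bounded cancellation then forces $f(l\setminus I_l)$ to miss the interior of $I$, so that whenever $x\in\mathcal F$ has $x\cdot I_l$ contained deep enough in one end of $l$ one gets $x\cdot I\subset\gamma$ (the $f$-images of the complementary rays of $l$ stay within $C$ of the tree-interval between the $f$-images of their endpoints by Corollary~\ref{cor-tightenings}(\ref{cor-tightenings-dichotomy}), hence cannot reach back to $x\cdot I$). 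Birecurrence of $l$ yields infinitely many such $x$ accumulating at each end of $l$, and since $f_*$ carries the two ends of $l$ onto the two ends of $\gamma$, the intervals $x\cdot I$ accumulate at both ends of $\gamma$; hence $\gamma$ is birecurrent.

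For the converse, suppose $\gamma$ is birecurrent in $(\mathcal Y,\delta)$ and fix a closed interval $J\subset l$. Enlarging $J$ if necessary, $f(J)$ lies in the tree-interval between $f(p)$ and $f(q)$ for the endpoints $p,q$ of $J$, and --- this is the mechanism used in the proof of Claim~\ref{claim-liftembed}, read in reverse --- bounded cancellation makes $f(J)$ contain a subinterval $I\subset\gamma$ whose length grows without bound as $J$ is enlarged. Choosing $J$ large enough that $I$ has length $>2C$, the same local argument as above shows that any translate $x\cdot I$ lying deep in an end of $\gamma$ comes from a translate $x\cdot J$ that is genuinely contained in $l$. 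Using that $f^*$ carries the ends of $\gamma$ onto the ends of $l$, birecurrence of $\gamma$ gives infinitely many such $x\cdot J$ accumulating at both ends of $l$, so $l$ is birecurrent.

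The only real content --- and the step I expect to be the main obstacle to write out carefully --- is the ``local argument'' appearing in both directions: verifying, purely from Lemma~\ref{lem-bcl} and Corollary~\ref{cor-tightenings}, that a translate of a fixed interval which is sufficiently deep in an end of one line maps (resp.\ pulls back) to a translate of the corresponding interval that is genuinely contained in the other line and is again deep in the corresponding end --- so that the property ``deep in an end'' is transported faithfully by $f_*$ and $f^*$. Once this is pinned down, the rest is bookkeeping with the $\mathcal F$-action, which is presumably why the authors leave the lemma as an exercise.
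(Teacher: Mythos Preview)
The paper gives no proof of this lemma --- it is marked ``(exercise)'' with a \qed\ and a citation to \cite[Lemma~3.1.4]{BFH00} --- so there is nothing to compare against. Your outline is a reasonable solution to that exercise, and the forward direction is fine: the mechanism from the open-map half of Claim~\ref{claim-liftembed} does exactly what you say.

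The converse direction, however, is not quite ``the same local argument'' run in reverse, and your setup there is slightly backwards. You enlarge $J$, then take $I\subset f(J)\cap\gamma$ with $\delta$-length $>2C$, and assert $x\cdot I\subset\gamma\Rightarrow x\cdot J\subset l$. But $I$ is \emph{smaller} than (the projection of) $J$, so knowing only that $x^{-1}\gamma$ contains $I$ does not a priori control $J$ inside $x^{-1}l=f^*(x^{-1}\gamma)$. The correct move is the opposite: keep the original $J=[p,q]$, and choose $I\subset\gamma$ to be a \emph{large} neighbourhood of the nearest-point projection of $\{f(p),f(q)\}$ to $\gamma$ --- padding by more than $2C$ on each side. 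Then one shows that for \emph{any} line $\gamma'$ in $(\mathcal Y,\delta)$ with $I\subset\gamma'$, the lift $f^*(\gamma')$ contains $J$; applying this with $\gamma'=x^{-1}\gamma$ gives $x\cdot J\subset l$. The verification of this sub-claim is a short four-ends argument: the ends of $\gamma,\gamma'$ split into an ``$a$-side'' pair and a ``$b$-side'' pair (where $I=[a,b]$); since $[e_1,e_1']$ and $[e_2,e_2']$ are separated by $I$ of length $>2C$, bounded cancellation forces the corresponding lines $[E_1,E_1']$ and $[E_2,E_2']$ in $\mathcal T$ to be disjoint, so in the ``H'' spanned by $E_1,E_2,E_1',E_2'$ the lines $l=[E_1,E_2]$ and $f^*(\gamma')=[E_1',E_2']$ overlap in the bridge, and one checks this bridge contains $J$. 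This is the genuine content your last paragraph alludes to; it is not symmetric with the forward direction because $f_*$ pushes intervals forward directly, while going backward requires this extra step comparing two lifts.
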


So leaves of~$\mathcal L_{\mathcal Z_i}^+[\psi_i]$ are birecurrent in~$\mathbb R(\mathcal G_i)$ and hence~$\mathbb R(\mathcal F)$;
moreover, a $\psi_{i*}^k$-attracting neighbourhood of a line in $\mathbb R(\mathcal G_i, \mathcal Z_i)$ will lift to a $\psi_*^k$-attracting neighbourhood of the same line in~$\mathbb R(\mathcal F)$. (exercise)
Thus the closure in~$\mathbb R(\mathcal F)$ of a stable lamination for~$[\psi_i]$ rel.~$\mathcal Z_i$, i.e.~a component of~$\mathcal L_{\mathcal Z_i}^+[\psi_i]$, is an attracting lamination for~$[\psi]$. 

\begin{lem}[cf.~{\cite[Lemma~3.1.10]{BFH00}}]\label{lem-coordinatefreeattracting}
Let $\psi\colon \mathcal F \to \mathcal F$ be an exponentially growing automorphism with a descending sequence $(\mathcal Y_i, \delta_i)_{i=1}^n$ of limit forests.
The components of stable laminations~$\mathcal L_{\mathcal Z_i}^+[\psi_i]~(1 \le i \le n)$ determine all the attracting laminations for~$[\psi]$.
\end{lem}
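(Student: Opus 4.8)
The plan is to show two inclusions: every component of some $\mathcal L_{\mathcal Z_i}^+[\psi_i]$ is an attracting lamination for~$[\psi]$, and every attracting lamination for~$[\psi]$ arises this way. The first direction is essentially already assembled in the paragraph preceding the statement: a leaf of $\mathcal L_{\mathcal Z_i}^+[\psi_i]$ is birecurrent in $\mathbb R(\mathcal G_i, \mathcal Z_i)$, hence birecurrent in $\mathbb R(\mathcal G_i)$ and so in $\mathbb R(\mathcal F)$ by Lemma~\ref{lem-liftbirecur} applied along the chain $\mathbb R(\mathcal G_i,\mathcal Z_i) \subset \mathbb R(\mathcal G_i) \subset \mathbb R(\mathcal F)$; and a $\psi_{i*}^k$-attracting neighbourhood lifts to a $\psi_*^k$-attracting neighbourhood of the same leaf (this is the noted exercise, whose content is that the canonical embeddings are open onto their images by Claim~\ref{claim-liftembed} and the malnormality identification, so a neighbourhood basis maps to a neighbourhood basis). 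Since $\psi_*$ permutes attracting laminations and acts transitively on the components of each $\mathcal L_{\mathcal Z_i}^+[\psi_i]$, the whole closure of each such component in $\mathbb R(\mathcal F)$ is a single attracting lamination for~$[\psi]$. So I would state this direction first and dispatch it quickly.

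For the reverse inclusion, fix an attracting lamination $A$ for~$[\psi]$ with a $\psi_*^k$-attracting neighbourhood $U$ of a birecurrent leaf $[l]$. The idea is to locate which ``level'' $[l]$ lives at. I would argue by induction down the descending sequence: if $[l] \in \mathbb R(\mathcal G_{i})$, I claim either $[l] \in \mathbb R(\mathcal G_{i+1})$ or $[l]$ is a leaf of (a component of) $\mathcal L_{\mathcal Z_i}^+[\psi_i]$. Indeed, $[l]$ has a $\psi_*^k$-attracting neighbourhood, which restricts to a $\psi_{i*}^k$-attracting neighbourhood in $\mathbb R(\mathcal G_i)$. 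Now pass to a limit forest: $(\mathcal Y_i,\delta_i)$ is the limit forest for $[\psi_i]$ rel.~$\mathcal Z_i$, built from a descending sequence of irreducible train tracks $(\tau_j)$ with only $\tau_{n(i)}$ expanding. The key dichotomy (the analogue of Lemma~\ref{lem-coordinatefreestable}/Claim~\ref{claim-anystable}, and the reasoning in the sketch of Lemma~\ref{lem-coordinatefreestable}): if $[l]$ is $\mathcal Y_i$-loxodromic — more precisely, if the axes or the leaf itself has unbounded image in $(\mathcal Y_i,\delta_i)$ — then by Theorem~\ref{thm-limitloxodromics} (conditions \eqref{thm-limitlox-cond-lox}$\Leftrightarrow$\eqref{thm-limitlox-cond-lim}) $[l]$ weakly $\psi_*$-limits into $\mathcal L_{\mathcal Z_i}^+[\psi]$; since $[l]$ already has a $\psi_*^k$-attracting neighbourhood and is birecurrent, it must itself be a leaf of $\mathcal L_{\mathcal Z_i}^+[\psi]$ (a birecurrent line with an attracting neighbourhood that weakly limits to a lamination lies in that lamination). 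If instead $[l]$ is $\mathcal Y_i$-elliptic, then $[l]$ has bounded image in $(\mathcal Y_i,\delta_i)$, hence lies in the space of lines of the point-stabilizer system, i.e.~$[l]\in\mathbb R(\mathcal G_{i+1})$ under the canonical embedding, and the induction continues. The base case $i=n$ is exactly Proposition~\ref{prop-limitloxodromics}/Lemma~\ref{lem-coordinatefreestable} for the expanding train track $\tau_n$: a line in $\mathbb R(\mathcal G_n)$ with a $\psi_{n*}^k$-attracting neighbourhood is a leaf of $\mathcal L_{\mathcal Z_n}^+[\psi_n]$; and $\mathcal G_{n+1}$ consists of polynomially growing subgroups, so no line in $\mathbb R(\mathcal G_{n+1})$ can have an attracting neighbourhood (its forward $\psi_*$-orbit cannot accumulate onto it with the required attracting dynamics, since polynomial growth precludes the stretching needed — this is where ``$[\psi]$ polynomially growing rel.~$\mathcal Z$ iff $\lambda[\tau_n]=1$'' and Theorem~\ref{thm-limitpseudotree}(3) are used). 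Running the induction, $[l]$ is a leaf of some $\mathcal L_{\mathcal Z_i}^+[\psi_i]$, so $A = \overline{[l]}$ (closure in $\mathbb R(\mathcal F)$) is the attracting lamination determined by the component of $\mathcal L_{\mathcal Z_i}^+[\psi_i]$ containing $[l]$.

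The main obstacle I expect is the bookkeeping in the elliptic/loxodromic dichotomy across the two nested descending sequences — the one of limit forests $(\mathcal Y_i,\delta_i)$ and, inside each, the one of irreducible train tracks. One must be careful that ``$[l]$ is $\mathcal Y_i$-elliptic'' really does place $[l]$ in $\mathbb R(\mathcal G_{i+1})$ as a subspace of $\mathbb R(\mathcal F)$ compatibly with all the canonical embeddings, and that an attracting neighbourhood in $\mathbb R(\mathcal F)$ restricts to one at the appropriate level; this is where the openness statements behind Claim~\ref{claim-liftembed} and the malnormality identification do the real work. The other delicate point is the implication ``birecurrent $+$ attracting neighbourhood $+$ weakly limits to $\Lambda$'' $\Rightarrow$ ``leaf of $\Lambda$'': one shows that every leaf segment of $[l]$ recurs (by birecurrence) and appears in the attracting neighbourhood, forcing $[l]$ into the closed invariant set $\Lambda$; a clean way is to note that $\psi_*^{kn}([l])$ converges to $[l]$ along the neighbourhood basis while also accumulating on a leaf of $\Lambda$, and in a (non-Hausdorff) space of lines this still pins $[l]$ down because $[l]$ is birecurrent so its closure is minimal and must coincide with the minimal component of $\Lambda$ it limits onto.
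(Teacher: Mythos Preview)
Your overall architecture is right and matches the paper's: locate the maximal level~$i$ with $[l]\in\mathbb R(\mathcal G_i)$, rule out $i=n+1$ because $\mathcal G_{n+1}$ is polynomially growing, and then show $[l]$ lies in~$\mathcal L_{\mathcal Z_i}^+[\psi_i]$. The paper phrases this as ``let $i$ be maximal'' rather than as an induction, but that is cosmetic.

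The genuine gap is in your key step. You invoke Theorem~\ref{thm-limitloxodromics} on the line~$[l]$ itself via a ``$\mathcal Y_i$-loxodromic/elliptic'' dichotomy for lines, but that theorem is stated only for \emph{elements} $x\in\mathcal F$ and their axes; there is no established statement that an arbitrary birecurrent line not in $\mathbb R(\mathcal G_{i+1})$ weakly $\psi_*$-limits to~$\mathcal L_{\mathcal Z_i}^+[\psi_i]$. Your last-paragraph patch (``$\psi_*^{kn}[l]$ converges to $[l]$ while also accumulating on a leaf of~$\Lambda$'') is circular: the accumulation on~$\Lambda$ is precisely what you are trying to establish, and Theorem~\ref{thm-limitloxodromics} does not supply it for~$[l]$. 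Relatedly, the implication ``bounded image in~$\mathcal Y_i$ $\Rightarrow$ $[l]\in\mathbb R(\mathcal G_{i+1})$'' is not immediate either; a line could a~priori have one end bounded and one unbounded, and birecurrence alone needs an argument to rule this out.

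The paper sidesteps both issues. Once at the maximal~$i$, it passes to $\mathbb R(\mathcal G_i,\mathcal Z_i)$ (using $\mathcal Z_i\subset\mathcal G_{i+1}$ and birecurrence), then uses the reasoning of Claim~\ref{claim-anystable}: since all but the last train track in the descending sequence building $(\mathcal Y_i,\delta_i)$ have $\lambda=1$, no birecurrent line outside $\mathbb R(\mathcal F_{i,n_i},\mathcal Z_i)$ can have an attracting neighbourhood. This reduces to the single expanding train track case, where the proof of Lemma~\ref{lem-coordinatefreestable} applies: pick a \emph{loxodromic element}~$x$ with axis in the attracting neighbourhood~$U$; its translation length grows under iteration, so $x$ is $\mathcal Y_i$-loxodromic, and now Theorem~\ref{thm-limitloxodromics} legitimately gives that the axis weakly limits to~$\mathcal L_{\mathcal Z_i}^+[\psi_i]$; since $\psi_*^{km}(U)$ is a neighbourhood basis for~$[l]$, the line~$[l]$ is a weak limit of those iterated axes and hence a leaf. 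The passage through an element is the missing ingredient in your argument.
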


\begin{proof}[Sketch of proof]
Suppose that $[l] \in \mathbb R(\mathcal F)$ is a birecurrent line with a $\psi_*^k$-attracting neighbourhood for some $k \ge 1$.
If~$\mathcal G_{n+1} \neq \emptyset$, then either it consists of only cyclic components or the restriction of~$\psi_{n}$ to~$\mathcal G_{n+1}$ is polynomially growing.
Either way, $\mathcal G_{n+1}$ cannot support an attracting lamination of~$\psi_n$.
Let $i \le n$ be the maximal index for which $\mathbb R(\mathcal G_{i}) \subset \mathbb R(\mathcal F)$ contains~$[l]$.
Birecurrence in~$\mathbb R(\mathcal F)$ and Lemma~\ref{lem-liftbirecur} imply~$[l]$ is birecurrent in $\mathbb R(\mathcal G_{i}, \mathcal Z_i)$ with a $\psi_{i*}^k$-attracting neighbourhood for some $k \ge 1$.
Following the proof of Claim~\ref{claim-anystable}, assume some descending chain $(\mathcal F_{i,j})_{j=2}^{n_i}$ of proper free factor systems of $\mathcal F_{i,1} \defeq \mathcal G_i$ was used to construct $(\mathcal Y_i, \delta_i)$;
then any birecurrent line in $\mathbb R(\mathcal G_i, \mathcal Z_i)$ with a $\psi_{i*}^k$-attracting neighbourhood is in $\mathbb R(\mathcal F_{i, n_i}, \mathcal Z_i)$.
The proof of Lemma~\ref{lem-coordinatefreestable} (with ``birecurrence'' in place of ``quasiperiodicity'') implies $[l] \in \mathcal L_{\mathcal Z_i}^+[\psi_i]$.
\end{proof}


The finite set of all attracting laminations for~$[\psi]$ is canonical (by definition) and partially ordered by inclusion;
an attracting lamination for~$[\psi]$ is \underline{topmost} if it is maximal in this partial order.
By Lemma~\ref{lem-coordinatefreestable},~$\psi_{i*}$ transitively permutes the components of~$\mathcal L_{\mathcal Z_i}^+[\psi_i]$;
so the closure in $\mathbb R(\mathcal F)$ of~$\mathcal L_{\mathcal Z_i}^+[\psi_i] \subset \mathbb R(\mathcal G_i, \mathcal Z_i)$ is a $\psi_*$-orbit~$\mathcal L_i^+[\psi]$ of attracting laminations for~$[\psi]$.
The goal is to {normalize} any limit pseudoforest $(\mathcal T, (d_i)_{i=1}^n)$ so that the levels are related to the partial order of the attracting laminations.


The next proposition is a repackaging of Theorem~\ref{thm-limitloxodromics} in the language of this chapter:

\begin{prop}\label{prop-limitloxodromics2}
Let $\psi\colon \mathcal F \to \mathcal F$ be an exponentially growing automorphism with a limit pseudoforest $(\mathcal T, (\delta_i)_{i=1}^n)$.

For a nontrivial element $x \in \mathcal F$, the following statements are equivalent:
\begin{enumerate}
\item\label{prop-limitlox2-cond-lox} the element~$x$ is $\mathcal T$-loxodromic;
\item\label{prop-limitlox2-cond-exp} the element~$x$ $[\psi]$-grows exponentially; and
\item\label{prop-limitlox2-cond-lim} the axis for~$x$ in $\mathbb R(\mathcal F)$ weakly $\psi_*$-limits to an attracting lamination.
\end{enumerate}
\end{prop}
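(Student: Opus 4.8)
The plan is to read this off Theorems~\ref{thm-limitpseudotree} and~\ref{thm-limitloxodromics} by unwinding the layered structure of the limit pseudoforest. Recall from Section~\ref{SubsecAssembHier} that $(\mathcal T, (\delta_i)_{i=1}^n) = (\mathcal X^{(n)}, (\delta_i)_{i=1}^n)$ is assembled from a descending sequence $(\mathcal Y_i, \delta_i)_{i=1}^n$ of nondegenerate limit forests, with point-stabilizer systems $\mathcal F = \mathcal G_1 \supseteq \cdots \supseteq \mathcal G_{n+1}$ (where $\mathcal G_{i+1} = \mathcal G[\mathcal Y_i]$), each $(\mathcal Y_i, \delta_i)$ the limit forest for $[\psi_i]$ rel.\ a $[\psi_i]$-invariant proper free factor system $\mathcal Z_i$, and an equivariant metric surjection $\pi_i^\circ \colon \mathcal T_i^\circ \to \mathcal Y_i$ from an equivariant blow-up $\mathcal T_i^\circ$ of train tracks (as in Section~\ref{SubsecLimittrees2}). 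I would also use the chain of canonical closed embeddings $\mathbb R(\mathcal G_i, \mathcal Z_i) \subset \mathbb R(\mathcal G_i) \subset \mathbb R(\mathcal F)$ from Section~\ref{SubsecAttrLams}: they intertwine $\psi_{i*}$ with $\psi_*$ (so they carry weak $\psi_{i*}$-limits to weak $\psi_*$-limits) and identify $\mathcal L^+_{\mathcal Z_i}[\psi_i]$ with $\mathcal L_i^+[\psi]$, a $\psi_*$-orbit of attracting laminations; by Lemma~\ref{lem-coordinatefreeattracting} every attracting lamination for $[\psi]$ is a component of some $\mathcal L_i^+[\psi]$.

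First, $(\ref{prop-limitlox2-cond-lox})\Leftrightarrow(\ref{prop-limitlox2-cond-exp})$ is Theorem~\ref{thm-limitpseudotree}(3). For $(\ref{prop-limitlox2-cond-lox})\Rightarrow(\ref{prop-limitlox2-cond-lim})$, I would take the least $i$ with $\|x\|_{\delta_i} > 0$ (such $i$ exists as $x$ is $\mathcal T$-loxodromic). Since $\|x\|_{\delta_j} = 0$ for all $j < i$, the element $x$ stabilizes a maximal convex subset of $\mathcal T$ of zero $\delta_{i-1}$-diameter, which is a copy of a $\mathcal Y_i$-component carrying $\delta_i$ as its metric; so a conjugate of $x$ lies in the corresponding $\mathcal G_i$-component and acts loxodromically on that copy, i.e.\ (after conjugating) $x \in \mathcal G_i$ is $\mathcal Y_i$-loxodromic. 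As $\pi_i^\circ$ is an equivariant surjection, $x$ is then $\mathcal T_i^\circ$-loxodromic, so Theorem~\ref{thm-limitloxodromics} applies to $x$; its condition~(\ref{thm-limitlox-cond-lim}) says that the axis for $[x]$ in $\mathbb R(\mathcal G_i, \mathcal Z_i)$ weakly $\psi_{i*}$-limits to $\mathcal L^+_{\mathcal Z_i}[\psi_i]$. Pushing this weak limit forward along $\mathbb R(\mathcal G_i, \mathcal Z_i) \subset \mathbb R(\mathcal F)$ turns it into a weak $\psi_*$-limit of the axis for $x$ to $\mathcal L_i^+[\psi]$, hence to an attracting lamination.

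The substantive direction is $(\ref{prop-limitlox2-cond-lim})\Rightarrow(\ref{prop-limitlox2-cond-lox})$, which I would prove by contraposition. If $x$ is $\mathcal T$-elliptic then, by Theorem~\ref{thm-limitpseudotree}, a conjugate of $x$ lies in $\mathcal G_{n+1} \subseteq \mathcal G_i$ for every $i \le n$, so $x$ is $\mathcal Y_i$-elliptic for every $i$ and its axis in $\mathbb R(\mathcal F)$ lies in the closed subspace $\mathbb R(\mathcal G_i)$. By Lemma~\ref{lem-coordinatefreeattracting} it suffices to show, for each $i$, that this axis does not weakly $\psi_*$-limit to a leaf $\beta$ of a component of $\mathcal L_i^+[\psi] = \mathcal L^+_{\mathcal Z_i}[\psi_i]$. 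Since $\mathcal Z_i$ is $[\psi_i]$-invariant, every forward $\psi_*$-iterate of the axis stays in $\mathbb R(\mathcal G_i)$, so any such weak limit happens inside $\mathbb R(\mathcal G_i)$. If $x$ is not conjugate into $\mathcal Z_i$, the axis and all its iterates lie in $\mathbb R(\mathcal G_i, \mathcal Z_i) = \operatorname{im}(f^*)$ for the collapse $f$ realizing the embedding $\mathbb R(\mathcal G_i, \mathcal Z_i) \subset \mathbb R(\mathcal G_i)$; applying the continuous inverse $f_*$ (Claim~\ref{claim-liftembed}) then gives that the axis for $[x]$ in $\mathbb R(\mathcal G_i, \mathcal Z_i)$ weakly $\psi_{i*}$-limits to $\mathcal L^+_{\mathcal Z_i}[\psi_i]$, which by Theorem~\ref{thm-limitloxodromics} would force $x$ to be $\mathcal Y_i$-loxodromic --- a contradiction. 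If $x$ is conjugate into $\mathcal Z_i$, then in the free splitting realizing $\mathbb R(\mathcal G_i)$ the axis of each $\psi^m(x)$ stays inside a single blown-up $\mathcal Z_i$-vertex and never crosses an edge that $f$ leaves uncollapsed, whereas every long enough subsegment of $\beta$ crosses such an edge; so no subsequence of the iterates can converge to $\beta$.

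I expect this last direction to be the main obstacle: one has to track weak limits carefully across the three spaces $\mathbb R(\mathcal G_i, \mathcal Z_i) \subset \mathbb R(\mathcal G_i) \subset \mathbb R(\mathcal F)$, verify that being ``elliptic at every level $\mathcal Y_i$'' genuinely obstructs weak limiting at every level, and deal separately with the degenerate case in which $x$ slips into the free factor system $\mathcal Z_i$ of some limit forest. The remaining steps are routine repackagings of Theorems~\ref{thm-limitpseudotree} and~\ref{thm-limitloxodromics}.
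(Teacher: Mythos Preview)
Your proposal is correct and follows essentially the same route as the paper: both reduce $(\ref{prop-limitlox2-cond-lox})\Leftrightarrow(\ref{prop-limitlox2-cond-exp})$ to Theorem~\ref{thm-limitpseudotree}, push $(\ref{prop-limitlox2-cond-lox})\Rightarrow(\ref{prop-limitlox2-cond-lim})$ through Theorem~\ref{thm-limitloxodromics} at the level~$i$ where $x$ becomes $\mathcal Y_i$-loxodromic, and for the contrapositive split into the same two cases (conjugate into $\mathcal Z_i$ or not), invoking Theorem~\ref{thm-limitloxodromics} in the second and a ``stays inside $\mathbb R(\mathcal Z_i)$'' argument in the first. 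The only cosmetic difference is that the paper phrases the $\mathcal Z_i$-case via the closed subspace $\mathbb R(\mathcal Z_i)\subset\mathbb R(\mathcal F)$ rather than tracking edge-crossings in a particular splitting, which is exactly the same obstruction.
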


\begin{proof} The equivalence between Conditions \ref{prop-limitlox2-cond-lox}--\ref{prop-limitlox2-cond-exp} is part of Theorem~\ref{thm-limitpseudotree}. Suppose $x \in \mathcal F$ is $\mathcal T$-loxodromic and the limit pseudoforest $(\mathcal T, (\delta_i)_{i=1}^n)$ is constructed from the descending sequence of limit forests $(\mathcal Y_i, \delta_i)$ for $1 \le i \le n$.
By construction, the element~$x$ is conjugate to a $\mathcal Y_i$-loxodromic element $y \in \mathcal G_i$ for some $i \le n$;
in particular,~$x$ and~$y$ have the same axis in $\mathbb R(\mathcal G_i) \subset \mathbb R(\mathcal F)$.
The axis for~$y$ in $\mathbb R(\mathcal G_i, \mathcal Z_i) \subset \mathbb R(\mathcal G_i)$ weakly $\psi_{i *}$-limits to the stable laminations~$\mathcal L_{\mathcal Z_i}^+[\psi_i] \subset \mathbb R(\mathcal G_i, \mathcal Z_i)$ by Theorem~\ref{thm-limitloxodromics};
therefore, the shared axis for~$y$ and~$x$ in~$\mathbb R(\mathcal F)$ weakly $\psi_*$-limits to the attracting laminations for~$[\psi]$ determined by~$\mathcal L_{\mathcal Z_i}^+[\psi_i]$, i.e.~the closure of $\mathcal L_{\mathcal Z_i}^+[\psi_i]$ in $\mathbb R(\mathcal F)$.

Conversely, suppose $x \in \mathcal F$ is $\mathcal T$-elliptic.
Then~$x$ is must be conjugate to a $\mathcal Y_n$-elliptic element $y \in \mathcal G_n$.
If~$y$ is conjugate to an element of~$\mathcal Z_i$, then the shared axis for~$y$ and~$x$ in the closed subspace $\mathbb R(\mathcal Z_i) \subset \mathbb R(\mathcal F)$ cannot weakly $\psi_*$-limit to the attracting lamination for~$[\psi]$ determined by a component of~$\mathcal L_{\mathcal Z_i}^+[\psi_i]$ --- such an attracting lamination contains lines not in $\mathbb R(\mathcal Z_i)$.
If~$y$ is not conjugate to an element of~$\mathcal Z_i$, then the axis for~$y$ in $\mathbb R(\mathcal G_i, \mathcal Z_i)$ does not weakly $\psi_{i *}$-limit to~$\mathcal L_{\mathcal Z_i}^+[\psi_i]$ by Theorem~\ref{thm-limitloxodromics};
therefore, the shared axis for~$y$ and~$x$ in~$\mathbb R(\mathcal F)$ cannot weakly $\psi_*$-limit to the attracting lamination for~$[\psi]$ determined by a component of~$\mathcal L_{\mathcal Z_i}^+[\psi_i]$.
By Lemma~\ref{lem-coordinatefreeattracting}, we have exhausted all possibilities when $1 \le i \le n$, and the axis for~$x$ in $\mathbb R(\mathcal F)$ cannot weakly $\psi_*$-limit to an attracting lamination for~$[\psi]$.
\end{proof}

\subsection{Pseudolaminations}\label{SubsecPseudoleaf}

Fix an exponentially growing automorphism $\psi \colon \mathcal F \to \mathcal F$ with a descending sequence $(\mathcal Y_i, \delta_i)_{i=1}^n$ of limit forests, and let $(\mathcal T, (\delta_i)_{i=1}^n)$ be the limit pseudoforest for $(\mathcal Y_i, \delta_i)_{i=1}^n$.
Recall that $\mathcal G_1 = \mathcal F$, $\mathcal G_{i+1} = \mathcal G[\mathcal Y_i]$, and $[\psi_i]$ is the restriction of~$[\psi]$ to~$\mathcal G_i$ for $i \ge 1$.
For $1 \le i \le n$, the stable laminations~$\mathcal L_{\mathcal Z_i}^+[\psi_i]$ are contained in $\mathbb R(\mathcal Y_i, \delta_i) \subset \mathbb R(\mathcal G_i, \mathcal Z_i)$, where~$\mathcal Z_i$ is some $[\psi_i]$-invariant proper free factor system of~$\mathcal G_i$.

Let $\mathcal T_i \subset \mathcal T$ be the characteristic convex subsets for~$\mathcal G_i$.
By construction of $(\mathcal T, (\delta_i)_{i=1}^n)$,~$\delta_i$ restricts to a $\mathcal G_i$-invariant convex pseudometric on~$\mathcal T_i$ whose associated $\mathcal G_i$-forest can be equivariantly identified with $(\mathcal Y_i, \delta_i)$.
Fix such an identification, and let $\kappa_i \colon \mathcal T_i \to \mathcal Y_i$ denote the natural equivariant collapse map.
The stable laminations~$\mathcal L_{\mathcal Z_i}^+[\psi_i]$ are in $\mathbb R(\mathcal Y_i, \delta_i)$;
their leaves have unique lifts (via $\kappa_i$) to $\mathcal T_i \subset \mathcal T$;
we call these \underline{pseudoleaves} of~$\mathcal L_{\mathcal T}^+[\psi_i]$.
A \underline{pseudoleaf segment} of $\mathcal L_{\mathcal T}^+[\psi_i]$ is a closed interval in a (representative of a) pseudoleaf with nondegenerate $\kappa_i$-image in~$\mathcal Y_i$.

Remarkably, the pseudoleaf segments detect weak $\psi_*$-limits of elements in attracting laminations.
Let $\mathcal L_i^+[\psi]$ be the attracting laminations for~$[\psi]$ determined by~$\mathcal L_{\mathcal Z_i}^+[\psi_i]$, i.e.~the closure in~$\mathbb R(\mathcal F)$ of the stable laminations~$\mathcal L_{\mathcal Z_i}^+[\psi_i]$.

\begin{prop}\label{prop-limitpseudolox2}

Let $\psi\colon \mathcal F \to \mathcal F$ be an exponentially growing automorphism with a limit pseudoforest $(\mathcal T, (\delta_i)_{i=1}^n)$.
For $1 \le j \le n$ and $\mathcal T$-loxodromic $x \in \mathcal F$, the axis for~$x$ in~$\mathcal T$ contains a pseudoleaf segment of~$\mathcal L_{\mathcal T}^+[\psi_j]$ if and only if the axis for~$x$ in $\mathbb R(\mathcal F)$ weakly $\psi_*$-limits to the attracting laminations~$\mathcal L_j^+[\psi]$.
\end{prop}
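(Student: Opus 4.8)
The plan is to reduce the statement to the componentwise setting and then apply Theorem~\ref{thm-limitloxodromics}, using the inductive structure of the blow-up construction. First I would fix a $\mathcal T$-loxodromic element $x \in \mathcal F$ and recall that, by the blow-up construction of $(\mathcal T, (\delta_i)_{i=1}^n)$ in Section~\ref{SubsecAssembHier}, there is a smallest index $i$ such that $x$ is conjugate into $\mathcal G_i$ and the corresponding element $y$ is $\mathcal Y_i$-loxodromic; then $x$ and $y$ have the same axis in $\mathbb R(\mathcal G_i) \subset \mathbb R(\mathcal F)$. The key observation is that the axis for $x$ in $\mathcal T$ meets a translate of the characteristic convex subset $\mathcal T_j$ in a pseudoleaf segment of $\mathcal L_{\mathcal T}^+[\psi_j]$ exactly when this can be detected inside $\mathcal T_i$, i.e.\ when $i = j$ (for other values of $j$, the axis either misses $\mathcal T_j$ entirely or its intersection collapses under $\kappa_j$, since pseudoleaf segments of $\mathcal L_{\mathcal T}^+[\psi_j]$ live in $\mathcal T_j$ and are distinguished by nondegenerate $\kappa_j$-image).

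The main work is the case $i = j$. Here I would translate between the ambient pretree $\mathcal T$ and the forest $\mathcal Y_j$ via the equivariant collapse map $\kappa_j \colon \mathcal T_j \to \mathcal Y_j$: a closed interval on the axis for $x$ in $\mathcal T_j$ is a pseudoleaf segment of $\mathcal L_{\mathcal T}^+[\psi_j]$ precisely when its $\kappa_j$-image is a nondegenerate subinterval of a leaf of $\mathcal L_{\mathcal Z_j}^+[\psi_j]$ in $\mathcal Y_j$. By Theorem~\ref{thm-limitloxodromics} applied to $\psi_j$ and the descending sequence of irreducible train tracks underlying $(\mathcal Y_j, \delta_j)$, the element $y = x$ (up to conjugacy) is $\mathcal Y_j$-loxodromic if and only if its axis in $\mathbb R(\mathcal G_j, \mathcal Z_j)$ weakly $\psi_{j*}$-limits to $\mathcal L_{\mathcal Z_j}^+[\psi_j]$; and the proof of that theorem in fact shows that the $\mathcal Y_j$-axis for a $\mathcal Y_j$-loxodromic element contains arbitrarily long leaf segments under forward iteration, which is the same data as the axis containing a single leaf segment (the lamination being minimal and perfect, by Lemma~\ref{lem-stableminperf}, any leaf segment of positive length ``spreads out'' along the axis). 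Lifting leaf segments of $\mathcal L_{\mathcal Z_j}^+[\psi_j]$ to pseudoleaf segments of $\mathcal L_{\mathcal T}^+[\psi_j]$ via $\kappa_j$ then gives the forward direction; conversely, a pseudoleaf segment on the $\mathcal T$-axis projects under $\kappa_j$ to a genuine leaf segment, forcing $x$ to be $\mathcal Y_j$-loxodromic and hence, again by Theorem~\ref{thm-limitloxodromics}, forcing the weak $\psi_{j*}$-limit. Finally I would transport the weak-limit statement from $\mathbb R(\mathcal G_j, \mathcal Z_j)$ up to $\mathbb R(\mathcal F)$ through the chain of canonical embeddings $\mathbb R(\mathcal G_j, \mathcal Z_j) \subset \mathbb R(\mathcal G_j) \subset \mathbb R(\mathcal F)$, using that $\mathcal L_j^+[\psi]$ is by definition the closure of $\mathcal L_{\mathcal Z_j}^+[\psi_j]$ in $\mathbb R(\mathcal F)$ and that these embeddings are topological, so weak limits are preserved --- this is essentially the argument already carried out in Proposition~\ref{prop-limitloxodromics2}.

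The step I expect to be the main obstacle is the precise dictionary between ``the axis contains a pseudoleaf segment'' (a statement about a single interval inside $\mathcal T$) and ``the axis weakly $\psi_*$-limits to $\mathcal L_j^+[\psi]$'' (a statement about the whole forward orbit of the axis converging to every leaf). Bridging these requires invoking quasiperiodicity and minimality of the stable lamination: a positive-length leaf segment appearing once on a loxodromic axis, combined with the fact that the axis is itself (eventually) quasiperiodic along the exponential stratum, forces longer and longer leaf segments to appear under iteration, and the minimality of $\mathcal L_{\mathcal Z_j}^+[\psi_j]$ makes every leaf a weak limit. Making this rigorous without re-deriving the internals of Theorem~\ref{thm-limitloxodromics} is the delicate part; I would lean on Conditions~\ref{thm-limitlox-cond-lox} and \ref{thm-limitlox-cond-lim} of that theorem being equivalent, so that the existence of one long leaf segment (which puts $x$ in the loxodromic case of the dichotomy there) is already enough to conclude.
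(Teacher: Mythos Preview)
Your plan has a genuine gap: you dismiss the case $j > i$ as vacuous, and this is exactly where the content of the proposition lies. Recall the nesting $\mathcal G_j \subset \mathcal G_i$ (up to conjugacy) and $\mathcal T_j \subset \mathcal T_i$. When $x$ is $\mathcal Y_i$-loxodromic, its $\mathcal T$-axis lies in a translate of $\mathcal T_i$ and can very well pass through translates of $\mathcal T_j$ with nondegenerate $\kappa_j$-image; this happens precisely when the $\mathcal Y_i$-axis $\gamma_i$ for $x$ passes through a point $\circ_i$ with nontrivial stabilizer and the associated blow-up data in $\mathcal Y_{i+1},\ldots,\mathcal Y_j$ is nondegenerate. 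So ``the axis either misses $\mathcal T_j$ entirely or its intersection collapses under $\kappa_j$'' is false in general, and Theorem~\ref{thm-limitloxodromics} alone cannot decide whether such pseudoleaf segments appear.

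The paper treats $j=i$ exactly as you suggest (via Theorem~\ref{thm-limitloxodromics}) but then spends the bulk of the proof on $j>i$. There one must track how the line $\gamma_i$ and a point $\circ_i \in \gamma_i$ with nontrivial stabilizer determine an algebraic iterated turn over $\mathcal G_{i+1}$, which in turn limits to an interval in $\overline{\mathcal Y}_{i+1}$ (Theorem~\ref{thm-limitturns}); if this interval is nondegenerate and contains a point with nontrivial stabilizer, one gets a nested iterated turn over $\mathcal G_{i+2}$, and so on down to level~$j$ (Subsections~\ref{Subsubsec-iteratedturns}--\ref{Subsubsec-nestedturns}). The $\mathcal T$-axis for $x$ contains a pseudoleaf segment of $\mathcal L_{\mathcal T}^+[\psi_j]$ exactly when the level-$j$ limit interval $[\star_{j,1},\star_{j,2}] \subset \overline{\mathcal Y}_j$ is nondegenerate, and by Theorem~\ref{thm-limitturns} this is equivalent to the iterated turn (hence the axis, after lifting through bounded cancellation) weakly limiting to a component of $\mathcal L_{\mathcal Z_j}^+[\psi_j]$. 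None of this machinery is present in your plan; without it, you cannot relate a pseudoleaf segment at depth $j$ to the forward $\psi_*$-orbit of the axis at level $i$.
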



\begin{proof}
Let $(\mathcal T, (\delta_i)_{i=1}^n)$ be the limit pseudoforest for the descending sequence $(\mathcal Y_i, \delta_i)_{i=1}^n$ of limit forests for~$[\psi]$.
For $i \le n$, pick a descending sequence $(\tau_{i,j})_{j=1}^{n_i}$ of irreducible train tracks for~$[\psi_i]$ rel.~$\mathcal Z_i$; 
we can assume~$\tau_{i+1,j}$ is defined on a free splitting of~$\mathcal Z_i$ for some $j<n_{i+1}$ since~$[\psi_{i+1}]$ is polynomially growing rel.~$\mathcal Z_i$ (Theorem~\ref{thm-limitloxodromics}).
The train tracks $(\tau_{i,j})_{j=1}^{n_i}$ induce a $\psi_i$-equivariant $\lambda_i$-Lipschitz PL-map $\tau_i^\circ \colon (\mathcal T_i^\circ, d_i^\circ) \to (\mathcal T_i^\circ, d_i^\circ)$.
Fix a metric free splitting $(\mathcal T^\star, d^\star)$ of~$\mathcal F$ that is the metric blow-up of $(\mathcal T_1^\circ, d_1^\circ)$, $(\mathcal T_{i+1}^\circ(\mathcal Z_{i}), d_{i+1}^\circ)$ for $i < n$, and some metric free splitting $(\mathcal T_{n+1}^\circ, d_{n+1}^\circ)$ of~$\mathcal Z_n$ whose free factor system $\mathcal F[\mathcal T_{n+1}^\circ]$ is trivial.
As the $\mathcal G_i$-orbit of~$\mathcal T_i^\circ(\mathcal Z_{i-1})$ is $\tau_i^\circ$-invariant, the maps~$(\tau_i^\circ)_{i=1}^n$ induce a $\psi$-equivariant PL-map~$\tau^\star$ on $(\mathcal T^\star, d^\star)$.

Let $x \in \mathcal F$ be a $\mathcal T$-loxodromic element. 
By construction, the element~$x$ is conjugate to a $\mathcal Y_i$-loxodromic $y_i \in \mathcal G_i$ for some $i \le n$;
let~$l_i^\circ$ be the $\mathcal T_i^\circ$-axis for~$y_i$.
If $j = i$, then the equivalence in the proposition's statement follows from Theorem~\ref{thm-limitloxodromics}.
For the rest of the proof, we prove the equivalence when $j > i$.
As we are going to invoke the same argument in the next proof, we mostly forget that~$l_i^\circ$ is a $\mathcal T_i^\circ$-axis for a $\mathcal Y_i$-loxodromic element and only use the fact $[l_i^\circ] \in \mathbb R(\mathcal Y_i, \delta_i)$, i.e.~$l_i^\circ$ projects to a line~$\gamma_i$ in $(\mathcal Y_i, \delta_i)$.

\smallskip
Suppose the $\mathcal T$-axis for~$x$ contains a pseudoleaf segment of~$\mathcal L_{\mathcal T}^+[\psi_j]$ for some $j > i$.
Then the $\mathcal T$-axis for~$y_i$ contains a pseudoleaf segment~$\sigma_j$ of~$\mathcal L_{\mathcal T}^+[\psi_j]$ and~$\kappa_i(\sigma_{j})$ is a point $\circ_i \in \gamma_i$ with nontrivial point stabilizer $G_{\circ_i} \defeq \operatorname{Stab}_{\mathcal G_i}(\circ_i)$.
In Subsection~\ref{Subsubsec-iteratedturns}, we describe how the line~$\gamma_i$ in $(\mathcal Y_i, \delta_i)$ and point~$\circ_i \in \gamma_i$ determine an algebraic iterated turn $(\epsilon, s_{i+1,1}^{-1}s_{i+1,2}; \varphi_{i+1})_{\mathcal G_{i+1}}$.
Any iterated turn $(\beta_{i+1,m})_{m \ge 0}$ over~$\mathcal T_{i+1}^\circ$ realizing this algebraic iterated turn limits to an interval $[\star_{i+1,1}, \star_{i+1,2}]$ in the metric completion $(\overline{\mathcal Y}_{i+1}, \delta_{i+1})$ by Theorem~\ref{thm-limitturns}, and $[\star_{i+1,1}, \star_{i+1,2}]$ contains~$\kappa_{i+1}(\sigma_j)$.

If $j = i+1$, then $[\star_{i+1,1}, \star_{i+1,2}] \supset \kappa_{i+1}(\sigma_{i+1})$ is not degenerate and $(\beta_{i+1,m})_{m \ge 0}$ weakly limits to a component of~$\mathcal L_{\mathcal Z_{i+1}}^+[\psi_{i+1}]$ by Theorem~\ref{thm-limitturns}.
Otherwise, for $k \ge i+1$, 
assume~$\kappa_{k}(\sigma_j)$ is a point~$\circ_{k}$ in the interval $[\star_{k,1}, \star_{k,2}] \subset \overline{\mathcal Y}_{k}$ corresponding to the algebraic iterated turn $(\epsilon, s_{k,1}^{-1}s_{k,2}; \varphi_{k})_{\mathcal G_{k}}$,
where $\circ_{k}$ has nontrivial stabilizer~$G_{\circ_{k}}$. 
By the discussion in Subsection~\ref{Subsubsec-nestedturns} (and remark after Theorem~\ref{thm-limitturns}), 
the algebraic iterated turn over~$\mathcal G_{k}$
and point~$\circ_{k}$ in $[\star_{k,1}, \star_{k,2}]$ determine an algebraic iterated turn $(\epsilon, s_{k+1,1}^{-1}s_{k+1,2}; \varphi_{k+1})_{\mathcal G_{k+1}}$
that limits to $[\star_{k+1,1}, \star_{k+1,2}] \subset \overline{\mathcal Y}_j$;
morevoer, $[\star_{k+1,1}, \star_{k+1,2}]$ contains $\kappa_{k+1}(\sigma_j)$.
By induction, $[\star_{j,1}, \star_{j,2}]$ contains $\kappa_{j}(\sigma_{j})$.
Since~$\kappa_{j}(\sigma_{j})$ is not degenerate, any realization $(\beta_{j,m})_{m \ge 0}$ over~$\mathcal T_j^\circ$ of the algebraic iterated turn $(\epsilon, s_{j,1}^{-1}s_{j,2}; \varphi_{j})_{\mathcal G_{j}}$ weakly limits to a component of~$\mathcal L_{\mathcal Z_{j}}^+[\psi_{j}]$ by Theorem~\ref{thm-limitturns}.

In either case ($j \ge i+1$), any realization over~$\mathcal T^\star$ of $(\epsilon, s_{j,1}^{-1}s_{j,2}; \varphi_{j})_{\mathcal G_j}$ weakly limits to (the closure in~$\mathbb R(\mathcal F)$ of) a component of $\mathcal L_{\mathcal Z_{j}}^+[\psi_{j}]$ (bounded cancellation).
If $j > i+1$, any realization over~$\mathcal T^\star$ of $(\epsilon, s_{i+1,1}^{-1}s_{i+1,2}; \varphi_{i+1})_{\mathcal G_{i+1}}$ weakly limits to a component of~$\mathcal L_{\mathcal Z_{j}}^+[\psi_{j}]$ by transitivity.
Hence the shared axis for~$y_i$ and~$x$ in~$\mathbb R(\mathcal F)$ weakly $\psi_*$-limits to a component of~$\mathcal L_{\mathcal Z_{j}}^+[\psi_{j}]$.
As $\psi_{j*} \colon \mathbb R(\mathcal G_j, \mathcal Z_j)  \to \mathbb R(\mathcal G_j, \mathcal Z_j)$ acts transitively on the components of~$\mathcal L_{\mathcal Z_{j}}^+[\psi_{j}]$, the axis for~$x$ in~$\mathbb R(\mathcal F)$ weakly $\psi_*$-limits to~$\mathcal L_{j}^+[\psi]$, the closure in~$\mathbb R(\mathcal F)$ of~$\mathcal L_{\mathcal Z_{j}}^+[\psi_{j}]$.

\smallskip
Conversely, suppose the axis~$[l^\star]$ for~$y_i$ (and~$x$) in~$\mathbb R(\mathcal F)$ weakly $\psi_*$-limits to~$\mathcal L_j^+[\psi]$ for some $j > i$.
Using $(\mathcal T^\star, d^\star)$-coordinates,
the axis~$\tau_*^{\star m}(l^\star)$ contains arbitrarily $d_j^\circ$-long leaf segments of~$\mathcal L_j^+[\psi]$ for $m \gg 1$.
So~$\tau_*^{\star M}(l^\star)$ has a $\mathcal L_{j}^+[\psi]$-leaf segment~$I^\star \subset \mathcal T^\star(\mathcal Z_{j-1})$ with $d_{j}^\circ$-length $L > C' \defeq \frac{2C[\tau^\star]}{\lambda_{j}-1}$ for $M \gg 1$.
As~$\tau_{j}^\circ$ is a train track on leaves of~$\mathcal L_{\mathcal Z_{j}}^+[\psi_{j}]$,~$\tau_*^{\star m}(l^\star)$ has a $\mathcal L_{j}^+[\psi]$-leaf segment surviving from~$I^\star$ with $d_j^\circ$-length $>\lambda_{j}^{M-m}(L-C')$ for $m \ge M$.

Let $\rho_{i} \colon (\mathcal T^\star(\mathcal G_{i}), d^\star) \to (\mathcal T_{i}^\circ, d_{i}^\circ)$ be an arbitrary equivariant PL-map.
The $\rho_i$-image of $I^\star \subset \tau_*^{\star M}(l^\star)$ is a vertex~$v \in \tau_{i *}^{\circ M}(l_i^\circ)$ with nontrivial stabilizer.
Since a nondegenerate part of~$I^\star$ survives in~$\psi_*^{m}(l^\star)$ for all $m \ge M$, we have $\tau_i^{\circ (m-M)}(v) \in \tau_{i *}^{\circ m}(l_i^\circ)$ for all $m \ge M$ and $h_i^{-M}(\pi_i^\circ(v)) \in \gamma_i$ has a nontrivial stabilizer $G_v \defeq \operatorname{Stab}_{\mathcal G_i}(h_i^{-M}(\pi_i^\circ(v)))$, where~$h_i$ is the $\psi_i$-equivariant $\lambda_i$-homothety on $(\mathcal Y_i, \delta_i)$.
As before, the line~$\gamma_i$, point $h_i^{-M}(\pi_i^\circ(v)) \in \gamma_i$, and equivariant PL-maps $\rho_{i+1}$, \dots, $\rho_{j}$ determine nested iterated turns over $\mathcal T_{i+1}^\circ$, \dots, $\mathcal T_{j}^\circ$ limiting to intervals in $\overline{\mathcal Y}_{i+1}$, \dots, $\overline{\mathcal Y}_{j}$.
By the computation in the previous paragraph and quasiperiodicity of stable laminations, the last iterated turn over~$\mathcal T_j^\circ$ weakly limits to a component of~$\mathcal L_{\mathcal Z_{j}}^+[\psi_{j}]$.
So the corresponding interval $[\star_{j,1}, \star_{j,2}] \subset \overline{\mathcal Y}_{j}$ is not degenerate (Theorem~\ref{thm-limitturns}) and the $\mathcal T$-axis for~$y_i$ has an intersection with~$\mathcal T_{j}$ whose $\kappa_{j}$-image is~$[\star_{j,1}, \star_{j,2}]$.
By the description of intervals in~$\mathcal Y_{j}$, $[\star_{j,1}, \star_{j,2}]$ contains a leaf segment of $\pi_{j *}^\circ(\mathcal L_{\mathcal Z_{j}}^+[\psi_{j}])$;
therefore, the $\mathcal T$-axes of~$y_i$ and~$x$ contain pseudoleaf segments of~$\mathcal L_{\mathcal T}^+[\psi_j]$.
\end{proof}

In fact, the containment relation on pseudoleaf segments detects the partial order on the set of attracting laminations:

\begin{claim}\label{claim-pseudoleafseg}
For $1\le i, j \le n$, a pseudoleaf segment of $\mathcal L_{\mathcal T}^+[\psi_i]$ contains a pseudoleaf segment of $\mathcal L_{\mathcal T}^+[\psi_j]$ if and only if~$\mathcal L_i^+[\psi]$ contains~$\mathcal L_j^+[\psi]$.
\end{claim}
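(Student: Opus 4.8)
The plan is to prove the two implications separately, after disposing of the cases $i=j$ (both sides hold trivially) and $i>j$. When $i>j$, a pseudoleaf segment of $\mathcal L_{\mathcal T}^+[\psi_i]$ lies in $\mathcal T_i\subseteq\mathcal T_j$, so it collapses to a point under $\kappa_j$ and cannot contain a nondegenerate pseudoleaf segment of $\mathcal L_{\mathcal T}^+[\psi_j]$; dually $\mathcal L_i^+[\psi]$, being the closure in $\mathbb R(\mathcal F)$ of $\mathcal L_{\mathcal Z_i}^+[\psi_i]\subseteq\mathbb R(\mathcal G_i)\subseteq\mathbb R(\mathcal G_{j+1})$, consists of $\mathcal Y_j$-elliptic lines while $\mathcal L_j^+[\psi]$ has $\mathcal Y_j$-loxodromic leaves, so $\mathcal L_i^+[\psi]\not\supseteq\mathcal L_j^+[\psi]$. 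So assume $j>i$. The bookkeeping fact I use throughout is that each component of $\mathcal T_j$ lies in a single fiber of the collapse $\kappa_i\colon\mathcal T_i\to\mathcal Y_i$: the $\mathcal G_j$-components are subgroups of conjugates of $\mathcal G_{i+1}$-components, whose characteristic subtrees are precisely the nondegenerate $\kappa_i$-fibers; hence every pseudoleaf segment of $\mathcal L_{\mathcal T}^+[\psi_j]$ has degenerate $\kappa_i$-image.

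For the ``if'' direction, suppose $\mathcal L_i^+[\psi]\supseteq\mathcal L_j^+[\psi]$, so $\mathcal L_{\mathcal Z_j}^+[\psi_j]\subseteq\mathcal L_j^+[\psi]\subseteq\mathcal L_i^+[\psi]=\overline{\mathcal L_{\mathcal Z_i}^+[\psi_i]}$. Fix a pseudoleaf $\widetilde\ell_j\subseteq\mathcal T_j$ of $\mathcal L_{\mathcal T}^+[\psi_j]$ and a pseudoleaf segment $\sigma_j\subseteq\widetilde\ell_j$. The lines containing a translate of $\sigma_j$ form a basic open neighbourhood of $[\widetilde\ell_j]$ in $\mathbb R(\mathcal F)$, so it meets $\mathcal L_{\mathcal Z_i}^+[\psi_i]$; i.e.\ some pseudoleaf $\widetilde\ell_i$ of $\mathcal L_{\mathcal T}^+[\psi_i]$ contains a translate of $\sigma_j$ (replace $\sigma_j$ by that translate). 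Writing $\circ\defeq\kappa_i(\sigma_j)$, the bookkeeping fact gives $\sigma_j\subseteq I\defeq\widetilde\ell_i\cap\kappa_i^{-1}(\circ)$, a nondegenerate interval, and since $\kappa_i(\widetilde\ell_i)$ is a bi-infinite line of $\mathcal Y_i$ passing through $\circ$, the pseudoleaf $\widetilde\ell_i$ leaves $\kappa_i^{-1}(\circ)$ on both sides of $I$. So any $\sigma_i$ with $I\subsetneq\sigma_i\subseteq\widetilde\ell_i$ has nondegenerate $\kappa_i$-image and is a pseudoleaf segment of $\mathcal L_{\mathcal T}^+[\psi_i]$ containing $\sigma_j$.

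For the ``only if'' direction, suppose $\sigma_j\subseteq\sigma_i$ with $\sigma_j$ a pseudoleaf segment of $\mathcal L_{\mathcal T}^+[\psi_j]$ and $\sigma_i$ a pseudoleaf segment of $\mathcal L_{\mathcal T}^+[\psi_i]$ carried by a pseudoleaf $\widetilde\ell_i$. Let $h\colon\mathcal T\to\mathcal T$ be the $\psi$-equivariant expanding homothety of Theorem~\ref{thm-limitpseudotree}. Since $h$ permutes the components of $\mathcal T_i$ and of $\mathcal T_j$ and descends through $\kappa_i,\kappa_j$ to the homotheties on $\mathcal Y_i,\mathcal Y_j$, each $h^m(\widetilde\ell_i)$ is a pseudoleaf of $\mathcal L_{\mathcal T}^+[\psi_i]$ containing the pseudoleaf segment $h^m(\sigma_j)$ of $\mathcal L_{\mathcal T}^+[\psi_j]$, and the $\delta_j$-length of $\kappa_j(h^m(\sigma_j))$ is $\lambda_j^{\,m}$ times that of $\kappa_j(\sigma_j)$, hence unbounded. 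Realized in $\mathbb R(\mathcal F)$, this gives leaves of $\mathcal L_i^+[\psi]$ that contain leaf segments of $\mathcal L_j^+[\psi]$ of unbounded length; a standard stacking argument extracts a line $\ell_\infty$ which, because $\mathcal L_i^+[\psi]$ and $\mathcal L_j^+[\psi]$ are closed and any line all of whose segments are leaf segments of $\mathcal L_j^+[\psi]$ is a leaf of $\mathcal L_j^+[\psi]$, lies in $\mathcal L_i^+[\psi]\cap\mathcal L_j^+[\psi]$. As the components of $\mathcal L_j^+[\psi]$ are minimal (closures of quasiperiodic lines) and transitively permuted by $\psi_*$, while $\mathcal L_i^+[\psi]$ is closed and $\psi_*$-invariant, this forces $\mathcal L_j^+[\psi]\subseteq\mathcal L_i^+[\psi]$. (Note that Proposition~\ref{prop-limitpseudolox2} alone is not enough here: a line may weakly $\psi_*$-limit to several incomparable laminations, so one really must track the containment $\sigma_j\subseteq\sigma_i$ through the iteration.)

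The main obstacle is making the two ``realization'' steps rigorous --- that pseudoleaves, pseudoleaf segments, their containments and $\delta_j$-lengths in the limit pretree $\mathcal T$ correspond faithfully to lines, line segments and their closure relations in $\mathbb R(\mathcal F)$, compatibly with the canonical embeddings of Section~\ref{SecLimits}. Concretely I would fix a free splitting $\mathcal T^\star$ of $\mathcal F$ with trivial free factor system that is a metric blow-up of the relevant $\mathcal T_i^\circ$ (as in the proof of Proposition~\ref{prop-limitpseudolox2}), carry out both implications in $\mathcal T^\star$, and absorb the error from bounded cancellation (Lemma~\ref{lem-bcl}) into the slack when passing between $\mathcal T$ and $\mathcal T^\star$. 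The remaining ingredients --- the fiber/extension observation and the soft topology of minimal laminations --- are routine.
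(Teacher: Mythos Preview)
Your case disposal ($i=j$, then reduce to $i<j$) and overall shape match the paper's, and you rightly flag the ``realization'' step as the crux. But your proposed fix --- ``absorb the error from bounded cancellation'' --- is not the mechanism that bridges the pseudoforest~$\mathcal T$ and~$\mathbb R(\mathcal F)$, and this is a genuine gap rather than bookkeeping.

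The difficulty is that a pseudoleaf~$\widetilde\ell_i \subset \mathcal T$ and the corresponding leaf~$l^\star \subset \mathcal T^\star$ are two \emph{different} lifts of the same line~$\gamma_i \subset \mathcal Y_i$: the former via the blow-up~$\kappa_i$ (Section~\ref{SubsecAssembHier}, using the contraction mapping theorem), the latter via the canonical embedding $\mathbb R(\mathcal Y_i,\delta_i)\hookrightarrow\mathbb R(\mathcal T^\star)$ (Claim~\ref{claim-liftembed}). There is no equivariant PL-map between~$\mathcal T$ and~$\mathcal T^\star$ to compare them; the segment~$\sigma_j$ lives entirely inside a $\kappa_i$-fibre, so it is invisible after projecting to~$\mathcal Y_i$, and bounded cancellation cannot tell you what the \emph{other} lift does inside that fibre. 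Your sentence ``the lines containing a translate of~$\sigma_j$ form a basic open neighbourhood of~$[\widetilde\ell_j]$ in~$\mathbb R(\mathcal F)$'' illustrates the problem most plainly: $\sigma_j$~is an interval in~$\mathcal T$, not in any free splitting, so this does not parse. The bridge is the iterated-turns machinery (Subsections~\ref{Subsubsec-iteratedturns}--\ref{Subsubsec-nestedturns}, Theorem~\ref{thm-limitturns}): the turn in~$\gamma_i$ at $\circ_i=\kappa_i(\sigma_j)$ determines an algebraic iterated turn over~$\mathcal G_{i+1}$ whose limit interval in~$\overline{\mathcal Y}_{i+1}$ encodes how~$\widetilde\ell_i$ traverses the fibre, while its realization over~$\mathcal T^\star$ tracks how~$l^\star$ does. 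This is precisely what the paper invokes with ``by the same argument as in the proof of Proposition~\ref{prop-limitpseudolox2}''.

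One further simplification the paper uses: choosing~$l^\star$ to be an eigenline of~$[\tau_i^{\circ k}]$ gives $\psi_*^k[l^\star]=[l^\star]$, so once the iterated-turns argument shows~$l^\star$ weakly $\psi_*$-limits to~$\mathcal L_j^+[\psi]$, the containment $\mathcal L_j^+[\psi]\subset\mathcal L_i^+[\psi]$ follows immediately from periodicity --- your stacking argument is unnecessary.
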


\noindent We only sketch the proof as it is almost identical to the proof of Proposition~\ref{prop-limitpseudolox2}.

\begin{proof}[Sketch of proof]
There's nothing to show if $i = j$.
Without loss of generality, assume $i < j$;
certainly, $\mathcal L_j^+[\psi]$ does not contain $\mathcal L_i^+[\psi]$ and no pseudoleaf segment of~$\mathcal L_{\mathcal T}^+[\psi_j]$ can contain a pseudoleaf segment of~$\mathcal L_{\mathcal T}^+[\psi_i]$.
Let~$[l_i^\circ]$ be an eigenline in $(\mathcal T_i^\circ, d_i^\circ)$ of~$[\tau_i^{\circ k}]$ for some $k \ge 1$, and~$l^\star$ be the lift of~$l_i^\circ$ to $(\mathcal T^\star, d^\star)$.
The projection~$\gamma_i$ (of~$l_i^\circ$) is a line in $(\mathcal Y_i, \delta_i)$, and we denote by $l_i \subset \mathcal T_i$ its lift via~$\kappa_i$ to a pseudoleaf of~$\mathcal L_{\mathcal T}^+[\psi_i]$.

Suppose the pseudoleaf~$l_i$ of~$\mathcal L_{\mathcal T}^+[\psi_i]$ contains a pseudoleaf segment~$\sigma_j$ of~$\mathcal L_{\mathcal T}^+[\psi_j]$. 
Then 
$\kappa_i(\sigma_{j})$ is a point $\circ_i \in \gamma_i$ with nontrivial point stabilizer~$G_{\circ_i}$.
By the same argument as in the previous proof, the line~$l^\star$ in~$\mathbb R(\mathcal F)$ weakly $\psi_*$-limits to~$\mathcal L_j^+[\psi]$.
Note that $\psi_*^k[l^\star] = [l^\star]$ in~$\mathbb R(\mathcal F)$ as~$[l_i^\circ]$ is an eigenline for~$[\tau_i^{\circ k}]$;
moreover,~$\mathcal L_i^+[\psi]$ consists of the closures in~$\mathbb R(\mathcal F)$ of~$[l^\star]$, \dots,~$\psi_*^{k-1}[l^\star]$ since~$\mathcal L_i^+[\psi]$ is a $\psi_*$-orbit of attracting laminations.
So $\mathcal L_i^+[\psi] \supset \mathcal L_{j}^+[\psi]$.

Conversely, suppose $\mathcal L_i^+[\psi] \supset \mathcal L_{j}^+[\psi]$.
As~$\mathcal L_i^+[\psi]$ and~$\mathcal L_{j}^+[\psi]$ are $\psi_*$-orbits of attracting laminations, the line~$l^\star$ contains arbitrarily $d_{j}^\circ$-long leaf segments of~$\mathcal L_{j}^+[\psi]$.
By the same argument as in the previous proof, the pseudoleaf~$l_i$, and hence some pseudoleaf segment of~$\mathcal L_{\mathcal T}^+[\psi_i]$, contains a pseudoleaf segment of~$\mathcal L_{\mathcal T}^+[\psi_j]$.
\end{proof}

\subsection{Topmost forests}\label{SubsecTopmost}

Fix an exponentially growing automorphism $\psi \colon \mathcal F \to \mathcal F$ with a descending sequence $(\mathcal Y_i, \delta_i)_{i=1}^n$ of limit forests, and let $(\mathcal T, (\delta_i)_{i=1}^n)$ be the limit pseudoforest for $(\mathcal Y_i, \delta_i)_{i=1}^n$.
Each limit forest $(\mathcal Y_i, \delta_i)$ has 
stable laminations~$\mathcal L_{\mathcal Z_i}^+[\psi_i]$ for~$[\psi_i]$ rel.~$\mathcal Z_i$.
Let~$\mathcal L_{\mathcal T}^+[\psi_i]$ be the lifts to~$\mathcal T$ of leaves in~$\mathcal L_{\mathcal Z_i}^+[\psi_i]$,~$\mathcal L_i^+[\psi]$ the closure in~$\mathbb R(\mathcal F)$ of~$\mathcal L_{\mathcal Z_i}^+[\psi_i]$, and $\{\mathcal A_{j}^{top}[\psi_i]\}_{j=1}^{k_i}$ the subset of $\{\mathcal L_j^+[\psi]\}_{j=i}^n$ consisting of all topmost attracting laminations for~$[\psi_i]$.
So $\mathcal A_j^{top}[\psi_i] = \mathcal L_{\iota(i,j)}^+[\psi]$ for some subsequence~$(\iota(i,j))_{j=1}^{k_i}$ of $(j)_{j=i}^n$ with $\iota(i,1) = i$, and $(\iota(i,j))_{j=2}^{k_i}$ is a subsequence of $(\iota(i+1,j))_{j=1}^{k_{i+1}}$ if $k_i \ge 2$.

%
For $i \ge 1$, we say the $\mathcal G_i$-invariant hierarchy $(\delta_j)_{j=i}^n$ on the characteristic convex subsets $\mathcal T_i \subset \mathcal T$ for~$\mathcal G_i$ \underline{normalizes} to a factored $\mathcal G_i$-invariant convex pseudometric $\Sigma_{j=1}^{k_i} \delta_{\iota(i,j)}$ if the $\mathcal G_{\iota(i,j)}$-invariant convex pseudometric~$\delta_{\iota(i,j)}$ can be extended to a $\mathcal G_i$-invariant convex pseudometric, also denoted~$\delta_{\iota(i,j)}$, on~$\mathcal T_i$. 
The $\mathcal F$-invariant hierarchy $(\delta_i)_{i=1}^n$ normalizes to~$\delta_1$ if (and only if) $k_1=1$.

We may assume $k_1 \ge 2$ and the $\mathcal G_2$-invariant hierarchy $(\delta_i)_{i=2}^n$ normalizes to $\oplus_{j=1}^{k_2} \delta_{\iota(2,j)}$.
Let~$\widehat{\mathcal T}_2$ be the $\kappa_1$-preimage of the characteristic convex subsets $\mathcal Y_1(\mathcal G_2)$.
Suppose $\mathcal F_{1,1} \defeq \mathcal F$, \dots,~$\mathcal F_{1, m}$ are the proper free factor systems of~$\mathcal F$ used to construct $(\mathcal Y_1, \delta_1)$ and let $\mathcal T_{1, 1}$, \dots,~$\mathcal T_{1, m}$ be their corresponding characteristic convex subsets in~$\mathcal T$.
By Lemma~\ref{lem-leafsegments}, every closed interval
in the characteristic convex subsets~$\mathcal Y_1(\mathcal F_{1,m})$ is a finite concatenation of leaf segments of~$\mathcal L_{\mathcal Z_1}^+[\psi_1]$.
Thus every closed interval in $\mathcal T_{1, m}$ is a finite concatenation of pseudoleaf segments of~$\mathcal L_{\mathcal T}^+[\psi_1]$ and closed intervals in $\mathcal F_{1,m} \cdot \widehat{\mathcal T}_2$.

Fix $j \in \{2, \ldots, k_1\}$.
Since $\mathcal L_1^+[\psi]$ does not contain $\mathcal L_{\iota(1,j)}^+[\psi]$, Claim~\ref{claim-pseudoleafseg} implies the intersection of any pseudoleaf segment of~$\mathcal L_{\mathcal T}^+[\psi_1]$ with~$\widehat{\mathcal T}_2$ has 0 diameter with respect to the convex pseudometric~$\delta_{\iota(1,j)}$;
we say that $\mathcal L_{\mathcal Z_1}^+[\psi_1]$ and~$\delta_{\iota(1,j)}$ are \underline{independent}.
So the intersection of any closed interval in~$\mathcal T_{1,m}$ with $\mathcal F_{1,m} \cdot \widehat{\mathcal T}_2$ has finitely many components that are translates of closed intervals in~$\widehat{\mathcal T}_2$ with positive $\delta_{\iota(1,j)}$-diameter.
Thus~$\delta_{\iota(1,j)}$ can be extended to an $\mathcal F_{1,m}$-invariant convex pseudometric on~$\mathcal T_{1,m}$ that is mutually singular with~$\delta_1$.
By our inductive description of intervals in $\mathcal Y_1$ (Lemma~\ref{lem-leafblocks}), the convex pseudometric~$\delta_{\iota(1,j)}$ extends equivariantly to~$\mathcal T$ as $\lambda_{\iota(1,j)} > 1$.

As~$j$ was arbitrary, the $\mathcal F$-invariant hierarchy $(\delta_i)_{i=1}^n$ normalizes to the factored convex pseudometric $\oplus_{j=1}^k \delta_{\iota(j)}$, where $k \defeq k_1$ and $\iota(j) \defeq \iota(1,j)$.
Let $(\mathcal Y, \oplus_{j=1}^{k} \delta_{\iota(j)})$ be the associated factored $\mathcal F$-forest.
The real $\mathcal F$-pretrees~$\mathcal Y$ are minimal and have trivial arc stabilizers since the pseudometric~$\oplus_{j=1}^{k} \delta_{\iota(j)}$ on~$\mathcal T$ is convex.
The $\psi$-equivariant $(\lambda_i)_{i=1}^n$-homothety~$h$ induces a $\psi$-equivariant \underline{$\oplus_{j=1}^{k} \lambda_{\iota(j)}$-dilation} on $(\mathcal Y, \oplus_{j=1}^{k} \delta_{\iota(j)})$: a $\lambda_{\iota(j)}$-homothety with respect to each factor~$\delta_{\iota(j)}$.
By Proposition~\ref{prop-limitpseudolox2}, a nontrivial element of~$\mathcal F$ is {$\delta_{\iota(j)}$-loxodromic} if and only if its axis in $\mathbb R(\mathcal F)$ weakly $\psi_*$-limits to~$\mathcal A_{j}^{top}[\psi_1]$ --- here, $\delta_{\iota(j)}$-loxodromic means the element acts loxodromically on the associated $\mathcal F$-forest for~$\delta_{\iota(j)}$.
The factored $\mathcal F$-forest $(\mathcal Y, \oplus_{j=1}^{k} \delta_{\iota(j)})$ is the \underline{complete topmost limit forest} for $(\mathcal Y_i, \delta_i)_{i=1}^n$.
%
Given any subset of the $\psi_*$-orbits of topmost attracting laminations for~$[\psi]$, then one may consider the associated factored $\mathcal F$-forest for the sum of corresponding pseudometrics:

\begin{thm}\label{thm-topmostlimitexist} 
Let $\psi \colon \mathcal F \to \mathcal F$ be an automorphism and $\{ \mathcal A_j^{top}[\psi]\}_{j=1}^k$ a (possibly empty) subset of $\psi_*$-orbits of topmost attracting laminations for~$[\psi]$. 

Then there is:
\begin{enumerate}
\item a minimal factored $\mathcal F$-forest $(\mathcal Y, \oplus_{j=1}^k \delta_j)$ with trivial arc stabilizers;
\item \label{cond-topmost-expanding} a unique $\psi$-equivariant expanding dilation $f \colon (\mathcal Y, \oplus_{j=1}^k \delta_j) \to (\mathcal Y, \oplus_{j=1}^k \delta_j)$; and
\item \label{cond-topmost-loxodromics} for $1 \le j \le k$, a nontrivial element $x \in \mathcal F$ is $\delta_j$-loxodromic if and only if its axis in~$\mathbb R(\mathcal F)$ weakly $\psi_*$-limits to~$\mathcal A_j^{top}[\psi]$. \qed
\end{enumerate}
\end{thm}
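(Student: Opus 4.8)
The plan is to read the theorem off from the construction of the complete topmost limit forest carried out earlier in this section, adding only three things: the treatment of the polynomially growing case, the passage from the complete collection of topmost laminations to an arbitrary subcollection, and the uniqueness of the expanding dilation.

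First I would dispose of the case where $[\psi]$ is polynomially growing: then $[\psi]$ has no attracting laminations, so the only admissible subcollection is empty ($k=0$), and I take $(\mathcal Y, \oplus_{j=1}^0 \delta_j)$ to be a point with the $\mathcal F$-action; this is (vacuously) a minimal factored $\mathcal F$-forest with trivial arc stabilizers, has the identity as its unique $\psi$-equivariant expanding dilation, satisfies Condition~(\ref{cond-topmost-loxodromics}) vacuously, and is degenerate. So I may assume $[\psi]$ is exponentially growing. Next I would fix a descending sequence $(\mathcal Y_i, \delta_i)_{i=1}^n$ of limit forests (it exists by Section~\ref{SubsecAssembHier}), form the limit pseudoforest $(\mathcal T, (\delta_i)_{i=1}^n)$ (Theorem~\ref{thm-limitpseudotree}), and recall from Lemma~\ref{lem-coordinatefreeattracting} that every $\psi_*$-orbit of topmost attracting laminations for~$[\psi]$ equals $\mathcal A_j^{top}[\psi_1] = \mathcal L_{\iota(1,j)}^+[\psi]$ for a unique $j \le k_1$. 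After re-indexing I may assume the given subcollection is $\mathcal A_1^{top}[\psi_1], \ldots, \mathcal A_k^{top}[\psi_1]$ with $k \le k_1$, and I set $\delta_j \defeq \delta_{\iota(1,j)}$. The running text of this section (via Lemma~\ref{lem-leafblocks} and Claim~\ref{claim-pseudoleafseg}, using that $\mathcal L_{\mathcal Z_1}^+[\psi_1]$ is independent of $\delta_{\iota(1,j)}$ for $j\ge 2$) already shows that each such $\delta_j$ extends to an $\mathcal F$-invariant convex pseudometric on~$\mathcal T$; hence $\oplus_{j=1}^k \delta_j$ is an $\mathcal F$-invariant factored convex pseudometric, and I take $(\mathcal Y, \oplus_{j=1}^k \delta_j)$ to be its associated factored $\mathcal F$-forest. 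Convexity of the pseudometric together with triviality of arc stabilizers on~$\mathcal T$ yields that this is a minimal $\mathcal F$-forest with trivial arc stabilizers, so Condition~(1) holds, and it is degenerate if and only if $k=0$.

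For Condition~(\ref{cond-topmost-expanding}), the $\psi$-equivariant $(\lambda_i)_{i=1}^n$-homothety $h$ on $(\mathcal T, (\delta_i)_{i=1}^n)$ preserves each extended $\delta_j$ and scales it by $\lambda_{\iota(1,j)} > 1$, so it descends to a $\psi$-equivariant expanding dilation $f$ of $(\mathcal Y, \oplus_{j=1}^k \delta_j)$, giving existence. For uniqueness I would argue that any $\psi$-equivariant expanding dilation $f'$ pairs the factor $\delta_j$ with $\delta_{\sigma(j)}$ for some permutation $\sigma$ of $\{1,\dots,k\}$, so $\|\psi(x)\|_{\delta_{\sigma(j)}} = \mu_j\|x\|_{\delta_j}$ for all $x \in \mathcal F$; thus $x$ is $\delta_j$-loxodromic if and only if $\psi(x)$ is $\delta_{\sigma(j)}$-loxodromic, which by Condition~(\ref{cond-topmost-loxodromics}) and the $\psi_*$-invariance of each $\mathcal A_j^{top}[\psi]$ (as a union of lines) forces $\sigma = \mathrm{id}$, once one knows the topmost laminations in the subcollection are pairwise mutually singular (Section~\ref{SubsecTopmost}) and hence distinguished by some loxodromic element. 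Then $f'$ restricts to a $\psi$-equivariant $\mu_j$-homothety of each associated factor tree $(\mathcal Y_j, \delta_j)$, which is a minimal nondegenerate $\mathcal F$-tree with trivial arc stabilizers (a nondegenerate arc there has positive $\delta_j$-diameter upstairs, hence trivial pointwise stabilizer), so $f'$ agrees with $f$ on it and $\mu_j = \lambda_{\iota(1,j)}$ by \cite[Theorem~3.7]{CM87}; as $j$ is arbitrary, $f' = f$. Finally, Condition~(\ref{cond-topmost-loxodromics}) is precisely Proposition~\ref{prop-limitpseudolox2}, after noting that $x$ is $\delta_j$-loxodromic exactly when its $\mathcal T$-axis contains a pseudoleaf segment of $\mathcal L_{\mathcal T}^+[\psi_{\iota(1,j)}]$.

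I expect the uniqueness clause of Condition~(\ref{cond-topmost-expanding}) to be the only genuinely non-routine step: existence of the dilation is immediate from the construction, but excluding dilations that permute the factors or rescale them by unequal factors requires simultaneously the mutual singularity of topmost laminations and the rigidity of $\psi$-equivariant homotheties on single factor trees. Everything else is bookkeeping layered on top of the construction already in place.
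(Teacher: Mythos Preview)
Your proposal is correct and follows the same route as the paper: the theorem is meant to be read off from the construction of the complete topmost limit forest in the preceding paragraphs of Section~\ref{SubsecTopmost}, with the subcollection case handled by summing only the relevant factor pseudometrics, Condition~(\ref{cond-topmost-loxodromics}) coming from Proposition~\ref{prop-limitpseudolox2}, and the dilation induced by the homothety~$h$ on the limit pseudoforest.

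One small point: your uniqueness argument takes an unnecessary detour. By the paper's definition of a \emph{dilation} (see the introduction), a dilation of $(\mathcal Y, \oplus_{j=1}^k \delta_j)$ is required to be a homothety with respect to each factor~$\delta_j$ individually; no permutation of the factors is permitted. So you can skip the step where you introduce~$\sigma$ and argue it is the identity. Once that is dropped, your argument reduces to: a $\psi$-equivariant dilation induces a $\psi$-equivariant homothety on each associated factor tree $(\mathcal Y_j, \delta_j)$, which is unique by \cite[Theorem~3.7]{CM87}, and these determine the dilation on~$\mathcal Y$ since $\oplus_j \delta_j$ is a metric there. This is exactly the implicit reasoning behind the paper's bare~\qed.
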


Fix some index $\iota(j) \neq 1$, and let~$\mathcal X_{1,m}$ be the associated $\mathcal F_{1,m}$-forest for $\delta_1 \oplus \delta_{\iota(j)}$ on~$\mathcal T_{1,m}$.
Two lines in $(\mathcal X_{1,m}, \delta_1 \oplus \delta_{\iota(j)})$ representing leaves in~$\mathcal L_{\mathcal Z_1}^+[\psi]$ \underline{overlap} if they have a nondegenerate intersection;
overlapping generates an equivalence relation and each {overlapping} class is identified with its union in~$\mathcal X_{1,m}$.
Let $\operatorname{\underline{supp}}[\psi_1; {\mathcal Z_1}]$ denote the subgroup system 
corresponding to the (setwise) stabilizers of overlapping classes~$L_{\mathcal Z_1}^+$ --- this subgroup system, called the \underline{lower-support} of~$\mathcal L_{\mathcal Z_1}^+[\psi_1]$, is $[\psi]$-invariant.
The system~$\operatorname{\underline{supp}}[\psi_1; {\mathcal Z_1}]$ is not empty as there are $\mathcal Y_1$-loxodromic elements whose axis in~$\mathcal Y_1^*$ is contained in~$L_{\mathcal Z_1}^+$.
Note the number of components in~$\operatorname{\underline{supp}}[\psi_1; {\mathcal Z_1}]$ is at most the number of components in~$\mathcal L_{\mathcal Z_1}^+[\psi_1]$.
Let $(\widehat{\mathcal X}_{1,m}(\mathcal G_2), \delta_{\iota(j)})$ be the closure in $(\mathcal X_{1,m}, \delta_1 \oplus \delta_{\iota(j)})$ of the characteristic subforest for~$\mathcal G_{2}$.
By Lemma~\ref{lem-leafsegments} again, intervals in~$\mathcal X_1(\mathcal F_{1,n_1})$ are finite concatenations of leaf segments of $\mathcal L_{\mathcal Z_1}^+[\psi_1]$ and closed intervals in~$\widehat{\mathcal X}_{1,m}(\mathcal G_{2})$.

The overlapping classes~$L_{\mathcal Z_1}^+$ and the $\mathcal F_{1,m}$-orbits of components of~$\widehat{\mathcal X}_{1,m}(\mathcal G_{2})$ form an $\mathcal F_{1,m}$-invariant {transverse covering} of~$\mathcal X_{1,m}$ (see \cite[Definition~4.6]{Gui04}).
Let~$\mathcal S'$ be a simplicial $\mathcal F_{1,m}$-pretree: vertices (``component-vertices'') in equivariant bijective correspondence with the components of the transverse covering (overlapping classes~$L_{\mathcal Z_1}^+$ and translates of components of~$\widehat{\mathcal X}_{1,m}(\mathcal G_{2})$);
for each point in~$\mathcal X_{1,m}$ contained in exactly two components of the transverse covering, there is an edge between the corresponding component-vertices;
for each point contained in more than two components, there is a new vertex (''intersection-vertex'') and an edge connecting it to each relevant component-vertex.
By the blow-up construction, translates of components of~$\widehat{\mathcal X}_{1,m}(\mathcal G_{2})$ either coincide or are disjoint.
In particular, each intersection-vertex $v \in \mathcal S'$ with a nontrivial stabilizer is adjacent to a unique vertex $w \in \mathcal S'$ corresponding to a component of $\mathcal F_{1,m} \cdot \widehat{\mathcal X}_{1,m}(\mathcal G_{2})$ and the stabilizer of~$v$ fixes~$w$;
therefore, we can collapse all such edges $[v,w]$ to form a simplicial $\mathcal F_{1,m}$-pretree~$\mathcal S$ whose intersection-vertices have trivial stabilizers.

The $\mathcal F_{1,m}$-forest $(\mathcal X_{1,m}, \delta_1 \oplus \delta_{\iota(j)})$ is a graph of actions with {skeleton}~$\mathcal S$ and the nondegenerate ``vertex trees'' are the components of the transverse covering~\cite[Lemma~4.7]{Gui04}.
As the $\psi_{1,m}$-equivariant expanding dilation on~$(\mathcal X_{1,m}, \delta_1 \oplus \delta_{\iota(j)})$ permutes the overlapping classes (and components of $\mathcal F_{1,m} \cdot \widehat{\mathcal X}_{1,m}(\mathcal G_{2})$), it induces a $\psi_{1,m}$-equivariant simplicial automorphism $\sigma \colon \mathcal S \to \mathcal S$ that preserves the ``type'' of a vertex.

Let~$\mathcal T_1^\diamond$ be an equivariant blow-up of $(\mathcal T_{1, j})_{j=1}^{m-1}$,~$\mathcal S$, and~$\mathcal X_{1,m}(\mathcal G_2)$.
When the metric~$\delta_{\iota(j)}$ is extended appropriately to~$\mathcal T_1^\diamond$, the simplicial automorphisms $(\tau_{1,j})_{j=1}^{m-1}$,~$\sigma$, and the homothety~$f_2$ on~$\mathcal X_{1,m}(\mathcal G_2)$ induce a $\psi$-equivariant $\lambda_{\iota(j)}$-Lipschitz map $\tau^\diamond \colon (\mathcal T_1^\diamond, \delta_{\iota(j)}^\diamond) \to (\mathcal T_1^\diamond, \delta_{\iota(j)}^\diamond)$ that linearly extends~$f_2$.
Using $\tau^\diamond$-iteration, we define the limit forest $(\mathcal X_1, \delta_{\iota(j)})$ for $[\tau_i]_{i=1}^{n-1}$, $\sigma$, and~$f_2$.
Like the previous convergence criteria, the proof of the following lemma is postponed to Section~\ref{SubsecConverge3}.

\begin{restate}{Lemma}{lem-convergence3}
Let $\psi\colon \mathcal F \to \mathcal F$ be an automorphism, $\left(\tau_i \colon \mathcal T_i \to \mathcal T_i\right)_{i=1}^n$ a descending sequence of irreducible train tracks for~$[\psi]$, $\mathcal Z \defeq \mathcal F[\mathcal T_n]$, $\mathcal G$ the nontrivial point stabilizer system for the limit forest for~$[\psi]$ rel.~$\mathcal Z$, $[\psi_{\mathcal G}]$ the $[\psi]$-restriction to~$\mathcal G$, $(\mathcal Y_{\mathcal G}, \delta)$ a minimal $\mathcal G$-forest with trivial arc stabilizers such that $\mathcal L_{\mathcal Z}^+[\psi]$ and~$\delta$ are independent, $h_{\mathcal G} \colon (\mathcal Y_{\mathcal G}, \delta) \to (\mathcal Y_{\mathcal G}, \delta)$ a $\psi_{\mathcal G}$-equivariant $\lambda$-homothety, $\mathcal S$ a minimal simplicial $\mathcal F[\mathcal T_{n-1}]$-forest that is the skeleton for the graph of actions for~$\mathcal L_{\mathcal Z}^+[\psi]$ and~$\delta$, $\sigma \colon \mathcal S \to \mathcal S$ the corresponding simplicial automorphism, and $(\mathcal X, \delta)$ the limit forest for~$[\tau_i]_{i=1}^{n-1}$,~$\sigma$, and~$h_{\mathcal G}$.

If $(\mathcal Y', \delta')$ is a minimal $\mathcal F$-forest with trivial arc stabilizers, the characteristic subforest of $(\mathcal Y', \delta')$ for~$\mathcal G$ is equivariantly isometric to $(\mathcal Y_{\mathcal G}, \delta)$, and the lower-support~$\operatorname{\underline{supp}}[\psi; {\mathcal Z}]$ of $\mathcal L_{\mathcal Z}^+[\psi]$ is $\mathcal Y'$-elliptic, then the limit of $(\mathcal Y' \psi^{m}, \lambda^{-m} \delta')_{m \ge 0}$ is $(\mathcal X, \delta)$. 
\end{restate}

Fix a subset~$\{\mathcal A_j^{top}[\psi]\}_{j=1}^k$ of $\psi_*$-orbits of topmost attracting laminations for~$[\psi]$;
a \underline{topmost forest} for~$[\psi]$ is a factored $\mathcal F$-forest satisfying the conclusion of Theorem~\ref{thm-topmostlimitexist} with respect to this subset.
Lemma~\ref{lem-convergence3} is enough to prove the universality of topmost forests:

\begin{thm}\label{thm-topmostexpandunique}
Let $\psi \colon \mathcal F \to \mathcal F$ be an automorphism and $\{\mathcal A_j^{top}[\psi]\}_{j=1}^k$ a  (possibly empty) subset of $\psi_*$-orbits of topmost attracting laminations for~$[\psi]$.
Any topmost forest for~$[\psi]$ with respect to the given subset has a unique equivariant dilation to any corresponding topmost limit forest for~$[\psi]$.
\end{thm}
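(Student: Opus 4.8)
The plan is to reduce the statement to the already-established convergence criterion, Lemma~\ref{lem-convergence3}, exactly as Corollaries~\ref{cor-unique} and~\ref{cor-unique2} reduce their uniqueness statements to Lemmas~\ref{lem-convergence} and~\ref{lem-convergence2}. Fix a subset $\{\mathcal A_j^{top}[\psi]\}_{j=1}^k$ of $\psi_*$-orbits of topmost attracting laminations, let $(\mathcal Y', \oplus_{j=1}^k \delta_j')$ be an arbitrary topmost forest for~$[\psi]$ with respect to this subset, and let $(\mathcal Y, \oplus_{j=1}^k \delta_j)$ be a corresponding topmost limit forest obtained from a descending sequence $(\mathcal Y_i, \delta_i)_{i=1}^n$ of limit forests by the normalization of Section~\ref{SubsecTopmost}. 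Since the factor-pseudometrics of a topmost forest are pairwise mutually singular, it suffices to treat each factor separately: fix~$j$ and show that the $\mathcal F$-forest $(\mathcal Y'_j, \delta_j')$ associated to the $j^{th}$ factor is uniquely equivariantly homothetic to $(\mathcal Y_j, \delta_j)$; assembling these homotheties over all~$j$ gives the required dilation, and uniqueness follows from the uniqueness of each factor-homothety together with~\cite[Theorem~3.7]{CM87}.

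To handle a single factor, first I would identify, using Proposition~\ref{prop-limitpseudolox2}, that the $\delta_j'$-loxodromic elements of~$\mathcal F$ are precisely the elements whose axes in $\mathbb R(\mathcal F)$ weakly $\psi_*$-limit to~$\mathcal A_j^{top}[\psi]$ — the same characterization satisfied by the $\delta_j$-loxodromic elements of $(\mathcal Y, \oplus_{j=1}^k \delta_j)$ by construction (Section~\ref{SubsecTopmost}). Tracing the normalization construction back through a descending sequence of limit forests, the factor $(\mathcal Y_j, \delta_j)$ arises (for some shift of indices) as the limit forest $(\mathcal X, \delta)$ of Lemma~\ref{lem-convergence3} built from the descending sequence of train tracks, the skeleton automorphism~$\sigma$, and the homothety~$h_{\mathcal G}$ on the point-stabilizer forest. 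So the task is to verify, for the $\mathcal F$-forest $(\mathcal Y'_j, \delta_j')$, the hypotheses of Lemma~\ref{lem-convergence3}: (i) it is a minimal $\mathcal F$-forest with trivial arc stabilizers; (ii) its characteristic subforest for the relevant point-stabilizer system~$\mathcal G$ is equivariantly isometric to $(\mathcal Y_{\mathcal G}, \delta)$; and (iii) the lower-support $\operatorname{\underline{supp}}[\psi;\mathcal Z]$ is $\mathcal Y'_j$-elliptic. For~(i), since the defining expanding dilation restricts to a $\psi$-equivariant expanding homothety on $(\mathcal Y'_j, \delta_j')$, the argument at the end of the proof of Corollary~\ref{cor-unique2} (degenerate skeleton forces dense orbits, hence trivial arc stabilizers by~\cite[Lemma~4.2]{LL03}) applies verbatim. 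For~(ii), the restriction of the expanding dilation to the characteristic subforest for~$\mathcal G$ is itself an expanding forest for the appropriate restricted automorphism, so one inducts on complexity using Corollary~\ref{cor-unique2} (or the topmost statement one level down) to identify it with $(\mathcal Y_{\mathcal G}, \delta)$. For~(iii), the lower-support consists of stabilizers of overlapping classes of leaves of $\mathcal L_{\mathcal Z}^+[\psi]$; since $\mathcal A_j^{top}[\psi]$ is topmost, the factor $\delta_j'$ restricted to any such overlapping class is a leaf-supported metric independent of the lower levels, and the $\psi$-equivariant expanding dilation forces the remaining factors to collapse these classes — this is precisely the independence condition, giving $\mathcal Y'_j$-ellipticity of $\operatorname{\underline{supp}}[\psi;\mathcal Z]$.

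Once the hypotheses are checked, Lemma~\ref{lem-convergence3} gives that the sequence $(\mathcal Y'_j\psi^m, \lambda^{-m}\delta_j')_{m \ge 0}$ converges to $(\mathcal X, \delta) = (\mathcal Y_j, \delta_j)$ in the space of translation-distance functions. On the other hand, because $(\mathcal Y'_j, \delta_j')$ admits a $\psi$-equivariant expanding $s$-homothety, $(\mathcal Y'_j\psi^m, \lambda^{-m}\delta_j')$ is equivariantly isometric to $(\mathcal Y'_j, (s/\lambda)^m\delta_j')$; convergence then forces $s = \lambda$ and $(\mathcal Y'_j, \delta_j')$ equivariantly isometric to $(\mathcal Y_j, c_j\,\delta_j)$ for some $c_j > 0$, with the isometry unique by~\cite[Theorem~3.7]{CM87}. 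Rescaling gives the equivariant homothety on the $j^{th}$ factor, and doing this for each~$j$ assembles to the desired unique equivariant dilation $(\mathcal Y, \oplus_{j=1}^k \delta_j) \to (\mathcal Y', \oplus_{j=1}^k \delta_j')$.

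I expect the main obstacle to be step~(iii) above — verifying that the lower-support is $\mathcal Y'$-elliptic from the hypothesis that $(\mathcal Y', \oplus \delta_j')$ is a topmost forest rather than something built from our explicit construction. The point is that we only know $(\mathcal Y', \oplus \delta_j')$ abstractly through the loxodromy characterization of Theorem~\ref{thm-topmostlimitexist}\eqref{cond-topmost-loxodromics} and the existence of an expanding dilation; extracting from this the precise transverse-covering/graph-of-actions structure needed to feed into Lemma~\ref{lem-convergence3} requires translating ``$\mathcal A_j^{top}[\psi]$ is topmost'' into a statement about which conjugacy classes must be $\delta_j'$-elliptic, and then invoking Claim~\ref{claim-pseudoleafseg} (or its analogue for $(\mathcal Y', \delta_j')$ via the weak-limit characterization) to see that overlapping classes of leaves are degenerate for every other factor. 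This is where the topmost hypothesis is genuinely used and where the bookkeeping across the levels of the descending sequence is heaviest.
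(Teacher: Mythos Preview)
Your overall strategy---verify the hypotheses of Lemma~\ref{lem-convergence3} for each factor and conclude via convergence---is the same as the paper's (which omits the proof and defers to Theorem~\ref{thm-dominatingexpandunique}; in the topmost case only the branch using Lemma~\ref{lem-convergence3} occurs, since a topmost lamination is never contained in~$\mathcal L_1^+[\psi]$). Two points deserve sharpening.

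First, the ``assembling'' step is not automatic. Knowing that each associated factor forest $(\mathcal Y_j', \delta_j')$ is equivariantly homothetic to $(\mathcal Y_j, \delta_j)$ does not by itself give a pretree-isomorphism $\mathcal Y' \to \mathcal Y$ that is simultaneously a homothety on every factor: the factored pretree carries more information than the collection of its quotients. The paper handles this by turning the factored metric into a hierarchy, observing that $\delta_{\iota(1)}$ and $\delta_1'$ have the same maximal elliptic system~$\mathcal G$ (by condition~(\ref{cond-topmost-loxodromics})), assuming inductively that the $\mathcal G$-pseudoforests are equivariantly homothetic, and then invoking \emph{uniqueness of the blow-up construction} to reduce to $k=1$. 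You should replace your factor-by-factor reduction with this hierarchy induction.

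Second, your step~(iii) is correct but can be stated much more directly once $k=1$. Elements of $\operatorname{\underline{supp}}[\psi_1;\mathcal Z_1]$ have $\mathcal T$-axes contained in unions of pseudoleaves of~$\mathcal L_{\mathcal T}^+[\psi_1]$; by Claim~\ref{claim-pseudoleafseg} and the fact that $\mathcal A_j^{top}[\psi]$ is topmost (hence not contained in~$\mathcal L_1^+[\psi]$ when $\iota(j)>1$), these axes contain no pseudoleaf segment of~$\mathcal L_{\mathcal T}^+[\psi_{\iota(j)}]$. Proposition~\ref{prop-limitpseudolox2} then says such elements do not weakly $\psi_*$-limit to~$\mathcal A_j^{top}[\psi]$, so condition~(\ref{cond-topmost-loxodromics}) of the topmost-forest definition forces them to be $\delta_j'$-elliptic. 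This is exactly how the paper argues ellipticity of the lower-support in the proof of Theorem~\ref{thm-dominatingexpandunique}. Also note that step~(i) is unnecessary: trivial arc stabilizers are part of the definition of a topmost forest, and when $\iota(1)=1$ you should invoke Lemma~\ref{lem-convergence2} rather than Lemma~\ref{lem-convergence3}.
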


\noindent Thus the factored $\mathcal F$-forest $(\mathcal Y, \oplus_{j=1}^{k} \delta_{\iota(j)})$ is \emph{the} \underline{complete topmost forest} for~$[\psi]$ (up to rescaling of the factors). We omit the proof since we are about to prove something stronger in the next section (see Theorem~\ref{thm-dominatingexpandunique}).

Suppose $(\mathcal T, (\delta_i)_{i=1}^n)$ and $(\mathcal T', (\delta_i)_{i=1}^{n'})$ are two limit pseudoforests for~$[\psi]$.
Then $n = n'$ as they are exactly the number of $\psi_*$-orbits of attracting laminations for~$[\psi]$.
Using Theorem~\ref{thm-topmostlimitexist}, the hierarchies can be inductively normalized to $(\oplus_{j=1}^{k_i} \delta_{i,j})_{i=1}^d$ and $(\oplus_{j=1}^{k_i} \delta_{i,j}')_{i=1}^d$ respectively, where~$d$ is the length of the longest chain in the partial order of attracting laminations for~$[\psi]$ and $\delta_{i,j}, \delta_{i,j}'$ are indexed by the same $\psi_*$-orbit~$\mathcal A_{i,j}[\psi]$ of attracting laminations.
By inductively invoking Theorem~\ref{thm-topmostexpandunique} and uniqueness of the blow-up construction, the normalized pseudoforests $(\mathcal T, (\oplus_{j=1}^{k_i} \delta_{i,j})_{i=1}^d)$ and ($\mathcal T', (\oplus_{j=1}^{k_i} \delta_{i,j}')_{i=1}^d)$ are in the same equivariant dilation class and invariants of this class are invariants of~$[\psi]$!
In particular, $\mathcal T$ and~$\mathcal T'$ are equivariantly pretree-isomorphic.

\begin{cor}\label{cor-limitpretreeunique}
Any two limit pretrees for an automorphism $\psi \colon \mathcal F \to \mathcal F$ are equivariantly pretree-isomorphic. \qed
\end{cor}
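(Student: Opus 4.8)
The plan is to deduce Corollary~\ref{cor-limitpretreeunique} from the preceding paragraph, which already establishes that any two limit pseudoforests $(\mathcal T, (\delta_i)_{i=1}^n)$ and $(\mathcal T', (\delta_i)_{i=1}^{n'})$ for $[\psi]$ have a common normalization, and that the normalized versions lie in the same equivariant dilation class. So the real content is extracting a pretree-isomorphism $\mathcal T \to \mathcal T'$ of the underlying real pretrees from this metric statement.

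First I would handle the degenerate case: if $[\psi]$ is polynomially growing, then by Theorem~\ref{thm-limitpseudotree} the real pretrees $\mathcal T$ and $\mathcal T'$ are both singletons (degenerate), hence trivially pretree-isomorphic. So assume $[\psi]$ is exponentially growing. Next, I would recall that $n = n'$ is forced because, by the construction in Section~\ref{SubsecAssembHier} together with Lemma~\ref{lem-coordinatefreeattracting} and the bijection between levels and $\psi_*$-orbits of attracting laminations, both numbers equal the number of such orbits. Then, following the preceding paragraph verbatim, I would invoke the poset of $\psi_*$-orbits of attracting laminations for $[\psi]$ to inductively normalize both hierarchies into factored hierarchies $(\oplus_{j=1}^{k_i}\delta_{i,j})_{i=1}^d$ and $(\oplus_{j=1}^{k_i}\delta_{i,j}')_{i=1}^d$, where $d$ is the length of the longest chain and the factors at level $i$ are indexed by the same orbits $\mathcal A_{i,j}[\psi]$. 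By Theorem~\ref{thm-topmostexpandunique} applied level by level (using the canonical identification of the $\mathcal A_{i,j}[\psi]$-indexed factors on both sides) and the uniqueness of the blow-up construction from \cite[Theorem~IV.4]{Mut21b}, there is an equivariant dilation $(\mathcal T, (\oplus_j \delta_{i,j})_i) \to (\mathcal T', (\oplus_j \delta_{i,j}')_i)$.

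Finally, a dilation of factored pseudoforests is by definition a system of pretree-isomorphisms of the underlying real pretrees that, in addition, scales each factor-pseudometric. Discarding the metric data, the dilation restricts to a pretree-isomorphism $\mathcal T \to \mathcal T'$. Since $\mathcal T$ (resp.\ $\mathcal T'$) is, as a real pretree, obtained from $(\mathcal T, (\oplus_j \delta_{i,j})_i)$ (resp.\ the primed object) by forgetting the hierarchy, this is exactly an equivariant pretree-isomorphism $\mathcal T \to \mathcal T'$, as claimed. The only subtlety requiring care is that normalization genuinely does not alter the underlying real pretree: inspecting Section~\ref{SubsecTopmost}, normalization only replaces the hierarchy $(\delta_i)_{i=1}^n$ by the factored pseudometric $\oplus_{j=1}^k \delta_{\iota(j)}$ built from \emph{the same} pseudometrics $\delta_{\iota(j)}$ extended equivariantly over $\mathcal T$, so the point set and interval function of $\mathcal T$ are untouched; likewise the blow-up uniqueness is at the level of real $\mathcal F$-pretrees. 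Hence the pretree-isomorphism supplied by the dilation is a pretree-isomorphism of the \emph{original} limit pretrees.

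The main obstacle is bookkeeping rather than a genuine difficulty: one must verify that the inductive normalization and the repeated application of Theorem~\ref{thm-topmostexpandunique} can be carried out consistently — i.e.\ that at each level $i$ the already-constructed equivariant dilation on the lower strata is compatible with the one produced by Theorem~\ref{thm-topmostexpandunique} at level $i$, so that they assemble into a single dilation of the whole pseudoforest. This is where ``uniqueness of the blow-up construction'' is used: the dilation at level $i$ and the lower-level dilation agree on the common sub-data (the characteristic subforests for the relevant subgroup systems), and \cite[Theorem~IV.4]{Mut21b} guarantees the blown-up pretree is determined by that data, so the maps glue uniquely. Once this compatibility is in hand, forgetting metrics gives the desired isomorphism with no further work.
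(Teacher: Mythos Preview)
Your proposal is correct and follows essentially the same approach as the paper: the corollary is stated with a \qed immediately after the paragraph you summarize, so the paper's intended proof is exactly the normalization-then-dilation argument you spell out. You are simply more explicit about the degenerate case and about why a dilation of pseudoforests yields a pretree-isomorphism of the underlying pretrees, but these are details the paper leaves implicit rather than a different route.
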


We can now define more invariants of an attracting lamination:
let~$A$ be an attracting lamination for~$[\psi]$,~$\mathcal A[\psi]$ its $\psi_*$-orbit, and $(\delta_{i,j}, \lambda_{i,j})$ the corresponding pair of pseudometric and stretch factor in the normalized pseudoforest $(\mathcal T, (\oplus_{j=1}^{k_i} \delta_{i,j})_{i=1}^d)$;
then $\lambda(A) \defeq \lambda_{i,j}$ is a well-defined \underline{stretch factor} for~$A$.
Now let~$A$ be topmost, $\{ \mathcal B_{i',j'} \}$ be the whole subset of $\psi_*$-orbits of attracting laminations not contained in~$\mathcal A[\psi]$, and~$(\mathcal T_A, (\oplus_{j'=1}^{k_i'} \delta_{i',j'}')_{i'=1}^{d'})$ the associated normalized pseudoforest.
Then the \underline{upper-support} of~$\mathcal A[\psi]$ is the subgroup system of point stabilizers $\operatorname{\overline{supp}} \mathcal A[\psi] \defeq \mathcal G[\mathcal T_A]$.
Unlike the lower-support, the upper-support is always a malnormal subgroup system of finite type.
Note that components of the lower-support are conjugate into components of the upper-support.

\subsection{Dominating forests}\label{SubsecDominating}



Fix an exponentially growing automorphism $\psi \colon \mathcal F \to \mathcal F$.
Let $A \subset \mathbb R(\mathcal F)$ be an attracting lamination for~$[\psi]$ and~$\lambda(A)$ its stretch factor.
We say~$A$ is \underline{dominating} if $\lambda(A) > \lambda(A')$ whenever~$A'$ is an attracting lamination for~$[\psi]$ containing~$A$ and~$A' \neq A$;
topmost attracting laminations are vacuously dominating.
We will extend Theorem~\ref{thm-topmostlimitexist} to dominating attracting laminations by mimicking the reasoning in the previous section, focusing only on the changes needed for dominating attracting laminations.


Let $(\mathcal Y_i, \delta_i)_{i = 1}^n$ be a descending sequence of limit forests for~$[\psi]$, $(\mathcal L_i^+[\psi])_{i=1}^n$ the corresponding sequence of $\psi_*$-orbits of attracting laminations for~$[\psi]$, $(\mathcal T, (\delta_i)_{i=1}^n)$ the limit pseudoforest for $(\mathcal Y_i, \delta_i)_{i=1}^n$, and $\{\mathcal A_j^{dom}[\psi_i]\}_{j=1}^{k_i}$ the subset of $\{\mathcal L_j^+[\psi]\}_{j=i}^n$ consisting of all dominating attracting laminations for~$[\psi_i]$ --- recall that~$\mathcal Y_i$ are $\mathcal G_i$-pretrees and $[\psi_i]$ is the restriction of~$[\psi]$ to~$\mathcal G_i$.
As before, $\mathcal A_j^{dom}[\psi_i] = \mathcal L_{\iota(i,j)}^+[\psi]$ for some subsequence~$(\iota(i,j))_{j=1}^{k_i}$ of $(j)_{j=i}^n$ with $\iota(i,1) = i$. 

Suppose $k_1 \ge 2$ and the $\mathcal G_2$-invariant hierarchy $(\delta_i)_{i=2}^n$ normalizes to the factored $\mathcal G_2$-invariant convex pseudometric $\Sigma_{j=1}^{k_2} \delta_{\iota(2,j)}$ on the characteristic convex subsets $\mathcal T_2 \subset \mathcal T$ for~$\mathcal G_2$.
Fix some $j \in \{2, \ldots, k_1\}$.
The previous section discusses how to equivariantly extend $\delta_{\iota(1,j)}$ to~$\mathcal T$ when $\mathcal L_1^+[\psi]$ does not contain $\mathcal L_{\iota(1,j)}^+[\psi]$.
Assume for the rest of this section that $\mathcal L_{\iota(1,j)}^+[\psi] \subset \mathcal L_1^+[\psi]$;
thus $\lambda_1 < \lambda_{\iota(1,j)}$ as $\mathcal L_{\iota(1,j)}^+[\psi]$ is dominating.
Let $(\mathcal Y^*, (\delta_1, \delta_{\iota(1,j)}))$ be the associated $\mathcal F$-pseudoforest for the $\mathcal F$-invariant 2-level hierarchy $(\delta_1, \delta_{\iota(1,j)})$ and $h^*$ the $\psi$-equivariant pretree-automorphism on~$\mathcal Y^*$ induced by $h \colon \mathcal T \to \mathcal T$.

\smallskip
Let $\tau_1 \colon (\Gamma_1, d_1) \to (\Gamma_1, d_1)$ be the $\lambda_1$-Lipschitz topological representative for~$\psi$ used to construct $(\mathcal Y_1, \delta_1)$ through iteration.
Pick an equivariant blow-up~$\Gamma^\circ$ of $\Gamma_1$ rel.~$\mathcal Y^*(\mathcal Z) \subset \mathcal Y^*$, the characteristic convex subsets for the proper free factor system $\mathcal Z \defeq \mathcal F[\Gamma_1]$.
Since~$\mathcal Z$ is $\delta_1$-elliptic, $\delta_{\iota(1,j)}$ is a metric on $\mathcal Y^*(\mathcal Z)$.
The blow-up inherits an $\mathcal F$-invariant 2-level hierarchy $(d_1, \delta_{\iota(1,j)})$ with full support.
As~$\Gamma_1$ is simplicial, this hierarchy extends to a factored $\mathcal F$-invariant convex metric $d_1 \oplus \delta_{\iota(1,j)}$ on~$\Gamma^\circ$.

Let~$[\psi_{\mathcal Z}]$ be the restriction of~$[\psi]$ to~$\mathcal Z$ and~$h_{\mathcal Z}^*$ the $\psi_{\mathcal Z}$-equivariant ``restriction'' of~$h^*$ to $(\mathcal Y^*(\mathcal Z), \delta_{\iota(1,j)})$.
For a parameter $c > 0$, the topological representative~$\tau_1$ induces a $\psi$-equivariant map~$\tau_c^\circ$ on~$\Gamma^\circ$ that extends~$h_{\mathcal Z}^*$ and is linear with respect to $(c \, d_1) \oplus \delta_{\iota(1,j)}$ on edges from~$\Gamma_1$.
If $c \gg 1$, then~$\tau_c^\circ$ is $\lambda_{\iota(1,j)}$-Lipschitz with respect to $(c \, d_1 ) \oplus \delta_{\iota(1,j)}$ since $\lambda_1 < \lambda_{\iota(1,j)}$.
Through $\tau_c^\circ$-iteration, we define a limit forest $(\mathcal Y, \delta_{\iota(1,j)})$ for $\tau_1$ and $h_{\mathcal Z}^*$ whose characteristic subforest for~$\mathcal Z$ is identified with $(\mathcal Y^*(\mathcal Z), \delta_{\iota(1,j)})$ --- up to equivariant isometry, this limit forest is independent of the parameter~$c$;
moreover, there is an induced $\psi$-equivariant $\lambda$-homothety~$h$ on~$(\mathcal Y, \delta_{\iota(1,j)})$ that restricts to~$h_{\mathcal Z}^*$ on~$\mathcal Y^*(\mathcal Z)$. 

We now refine this construction of a limit forest.
For $n \ge 1$, set $d_n^\circ \defeq \lambda_1^{-n} d_1 \oplus \lambda_{\iota(1,j)}^{-n} \delta_{\iota(1,j)}$ and $\tau^\circ \defeq \tau_1^\circ$.
The map
$\tau^\circ \colon (\Gamma^\circ, d_0^\circ) \to  (\Gamma^\circ \psi, d_1^\circ)$ is equivariant and $(1+D)$-Lipschitz for some~$D \ge 0$.
In fact, $\tau^\circ \colon (\Gamma^\circ, d_n^\circ) \to  (\Gamma^\circ \psi, d_{n+1}^\circ)$  is $(1+D r^n)$-Lipschitz, where $r \defeq \lambda_1 \lambda_{\iota(1,j)}^{-1}$.
Set $p_n \defeq \prod_{i=0}^{n-1} (1+D r^i)$;
then $\tau^{\circ n} \colon (\Gamma^\circ, d_0^\circ) \to  (\Gamma^\circ \psi^n, p_n^{-1} d_n^\circ)$ is equivariant and metric;
moreover, the pullback of $p_n^{-1} d_n^\circ$ to~$\Gamma^\circ$ along~$\tau^{\circ n}$ converges to an $\mathcal F$-invariant pseudometric on~$\Gamma^\circ$ as $n \to \infty$.
Since $|r| < 1$, the sequence $(p_n)_{n=1}^\infty$ converges and the pullback of $d_n^\circ$ to~$\Gamma^\circ$ along~$\tau^{\circ n}$ converges to a factored $\mathcal F$-invariant pseudometric $\delta_1^\circ + \delta_{\iota(1,j)}^\circ$ on~$\Gamma^\circ$.
Let $(\Gamma^*, \delta_1^\circ + \delta_{\iota(1,j)}^\circ)$ be the associated factored $\mathcal F$-forest for this factored pseudometric on~$\Gamma^\circ$ --- as $\mathcal L_{\iota(1,j)}^+[\psi] \subset \mathcal L_1^+[\psi]$, one can show that $\delta_1^\circ$ and $\delta_{\iota(1,j)}^\circ$ are not mutually singular and $\delta_{\iota(1,j)}^\circ$ is actually a metric on~$\Gamma^*$.
By construction, the characteristic subforest for~$\mathcal Z$ in $(\Gamma^*, \delta_1^\circ + \delta_{\iota(1,j)}^\circ)$ is equivariantly isometric to $(\mathcal Y^*(\mathcal Z), \delta_{\iota(1,j)})$.
Similarly, the $\mathcal F$-forest $(\mathcal Y_1, \delta_1)$ is equivariantly isometric to the associated metric space for the pseudometric~$\delta_1^\circ$ on~$\Gamma^*$ or~$\Gamma^\circ$ (Corollary~\ref{cor-unique2}).

\begin{restate}{Lemma}{lem-convergence4}
Let $\psi\colon \mathcal F \to \mathcal F$ be an automorphism,~$\mathcal Z$ a $[\psi]$-invariant proper free factor system, $(\mathcal Y_{\mathcal Z}, \delta)$ a minimal $\mathcal Z$-forest with trivial arc stabilizers, $\left(\tau_i \colon \mathcal T_i \to \mathcal T_i\right)_{i=1}^n$ a descending sequence of irreducible train tracks for~$[\psi]$ with $\mathcal F[\mathcal T_n] = \mathcal Z$, $h_{\mathcal Z} \colon (\mathcal Y_{\mathcal Z}, \delta) \to (\mathcal Y_{\mathcal Z}, \delta)$ a $\psi_{\mathcal Z}$-equivariant $\lambda$-homothety, and $(\mathcal Y, \delta)$  the limit forest for~$[\tau_i]_{i=1}^n$ and~$h_{\mathcal Z}$, where $\lambda > \lambda[\tau_n]$ and $[\psi_{\mathcal Z}]$ is the $[\psi]$-restriction to~$\mathcal Z$.

If $(\mathcal Y', \delta')$ is a minimal $\mathcal F$-forest with trivial arc stabilizers and the characteristic subforest of $(\mathcal Y', \delta')$ for~$\mathcal Z$ is equivariantly isometric to $(\mathcal Y_{\mathcal Z}, \delta)$, then the limit of $(\mathcal Y' \psi^{m}, \lambda^{-m} \delta')_{m \ge 0}$ is $(\mathcal Y, \delta)$. 
\end{restate}

Again, the proof is postponed to Section~\ref{SubsecConverge4}.
Since the restriction of~$[\psi]$ to $\mathcal G_1$ is polynomially growing rel.~$\mathcal Z$, Lemma~\ref{lem-convergence4} implies the characteristic subforests $(\mathcal Y^*(\mathcal G_1), \delta_{\iota(1,j)})$ and $(\Gamma^*(\mathcal G_1), \delta_{\iota(1,j)}^\circ)$ for~$\mathcal G_1$ are equivariantly isometric.
By uniqueness of the blow-up construction,~$\Gamma^*$ is equivariantly pretree-isomorphic to~$\mathcal Y^*$;
through this pretree-isomorphism, we can identify $\delta_{\iota(1,j)}^\circ$ with an extension of~$\delta_{\iota(1,j)}$ to an $\mathcal F$-invariant convex pseudometric (in fact, metric) on~$\mathcal Y^*$.
Finally, we can lift~$\delta_{\iota(1,j)}$ to an $\mathcal F$-invariant convex pseudometric on~$\mathcal T$ since~$\mathcal T$ is an equivariant blow-up of~$\mathcal Y^*$.

As~$j$ was arbitrary, the $\mathcal F$-invariant hierarchy $(\delta_i)_{i=1}^n$ normalizes to the factored $\mathcal F$-invariant convex pseudometric $\Sigma_{j=1}^k \delta_{\iota(j)}$, where $k \defeq k_1$ and $\iota(j) \defeq \iota(1,j)$.
We call the associated factored $\mathcal F$-forest $(\mathcal Y, \Sigma_{j=1}^{k} \delta_{\iota(j)})$ the \underline{complete dominating limit forest} for $(\mathcal Y_i, \delta_i)_{i=1}^n$.
This proves the existence part of our main theorem:

\begin{thm}\label{thm-dominatinglimitexist} 
Let $\psi \colon \mathcal F \to \mathcal F$ be an automorphism and $\{\mathcal A_j^{dom}[\psi]\}_{j=1}^k$ a  (possibly empty) subset of $\psi_*$-orbits of dominating attracting laminations for~$[\psi]$.

Then there is:
\begin{enumerate}
\item a minimal factored $\mathcal F$-forest $(\mathcal Y, \Sigma_{j=1}^k \delta_j)$ with trivial arc stabilizers;
\item \label{cond-dominating-expanding} a unique $\psi$-equivariant expanding dilation $f \colon (\mathcal Y, \Sigma_{j=1}^k \delta_j) \to (\mathcal Y, \Sigma_{j=1}^k \delta_j)$; and
\item \label{cond-dominating-loxodromics} for $1 \le j \le k$, a nontrivial element $x \in \mathcal F$ is $\delta_j$-loxodromic if and only if its axis in~$\mathbb R(\mathcal F)$ weakly $\psi_*$-limits to~$\mathcal A_j^{dom}[\psi]$. \qed
\end{enumerate}
\end{thm}

Fix a subset~$\{\mathcal A_j^{dom}[\psi]\}_{j=1}^k$ of $\psi_*$-orbits of dominating attracting laminations for~$[\psi]$;
a \underline{dominating forest} for~$[\psi]$ is a factored $\mathcal F$-forest satisfying the conclusion of the previous theorem with respect to this subset.
Finally, we prove universality:

\begin{thm}\label{thm-dominatingexpandunique}
Let $\psi \colon \mathcal F \to \mathcal F$ be an automorphism and $\{\mathcal A_j^{dom}[\psi]\}_{j=1}^k$ a  (possibly empty) subset of $\psi_*$-orbits of dominating attracting laminations for~$[\psi]$.
Any dominating forest for~$[\psi]$ with respect to the given subset has a unique equivariant dilation to any corresponding dominating limit forest for~$[\psi]$.
\end{thm}
\begin{proof}
Let $(\mathcal Y_i, \delta_i)_{i=1}^n$ be a descending sequence of limit forests for~$[\psi]$, $\mathcal L_{\mathcal Z_i}^+[\psi_i] \subset \mathbb R(\mathcal G_i, \mathcal Z_i)$ the stable laminations for $(\mathcal Y_i, \delta_i)$, $\mathcal L_i^+[\psi]$ the closure of~$\mathcal L_{\mathcal Z_i}^+[\psi_i]$ in~$\mathbb R(\mathcal F)$, $\{\mathcal L_{\iota(j)}^+[\psi]\}_{j=1}^k$ a subset of $\psi_*$-orbits of dominating attracting laminations,
$(\mathcal Y^*, \Sigma_{j=1}^k \delta_{\iota(j)})$ the corresponding dominating limit forest for $(\mathcal Y_i, \delta_i)_{i=1}^n$, and $(\mathcal Y', \Sigma_{j=1}^k \delta_j')$ a corresponding dominating forest for~$[\psi]$.
Turn the factored metrics into hierarchies, and consider the pseudoforests $(\mathcal Y^*, (\delta_{\iota(j)})_{j=1}^k)$ and $(\mathcal Y', (\delta_j')_{j=1}^k)$.
By Theorem~\ref{thm-dominatinglimitexist}(\ref{cond-dominating-loxodromics}), $\delta_{\iota(1)}$ and $\delta_1'$ have the same maximal elliptic subgroup system~$\mathcal G$.

For induction, assume the $\mathcal G$-pseudoforests $(\mathcal Y^*(\mathcal G), (\delta_{\iota(j)})_{j=2}^k)$ and $(\mathcal Y'(\mathcal G), (\delta_j')_{j=2}^k)$ are equivariantly homothetic.
By uniqueness of the blow-up construction, it is enough to show that the associated $\mathcal F$-forests for~$\delta_{\iota(1)}$ and~$\delta_1'$ (on~$\mathcal Y^*$ and $\mathcal Y'$ respectively) are equivariantly homothetic.
So we may assume $k=1$ for the rest of the proof.
If $\iota(1) = 1$, then $(\mathcal Y^*, \delta_{\iota(1)})$ and $(\mathcal Y', \delta_1')$ are equivariantly homothetic by Lemma~\ref{lem-convergence2}.
Otherwise, $\iota(1) > 1$ and, for induction on complexity, we assume $(\mathcal Y^*(\mathcal G_2), \delta_{\iota(1)})$ and $(\mathcal Y'(\mathcal G_2), \delta_1')$ are equivariantly homothetic.
Either: 1) $\mathcal L_{\iota(1)}^+[\psi] \subset \mathcal L_1^+[\psi]$ and $\lambda_1 < \lambda_{\iota(1)}$ since $\mathcal L_{\iota(1)}^+[\psi]$ is dominating;
or 2) the lower-support $\operatorname{\underline{supp}}[\psi_1; {\mathcal Z_1}]$ of~$\mathcal L_{\mathcal Z_1}^+[\psi_1]$ is elliptic in~$\mathcal Y^*$ and~$\mathcal Y'$ by Theorem~\ref{thm-dominatinglimitexist}(\ref{cond-dominating-loxodromics}).
The $\mathcal F$-forests $(\mathcal Y^*, \delta_{\iota(1)})$ and $(\mathcal Y', \delta_1')$ are equivariantly homothetic by Lemmas~\ref{lem-convergence4} and~\ref{lem-convergence3} respectively, and we are done.
%
\end{proof}

\noindent Thus the factored $\mathcal F$-forest $(\mathcal Y, \Sigma_{j=1}^{k} \delta_{\iota(j)})$ is \emph{the} \underline{complete dominating forest} for~$[\psi]$.

\newpage
\section{Convergence criteria}\label{SecConverge}

This chapter adapts then extends Section~7 of Levitt--Lustig's paper~\cite{LL03};
they, in turn, gave complete details for the proof sketched by Bestvina--Feighn--Handel in~\cite[Lemma~3.4]{BFH97}.

\subsection{Proof of Lemma~\ref{lem-convergence}}\label{SubsecConverge}

Fix an automorphism $\psi \colon \mathcal F \to \mathcal F$ with an expanding irreducible train track $\tau \colon \mathcal T \to \mathcal T$.
Let $\lambda \defeq \lambda[\tau]$, $(\mathcal Y_\tau, d_\infty)$ be the limit forest for~$[\tau]$, $\pi \colon (\mathcal T, d_\tau) \to (\mathcal Y_\tau, d_\infty)$ the constructed equivariant metric PL-map, $\mathcal L^+[\tau] \subset \mathbb R(\mathcal T)$ the stable lamination for~$[\tau]$, and~$k \ge 1$ the number of components of~$\mathcal L^+[\tau]$.
Suppose  $f \colon (\mathcal T, d_\tau) \to (\mathcal Y, \delta)$ is an equivariant PL-map and~$\mathcal L^+[\tau]$ is in the canonically embedded subspace $\mathbb R(\mathcal Y, \delta) \subset \mathbb R(\mathcal T)$.

\begin{claim}[{cf.~\cite[Lemma~7.1]{LL03}}] \label{claim-leafconst}
There is a sequence~$c(f)$ of positive constants $c_i$ indexed by the components $\Lambda_i^+ \subset \mathcal L^+[\tau]$ such that 
\[ \lim_{m \to \infty} \lambda^{-mk} \delta(f(\tau^{mk}(p)), f(\tau^{mk}(q))) = c_i \, d_\infty(\pi(p),\pi(q))\]
for any leaf segment~$[p,q]$ of~$\Lambda_i^+$.
\end{claim}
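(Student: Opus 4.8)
The plan is to analyze the sequence $a_m([p,q]) \defeq \lambda^{-mk} \delta(f(\tau^{mk}(p)), f(\tau^{mk}(q)))$ for a leaf segment $[p,q]$ of $\Lambda_i^+$ and show it converges to a limit that depends only on $d_\infty(\pi(p),\pi(q))$ (linearly), then extract the constant $c_i$. First I would reduce to leaf segments that are ``edge-aligned'' images under $\tau$: since $[p,q]$ is a leaf segment of $\Lambda_i^+$, by construction it is contained in $\tau^N(e)$ for some edge $e$ in the block $\mathcal B_i$ and some $N \ge 0$, and iterating $\tau^k$ keeps us inside leaves of $\Lambda_i^+$. The key point is that $\tau_* \colon \mathbb R(\mathcal T)\to\mathbb R(\mathcal T)$ transitively permutes the $k$ components, so $\tau^k$ preserves each $\Lambda_i^+$; thus the ``first-return'' behavior on block $\mathcal B_i$ is governed by a \emph{primitive} matrix with Perron eigenvalue $\lambda^k$, which is exactly why the normalization $\lambda^{-mk}$ is the right one.

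Next I would establish that $(a_m([p,q]))_{m\ge 0}$ is Cauchy. The essential estimate is bounded cancellation (Lemma~\ref{lem-bcl}) applied to $f$ and to the iterates $\tau^{mk} \colon (\mathcal T, d_\tau) \to (\mathcal T\psi^{mk}, \lambda^{-mk} d_\tau)$, whose cancellation constants are summable and bounded by $C' \defeq \frac{C[\tau]}{\lambda - 1}$. Write $[p_m, q_m] \defeq \tau^{mk}_*([p,q])$ for the ``tightened'' leaf segment, which is a genuine leaf segment of $\Lambda_i^+$ in $(\mathcal T, d_\tau)$ once $m$ is large (the untightened image $\tau^{mk}([p,q])$ lies in a $C'$-neighbourhood of it, and by the train-track property it is a concatenation of boundedly many leaf segments). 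Comparing $a_{m+1}$ to $a_m$: since $[p_{m+1},q_{m+1}]$ equals $\tau^k_*([p_m,q_m])$ up to endpoints within $C'$ and $f$ is Lipschitz with cancellation constant $C[f]$, the difference $|a_{m+1}([p,q]) - \lambda^{-k}\,(\text{contribution of }[p_m,q_m])|$ is $O(\lambda^{-(m+1)k})$, which sums to a convergent series. A cleaner way to package this: the leaf of $\Lambda_i^+$ containing $[p,q]$ lies in $\mathbb R(\mathcal Y,\delta) = \operatorname{im}(f^*)$ by hypothesis, so $f_*$ of this leaf is a well-defined line $\gamma$ in $(\mathcal Y,\delta)$, and $a_m([p,q])$ measures (up to $O(\lambda^{-mk})$ error from bounded cancellation) the $\delta$-length along $\gamma$ of the subsegment corresponding to $\tau^{mk}_*([p,q])$. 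Since $\tau_*$ restricts to a homeomorphism $g_* \colon \mathbb R(\mathcal Y,\delta)\to\mathbb R(\mathcal Y,\delta)$ and the leaf is quasiperiodic (Lemma~\ref{lem-stableminperf}), one gets convergence of $a_m$ to a finite positive limit.

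Then I would prove the limit is additive and scale-equivariant in $[p,q]$: additivity $a_\infty([p,r]) = a_\infty([p,q]) + a_\infty([q,r])$ for $q\in[p,r]$ is immediate from convexity of $\delta$ once the cancellation errors are sent to zero, so $a_\infty$ descends to a function of $d_\infty(\pi(p),\pi(q))$ provided I also check $a_\infty([p,q]) = 0$ iff $d_\infty(\pi(p),\pi(q)) = 0$ — but $\pi$ is isometric on leaf segments of $\mathcal L^+[\tau]$, so $d_\infty(\pi(p),\pi(q)) = d_\tau(p,q) > 0$ for any nondegenerate leaf segment; positivity of $a_\infty$ then follows because $f$ on the leaf $\gamma$ cannot collapse it (both ends have unbounded $f$-image by quasiperiodicity). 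By minimality of $\Lambda_i^+$ (every leaf dense, Lemma~\ref{lem-stableminperf}) together with the quasiperiodicity bound $L(I)$, the ratio $a_\infty([p,q]) / d_\infty(\pi(p),\pi(q))$ is forced to be a single constant $c_i$ on all of $\Lambda_i^+$: any two leaf segments of comparable combinatorial type have the same ratio by the scaling relation $a_\infty(\tau^k_*([p,q])) = \lambda^k a_\infty([p,q])$ combined with $d_\infty(\pi\tau^k_*(p),\pi\tau^k_*(q)) = \lambda^k d_\infty(\pi(p),\pi(q))$, and density propagates this to all leaf segments.

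\textbf{Main obstacle.} The hard part is controlling the error terms uniformly as $m\to\infty$: one must show that the ``tightening'' corrections coming from bounded cancellation (both for $\tau^{mk}$ in the source and for $f$ in the target) contribute a geometric series in $\lambda^{-k}$ rather than growing, and simultaneously that the combinatorial complexity (number of leaf segments covering $\tau^{mk}([p,q])$) stays bounded so the pigeonhole/train-track estimates from Proposition~\ref{prop-limitloxodromics} apply. This is exactly the technical heart of Levitt--Lustig's Lemma~7.1, and the factored structure ($k\ge 2$ components cyclically permuted, hence one constant $c_i$ per block rather than a single global constant) is what dictates passing to the $k$-th power $\tau^{mk}$ throughout.
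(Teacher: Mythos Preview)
Your outline diverges from the paper's proof in a substantive way, and the divergence is precisely where the gap sits.

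The paper does not argue that $(a_m)$ is Cauchy via bounded-cancellation telescoping. Instead it uses Perron--Frobenius theory directly on the transition matrix~$A \defeq A[\tau]$. For a leaf segment $[p,q]$ with vertex endpoints, encode it by the edge-count vector~$v$; the train-track property gives $v^{(m)} = A^m v$ exactly (no tightening in the source --- your remark about ``tightening corrections for $\tau^{mk}$'' is a misconception, since $\tau$ restricted to a leaf segment is a homothetic embedding). Perron's theorem for the primitive block~$\mathcal B_i$ yields $\lambda^{-mk} v_e^{(mk)} \to \nu_e^R \langle \nu^L, v\rangle$. One then fixes a large~$m_\epsilon$ so that every edge~$e$ satisfies $\delta_e(m_\epsilon) \defeq \delta(f(\tau^{m_\epsilon k}(p_e)), f(\tau^{m_\epsilon k}(q_e))) > \epsilon^{-1} C[f]$ (this is where the hypothesis $\mathcal L^+[\tau] \subset \mathbb R(\mathcal Y,\delta)$ enters), and sandwiches $\delta(f(\tau^{(m_\epsilon+m)k}(p)), f(\tau^{(m_\epsilon+m)k}(q)))$ between $\sum_e v_e^{(mk)}(\delta_e(m_\epsilon)-2C[f])$ and $\sum_e v_e^{(mk)}\delta_e(m_\epsilon)$. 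Dividing by $\lambda^{(m_\epsilon+m)k}\langle \nu^L, v\rangle$ and letting $m\to\infty$ then $\epsilon\to 0$ gives the limit and its positivity simultaneously; the constant $c_i$ is read off from the block eigendata and does not need a separate density/minimality propagation argument.

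Your Cauchy step is where the proposal breaks. The asserted recursion ``$|a_{m+1}-\lambda^{-k}(\text{contribution of }[p_m,q_m])| = O(\lambda^{-(m+1)k})$'' does not yield convergence: if ``contribution'' means $\delta(f(p_m),f(q_m))$ then this reads $|a_{m+1}-\lambda^{-k}a_m|\to 0$, which is false for a sequence converging to a nonzero constant; and there is no other candidate for ``contribution'' that makes the telescoping work without already knowing the per-edge limits. What actually forces $a_m$ to converge is that the \emph{relative frequencies} of edge types in $[\tau^{mk}(p),\tau^{mk}(q)]$ stabilize --- and that is Perron's theorem, not bounded cancellation. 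Your quasiperiodicity/additivity argument for extracting a single constant~$c_i$ per component is sound \emph{once} the limit exists, but it presupposes the hard part rather than supplying it.
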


\noindent Any two equivariant PL-maps $f, g \colon (\mathcal T, d_\tau) \to (\mathcal Y, \delta)$ are a bounded $\delta$-distance apart, and $c(f) = c(g)$.
So we can define $c(\mathcal Y, \delta) \defeq c(f)$;
note that $c(\mathcal Y, s \, \delta) = s \, c(\mathcal Y,  \delta)$ for $s > 0$.
Without loss of generality, rescale the metric~$\delta$ so that~$f$ is an equivariant metric PL-map.

\begin{proof}
Let $\nu^R \defeq \nu^R[\tau]$ (resp.~$\nu^L \defeq \nu^L[\tau]$) be the unique positive right (resp.~left) eigenvector for the irreducible transition matrix $A \defeq A[\tau]$ whose sum of entries is~1 (resp.~dot product $\langle \nu^L, \nu^R \rangle = k$).
Suppose $[p,q]$ is a leaf segment (of a component $\Lambda_i^+ \subset \mathcal L^+[\tau]$) with endpoints at vertices of~$\mathcal T$ and let $v \defeq v[p,q]$ be the vector counting the occurrences of~$[e]$ in $[p,q]$: $[e]$ is an $\mathcal F$-orbit of edges in~$\mathcal T$; the entries of $v = (v_e)$ are indexed by the $\mathcal F$-orbits~$[e]$; and~$v_e$ is the number of translates of~$e$ in~$[p,q]$.
The train track property gives us $v^{(m)} \defeq v[\tau^m(p), \tau^m(q)] = A^m v$.
Then, as $[p,q]$ is a leaf segment, the positive entries of~$v^{(mk)}$ are indexed in the same block $\mathcal B_i = \mathcal B(\Lambda_i^+)$ for all $m \ge 0$.
By Perron's theorem, if~$[e]$ is in the block~$\mathcal B_i$, then \[ \lim_{m \to \infty} \frac{v_e^{(mk)}}{\lambda^{mk} \langle \nu^L, v \rangle} = \nu_e^R.
\]

For small $\epsilon > 0$, fix $m_\epsilon \gg 1$ such that $\delta_e(m_\epsilon) \defeq \delta(f(\tau^{m_\epsilon k}(p_e)), f(\tau^{m_\epsilon k}(q_e))) > \epsilon^{-1} C[f]$ for every edge $e = [p_e, q_e]$ in~$\mathcal T$ --- we need the assumption $\mathcal L^+[\tau] \subset \mathbb R(\mathcal Y, \delta)$ for this.
The interval $[\tau^{(m_\epsilon+m)k}(p), \tau^{(m_\epsilon+m)k}(q)]$ is a union of $v^{(mk)}_e$-many translates of~$\tau^{m_\epsilon k}(e)$, as~$[e]$ ranges over all the orbits of edges in~$\mathcal T$.
In~$\mathcal Y$, we get
\[ 
\sum_{[e] \subset \mathcal T} v^{(mk)}_e (\delta_e(m_\epsilon)- 2C[f]) \le \delta(f(\tau^{(m_\epsilon+m)k}(p)), f(\tau^{(m_\epsilon+m)k}(q))) \le \sum_{[e] \subset \mathcal T} v^{(mk)}_e \delta_e(m_\epsilon).
\]
Divide by $\lambda^{(m_\epsilon + m)k} d_\infty(\pi(p), \pi(q))  = \lambda^{(m_\epsilon + m)k} \langle \nu^L, v \rangle$, and let $m \to \infty$:
\[\begin{aligned}
(1 - 2 \epsilon) \sum_{[e] \in \mathcal B_i} \nu^R_{e} \, & \frac{\delta_e(m_\epsilon)}{\lambda^{m_\epsilon k}} \le \liminf_{m \to \infty} \frac{\delta(f(\tau^{m k}(p)), f(\tau^{m k}(q)))}{\lambda^{mk} d_\infty(\pi(p), \pi(q))} \\
&\le \limsup_{m \to \infty} \frac{\delta(f(\tau^{m k}(p)), f(\tau^{m k}(q)))}{\lambda^{mk} d_\infty(\pi(p), \pi(q))} \le \sum_{[e] \in \mathcal B_i} \nu^R_{e} \, \frac{\delta_e(m_\epsilon)}{\lambda^{m_\epsilon k}}
\end{aligned}\]
Since~$f$ is a metric map, we have $\lambda^{-m_\epsilon k}\delta_e(m_\epsilon) \le \nu^L_e$.
So the $\liminf$ and $\limsup$ above are real, equal, and depend only on the block~$\mathcal B_i$ for~$\Lambda_i^+$.

If~$\epsilon$ is small, then $\epsilon^{-1}C[f] > 2C[f] + L$ for some $L > 0$;
by bounded cancellation, 
\[ \begin{aligned}
c_i \defeq \lim_{m \to \infty} \frac{\delta(f(\tau^{m k}(p)), f(\tau^{m k}(q)))}{\lambda^{m k} d_\infty(\pi(p), \pi(q))} &\ge  \lim_{m \to \infty} \frac{\| v^{(mk)} \|_1 L}{\lambda^{(m_\epsilon+m)k} \langle \nu^L, v \rangle} \ge \frac{\nu_e^R L}{\lambda^{m_\epsilon k}} > 0,
\end{aligned} \]
where $\| v^{(m)} \|_1$ is the sum of the entries in~$v^{(m)}$ and~$[e]$ is in the same block as $[p,q]$.

\smallskip
We now relax the restriction that~$[p,q]$ is an edge-path, i.e.~$p, q$ need not be vertices.
For $m \ge 0$, let $[\bar p_m, \bar q_m]$ be the shortest edge-path containing $[\tau^{mk}(p), \tau^{mk}(q)]$; 
for $m, m' \ge 0$,
\[\begin{aligned}
\frac{\delta(f(\tau^{mk}(\bar p_{m'})), f(\tau^{mk}(\bar q_{m'}))) - \lambda^{mk}2}{\lambda^{mk} (d_\infty(\pi(\bar p_{m'}), \pi(\bar q_{m'})) + 2)} &\le \frac{\delta(f(\tau^{(m+m')k}(p)), f(\tau^{(m+m')k}(q)))}{\lambda^{(m+m')k} d_\infty(\pi(p), \pi(q))} \\
&\le 
\frac{\delta(f(\tau^{mk}(\bar p_{m'})), f(\tau^{mk}(\bar q_{m'}))) + \lambda^{mk}2}{\lambda^{mk} (d_\infty(\pi(\bar p_{m'}), \pi(\bar q_{m'})) - 2)}.
\end{aligned}\]
Both upper and lower bounds converge to~$c_i$ as $m', m \to \infty$: $[\bar p_{m'}, \bar q_{m'}]$ is a leaf segment with endpoints at vertices of~$\mathcal T$, so 
\begin{align*}
 \lim_{m' \to \infty} \lim_{m \to \infty} & \frac{\delta(f(\tau^{mk}(\bar p_{m'})), f(\tau^{mk}(\bar q_{m'}))) \mp \lambda^{mk}2}{ \lambda^{mk}( d_\infty(\pi(\bar p_{m'}), \pi(\bar q_{m'})) \pm 2)} \\
 = & \lim_{m' \to \infty} \frac{ c_i \, d_\infty(\pi(\bar p_{m'}), \pi(\bar q_{m'})) \mp 2}{d_\infty(\pi(\bar p_{m'}), \pi(\bar q_{m'})) \pm 2} = c_i. \qedhere 
\end{align*}
\end{proof}

The next step is extending the claim to all intervals $[p,q] \subset \mathcal T$.
Set $(c_i)_{i=1}^k \defeq c(\mathcal Y, \delta)$ and let $d_\infty = \oplus_{i=1}^k d_\infty^{(i)}$ be the factorization indexed by the components $\Lambda_i^+ \subset \mathcal L^+[\tau]$.
For convenience, replace~$\psi$ with its iterate~$\psi^k$,~$\tau$ with~$\tau^k$, and~$\lambda$ with~$\lambda^k$.

\begin{claim}[{cf.~\cite[Lemma~7.2]{LL03}}] \label{claim-uniformconst}
For any $p_1, p_2 \in \mathcal T$,
\[ \lim_{m \to \infty} \lambda^{-m} \delta(f(\tau^{m}(p_1)), f(\tau^{m}(p_2))) = \sum_{i=1}^k c_i \, d_\infty^{(i)}(\pi(p_1), \pi(p_2)). \]
\end{claim}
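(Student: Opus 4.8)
The plan is to upgrade Claim~\ref{claim-leafconst} from leaf segments to arbitrary intervals using Lemma~\ref{lem-leafsegments}: every closed interval $[p_1,p_2]$ in a leaf-segment-generated piece of $\mathcal T$ is a finite concatenation of leaf segments, but since $\tau$ is only a train track (not a folded map on arbitrary paths), $\tau^m([p_1,p_2])$ need not remain a concatenation of leaf segments --- cancellation occurs at the finitely many ``illegal turns'' where consecutive leaf segments meet. So the first step is to write $[p_1,p_2]$ as a concatenation of $N$ leaf segments $[q_{j-1},q_j]$ (with endpoints at vertices, modulo the truncation argument already used at the end of the previous claim), where $N = N(p_1,p_2)$ depends only on the combinatorics of $[p_1,p_2]$ and is unchanged under iteration by $\tau$. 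Each leaf segment lies in a single component $\Lambda_{i(j)}^+$, so $\sum_i c_i\, d_\infty^{(i)}(\pi(p_1),\pi(p_2)) = \sum_{j=1}^N c_{i(j)}\, d_\infty(\pi(q_{j-1}),\pi(q_j))$ because $\pi$ is convex and isometric on leaf segments, and $d_\infty^{(i)}$ only sees the blocks in $\mathcal B_i$.

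Next I would run the two-sided estimate. For the upper bound, $\tau^m([p_1,p_2]) \subset \bigcup_{j=1}^N \tau^m([q_{j-1},q_j])$, so by the triangle inequality $\delta(f(\tau^m p_1), f(\tau^m p_2)) \le \sum_j \delta(f(\tau^m q_{j-1}), f(\tau^m q_j))$; dividing by $\lambda^m$ and letting $m \to \infty$ gives $\limsup \le \sum_j c_{i(j)} d_\infty(\pi q_{j-1}, \pi q_j)$ by Claim~\ref{claim-leafconst}. For the lower bound, I would use bounded cancellation for $\tau^m$: with cancellation constant $C \defeq C[\tau]$ and $C' \defeq C/(\lambda-1)$ a uniform cancellation constant for all $\tau^m : (\mathcal T, d_\tau) \to (\mathcal T\psi^m, \lambda^{-m}d_\tau)$ (as in Proposition~\ref{prop-limitloxodromics}), the image $\tau^m([q_{j-1},q_j])$ contains a sub-leaf-segment $[q_{j-1}',q_j']$ whose $\tau^m$-translated portion survives in $[\tau^m p_1, \tau^m p_2]$ up to the $C$-neighbourhoods at the two endpoints --- more precisely, after composing with $f$ (cancellation constant $C[f]$), the surviving portions have $f$-images that are pairwise a bounded $\delta$-distance apart inside $[f(\tau^m p_1), f(\tau^m p_2)]$. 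Thus $\delta(f(\tau^m p_1), f(\tau^m p_2)) \ge \sum_j \delta(f(\tau^m q_{j-1}'), f(\tau^m q_j')) - 2N(C + C[f])$ (a constant independent of $m$); dividing by $\lambda^m$, the constant dies, and since each $[q_{j-1}',q_j']$ exhausts $[q_{j-1},q_j]$ as $m \to \infty$ (the lost length is bounded by the $d_\tau$-distance corresponding to $C'$-neighbourhoods, which is $O(1)$), Claim~\ref{claim-leafconst} gives $\liminf \ge \sum_j c_{i(j)} d_\infty(\pi q_{j-1}, \pi q_j)$. Combining, the limit exists and equals $\sum_i c_i\, d_\infty^{(i)}(\pi p_1, \pi p_2)$.

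The main obstacle I anticipate is making the lower-bound bookkeeping fully rigorous: one must track how much of each leaf segment $[q_{j-1},q_j]$ is ``eaten'' at the two ends under $\tau^m$ and under post-composition with $f$, show this loss is bounded uniformly in $m$ (not just for each $m$), and verify that the surviving middle portions of consecutive leaf segments have $f$-images occurring in the correct linear order along $[f(\tau^m p_1), f(\tau^m p_2)]$ so that the distances genuinely add up rather than overlap --- this is where bounded cancellation (Lemma~\ref{lem-bcl}) for both $\tau^m$ and $f$ is used simultaneously, and it is the analogue of the disjointness argument ``$I_m$ is an interval in $\tau_*^{m+k}(l)$'' from Proposition~\ref{prop-limitloxodromics}. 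A secondary, lighter point is dealing with non-vertex endpoints $p_1,p_2$ and with leaf segments that straddle a branch point of the lamination, but both are handled by the same truncation-to-vertices trick already deployed at the end of the proof of Claim~\ref{claim-leafconst}, so I would simply remark that it carries over verbatim. Once Claim~\ref{claim-uniformconst} is established for all $p_1,p_2$, the convergence of translation distance functions $\|\cdot\|_{\lambda^{-mk}\delta} \to \|\cdot\|_{\oplus_i c_i d_\infty^{(i)}}$ in $\mathbb R_{\ge 0}^{\mathcal F}$ follows immediately by applying it to axes of $\mathcal Y$-loxodromic elements, completing the proof of Lemma~\ref{lem-convergence}.
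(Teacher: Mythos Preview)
Your argument has a genuine gap, and it lies in the very first displayed equality you assert:
\[
\sum_i c_i\, d_\infty^{(i)}(\pi(p_1),\pi(p_2)) \;=\; \sum_{j=1}^N c_{i(j)}\, d_\infty(\pi(q_{j-1}),\pi(q_j)).
\]
The right-hand side is $\sum_i c_i\, d_\tau^{(i)}(p_1,p_2)$, since $\pi$ is isometric on each leaf segment and the $[q_{j-1},q_j]$ partition $[p_1,p_2]$. But $\pi$ is \emph{not} ``convex'' in the sense you need: the images $\pi(q_j)$ need not lie on the interval $[\pi(p_1),\pi(p_2)]$ --- they can be up to $C[\pi]$ away --- precisely because $\pi$ folds at the illegal turns~$q_j$. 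So in general $d_\tau^{(i)}(p_1,p_2) > d_\infty^{(i)}(\pi(p_1),\pi(p_2))$, and your upper bound $\limsup \le \sum_i c_i\, d_\tau^{(i)}(p_1,p_2)$ is correct but not tight.

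The same issue breaks your lower bound. You claim the surviving sub-leaf-segments $[q_{j-1}',q_j']$ exhaust $[q_{j-1},q_j]$ as $m\to\infty$, but the backtracking at each illegal turn under $\tau^m$ has $d_\tau$-length up to $\lambda^m C'$ (the cancellation constant of $\tau^m\colon(\mathcal T,d_\tau)\to(\mathcal T,d_\tau)$ grows like $\lambda^m$). After rescaling by $\lambda^{-m}$ this loss is bounded but does \emph{not} vanish --- indeed, its limit is exactly the gap $d_\tau^{(i)} - d_\infty^{(i)}$ above. So your sandwich brackets the wrong quantity.

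The paper's fix is a double-limit trick you are missing entirely: decompose $[\tau^{m'}(p_1),\tau^{m'}(p_2)]$ into at most $N(p_1,p_2)+1$ leaf segments $[q_j,q_{j+1}]$, apply Claim~\ref{claim-leafconst} as $m\to\infty$ to get $\lambda^{-m'}\sum_i c_i\, d_\tau^{(i)}(\tau^{m'}(p_1),\tau^{m'}(p_2))$ on the right, and \emph{then} let $m'\to\infty$. This second limit is precisely the definition of $d_\infty^{(i)}(\pi(p_1),\pi(p_2))$, and it is what converts $d_\tau^{(i)}$ into $d_\infty^{(i)}$. The lower bound goes the same way; the error term is $2N'C[f]/\lambda^{m+m'}$ (or, more carefully, involves $C[f\circ\tau^m]\le \lambda^m(\mathrm{Lip}(f)C'+C[f])$), which vanishes after dividing by $\lambda^{m+m'}$ and sending $m'\to\infty$.
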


\begin{proof}
Let $[p_1, p_2]$ be an interval in~$\mathcal T$ and $N(p_1,p_2)$ the number of vertices in $(p_1, p_2)$.
Suppose $\pi(p_1) = \pi(p_2)$, i.e.~$d_\infty(\pi(p_1), \pi(p_2)) = 0$.
Since~$f$ is a metric map, we get
\[ 0 \le \lambda^{-m} \delta(f(\tau^m(p_1)), f(\tau^m(p_2))) \le \lambda^{-m} d_\tau(\tau^m(p_1), \tau^m(p_2)), \]
and the limit of the middle term (as $m \to \infty$) is~$0$.
So we may assume $d_\infty(\pi(p_1), \pi(p_2)) > 0$.
For a given $m' \ge 0$, let $[\tau^{m'}(p_1), \tau^{m'}(p_2)]$ be a concatenation of $N'+1$ leaf segments $[q_j, q_{j+1}]_{j=0}^{N'}$ (of $\Lambda_{i(j)}^+ \subset \mathcal L^+[\tau]$) for some nonegative $N' \le N(p_1, p_2)$ and $i(j) \in \{1, \ldots, k\}$, 
where $q_0 = \tau^{m'}(p_1)$ and $q_{N'+1} = \tau^{m'}(p_2)$.
Then, by Claim~\ref{claim-leafconst},
\[\begin{aligned} 
\limsup_{m \to \infty} &\frac{\delta(f(\tau^{m+m'}(p_1)), f(\tau^{m+m'}(p_2)))}{\lambda^m} \le \lim_{m \to \infty} \sum_{j=0}^{N'}\frac{\delta(f(\tau^m(q_j)),f(\tau^m(q_{j+1})))}{\lambda^m} \\
 & = \sum_{j=0}^{N'} c_{i(j)} d_\infty(\pi(q_j), \pi(q_{j+1}))  = \sum_{i=1}^k c_i d_\tau^{(i)}(\tau^{m'}(p_1), \tau^{m'}(p_2)),
\end{aligned}\]
where the last equality comes from $d_\infty(\pi(q_j), \pi(q_{j+1})) = d_\tau^{(i(j))}(q_j, q_{j+1})$ since $[q_j, q_{j+1}]$ is a leaf segment.
Divide by~$\lambda^{m'}$, let $m' \to \infty$, and invoke the definition of~$d_\infty^{(i)}$ to get
\[\begin{aligned} 
\limsup_{m + m' \to \infty} &\frac{\delta(f(\tau^{m+m'}(p_1)), f(\tau^{m+m'}(p_2)))}{\lambda^{m+m'}} \le \sum_{i=1}^k c_i d_\infty^{(i)}(\pi(p_1), \pi(p_2)).
\end{aligned}\]
Using bounded cancellation, we get a lower bound:
\[\delta(f(\tau^{m+m'}(p_1)), f(\tau^{m+m'}(p_2))) \ge  \sum_{j=0}^{N'}\delta(f(\tau^m(q_j)),f(\tau^m(q_{j+1})))  - 2N' C[f], \]
which, after dividing by $\lambda^{m+m'}$ and letting $m \to \infty$ then $m' \to \infty$, leads to 
\[ \liminf_{m+m' \to \infty} \frac{\delta(f(\tau^{m+m'}(p_1)), f(\tau^{m+m'}(p_2)))}{\lambda^{m+m'}} \ge \sum_{i=1}^k c_i d_\infty^{(i)}(\pi(p_1), \pi(p_2)). \qedhere \] 
\end{proof}

Like in our construction of limit forests (Section~\ref{SubsecLimittrees}), let~$\delta_m^*$ be the pullback of~$\lambda^{-m}\delta$ via $f \circ \tau^m$ for $m \ge 0$.
Then~$\delta_m^*$ is an $\mathcal F$-invariant pseudometric on~$\mathcal T$ whose associated metric space is equivariantly isometric to $(\mathcal Y\psi^m, \lambda^{-m}\delta)$.
By Claim~\ref{claim-uniformconst}, the (pointwise) limit $\underset{m \to \infty} \lim \delta_m^*$ is the pullback of $\oplus_{i=1}^k c_i \, d_\infty^{(i)}$ via~$\pi$.
In other words, the sequence $(\mathcal Y\psi^m, \lambda^{-m} \delta)_{m \ge 0}$ converges to $(\mathcal Y_\tau, \oplus_{i=1}^k c_i \, d_\infty^{(i)})$ and we are done:

\begin{lem}[{cf.~\cite[Lemma~3.4]{BFH97}}]\label{lem-convergence}
Let~$\psi\colon \mathcal F \to \mathcal F$ be an automorphism,~$\tau \colon \mathcal T \to \mathcal T$ an expanding irreducible train track for~$\psi$, $(\mathcal Y_\tau, d_\infty)$ the limit forest for~$[\tau]$, and $\lambda \defeq \lambda[\tau]$.

If $(\mathcal T, d_\tau) \to (\mathcal Y, \delta)$ is an equivariant PL-map and the $k$-component lamination~$\mathcal L^+[\tau]$ is in $\mathbb R(\mathcal Y, \delta) \subset \mathbb R(\mathcal T)$, then the sequence $(\mathcal Y \psi^{mk}, \lambda^{-mk} \delta)_{m \ge 0}$ converges to $(\mathcal Y_\tau, \oplus_{i=1}^k c_i \, d_\infty^{(i)})$, where $d_\infty = \oplus_{i=1}^k \, d_\infty^{(i)}$ and $c_i > 0$. \qed
\end{lem}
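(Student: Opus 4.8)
\textbf{Proof proposal for Lemma~\ref{lem-convergence}.}

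The plan is to prove convergence via a two-stage claim structure, exactly mirroring the progression in Levitt--Lustig's Lemmas~7.1 and~7.2. The key object throughout is the pullback pseudometric $\delta_m^*$ on $\mathcal T$ obtained by pulling back $\lambda^{-m}\delta$ along $f \circ \tau^m$; its associated metric space is equivariantly isometric to $(\mathcal Y\psi^m, \lambda^{-m}\delta)$, so it suffices to show $\delta_m^*$ converges pointwise to the pullback via $\pi$ of a factored pseudometric $\oplus_{i=1}^k c_i\, d_\infty^{(i)}$ with each $c_i>0$. Since translation distances are determined by distances between points on axes, pointwise convergence of the pseudometrics gives convergence of the translation distance functions in $\mathbb R_{\ge 0}^{\mathcal F}$ with the product topology, which is the asserted mode of convergence.

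First I would establish the \emph{leaf-segment constant}: for each component $\Lambda_i^+ \subset \mathcal L^+[\tau]$ there is a single constant $c_i > 0$ such that $\lambda^{-mk}\delta(f(\tau^{mk}(p)), f(\tau^{mk}(q))) \to c_i\, d_\infty(\pi(p),\pi(q))$ for every leaf segment $[p,q]$ of $\Lambda_i^+$. The mechanism is Perron--Frobenius theory applied to the transition matrix $A[\tau]$: for a leaf segment with endpoints at vertices, the edge-occurrence vector $v^{(mk)} = A^{mk} v$ has positive entries confined to a single primitive block $\mathcal B_i$, and $v_e^{(mk)}/(\lambda^{mk}\langle \nu^L, v\rangle) \to \nu_e^R$. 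One writes the interval $[\tau^{(m_\epsilon+m)k}(p),\tau^{(m_\epsilon+m)k}(q)]$ as a union of $v_e^{(mk)}$ translates of $\tau^{m_\epsilon k}(e)$, sandwiches $\delta$ of its $f$-image between sums $\sum v_e^{(mk)}(\delta_e(m_\epsilon) - 2C[f])$ and $\sum v_e^{(mk)}\delta_e(m_\epsilon)$ using bounded cancellation (Lemma~\ref{lem-bcl}), divides by $\lambda^{(m_\epsilon+m)k}d_\infty(\pi(p),\pi(q))$, and lets $m\to\infty$. The hypothesis $\mathcal L^+[\tau] \subset \mathbb R(\mathcal Y,\delta)$ is exactly what guarantees $\delta_e(m_\epsilon) = \delta(f(\tau^{m_\epsilon k}(p_e)), f(\tau^{m_\epsilon k}(q_e)))$ can be made arbitrarily large — each edge lies on a leaf, so its iterates stretch unboundedly under $f$. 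Positivity of $c_i$ comes from the lower bound combined with $\langle \nu^L,\nu^R\rangle$ being finite; then a standard approximation by shortest enclosing edge-paths removes the vertex restriction on endpoints.

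Second, I would upgrade this to arbitrary intervals $[p,q] \subset \mathcal T$: here one uses the factorization $d_\infty = \oplus_{i=1}^k d_\infty^{(i)}$ and the factored metric $d_\tau = \oplus_{i=1}^k d_\tau^{(i)}$ on $\mathcal T$. Writing $[\tau^{m'}(p_1),\tau^{m'}(p_2)]$ as a concatenation of at most $N(p_1,p_2)+1$ leaf segments (possible since $\tau$ is a train track, with $N$ the number of vertices in the open interval, which is $\tau$-invariant up to the bound), one applies Claim~\ref{claim-leafconst} to each piece to get the upper bound $\sum_i c_i\, d_\infty^{(i)}(\pi(p_1),\pi(p_2))$ after letting first $m\to\infty$ then $m'\to\infty$, using $d_\infty(\pi(q_j),\pi(q_{j+1})) = d_\tau^{(i(j))}(q_j,q_{j+1})$ on leaf segments and the definition of $d_\infty^{(i)}$ as a limit. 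The matching lower bound uses bounded cancellation once more: $\delta(f(\tau^{m+m'}(p_1)),f(\tau^{m+m'}(p_2))) \ge \sum_j \delta(f(\tau^m(q_j)),f(\tau^m(q_{j+1}))) - 2N'C[f]$, and the $-2N'C[f]$ error is swallowed after dividing by $\lambda^{m+m'}$. When $\pi(p_1)=\pi(p_2)$ the metric-map property forces the limit to be $0$, consistent with the formula. The main obstacle I anticipate is bookkeeping in Claim~\ref{claim-leafconst}: making the $\epsilon$-dependent choice of $m_\epsilon$ uniform over all (finitely many orbits of) edges, and carefully tracking that the sandwiching inequalities survive the double limit $m,m'\to\infty$ in the right order — the interchange of limits is where the argument is genuinely delicate, and it rests entirely on the uniform cancellation constant $C[f]$ and the metric-map normalization $\lambda^{-m_\epsilon k}\delta_e(m_\epsilon) \le \nu_e^L$ keeping all the bounds finite. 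Everything else is a direct transcription of the Perron--Frobenius estimate into the tree setting.
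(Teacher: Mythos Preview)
Your proposal is correct and follows essentially the same approach as the paper's proof: the two-claim structure (leaf-segment constant via Perron--Frobenius and bounded cancellation, then extension to arbitrary intervals by decomposing into boundedly many leaf segments), the pullback-pseudometric framing, and the identification of where the hypothesis $\mathcal L^+[\tau] \subset \mathbb R(\mathcal Y,\delta)$ enters all match the paper's argument in Section~\ref{SubsecConverge} exactly.
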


\subsection{Proof of Lemma~\ref{lem-convergence2}}\label{SubsecConverge2}

Fix an automorphism $\psi\colon \mathcal F \to \mathcal F$ with an invariant proper free factor system~$\mathcal Z'$ and a descending sequence of irreducible train tracks $\left(\tau_i \colon \mathcal T_i \to \mathcal T_i\right)_{i=1}^n$ rel.~$\mathcal Z'$ with $\lambda \defeq \lambda[\tau_n] > 1$.
Let $\mathcal L_{\mathcal Z}^+[\psi] \subset \mathbb R(\mathcal F, \mathcal Z)$ be the $k$-component stable laminations for~$[\psi]$ rel.~$\mathcal Z \defeq \mathcal F[\mathcal T^\circ]$,~$\mathcal T^\circ$ an equivariant blow-up of the free splittings~$(\mathcal T_i)_{i=1}^n$, $\tau^\circ \colon \mathcal T^\circ \to \mathcal T^\circ$ a topological representative for~$[\psi]$ induced by $[\tau_i]_{i=1}^n$,~$d^\circ$ an $\mathcal F$-invariant convex metric on~$\mathcal T^\circ$ that extends~$d_n$ on~$\mathcal T_n$ such that~$\tau^\circ$ is $\lambda$-Lipschitz on $(\mathcal T^\circ, d^\circ)$, and $\pi^\circ \colon (\mathcal T^\circ, d^\circ) \to (\mathcal Y, \delta)$ the equivariant metric map to a limit forest constructed using $\tau^\circ$-iteration.
We denote by~$d_n$ again the $\mathcal F$-invariant convex pseudometric on~$\mathcal T^\circ$ that extends~$d_n$ on~$\mathcal T_n$.
Recall that the components $\Lambda_j^+ \subset \mathcal L_{\mathcal Z}^+[\psi]$ index the factorizations $d_n = \oplus_{j=1}^k d_n^{(j)}$ and $\delta = \oplus_{j=1}^k \delta_j$.
For convenience, set $\mathcal F_1 \defeq \mathcal F$ and $\mathcal F_{i+1} \defeq \mathcal F[\mathcal T_i]$, then replace~$\psi$ with~$\psi^k$,~$\tau^\circ$ with~$\tau^{\circ k}$, and~$\lambda$ with~$\lambda^k$.

Suppose $(\mathcal Y', \delta')$ is a minimal $\mathcal F$-forest with trivial arc stabilizers,~$\mathcal Z$ is $\mathcal Y'$-elliptic, and~$\mathcal L_{\mathcal Z}^+[\psi]$ is in $\mathbb R(\mathcal Y', \delta') \subset \mathbb R(\mathcal F, \mathcal Z)$. 
Let $(\mathcal Y_n', \delta')$ be the characteristic subforest of $(\mathcal Y', \delta')$ for~$\mathcal F_n$
and $f_n \colon (\mathcal T_n, d_n) \to (\mathcal Y_n', \delta')$ an equivariant PL-map.
Extend~$f_n$ to an equivariant PL-map
$f \colon (\mathcal T^\circ, d^\circ) \to (\mathcal Y', \delta')$. 
By Claim~\ref{claim-leafconst}, we can set $(c_j)_{j=1}^k \defeq c(\mathcal Y_n', \delta') > 0$.

\begin{claim} \label{claim-uniformconst2}
For any $p_1, p_2 \in \mathcal T^\circ$,
\[ \lim_{m \to \infty} \lambda^{-m} \delta'(f(\tau^{\circ m}(p_1)), f(\tau^{\circ m}(p_2))) = \sum_{j=1}^k c_j \, \delta_j(\pi^\circ(p_1), \pi^\circ(p_2)). \]
\end{claim}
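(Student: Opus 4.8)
The plan is to mimic the proof of Claim~\ref{claim-uniformconst}, but with two modifications forced by the blow-up structure of~$\mathcal T^\circ$. First I would reduce to the case where~$\pi^\circ(p_1) \ne \pi^\circ(p_2)$: if~$\pi^\circ(p_1) = \pi^\circ(p_2)$ then each~$\delta_j(\pi^\circ(p_1),\pi^\circ(p_2)) = 0$, and since~$f$ is (up to rescaling~$\delta'$) a metric map we have $0 \le \lambda^{-m}\delta'(f(\tau^{\circ m}(p_1)), f(\tau^{\circ m}(p_2))) \le \lambda^{-m} d^\circ(\tau^{\circ m}(p_1), \tau^{\circ m}(p_2))$, and the upper bound tends to~$0$ because the pullback pseudometrics~$\lambda^{-m}(\tau^{\circ m})^* d^\circ$ decrease to the pullback of~$\delta$ via~$\pi^\circ$, which assigns these two points distance~$0$ (here I use that $\pi^\circ$ semiconjugates~$\tau^\circ$ to the $\lambda$-homothety~$h$ and that $\pi^\circ(p_1)=\pi^\circ(p_2)$ is $h$-equivariantly preserved). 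So assume $\delta(\pi^\circ(p_1), \pi^\circ(p_2)) > 0$.

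Next, for each $m' \ge 0$, decompose the interval $[\tau^{\circ m'}(p_1), \tau^{\circ m'}(p_2)] \subset \mathcal T^\circ$ into finitely many pieces. By Lemma~\ref{lem-leafblocks} (applied to the blow-up~$\mathcal T^\circ$, or rather the analogous statement for the free-splitting blow-up proven in Section~\ref{SubsecLimittrees2}), this interval is a finite concatenation of leaf segments of~$\mathcal L_{\mathcal Z}^+[\psi] = \mathcal L^+[\tau_n]$ together with edges from the polynomial strata of~$\mathcal T^\circ$. The leaf-segment pieces are handled exactly by Claim~\ref{claim-leafconst} (which gives the constants~$c_j$), so each contributes $c_{j} \, \delta_j(\pi^\circ(q), \pi^\circ(q'))$ in the limit; the key point is that on a leaf segment of~$\Lambda_j^+$ one has $d_n^{(j)} = d^\circ$ restricted to that segment and $\delta_j(\pi^\circ(q),\pi^\circ(q')) = \delta(\pi^\circ(q),\pi^\circ(q'))$. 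For the polynomial-strata edges, I would show their total contribution is negligible after dividing by~$\lambda^m$: the number of such edges in $[\tau^{\circ m'}(p_1), \tau^{\circ m'}(p_2)]$ grows only polynomially in~$m'$ (the polynomial strata are simplicial automorphisms in the descending sequence), each has $d^\circ$-length bounded by a constant and $f$-image bounded by a constant, and $\pi^\circ$ collapses each such edge to a point (so it contributes~$0$ to every~$\delta_j$). Hence, combining Claim~\ref{claim-leafconst} over the leaf-segment pieces via the triangle inequality for the upper bound and bounded cancellation (Lemma~\ref{lem-bcl}, with constant $C[f]$) for the lower bound, dividing by $\lambda^{m'}$, and letting first $m \to \infty$ then $m' \to \infty$ — using the definition of the factors~$\delta_j$ as limits of $\lambda^{-m'}d_n^{(j)}$ pushed forward, together with $\pi^\circ$-invariance of the polynomial contributions — yields
\[ \limsup_{m \to \infty} \lambda^{-m} \delta'(f(\tau^{\circ m}(p_1)), f(\tau^{\circ m}(p_2))) \le \sum_{j=1}^k c_j \, \delta_j(\pi^\circ(p_1), \pi^\circ(p_2)) \le \liminf_{m \to \infty} \lambda^{-m} \delta'(f(\tau^{\circ m}(p_1)), f(\tau^{\circ m}(p_2))), \]
which is the claim.

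The main obstacle I anticipate is bookkeeping the polynomial strata: unlike in Claim~\ref{claim-uniformconst}, the interval $[\tau^{\circ m'}(p_1),\tau^{\circ m'}(p_2)]$ is no longer a bounded concatenation of leaf segments — the number of polynomial-strata edges it meets grows with~$m'$. I need the precise estimate that this growth is polynomial in~$m'$ while the leaf-segment contributions grow like~$\lambda^{m'}$, so that after normalizing by $\lambda^{m'+m}$ the polynomial part vanishes in the double limit. This is exactly the kind of stratum-by-stratum growth control established in the proof of Theorem~\ref{thm-limitturns} (the $\alpha(m), \beta(m)$ polynomial covering bounds), and I would import that reasoning. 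A secondary subtlety is ensuring the constants $c(\mathcal Y_n', \delta')$ from Claim~\ref{claim-leafconst} — which are defined using the limit forest~$\mathcal Y_\tau = \mathcal Y_n$ for~$[\tau_n]$ and the map $\pi = \pi_n^\circ$ onto it — match up correctly with the identification $\pi^\circ$ restricted to~$\mathcal T_n$, which is immediate since $\pi^\circ|_{\mathcal T_n}$ is the constructed metric PL-map to~$(\mathcal Y_n, \delta)$ and leaf segments of~$\mathcal L_{\mathcal Z}^+[\psi]$ lie in~$\mathcal T_n^\circ = \mathcal T_n$.
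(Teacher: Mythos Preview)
Your proposal is correct and follows essentially the same strategy as the paper: decompose $[\tau^{\circ m'}(p_1),\tau^{\circ m'}(p_2)]$ into exponential-stratum pieces plus polynomially many polynomial-stratum edges, handle the former via the single-train-track case, show the latter vanish after dividing by $\lambda^{m+m'}$, and pass to the double limit using the triangle inequality (upper bound) and bounded cancellation (lower bound). The only organizational difference is that the paper groups the exponential part into maximal $(\mathcal F\cdot\mathcal T_n)$-segments and invokes Claim~\ref{claim-uniformconst} on each, whereas you decompose one level finer into individual leaf segments and invoke Claim~\ref{claim-leafconst}; both work, though the paper's packaging reuses the previous claim more directly and keeps the piece count at level~$m'$ equal to~$\alpha(m')$ rather than the (still polynomial) total leaf-segment count.
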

\begin{proof}
Let $[p_1, p_2]$ be an interval in~$\mathcal T^\circ$ and assume $\delta(\pi^\circ(p_1), \pi^\circ(p_2)) > 0$ without loss of generality.
Given Claim~\ref{claim-uniformconst}, we may assume $n \ge 2$.
For $m' \ge 0$, the interval $[\tau^{\circ m'}(p_1), \tau^{\circ m'}(p_2)]$ is a concatenation of $\alpha(m')$ segments that are in~$\mathcal F \cdot \mathcal T_n$ or edges from $\mathcal T_i~(i > 1)$, where $\alpha(m')$ is bounded by a polynomial in~$m'$ of degree $ \le n-2$.
Set~$M$ to be the length of the longest edge from $\mathcal T_i~(i > 1)$ in $(\mathcal T^\circ, d^\circ)$.
For $m' \gg 0$, let $[q_{m',l}, q_{m',l+1}]_{l=0}^{N(m')}$ be the nondegenerate $(\mathcal F \cdot \mathcal T_n)$-segments.
As~$\tau^\circ$ and~$f$ are $\lambda$- and $L$-Lipschitz  respectively,
\[\begin{aligned} 
\delta'&(f(\tau^{\circ (m+m')}(p_1)), f(\tau^{\circ (m+m')}(p_2)) \\ 
&\le \sum_{l=0}^{N(m')} \delta'(f_n(\tau_n^{m}(q_{m',l})), f_n(\tau_n^{m}(q_{m',l+1}))) +  \alpha(m')\lambda^{m}LM.
\end{aligned}\]
Divide by $\lambda^{m+m'}$, let $m \to \infty$ then $m' \to \infty$, and invoke Claim~\ref{claim-uniformconst} and definition of~$\delta_j$:
\[\begin{aligned} 
&\limsup_{m + m' \to \infty} \frac{\delta'(f(\tau^{\circ (m+m')}(p_1)), f(\tau^{\circ{m+m'}}(p_2)))}{\lambda^{m+m'}} \\
&\le \lim_{m' \to \infty} \sum_{l = 0}^{N(m')} \sum_{j = 1}^k \frac{ c_j \delta_j(\pi^\circ(q_{m',l}), \pi^\circ(q_{m',l+1}))}{ \lambda^{m'} }  \\
&\le \lim_{m' \to \infty} \sum_{j = 1}^k \frac{ c_j d_n^{(j)}(\tau^{\circ m'}(p_1), \tau^{\circ m'}(p_2)) }{ \lambda^{m'} } 
= \sum_{j=1}^k c_j \delta_j(\pi^\circ(p_1), \pi^\circ(p_2)),
\end{aligned}\]
using the fact~$\pi^\circ$ is a metric map.
The intervals $[\pi^\circ(q_{m',l}),\pi^\circ(q_{m',l+1})]$ contribute at least \[ \lambda^{m'} \delta_j(\pi^\circ(p_1), \pi^\circ(p_2)) -\alpha(m')\left( M + 2C[\pi^\circ] \right)\]
to the $\delta_j$-length of $[\pi^\circ(\tau^{\circ m'}(p_1)), \pi^\circ(\tau^{\circ m'}(p_2))]$.
As before, bounded cancellation gives us:
\[ \begin{aligned}
 \delta'&(f(\tau^{\circ (m+m')}(p_1)), f(\tau^{\circ(m+m')}(p_2)))\\
&\ge \sum_{l=0}^{N(m')}\delta'(f_n(\tau_n^{m}(q_{m',l})), f_n(\tau_n^{m}(q_{m',l+1}))) - 2\, \alpha(m') C[f].
\end{aligned}\]
Divide by $\lambda^{m+m'}$ and let $m \to \infty$ then $m' \to \infty$ yields:
\[\begin{aligned}
&\liminf_{m + m' \to \infty} \frac{\delta'(f(\tau^{\circ (m+m')}(p_1)), f(\tau^{\circ{m+m'}}(p_2)))}{\lambda^{m+m'}} \\
&\ge \lim_{m' \to \infty} \sum_{j = 1}^k c_j \sum_{l = 0}^{N(m')}  \frac{ \delta_j(\pi^\circ(q_{m',l}), \pi^\circ(q_{m',l+1}))}{ \lambda^{m'} }
\ge \sum_{j=1}^k c_j \delta_j(\pi^\circ(p_1), \pi^\circ(p_2)),
\end{aligned}\]
where the last inequality comes from the contribution inequality above.
\end{proof}

The rest of the argument is the same as in the previous section.
Let~$\delta_m^*$ be pullback of $\lambda^{-m} \delta'$ via $f \circ \tau^{\circ m}$ for $m \ge 0$.
By Claim~\ref{claim-uniformconst2}, the limit $\underset{m \to \infty} \lim \delta_m^*$ is the pullback of $\oplus_{j=1}^k c_j \, \delta_j$ via~$\pi^\circ$ and we are done:

\begin{lem}\label{lem-convergence2}
Let~$\psi\colon \mathcal F \to \mathcal F$ be an automorphism, $\mathcal Z'$ a $[\psi]$-invariant proper free factor system, $\left(\tau_i \colon \mathcal T_i \to \mathcal T_i\right)_{i=1}^n$ a descending sequence of irreducible train tracks for~$[\psi]$ rel.~$\mathcal Z'$ with $\lambda \defeq \lambda[\tau_n] > 1$, $(\mathcal Y, \delta)$ the limit forest for~$[\tau_i]_{i=1}^n$, $(\mathcal Y', \delta')$ a minimal $\mathcal F$-forest with trivial arc stabilizers, and $\mathcal Z \defeq \mathcal F[\mathcal T_n]$.

If~$\mathcal Z$ is $\mathcal Y'$-elliptic and the $k$-component lamination~$\mathcal L_{\mathcal Z}^+[\psi]$ is in $\mathbb R(\mathcal Y', \delta') \subset \mathbb R(\mathcal F, \mathcal Z)$, then the limit of $(\mathcal Y' \psi^{mk}, \lambda^{-mk} \delta')_{m \ge 0}$ is $(\mathcal Y, \oplus_{j=1}^k c_j \, \delta_j)$, where  $\delta = \oplus_{j=1}^k \, \delta_j$ and $c_j > 0$. \qed
\end{lem}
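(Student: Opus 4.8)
The plan is to run the argument of Section~\ref{SubsecConverge} one level higher: replace the single expanding train track $\tau$ by a blow-up $\tau^\circ$ of the whole descending sequence $(\tau_i)_{i=1}^n$, and replace the limit map $\pi$ by the map $\pi^\circ\colon(\mathcal T^\circ, d^\circ)\to(\mathcal Y,\delta)$ obtained by iterating $\tau^\circ$. So first I would fix an equivariant blow-up $\mathcal T^\circ$ of the free splittings $(\mathcal T_i)_{i=1}^n$, the induced $\psi$-equivariant $\lambda$-Lipschitz topological representative $\tau^\circ$, and the limit map $\pi^\circ$; recall that $\pi^\circ$ has a cancellation constant $C[\pi^\circ]$ as a limit of equivariant metric maps with uniformly bounded cancellation constants. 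Since $\mathcal Z\defeq\mathcal F[\mathcal T_n]$ is $\mathcal Y'$-elliptic, there is an equivariant PL-map $f_n\colon(\mathcal T_n, d_n)\to(\mathcal Y_n',\delta')$ onto the characteristic subforest of $(\mathcal Y',\delta')$ for $\mathcal F_n\defeq\mathcal F[\mathcal T_{n-1}]$, which I extend to an equivariant PL-map $f\colon(\mathcal T^\circ, d^\circ)\to(\mathcal Y',\delta')$.

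Because $\mathcal L_{\mathcal Z}^+[\psi]$ lies in $\mathbb R(\mathcal Y',\delta')$, the restriction of $f$ to the exponential stratum $\mathcal T_n$ satisfies the hypotheses of Claim~\ref{claim-leafconst}, so there are positive constants $(c_j)_{j=1}^k$ indexed by the components $\Lambda_j^+\subset\mathcal L_{\mathcal Z}^+[\psi]$ with $\lambda^{-mk}\delta'(f(\tau_n^{mk}(p)),f(\tau_n^{mk}(q)))\to c_j\,\delta_j(\pi^\circ(p),\pi^\circ(q))$ along leaf segments $[p,q]$ of $\Lambda_j^+$; as before $(c_j)$ depends only on $(\mathcal Y_n',\delta')$ since any two PL-maps are at bounded $\delta'$-distance. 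The core step is then the uniform constant estimate (Claim~\ref{claim-uniformconst2}): for all $p_1,p_2\in\mathcal T^\circ$,
\[ \lim_{m\to\infty}\lambda^{-m}\delta'(f(\tau^{\circ m}(p_1)),f(\tau^{\circ m}(p_2)))=\sum_{j=1}^k c_j\,\delta_j(\pi^\circ(p_1),\pi^\circ(p_2)). \]
After passing to $\psi^k$, $\tau^{\circ k}$, $\lambda^k$, I would mimic the proof of Claim~\ref{claim-uniformconst}: for each $m'$ decompose $[\tau^{\circ m'}(p_1),\tau^{\circ m'}(p_2)]$ into $\alpha(m')$ pieces, each either a nondegenerate segment in a translate of $\mathcal T_n$ or a single edge from a polynomial stratum $\mathcal T_i$ $(i<n)$, where $\alpha$ is a polynomial in $m'$ of degree $\le n-2$ because only $\tau_n$ is expanding. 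The upper bound comes from the $\lambda$- and Lipschitz-constant bounds on the polynomial-stratum edges together with Claim~\ref{claim-uniformconst} and the definition of $\delta_j$ on the $\mathcal T_n$-pieces; dividing by $\lambda^{m+m'}$ and letting $m\to\infty$ then $m'\to\infty$ kills the polynomial error, since a term of size $\alpha(m')\lambda^{m}LM$ becomes $\alpha(m')LM\,\lambda^{-m'}\to0$. The lower bound uses bounded cancellation for $f$ (losing $2\alpha(m')C[f]$) together with a parallel estimate involving $C[\pi^\circ]$ that shows the $\mathcal T_n$-pieces already realize $\lambda^{m'}\delta_j(\pi^\circ(p_1),\pi^\circ(p_2))$ up to an $\alpha(m')$-bounded error; the same double limit recovers the lower bound.

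Finally, exactly as in Section~\ref{SubsecConverge}, I would let $\delta_m^*$ be the pullback of $\lambda^{-m}\delta'$ along $f\circ\tau^{\circ m}$ --- an $\mathcal F$-invariant pseudometric on $\mathcal T^\circ$ whose associated metric space is equivariantly isometric to $(\mathcal Y'\psi^m,\lambda^{-m}\delta')$ --- and observe that the uniform constant estimate says $\delta_m^*$ converges pointwise to the pullback of $\oplus_{j=1}^k c_j\,\delta_j$ along $\pi^\circ$; hence $(\mathcal Y'\psi^{mk},\lambda^{-mk}\delta')$ converges to $(\mathcal Y,\oplus_{j=1}^k c_j\,\delta_j)$ in the space of translation distance functions, which is the assertion. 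The main obstacle is the bookkeeping in the uniform constant estimate: one must simultaneously control the polynomially growing count $\alpha(m')$ of stratum edges so that both their total $\delta'$-contribution and the cancellation errors $\alpha(m')C[f]$ and $\alpha(m')C[\pi^\circ]$ are washed out after dividing by $\lambda^{m+m'}$ --- using precisely the exponential-versus-polynomial gap guaranteed by $\lambda>1$ --- while keeping in mind that $\pi^\circ$ is only a metric map with a cancellation constant, not a genuine PL-map.
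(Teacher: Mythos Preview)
Your proposal is correct and follows essentially the same route as the paper's proof in Section~\ref{SubsecConverge2}: the same blow-up $\tau^\circ$ and limit map $\pi^\circ$, the same reduction to Claim~\ref{claim-leafconst} on the exponential stratum to obtain the constants $(c_j)$, the same decomposition of $[\tau^{\circ m'}(p_1),\tau^{\circ m'}(p_2)]$ into $\alpha(m')$ polynomial-stratum edges and $\mathcal T_n$-segments (with $\alpha$ of degree $\le n-2$), the same double-limit $m\to\infty$ then $m'\to\infty$ for both bounds using Claim~\ref{claim-uniformconst} on the $\mathcal T_n$-pieces and bounded cancellation for $f$ and $\pi^\circ$, and the same pullback conclusion. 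Your identification of the main bookkeeping obstacle---controlling the $\alpha(m')$-many error terms via the exponential-versus-polynomial gap---is exactly what the paper does.
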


\subsection{Sketch of Lemma~\ref{lem-convergence3}}\label{SubsecConverge3}

Fix an automorphism $\psi \colon \mathcal F \to \mathcal F$.
Let $(\tau_i \colon \mathcal T_i \to \mathcal T_i)_{i=1}^n$ be a descending sequence of irreducible train tracks for~$[\psi]$, $\mathcal Z \defeq \mathcal F[\mathcal T_n]$,  $\mathcal L_{\mathcal Z}^+[\psi]$ the stable lamination for~$[\psi]$ in~$\mathbb R(\mathcal F, \mathcal Z)$, $(\mathcal Y_1, \delta_1)$ the limit forest for~$[\psi]$ rel.~$\mathcal Z$, $\mathcal G \defeq \mathcal G[\mathcal Y_1]$, $[\psi_{\mathcal G}]$ the restriction of~$[\psi]$ to~$\mathcal G$, $(\mathcal Y_{\mathcal G}, \delta)$ a minimal $\mathcal G$-forest with trivial arc stabilizers, and $h_{\mathcal G} \colon (\mathcal Y_{\mathcal G}, \delta) \to (\mathcal Y_{\mathcal G}, \delta)$ a $\psi_{\mathcal G}$-equivariant $\lambda$-homothety.
Construct the equivariant psuedoforest blow-up $(\mathcal Y_1^*, (\delta_1, \delta))$ of $(\mathcal Y_1, \delta_1)$ rel. $(\mathcal Y_{\mathcal G}, \delta)$ and expanding homotheties representing $[\psi]$ and $[\psi_{\mathcal G}]$.
For this section, we will assume assume $\mathcal L_{\mathcal Z}^+[\psi]$ and~$\delta$ are independent:
the pseudoleaf segments for~$\mathcal L_{\mathcal Z}^+[\psi]$ in~$\mathcal Y_1^*$ have 0 $\delta$-diameter intersections with~$\mathcal Y_{\mathcal G}$.
Set $\mathcal F_n \defeq \mathcal F[\mathcal T_{n-1}]$ and $[\psi_n]$ to be the restriction of~$[\psi]$ to~$\mathcal F_n$;
the characteristic convex subset $\mathcal Y_1^*(\mathcal F_n) \subset \mathcal Y_1^*$ has a graph of actions decomposition with vertex forests $\widehat{\mathcal Y}_{\mathcal G}$ and the overlapping classes for~$\mathcal L_{\mathcal Z}^+[\psi]$.

Let the minimal simplicial $\mathcal F_n$-forest $\mathcal S$ be the skeleton for the graph of actions for~$\mathcal L_{\mathcal Z}^+[\psi]$ and~$\delta$.
By construction, there is a $\psi_n$-equivariant simplicial automorphism $\sigma \colon \mathcal S \to \mathcal S$.
The lower-support~$\operatorname{\underline{supp}}[\psi; {\mathcal Z}]$ of~$\mathcal L_{\mathcal Z}^+[\psi]$ is given by stabilizers of vertices in~$\mathcal S$ corresponding to overlapping classes.
Construct the equivariant blow-up~$\mathcal T^\diamond$ of $(\mathcal T_i)$, $\mathcal S$, and $\mathcal Y_{\mathcal G}$;
then extend the metric~$\delta$ to an $\mathcal F$-invariant convex metric~$d \oplus \delta$ on~$\mathcal T^\diamond$ so that the $\psi$-equivariant map $\tau_c^\diamond \colon (\mathcal T^\diamond, (c \, d) \oplus \delta) \to (\mathcal T^\diamond, (c \, d) \oplus \delta)$ induced by $[\tau_i]_{i=1}^{n-1}$, $\sigma$ and linearly extending~$h_{\mathcal G}$ is $\lambda$-Lipschitz for any parameter $c \gg 1$.
Let $d_c^\diamond \defeq (c \, d) \oplus \delta$;
for $c \gg 1$, construct using $\tau_c^\diamond$-iteration an equivariant metric surjection $\pi_c^\diamond \colon (\mathcal T^\diamond, d_c^\diamond) \to (\mathcal X, \delta)$ that extends the identification of $(\mathcal Y_{\mathcal G}, \delta)$ and semiconjugates~$\tau_c^\diamond$ to a $\psi$-equivariant $\lambda$-homothety on $(\mathcal X, \delta)$.

Suppose $(\mathcal Y', \delta')$ is a minimal $\mathcal F$-forest with trivial arc stabilizers and whose characteristic subforest for~$\mathcal G$ is equivariantly isometric to $(\mathcal Y_{\mathcal G}, \delta)$.
So if we also assume $\operatorname{\underline{supp}}[\psi; {\mathcal Z}]$ is $\mathcal Y'$-elliptic, then there is an equivariant map $f_c \colon (\mathcal T^\diamond, d_c^\diamond) \to (\mathcal Y', \delta')$ that linearly extends the identification of $(\mathcal Y_{\mathcal G}, \delta)$;
this is necessarily a Lipschitz map.
Pick any free splitting~$\mathcal T$ of~$\mathcal F$ with trivial $\mathcal F[\mathcal T]$.
Then any equivariant PL-map $\mathcal T \to \mathcal T^\diamond$ is surjective (by minimality) and composes with~$f_c$ to give (up to an equivariant homotopy rel.~the vertices) an equivariant PL-map with a cancellation constant.
So~$f_c$ must have a cancellation constant.
The proof of the next claim is a variation of Claim~\ref{claim-uniformconst2}'s proof:

\begin{claim} \label{claim-uniformconst3}
For any $p_1, p_2 \in \mathcal T^\diamond$,
\[ \lim_{m \to \infty} \lambda^{-m} \delta'(f_c(\tau_c^{\diamond m}(p_1)), f_c(\tau_c^{\diamond m}(p_2))) = \delta(\pi_c^\diamond(p_1), \pi_c^\diamond(p_2)). \]
\end{claim}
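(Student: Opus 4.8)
\textbf{Proof proposal for Claim~\ref{claim-uniformconst3}.}

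The plan is to mimic the two-stage argument used for Claim~\ref{claim-uniformconst2}: first handle the ``leaf-block'' pieces via the constants produced by Claim~\ref{claim-leafconst} (applied to the stable lamination of $[\tau_n]$), then control the contribution of the finitely many polynomial strata and the vertex forests coming from $\mathcal Y_{\mathcal G}$ via the cancellation constants of $f_c$, $\tau_c^\diamond$, and $\pi_c^\diamond$. A point of contrast with Claim~\ref{claim-uniformconst2}: here the relevant ``base'' pseudometric on $\mathcal T^\diamond$ is not $d_n$ but the convex metric $d_c^\diamond = (c\, d)\oplus\delta$, and the target metric on $\mathcal X$ is $\delta$, which is genuinely a metric (not merely a factored pseudometric), because the overlapping classes for $\mathcal L_{\mathcal Z}^+[\psi]$ are glued into $\mathcal Y_{\mathcal G}$ rather than sitting in a mutually singular factor. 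So instead of tracking a sequence $(c_j)$ of block-constants, the leaf-block pieces will already come with the correct scaling built into $\pi_c^\diamond$: since $\pi_c^\diamond$ is constructed by $\tau_c^\diamond$-iteration, it plays the role of the map $\pi$ in Section~\ref{SubsecConverge}, and Claim~\ref{claim-leafconst}-type limits hold with all block-constants equal to $1$ after rescaling. I would first record this: for any pseudoleaf segment $[p,q]$ of $\mathcal L_{\mathcal T}^+[\psi]$ in a copy of $\mathcal T_n$ inside $\mathcal T^\diamond$, $\lambda^{-m}\delta'(f_c(\tau_c^{\diamond m}(p)),f_c(\tau_c^{\diamond m}(q)))\to \delta(\pi_c^\diamond(p),\pi_c^\diamond(q))$; this uses that $(\mathcal Y_{\mathcal G},\delta)$-restriction is isometric on both sides and that the leaf-segment analysis is local to a single $\mathcal T_n$-component, where Claims~\ref{claim-leafconst} and~\ref{claim-uniformconst} apply verbatim.

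Next I would treat a general interval $[p_1,p_2]\subset\mathcal T^\diamond$. The key structural input is Lemma~\ref{lem-leafblocks}: applied to the descending sequence $(\tau_i)_{i=1}^{n-1}$ together with $\sigma$, it says that the interval $[\tau_c^{\diamond m'}(p_1),\tau_c^{\diamond m'}(p_2)]$ is a concatenation of $\alpha(m')$ pieces, where $\alpha$ is a polynomial of degree $\le n-1$ and each piece is either (i) a leaf block of $\mathcal L_{\mathcal T}^+[\psi]$, (ii) an edge coming from a polynomial stratum $\mathcal T_i$ ($i<n$) or from the skeleton $\mathcal S$, or (iii) a closed interval inside a translate of $\widehat{\mathcal Y}_{\mathcal G}$. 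The $f_c$- and $\tau_c^\diamond$-images of types (ii) and (iii) are controlled: type-(ii) edges have uniformly bounded $d_c^\diamond$-length (after scaling, their contribution is $\le \alpha(m')\lambda^{m}\cdot\mathrm{const}$ and dies after dividing by $\lambda^{m+m'}$ and sending $m\to\infty$), and type-(iii) pieces are mapped isometrically by both $f_c$ (on $\mathcal Y_{\mathcal G}$) and $\pi_c^\diamond$, contributing $0$ to the $\delta$-distance on $\mathcal X$ since $\mathcal L_{\mathcal Z}^+[\psi]$ and $\delta$ are independent and the overlapping-class gluing makes these $\delta$-null in $\mathcal X$ once one passes to the relevant quotient — here I must be careful and actually it is cleaner to say: type-(iii) pieces contribute the \emph{same} amount to $\delta'(f_c(\cdot),f_c(\cdot))$ and to $\delta(\pi_c^\diamond(\cdot),\pi_c^\diamond(\cdot))$, by the assumed isometric identification of characteristic subforests for $\mathcal G$. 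Summing: the upper bound comes from the triangle inequality plus the leaf-block limit, dividing by $\lambda^{m+m'}$, letting $m\to\infty$ then $m'\to\infty$, and using that $\alpha(m')\lambda^{-m'}\to 0$; the lower bound comes from bounded cancellation for $f_c$ (each of the $\alpha(m')$ concatenation points loses at most $2\,C[f_c]$) and bounded cancellation for $\pi_c^\diamond$, again killed by $\alpha(m')\lambda^{-m'}\to 0$. This is exactly the shape of Claim~\ref{claim-uniformconst2}'s proof, with ``$d_n^{(j)}$'' replaced by the single pseudometric on $\mathcal T^\diamond$ pulled back from $\delta$ via $\pi_c^\diamond$, and with the polynomial degree bumped from $n-2$ to $n-1$ because of the extra skeleton stratum $\mathcal S$.

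The step I expect to be the main obstacle is the precise accounting of the type-(iii) vertex-forest pieces and of the skeleton edges under iteration — specifically, verifying that the closest-point-projection / overlapping-class bookkeeping from Subsections~\ref{Subsubsec-iteratedturns}--\ref{Subsubsec-nestedturns} behaves compatibly with the map $\pi_c^\diamond$, so that a piece of $[\tau_c^{\diamond m'}(p_1),\tau_c^{\diamond m'}(p_2)]$ lying in a translate of $\widehat{\mathcal Y}_{\mathcal G}$ really does map to a $\delta$-isometric piece in $\mathcal X$ and not get absorbed or duplicated by the gluing at singular eigenrays. Once that compatibility is established (it follows from the graph-of-actions description of $\mathcal Y_1^*(\mathcal F_n)$ with skeleton $\mathcal S$ and from the construction of $\pi_c^\diamond$ as a limit of $\tau_c^\diamond$-iterates, which are by design isometric on $\mathcal Y_{\mathcal G}$-copies and linear on $\mathcal S$-edges), the $\limsup$/$\liminf$ squeeze closes and the Claim follows. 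As in the earlier sections, once Claim~\ref{claim-uniformconst3} is in hand, Lemma~\ref{lem-convergence3} is immediate: the pullback of $\lambda^{-m}\delta'$ along $f_c\circ\tau_c^{\diamond m}$ converges pointwise to the pullback of $\delta$ along $\pi_c^\diamond$, which says precisely that $(\mathcal Y'\psi^m,\lambda^{-m}\delta')_{m\ge0}$ converges to $(\mathcal X,\delta)$.
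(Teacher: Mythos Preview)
Your overall strategy---decompose into polynomially many pieces, handle the vertex-forest pieces via the isometric identification, then squeeze with bounded cancellation---is correct and is what the paper does. But you have misread the structure of $\mathcal T^\diamond$, and this leads you to do unnecessary work in your first paragraph.

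The blow-up $\mathcal T^\diamond$ is built from $(\mathcal T_i)_{i=1}^{n-1}$, the skeleton $\mathcal S$, and $\mathcal Y_{\mathcal G}$; the exponential stratum $\mathcal T_n$ does \emph{not} appear. In the graph-of-actions decomposition with skeleton $\mathcal S$, the overlapping classes for $\mathcal L_{\mathcal Z}^+[\psi]$ are vertex trees with stabilizers in $\operatorname{\underline{supp}}[\psi;\mathcal Z]$, which is assumed $\mathcal Y'$-elliptic and is $\mathcal X$-elliptic by construction---so those vertices are \emph{not} blown up in $\mathcal T^\diamond$. There are therefore no ``type-(i) leaf blocks of $\mathcal L_{\mathcal T}^+[\psi]$'' in $\mathcal T^\diamond$ at all, and your invocation of Claim~\ref{claim-leafconst} is vacuous.

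This is exactly why the paper's sketch says ``the set-up is simpler'': the only non-edge pieces of $[\tau_c^{\diamond m'}(p_1),\tau_c^{\diamond m'}(p_2)]$ are segments in translates of $\mathcal Y_{\mathcal G}$, on which $\tau_c^\diamond$ restricts to the $\lambda$-homothety $h_{\mathcal G}$ and $f_c$ restricts to the given isometric identification. Hence for any $\mathcal Y_{\mathcal G}$-segment $[q,r]$ one has
\[
\lambda^{-m}\,\delta'\bigl(f_c(\tau_c^{\diamond m}(q)),\,f_c(\tau_c^{\diamond m}(r))\bigr)=\delta(q,r)=\delta(\pi_c^\diamond(q),\pi_c^\diamond(r))
\]
for \emph{every} $m$, not just in the limit; no Perron--Frobenius analysis is needed. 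The remaining $\alpha(m')$ pieces are edges from $\mathcal T_i$ ($i<n$) and from $\mathcal S$, all of bounded length, and the triangle-inequality/bounded-cancellation squeeze from Claim~\ref{claim-uniformconst2} closes as you describe. Your type-(iii) observation that $\mathcal Y_{\mathcal G}$-pieces ``contribute the same amount on both sides'' is the whole argument; your type-(i) analysis is a detour through a case that does not occur.
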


\begin{proof}[Sketch of proof]
For $m' \ge 0$, the interval $[\tau_c^{\circ m'}(p_1), \tau_c^{\circ m'}(p_2)]$ is a concatenation of $\alpha(m')$ segments that are in the orbit of~$\mathcal Y_{\mathcal G}$ or edges from $\mathcal T_i~(i \ge 1)$, where $\alpha(m')$ is bounded by a polynomial in~$m'$ of degree $\le n-1$.
With an almost identical argument, invoke the definition of~$\pi_c^\circ$ to conclude
\[ \lim_{m + m' \to \infty} \frac{\delta'(f_c(\tau_c^{\circ (m+m')}(p_1)), f_c(\tau_c^{\circ{m+m'}}(p_2)))}{\lambda^{m+m'}} = \delta(\pi_c^\circ(p_1), \pi_c^\circ(p_2)). \]
The set-up is simpler as~$\tau_c^\circ$ (resp.~$f_c$) is a $\lambda$-homothety (resp. isometry) on $(\mathcal Y_{\mathcal Z}, \delta)$.
\end{proof}

As in the previous section, we have proven the following:

\begin{lem}\label{lem-convergence3}
Let $\psi\colon \mathcal F \to \mathcal F$ be an automorphism, $\left(\tau_i \colon \mathcal T_i \to \mathcal T_i\right)_{i=1}^n$ a descending sequence of irreducible train tracks for~$[\psi]$, $\mathcal Z \defeq \mathcal F[\mathcal T_n]$, $\mathcal G$ the nontrivial point stabilizer system for the limit forest for~$[\psi]$ rel.~$\mathcal Z$, $[\psi_{\mathcal G}]$ the $[\psi]$-restriction to~$\mathcal G$, $(\mathcal Y_{\mathcal G}, \delta)$ a minimal $\mathcal G$-forest with trivial arc stabilizers such that $\mathcal L_{\mathcal Z}^+[\psi]$ and~$\delta$ are independent, $h_{\mathcal G} \colon (\mathcal Y_{\mathcal G}, \delta) \to (\mathcal Y_{\mathcal G}, \delta)$ a $\psi_{\mathcal G}$-equivariant $\lambda$-homothety, $\mathcal S$ a minimal simplicial $\mathcal F[\mathcal T_{n-1}]$-forest that is the skeleton for the graph of actions for~$\mathcal L_{\mathcal Z}^+[\psi]$ and~$\delta$, $\sigma \colon \mathcal S \to \mathcal S$ the corresponding simplicial automorphism, and $(\mathcal X, \delta)$ the limit forest for~$[\tau_i]_{i=1}^{n-1}$,~$\sigma$, and~$h_{\mathcal G}$.

If $(\mathcal Y', \delta')$ is a minimal $\mathcal F$-forest with trivial arc stabilizers, the characteristic subforest of $(\mathcal Y', \delta')$ for~$\mathcal G$ is equivariantly isometric to $(\mathcal Y_{\mathcal G}, \delta)$, and the lower-support~$\operatorname{\underline{supp}}[\psi; {\mathcal Z}]$ of $\mathcal L_{\mathcal Z}^+[\psi]$  is $\mathcal Y'$-elliptic, then the limit of $(\mathcal Y' \psi^{m}, \lambda^{-m} \delta')_{m \ge 0}$ is $(\mathcal X, \delta)$.  \qed
\end{lem}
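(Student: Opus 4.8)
The plan is to imitate the proofs of Lemmas~\ref{lem-convergence} and~\ref{lem-convergence2}, with the train-track lamination that sat at the ``bottom'' there replaced here by the forest $(\mathcal Y_{\mathcal G}, \delta)$ over which the blow-up is built. First I would assemble the model: take the equivariant blow-up $\mathcal T^\diamond$ of the free splittings $(\mathcal T_i)_{i=1}^{n-1}$, the skeleton $\mathcal S$, and $\mathcal Y_{\mathcal G}$; equip it with an $\mathcal F$-invariant convex metric $d_c^\diamond \defeq (c\,d) \oplus \delta$ in which $d$ carries the edges coming from the $\mathcal T_i$'s and from $\mathcal S$ while $\delta$ is the pushed-forward metric on the orbit of $\mathcal Y_{\mathcal G}$; and let $\tau_c^\diamond$ be the $\psi$-equivariant map induced by $[\tau_i]_{i=1}^{n-1}$, $\sigma$, and the linear extension of $h_{\mathcal G}$. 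For $c \gg 1$ this map is $\lambda$-Lipschitz for $d_c^\diamond$, where one uses the independence of $\mathcal L_{\mathcal Z}^+[\psi]$ and $\delta$ so that the blow-up directions stay transverse to the laminar edges and contribute nothing to the stretching. Running $\tau_c^\diamond$-iteration exactly as in Section~\ref{SubsecLimittrees} then yields an equivariant metric semiconjugacy $\pi_c^\diamond \colon (\mathcal T^\diamond, d_c^\diamond) \to (\mathcal X, \delta)$ extending the identification of $(\mathcal Y_{\mathcal G}, \delta)$ with the characteristic subforest of $(\mathcal X, \delta)$ for $\mathcal G$.

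Next I would bring in $(\mathcal Y', \delta')$. Since its characteristic subforest for $\mathcal G$ is equivariantly isometric to $(\mathcal Y_{\mathcal G}, \delta)$ and the lower-support $\operatorname{\underline{supp}}[\psi; {\mathcal Z}]$ is $\mathcal Y'$-elliptic, there is an equivariant map $f_c \colon (\mathcal T^\diamond, d_c^\diamond) \to (\mathcal Y', \delta')$ linearly extending that isometry; it is Lipschitz by minimality of $(\mathcal Y', \delta')$, and by precomposing with an equivariant PL-map from a free splitting of $\mathcal F$ and tightening rel.\ the vertices it inherits a cancellation constant via Lemma~\ref{lem-bcl}.

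The heart of the argument is the estimate (Claim~\ref{claim-uniformconst3}) that for all $p_1, p_2 \in \mathcal T^\diamond$,
\[ \lim_{m \to \infty} \lambda^{-m}\, \delta'\big(f_c(\tau_c^{\diamond m}(p_1)),\, f_c(\tau_c^{\diamond m}(p_2))\big) = \delta\big(\pi_c^\diamond(p_1),\, \pi_c^\diamond(p_2)\big). \]
I would prove this by decomposing $[\tau_c^{\diamond m'}(p_1), \tau_c^{\diamond m'}(p_2)]$ into the pieces lying in the $\mathcal F$-orbit of $\mathcal Y_{\mathcal G}$ together with the edges from the relatively polynomial strata $\mathcal T_i$ and from $\mathcal S$, the number of pieces being bounded by a polynomial $\alpha(m')$ of degree $\le n-1$ since everything below the exponential data is built from simplicial automorphisms. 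The $\lambda$-Lipschitz bound on $\tau_c^\diamond$ and the Lipschitz bound on $f_c$ then give an upper estimate and bounded cancellation a matching lower estimate, both carrying error terms that vanish after dividing by $\lambda^{m+m'}$ and taking the iterated limit $m \to \infty$, then $m' \to \infty$; this pinches the two sides together. The one real simplification over Claim~\ref{claim-uniformconst2} is that $\tau_c^\diamond$ is an honest $\lambda$-homothety and $f_c$ an honest isometry on the orbit of $\mathcal Y_{\mathcal G}$, so no Perron--Frobenius leaf constants appear and the limiting pseudometric is exactly $(\pi_c^\diamond)^*\delta$. Finally, with $\delta_m^*$ the pullback of $\lambda^{-m}\delta'$ along $f_c \circ \tau_c^{\diamond m}$, the claim reads $\lim_m \delta_m^* = (\pi_c^\diamond)^*\delta$, i.e.\ $(\mathcal Y'\psi^m, \lambda^{-m}\delta')$ converges to $(\mathcal X, \delta)$ in the space of translation-distance functions, which is the assertion.

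The hard part, I expect, is the bookkeeping in the model rather than the limiting estimates: choosing $\mathcal T^\diamond$, the parameter $c$, the metric $d_c^\diamond$, and the map $\tau_c^\diamond$ so that $\tau_c^\diamond$ is genuinely $\lambda$-Lipschitz and its iteration converges, and pinning down the role of the independence hypothesis --- it is precisely what keeps the blow-up directions from interfering with leaf segments, hence what makes both the polynomial count $\alpha(m')$ and the disappearance of the constants $c_j$ legitimate. A second delicate point is checking that $f_c$ has a cancellation constant under the hypothesis that only the \emph{lower-support} (rather than all of $\mathcal Z$, or the full point-stabilizer system $\mathcal G$) is $\mathcal Y'$-elliptic, since this is the weakest ellipticity that still lets the skeleton vertices and overlapping classes be mapped consistently into $\mathcal Y'$.
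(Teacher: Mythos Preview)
Your proposal is correct and follows essentially the same approach as the paper: the same blow-up $\mathcal T^\diamond$ with metric $d_c^\diamond = (c\,d)\oplus\delta$ and $\lambda$-Lipschitz map $\tau_c^\diamond$, the same construction of $\pi_c^\diamond$ and $f_c$ (with the cancellation constant obtained by precomposing with a PL-map from a free splitting), and the same key estimate (Claim~\ref{claim-uniformconst3}) proved by decomposing $[\tau_c^{\diamond m'}(p_1),\tau_c^{\diamond m'}(p_2)]$ into $\alpha(m')$ pieces with $\alpha$ polynomial of degree $\le n-1$. Your observation that no Perron--Frobenius constants appear because $\tau_c^\diamond$ and $f_c$ are already a $\lambda$-homothety and an isometry on the orbit of $\mathcal Y_{\mathcal G}$ is exactly the simplification the paper notes.
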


\subsection{Sketch of Lemma~\ref{lem-convergence4}}\label{SubsecConverge4}

Fix an automorphism $\psi \colon \mathcal F \to \mathcal F$ with an invariant proper free factor system~$\mathcal Z$ and a minimal $\mathcal Z$-forest $(\mathcal Y_{\mathcal Z}, \delta)$ with trivial arc stabilizers.
Let $(\tau_i \colon \mathcal T_i \to \mathcal T_i)_{i=1}^n$ be a descending sequence of irreducible train tracks for~$[\psi]$ with $\mathcal F[\mathcal T_n] = \mathcal Z$, $d_n$ the eigenmetric on~$\mathcal T_n$ for~$[\tau_n]$, and $h_{\mathcal Z} \colon (\mathcal Y_{\mathcal Z}, \delta) \to (\mathcal Y_{\mathcal Z}, \delta)$ a $\psi_{\mathcal Z}$-equivariant $\lambda$-homothety, where $\lambda > \lambda[\tau_n]$ and $[\psi_{\mathcal Z}]$ is the $[\psi]$-restriction to~$\mathcal Z$.
Set $\mathcal F_1 \defeq \mathcal F$ and $\mathcal F_{i+1} \defeq \mathcal F[\mathcal T_i]$.

Choose an arbitrary equivariant iterated blow-up~$\mathcal T^*$ of~$(\mathcal T_i)_{i=1}^n$ and let $\tau^* \colon \mathcal T^* \to \mathcal T^*$ be the $\psi$-equivariant topological representative induced by $(\tau_i)_{i=1}^n$.
Extend the metric~$d_n$ on~$\mathcal T_n$ to an $\mathcal F$-invariant convex metric~$d^*$ on~$\mathcal T^*$ so that $\tau^* \colon (\mathcal T^*, d^*) \to (\mathcal T^*, d^*)$ is $\lambda[\tau_n]$-Lipschitz.
Finally, choose an arbitrary equivariant metric blow-up $(\mathcal T^\circ, d^* \oplus \delta)$ of $(\mathcal T^*, d^*)$ rel.~$(\mathcal Y_{\mathcal Z}, \delta)$.
For a parameter $c > 0$, the topological representative~$\tau^*$ induces a $\psi$-equivariant map~$\tau_c^\circ$ on~$\mathcal T^\circ$ that linearly extends the $\lambda$-homothety~$h_{\mathcal Z}$ with respect to the metric $d_c^\circ \defeq  (c \, d^*) \oplus \delta$.
As $\lambda > \lambda[\tau_n]$, the map~$\tau_c^\circ$ is $\lambda$-Lipschitz with respect to~$d_c^\circ $ for $c \gg 1$.
Let $(\mathcal Y, \delta)$ be the limit forest for~$[\tau_c^\circ]$ and $\pi_c^\circ \colon (\mathcal T^\circ, d_c^\circ) \to (\mathcal Y, \delta)$ the equivariant metric surjection constructed through $\tau^\circ$-iteration.

Suppose $(\mathcal Y', \delta')$ is a minimal $\mathcal F$-forest with trivial arc stabilizers and whose characteristic subforest for $\mathcal Z$ is equivariantly isometric to $(\mathcal Y_{\mathcal Z}, \delta)$.
Let $f_c \colon (\mathcal T^\circ, d_c^\circ) \to (\mathcal Y', \delta')$ be an equivariant map that linearly extends the identification of $(\mathcal Y_{\mathcal Z}, \delta)$.

\begin{claim} \label{claim-uniformconst4}
For any $p_1, p_2 \in \mathcal T^\circ$,
\[ \lim_{m \to \infty} \lambda^{-m} \delta'(f_c(\tau_c^{\circ m}(p_1)), f_c(\tau_c^{\circ m}(p_2))) = \delta(\pi_c^\circ(p_1), \pi_c^\circ(p_2)). \]
\end{claim}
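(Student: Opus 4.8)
The statement to establish is Claim~\ref{claim-uniformconst4}: for all $p_1, p_2 \in \mathcal T^\circ$,
\[ \lim_{m \to \infty} \lambda^{-m} \delta'(f_c(\tau_c^{\circ m}(p_1)), f_c(\tau_c^{\circ m}(p_2))) = \delta(\pi_c^\circ(p_1), \pi_c^\circ(p_2)). \]
The proof follows the same template as Claims~\ref{claim-uniformconst2} and~\ref{claim-uniformconst3}, so the plan is to reuse that machinery with the single essential simplification that here the exponential dynamics is \emph{entirely carried inside} $(\mathcal Y_{\mathcal Z}, \delta)$: the map $\tau_c^\circ$ restricts to the $\lambda$-homothety $h_{\mathcal Z}$ there, while $\tau^*$ is only $\lambda[\tau_n]$-Lipschitz with $\lambda[\tau_n] < \lambda$, so the ``outer'' strata contribute subexponentially. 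In particular there is no stable lamination to track, no factored metric, and no need for Claim~\ref{claim-leafconst}: the role played earlier by leaf segments is now played by segments lying in translates of $\mathcal Y_{\mathcal Z}$, on which $f_c$ is an isometry (it linearly extends the identification of $(\mathcal Y_{\mathcal Z}, \delta)$) and $\tau_c^\circ$ is exactly $h_{\mathcal Z}$.

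\textbf{Key steps.} First, I would reduce to $p_1, p_2$ with $\delta(\pi_c^\circ(p_1), \pi_c^\circ(p_2)) > 0$; in the degenerate case the middle quantity is squeezed between $0$ and $\lambda^{-m} d_c^\circ(\tau_c^{\circ m}(p_1), \tau_c^{\circ m}(p_2)) \to 0$ (the latter because the $d^*$-contribution stretches only by $\lambda[\tau_n]^m = o(\lambda^m)$ and the $\delta$-contribution is fixed). Second, for each $m' \ge 0$ I would decompose the interval $[\tau_c^{\circ m'}(p_1), \tau_c^{\circ m'}(p_2)]$ into $\alpha(m')$ pieces, where $\alpha(m')$ is polynomial in $m'$ of degree $\le n-1$, each piece being either an edge coming from some $\mathcal T_i$ (of uniformly bounded $d^*$-length $\le M$) or a segment contained in a translate of $\mathcal Y_{\mathcal Z}$; this combinatorial bound is exactly the one used in the earlier convergence lemmas and follows from the descending-sequence structure together with the blow-up construction. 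Third, for the upper bound, apply $\tau_c^{\circ m}$, use that $f_c$ is $L$-Lipschitz and isometric on $\mathcal Y_{\mathcal Z}$-segments, that $\tau_c^\circ$ is $\lambda[\tau_n]$-Lipschitz on the edge strata and $h_{\mathcal Z}$ on the $\mathcal Y_{\mathcal Z}$-strata, getting
\[ \delta'(f_c(\tau_c^{\circ(m+m')}(p_1)), f_c(\tau_c^{\circ(m+m')}(p_2))) \le \sum_{l} \delta(h_{\mathcal Z}^m(q_{m',l}), h_{\mathcal Z}^m(q_{m',l+1})) + \alpha(m') \lambda[\tau_n]^m L M; \]
dividing by $\lambda^{m+m'}$, letting $m \to \infty$ (the error term vanishes since $\lambda[\tau_n] < \lambda$) then $m' \to \infty$, and invoking the definition of $\pi_c^\circ$ (the pullback of $\delta$ along $\tau_c^{\circ m'}$, suitably rescaled, converges to $\delta \circ \pi_c^\circ$, exactly as in the construction of the limit forest in Section~\ref{SubsecLimittrees}) together with the fact that $\pi_c^\circ$ restricted to $\mathcal Y_{\mathcal Z}$-segments is an isometry. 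Fourth, for the lower bound, use bounded cancellation for $f_c$ (which has a cancellation constant $C[f_c]$ because composing an equivariant PL-map from a free splitting with $f_c$ yields, up to equivariant homotopy rel vertices, an equivariant PL-map, and those have cancellation constants by Lemma~\ref{lem-bcl}) to get
\[ \delta'(f_c(\tau_c^{\circ(m+m')}(p_1)), f_c(\tau_c^{\circ(m+m')}(p_2))) \ge \sum_{l} \delta(h_{\mathcal Z}^m(q_{m',l}), h_{\mathcal Z}^m(q_{m',l+1})) - 2\,\alpha(m') C[f_c], \]
and again a cancellation-constant bound $C[\pi_c^\circ]$ to control the $\pi_c^\circ$-side; dividing and taking the iterated limit (the polynomial error $\alpha(m')$ is killed by $\lambda^{m'}$ in the denominator since $\lambda > 1$) yields $\liminf \ge \delta(\pi_c^\circ(p_1), \pi_c^\circ(p_2))$. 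Finally, I would package the claim the usual way: the pullback $\delta_m^*$ of $\lambda^{-m}\delta'$ along $f_c \circ \tau_c^{\circ m}$ converges pointwise to the pullback of $\delta$ along $\pi_c^\circ$, which is precisely the assertion that $(\mathcal Y'\psi^m, \lambda^{-m}\delta')_{m \ge 0}$ converges to $(\mathcal Y, \delta)$, giving Lemma~\ref{lem-convergence4}; the independence of $c$ was already noted in the construction.

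\textbf{Main obstacle.} None of the individual estimates is hard — each mirrors a step in Claim~\ref{claim-uniformconst2} or~\ref{claim-uniformconst3} — so the real work is bookkeeping: confirming that the interval decomposition genuinely has polynomial complexity $\alpha(m') = O(m'^{\,n-1})$ for the \emph{metric} blow-up $\mathcal T^\circ$ (not just the simplicial one), and carefully checking that $f_c$ and $\pi_c^\circ$ both possess cancellation constants despite $\pi_c^\circ$ being only a limit of metric maps (as for $\pi^\circ$ in Theorem~\ref{thm-limitturns}'s proof, cancellation constants are uniformly bounded along $\tau_c^\circ$-iteration because $\lambda[\tau_n] < \lambda$ forces a geometric series in $\lambda[\tau_n]/\lambda$, hence the limit inherits a finite constant). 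Once these two structural points are in hand, the squeeze argument is routine and I would present it tersely, referring back to Claim~\ref{claim-uniformconst2}'s proof for the repeated computations.
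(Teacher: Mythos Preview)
Your overall strategy---decompose, squeeze using bounded cancellation, take iterated limits---matches the paper's. But there is a genuine error in your combinatorial count: you claim that the number of pieces $\alpha(m')$ in the decomposition of $[\tau_c^{\circ m'}(p_1), \tau_c^{\circ m'}(p_2)]$ into edges from~$\mathcal T_i$ and $\mathcal Y_{\mathcal Z}$-segments is \emph{polynomial} of degree $\le n-1$. This is false in the present setting. Unlike in Claims~\ref{claim-uniformconst2} and~\ref{claim-uniformconst3}, here the descending sequence $(\tau_i)_{i=1}^n$ still includes the \emph{expanding} train track~$\tau_n$ (with $\lambda[\tau_n] > 1$ in the main application), and the decomposition is relative to~$\mathcal Y_{\mathcal Z}$, which sits \emph{below}~$\mathcal T_n$. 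Iterating~$\tau_n$ on a single edge of~$\mathcal T_n$ already produces $\sim \lambda[\tau_n]^{m'}$ edges, so the count $\beta(m')$ (the paper's notation) has \emph{exponential} growth rate~$\lambda[\tau_n]$.

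This is not merely a bookkeeping slip: it is exactly where the hypothesis $\lambda > \lambda[\tau_n]$---the dominating condition---does its work. The error terms of the form $\beta(m')\,/\,\lambda^{m'}$ (and the analogous ones in the double limit) vanish not because~$\beta$ is polynomial but because $(\lambda[\tau_n]/\lambda)^{m'} \to 0$. Your upper-bound error term $\alpha(m')\,\lambda[\tau_n]^m L M$ is correspondingly miscalibrated; the correct bound on the edge contribution after $m+m'$ iterations is of order $\lambda[\tau_n]^{m+m'}$ in total (since~$\tau^*$ is $\lambda[\tau_n]$-Lipschitz on the~$d^*$-factor), and it is the ratio $\lambda[\tau_n]/\lambda < 1$ that kills it. Once you replace ``polynomial'' by ``exponential with rate~$\lambda[\tau_n]$'' throughout and invoke $\lambda > \lambda[\tau_n]$, your argument goes through and coincides with the paper's sketch.
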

\begin{proof}[Sketch of proof]
For $m' \ge 0$, the interval $[\tau_c^{\circ m'}(p_1), \tau_c^{\circ m'}(p_2)]$ is a concatenation of $\beta(m')$ segments that are in the orbit of~$\mathcal Y_{\mathcal Z}$ or edges from $\mathcal T_i~(i \ge 1)$, where $\beta(m')$ has exponential growth rate $\lambda[\tau_n] < \lambda$.
Proceed just as in the proof of Claim~\ref{claim-uniformconst3}.
%
\end{proof}

Altogether, we have proven the following:

\begin{lem}\label{lem-convergence4}
Let $\psi\colon \mathcal F \to \mathcal F$ be an automorphism,~$\mathcal Z$ a $[\psi]$-invariant proper free factor system, $(\mathcal Y_{\mathcal Z}, \delta)$ a minimal $\mathcal Z$-forest with trivial arc stabilizers, $\left(\tau_i \colon \mathcal T_i \to \mathcal T_i\right)_{i=1}^n$ a descending sequence of irreducible train tracks for~$[\psi]$ with $\mathcal F[\mathcal T_n] = \mathcal Z$, $h_{\mathcal Z} \colon (\mathcal Y_{\mathcal Z}, \delta) \to (\mathcal Y_{\mathcal Z}, \delta)$ a $\psi_{\mathcal Z}$-equivariant $\lambda$-homothety, and $(\mathcal Y, \delta)$  the limit forest for~$[\tau_i]_{i=1}^n$ and~$h_{\mathcal Z}$, where $\lambda > \lambda[\tau_n]$ and $[\psi_{\mathcal Z}]$ is the $[\psi]$-restriction to~$\mathcal Z$.

If $(\mathcal Y', \delta')$ is a minimal $\mathcal F$-forest with trivial arc stabilizers and the characteristic subforest of $(\mathcal Y', \delta')$ for~$\mathcal Z$ is equivariantly isometric to $(\mathcal Y_{\mathcal Z}, \delta)$, then the limit of $(\mathcal Y' \psi^{m}, \lambda^{-m} \delta')_{m \ge 0}$ is $(\mathcal Y, \delta)$. \qed
\end{lem}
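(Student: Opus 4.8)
The final statement to prove is Lemma~\ref{lem-convergence4}. Let me sketch a proof.

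\medskip

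The plan is to mimic the proofs of Lemmas~\ref{lem-convergence2} and~\ref{lem-convergence3} given in the preceding sections, with the key new input being that the blow-up map~$\tau_c^\circ$ has \emph{exponential} stretch on the $\mathcal T_i$-strata but with rate $\lambda[\tau_n] < \lambda$, so those strata become negligible after rescaling by~$\lambda^m$. First I would fix the data exactly as in the setup preceding the statement: a descending sequence $(\tau_i)_{i=1}^n$ with $\mathcal F[\mathcal T_n]=\mathcal Z$, an equivariant iterated blow-up~$\mathcal T^*$ of the $(\mathcal T_i)$, the $\psi$-equivariant induced representative $\tau^*\colon(\mathcal T^*,d^*)\to(\mathcal T^*,d^*)$ which is $\lambda[\tau_n]$-Lipschitz, and then a metric blow-up $(\mathcal T^\circ, d_c^\circ)$ of $(\mathcal T^*,d^*)$ relative to $(\mathcal Y_{\mathcal Z},\delta)$, with $d_c^\circ = (c\,d^*)\oplus\delta$ and~$c\gg 1$ chosen so that~$\tau_c^\circ$ is $\lambda$-Lipschitz (possible since $\lambda>\lambda[\tau_n]$). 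Having $(\mathcal Y,\delta)$ as the limit forest for~$[\tau_c^\circ]$ with semiconjugating metric surjection $\pi_c^\circ\colon(\mathcal T^\circ,d_c^\circ)\to(\mathcal Y,\delta)$, and $(\mathcal Y',\delta')$ the given minimal $\mathcal F$-forest whose $\mathcal Z$-subforest is equivariantly isometric to $(\mathcal Y_{\mathcal Z},\delta)$, I would pick an equivariant map $f_c\colon(\mathcal T^\circ, d_c^\circ)\to(\mathcal Y',\delta')$ linearly extending that identification; such an $f_c$ has a cancellation constant by the same factoring-through-a-free-splitting argument used just before Claim~\ref{claim-uniformconst3}.

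\medskip

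The heart is the pointwise convergence statement, Claim~\ref{claim-uniformconst4}: for all $p_1,p_2\in\mathcal T^\circ$,
\[ \lim_{m\to\infty}\lambda^{-m}\,\delta'\!\bigl(f_c(\tau_c^{\circ m}(p_1)),f_c(\tau_c^{\circ m}(p_2))\bigr) = \delta\bigl(\pi_c^\circ(p_1),\pi_c^\circ(p_2)\bigr). \]
The proof follows the template of Claims~\ref{claim-uniformconst2} and~\ref{claim-uniformconst3}: for $m'\ge 0$ decompose $[\tau_c^{\circ m'}(p_1),\tau_c^{\circ m'}(p_2)]$ as a concatenation of $\beta(m')$ pieces, where each piece is either a segment lying in a translate of~$\mathcal Y_{\mathcal Z}$ or an edge coming from some~$\mathcal T_i$; the count $\beta(m')$ grows exponentially with rate $\lambda[\tau_n]$. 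On the $\mathcal Y_{\mathcal Z}$-pieces, $\tau_c^\circ$ acts (essentially) by the $\lambda$-homothety~$h_{\mathcal Z}$ and $f_c$ acts by an isometry, so those contributions to $\delta'(f_c(\tau_c^{\circ(m+m')}(p_1)),f_c(\tau_c^{\circ(m+m')}(p_2)))$, after dividing by $\lambda^{m+m'}$ and letting $m\to\infty$, converge to the corresponding $\delta$-lengths in~$\mathcal Y$ via the construction of~$\pi_c^\circ$. The edge-pieces have total $d_c^\circ$-length at most $\beta(m')\lambda^{m}LM$ for the longest-edge length~$M$ and Lipschitz constant~$L$; dividing by $\lambda^{m+m'}$ gives a bound $\lesssim (\lambda[\tau_n]/\lambda)^{m'}$, which $\to 0$ as $m'\to\infty$. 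The upper bound for $\delta'$ is thus immediate; for the lower bound I would, exactly as in Claim~\ref{claim-uniformconst2}, invoke bounded cancellation for~$f_c$ to see the $\mathcal Y_{\mathcal Z}$-pieces' images still contribute their full length minus $2\beta(m')C[f_c]$, and since $\beta(m')/\lambda^{m'}\to 0$ this error is also killed in the double limit. Taking $m,m'\to\infty$ (with $m$ first) pins both $\liminf$ and $\limsup$ to $\delta(\pi_c^\circ(p_1),\pi_c^\circ(p_2))$.

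\medskip

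Finally I would conclude as in the earlier sections: let $\delta_m^*$ denote the pullback of $\lambda^{-m}\delta'$ via $f_c\circ\tau_c^{\circ m}$; this is an $\mathcal F$-invariant pseudometric on~$\mathcal T^\circ$ whose associated metric space is equivariantly isometric to $(\mathcal Y'\psi^m,\lambda^{-m}\delta')$. Claim~\ref{claim-uniformconst4} says $\lim_{m\to\infty}\delta_m^*$ is the pullback of~$\delta$ via~$\pi_c^\circ$, i.e.\ the sequence $(\mathcal Y'\psi^m,\lambda^{-m}\delta')_{m\ge 0}$ converges to $(\mathcal Y,\delta)$ in the space of translation-length functions; this is the assertion of the lemma. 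I expect the main obstacle to be purely bookkeeping rather than conceptual: carefully controlling the edge-piece count $\beta(m')$ and confirming it is genuinely $O((\lambda[\tau_n])^{m'})$ up to polynomial factors (coming from the relatively polynomial strata $\mathcal T_1,\dots,\mathcal T_{n-1}$ inside the blow-up), and checking that the independence/arc-stabilizer hypotheses are exactly what is needed to build $f_c$ extending the isometric identification — but since $\lambda[\tau_n]<\lambda$ strictly, even $\beta(m')$ times any polynomial is dominated by $\lambda^{m'}$, so no factored-metric subtlety (unlike Lemma~\ref{lem-convergence}) arises and a single limit forest $(\mathcal Y,\delta)$ is the answer. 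I would therefore only sketch the proof, pointing to Claim~\ref{claim-uniformconst3}'s proof for the shared details.
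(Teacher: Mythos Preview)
Your proposal is correct and matches the paper's approach essentially line for line: the paper sets up $(\mathcal T^\circ, d_c^\circ)$, $\tau_c^\circ$, $\pi_c^\circ$, and $f_c$ exactly as you describe, then states Claim~\ref{claim-uniformconst4} and sketches its proof by decomposing $[\tau_c^{\circ m'}(p_1),\tau_c^{\circ m'}(p_2)]$ into $\beta(m')$ pieces with $\beta(m')$ growing at rate $\lambda[\tau_n]<\lambda$, deferring the rest to the proof of Claim~\ref{claim-uniformconst3}. Your observation that no factored-metric subtlety arises (a single $\lambda$ dominates) is also the reason the paper's statement and proof are simpler here than in Lemma~\ref{lem-convergence}.
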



\section{Expanding forests}\label{SecExpForests}
We finally characterize the \underline{expanding forests} for an automorphism $\psi \colon \mathcal F \to \mathcal F$, i.e.~minimal very small $\mathcal F$-forests that admit $\psi$-equivariant expanding homotheties.
By the last paragraph in the proof of Corollary~\ref{cor-unique2}, expanding forests have trivial arc stabilizers.
We start with a criterion of nonconvergence that complements Lemmas~\ref{lem-convergence3} and~\ref{lem-convergence4}.

\subsection{Nonconvergence criterion}\label{SubsecNonConverge}

Fix an automorphism $\psi \colon \mathcal F \to \mathcal F$ with an invariant proper free factor system~$\mathcal Z$ and a minimal $\mathcal Z$-forest $(\mathcal Y_{\mathcal Z}, \delta)$ with trivial arc stabilizers.
Let $\tau \colon \mathcal T \to \mathcal T$ be an expanding irreducible train track for~$[\psi]$ with $\mathcal F[\mathcal T] = \mathcal Z$, $d_\tau$ the eigenmetric on~$\mathcal T$ for~$[\tau]$,  $\mathcal L_{\mathcal Z}^+[\psi]$ the stable lamination for~$[\psi]$ in~$\mathbb R(\mathcal F, \mathcal Z)$, $(\mathcal Y_\tau, d_\infty)$ the limit forest for~$[\psi]$ rel.~$\mathcal Z$, and $h_{\mathcal Z} \colon (\mathcal Y_{\mathcal Z}, \delta) \to (\mathcal Y_{\mathcal Z}, \delta)$ a $\psi_{\mathcal Z}$-equivariant $\lambda$-homothety, where $1 < \lambda \le \lambda[\tau]$ and $[\psi_{\mathcal Z}]$ is the restriction of~$[\psi]$ to~$\mathcal Z$.

Set $\mathcal G \defeq \mathcal G[\mathcal Y_\tau]$, and denote the restriction of~$[\psi]$ to~$\mathcal G$ by  $[\psi_{\mathcal G}]$.
Since~$[\psi_{\mathcal G}]$ is polynomially growing rel.~$\mathcal Z$, we can equivariantly include $(\mathcal Y_{\mathcal Z}, \delta)$ in a minimal $\mathcal G$-forest $(\mathcal Y_{\mathcal G}, \delta)$ with trivial arc stabilizers and extend~$h_\mathcal Z$ to a $\psi_{\mathcal G}$-equivariant $\lambda$-homothety $h_{\mathcal G} \colon  (\mathcal Y_{\mathcal G}, \delta) \to  (\mathcal Y_{\mathcal G}, \delta)$. 
Construct the equivariant psuedoforest blow-up $(\mathcal Y^*, (d_\infty, \delta))$ of $(\mathcal Y_\tau, d_\infty)$ rel. $(\mathcal Y_{\mathcal G}, \delta)$ and the expanding homotheties representing $[\psi]$ and $[\psi_{\mathcal G}]$.
Finally, suppose $\mathcal L_{\mathcal Z}^+[\psi]$ and~$\delta$ are \emph{dependent}, i.e.~the pseudoleaf segments for~$\mathcal L_{\mathcal Z}^+[\psi]$ in~$\mathcal Y^*$ have some positive $\delta$-diameter intersections with~$\mathcal Y_{\mathcal G}$.
We are essentially in the case not covered by Lemmas~\ref{lem-convergence3} and~\ref{lem-convergence4}.

\smallskip
Choose an iterate $[\tau^{k}]$ that maps all $\mathcal F$-orbits of branches in~$\mathcal T$ to $[\tau^{k}]$-fixed orbits.
Pick a branch~$e^+$ in~$\mathcal T$;
suppose its basepoint $p \in \mathcal T$ is a vertex with a nontrivial stabilizer.
Without loss of generality, assume $\tau^{k}(e^+) = e^+$.
Use the contraction mapping theorem to decide how to equivariantly attach $\tau^{k}(e^+)$ to $\mathcal F \cdot \mathcal Y_{\mathcal Z}$;
then equivariantly attach~$e^+$ to the same point.
Now suppose the basepoint~$p$ has a trivial stabilizer but $\tau^{k}(p)$ has a nontrivial one.
Then there are finitely many directions~$e_1^+, \ldots, e_l^+$ at~$p$.
We have described how to attach their images~$\tau^{k}(e_1), \ldots, \tau^{k}(e_l)$ to the $\mathcal F$-orbit of~$\mathcal Y_{\mathcal Z}$;
let $C_p \subset \mathcal F \cdot \mathcal Y_{\mathcal Z}$ be the \emph{convex hull} of these attaching points.
Equivariantly replace~$p \in \mathcal T$ with a copy of~$(C_p, \lambda^{-k} \delta )$ and attach~$e_j^+$ to the copy of the attaching point for its $\tau^{k}$-image.
Finally, if~$\tau^{k}(p)$ has a trivial stabilizer, then there is nothing to do.
As~$[e^+]$ ranges over all $\mathcal F$-orbits of branches in~$\mathcal T$, this defines a {preferred} equivariant metric blow-up $(\mathcal T^\circ, d_\tau \oplus \delta)$ of $(\mathcal T, d_\tau)$ rel.~$(\mathcal Y_\mathcal Z, \delta)$.
The train track~$\tau$ induces a $\psi$-equivariant map $\tau^\circ \colon (\mathcal T^\circ, d_\tau \oplus \delta) \to (\mathcal T^\circ, d_\tau \oplus \delta)$ that linearly extends the homothety~$h_{\mathcal Z}$.
The preferred construction guarantees~$\tau^\circ$ is a train track in a sense: $\tau^{\circ m}$ is injective on the edges from~$\mathcal T$ for all $m \ge 1$.
Suppose $(\mathcal Y, \delta)$ is a minimal $\mathcal F$-forest with trivial arc stabilizers and whose characteristic subforest for $\mathcal Z$ is equivariantly isometric to $(\mathcal Y_{\mathcal Z}, \delta)$.

\begin{claim} \label{claim-blowup}
For some element~$x$ in~$\mathcal F$, $ \lambda^{-m} \| \psi^m(x) \|_{\delta} \to \infty $ as $m \to \infty$.
\end{claim}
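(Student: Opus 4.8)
\textbf{Proof proposal for Claim~\ref{claim-blowup}.}

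The plan is to exhibit a single $\mathcal F$-loxodromic element whose $\delta$-translation distance, after normalizing by $\lambda^m$, diverges; the source of the divergence is precisely the dependence hypothesis. First I would fix an eigenline configuration: choose a branch $e^+$ (or a pair of branches at a common basepoint) in $\mathcal T$ witnessing the dependence, i.e.~such that the corresponding pseudoleaf segment of $\mathcal L_{\mathcal Z}^+[\psi]$ in $\mathcal Y^*$ has positive $\delta$-diameter intersection with $\mathcal Y_{\mathcal G}$. Iterating the expanding train track $\tau^k$ produces an eigenline $l$ of $[\tau^k]$ through $e^+$; by quasiperiodicity (Lemma~\ref{lem-stableminperf}) and the construction of the preferred blow-up $(\mathcal T^\circ, d_\tau \oplus \delta)$, the segments $\tau^{\circ mk}(e^+)$ cross the inserted copies of $(C_p, \lambda^{-k}\delta)$ in a way that accumulates $\delta$-length. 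The key point: because $\tau^{\circ m}$ is injective on edges from $\mathcal T$ (the preferred construction), the $m^{th}$ iterate of a fundamental domain picks up a genuinely new $\delta$-contribution at each stratum crossing rather than merely rescaling an old one.

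Next I would pass to an actual loxodromic element. Take a $\mathcal Y_\tau$-loxodromic $x \in \mathcal F$ whose $\mathcal T$-axis $l_x$ contains the chosen pseudoleaf segment (such $x$ exists since the overlapping class containing our witness segment is stabilized by a loxodromic, cf.~the discussion of $\operatorname{\underline{supp}}$); one may take $x$ with a long fundamental domain $[p_0, x\cdot p_0]$ covered by $N$ leaf segments of $\mathcal L^+[\tau]$. The axis of $\psi^{mk}(x)$ in $\mathcal T^\circ$ has a fundamental domain $[\tau^{\circ mk}(p_0), \tau^{\circ mk}(x\cdot p_0)]$ still covered by $N$ leaf blocks; each of these blocks, under the preferred blow-up, crosses an iterated copy of the convex hull $C_p$. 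I would estimate the $\delta$-length contributed by these crossings from below: the iterated turn analysis of Theorem~\ref{thm-limitturns} shows that (in the dependent case) the limit interval $[\star_1, \star_2]$ in the relevant metric completion is nondegenerate with respect to $\delta$, so the $\delta$-diameter of the $m^{th}$ crossing is bounded below by a fixed positive constant $c_0$ — and crucially, when we measure in $(\mathcal T^\circ \psi^{mk}, \lambda^{-mk}(d_\tau \oplus \delta))$, these contributions do \emph{not} decay because each new crossing is freshly created by the $mk^{th}$ iterate rather than being a $\lambda^{-mk}$-shrunk image of the original. Summing over the $O(m)$ (or more) crossings accumulated along the orbit segment gives $\lambda^{-mk}\|\psi^{mk}(x)\|_{\delta} \gtrsim m c_0 \to \infty$; interpolating over $\psi^{mk+r}$ for $0 \le r < k$ via the $\lambda$-Lipschitz bound then gives divergence along the full sequence, after possibly replacing $x$ by $x, \psi(x), \dots, \psi^{k-1}(x)$ and taking the one that works.

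The main obstacle I anticipate is making rigorous the assertion that the $\delta$-contributions accumulate \emph{linearly} (or at least unboundedly) in $m$ after normalization, rather than staying bounded. In the independent case of Lemmas~\ref{lem-convergence3} and~\ref{lem-convergence4} these contributions are exactly what get absorbed into a convergent limit; here one must show the opposite, and the mechanism is that a pseudoleaf segment, while having bounded $d_\tau$-length after normalization (Case~2-type analysis), picks up a \emph{new} $\delta$-segment of definite size each time $\tau^{\circ k}$ re-crosses the blown-up vertex — so the count of such $\delta$-segments along a fixed-length piece of the expanding axis grows like the number of leaf-segment crossings, which is linear in the iterate. I would carry this out by tracking, for the fundamental domain of $\psi^{mk}(x)$, the quantity $\sum_{j} \delta\big(\text{endpoints of the }j^{th}\text{ inserted-hull crossing}\big)$ and applying bounded cancellation (Lemma~\ref{lem-bcl}) plus the contraction-mapping fixed-point description of the attaching points to show each summand is $\ge c_0 \lambda^{-k}$ uniformly; the number of summands is $\ge$ (number of $\tau^k$-crossings of the branch $e^+$ inside the orbit segment), which grows at least linearly since $\tau$ is expanding and $x$ is loxodromic. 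Then $\lambda^{-mk}\|\psi^{mk}(x)\|_\delta \ge (\text{linear in }m) \cdot c_0\lambda^{-k} \to \infty$, completing the proof.
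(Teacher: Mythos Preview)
Your proposal has a genuine gap in the final counting estimate. You assert that the number of inserted-hull crossings along a fundamental domain of $\psi^{mk}(x)$ ``grows at least linearly'' and that each contributes $\ge c_0\lambda^{-k}$ to the \emph{normalized} length, yielding $\lambda^{-mk}\|\psi^{mk}(x)\|_\delta \ge m\,c_0\lambda^{-k}$. But this does not follow from your premises: if each of $m$ crossings has unscaled $\delta$-length $\ge c_0$, then $\|\psi^{mk}(x)\|_\delta \ge m\,c_0$ (up to bounded cancellation), hence $\lambda^{-mk}\|\psi^{mk}(x)\|_\delta \ge m\,c_0\,\lambda^{-mk} \to 0$. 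Your sentence ``these contributions do not decay because each new crossing is freshly created by the $mk^{th}$ iterate'' is exactly backwards: a crossing freshly created at step $mk$ has $\delta$-length of order $c_0$, so after dividing by $\lambda^{mk}$ it contributes only $c_0\lambda^{-mk}$.

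What is missing is precisely the mechanism that makes the hypothesis $\lambda \le \lambda[\tau]$ relevant. The paper chooses $x$ so that a fundamental domain on its $\mathcal T$-axis is a single leaf segment, and then counts the number $\beta(m)$ of $\mathcal T$-edges in $[\tau^{\circ m}(p),\tau^{\circ m}(q)]$: this grows \emph{exponentially} with rate $\lambda[\tau]$, not linearly. Each of the $\beta(i)$ edges present at stage $i$ produces (after a fixed further $m'$ iterates) a $\mathcal Y_{\mathcal Z}$-segment of $\delta$-length $\ge 2C[f]+L$, and thereafter this segment is stretched by $h_{\mathcal Z}$, contributing $\lambda^{m-i}L$ at stage $m+m'$. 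Summing and normalizing gives $\lambda^{-m}\|\psi^{m+m'}(x)\|_\delta \ge L\sum_{i=0}^m \beta(i)\lambda^{-i}$; since $\beta(i)\lambda^{-i}\asymp(\lambda[\tau]/\lambda)^i \ge 1$, the sum diverges. Your argument recovers neither the exponential count $\beta(i)$ nor the $\lambda^{m-i}$-stretching of older segments, and a merely linear count cannot beat the $\lambda^{-mk}$ normalization.
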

\begin{proof}[Sketch of proof]
A long leaf segment in~$\mathcal T$ contains at least three (unoriented) translates $x_i \cdot e~(1\le i\le 3)$ of an edge~$e$.
So $x \defeq x_i^{-1} x_{i+1}$ is $\mathcal T$-loxodromic for some~$i \pmod 3$.
Choose a fundamental domain $[p,q]$ of~$x$ acting on its axis that is a leaf segment with endpoints at vertices.
Set $d^\circ \defeq d_\tau \oplus \delta$, and let $f \colon (\mathcal T^\circ, d^\circ) \to (\mathcal Y, \delta)$ an equivariant map that linearly extends the identification of $(\mathcal Y_{\mathcal Z}, \delta)$.

The assumption that~$\mathcal L_{\mathcal Z}^+[\psi]$ and~$\delta$ were dependent implies the $\tau^\circ$-image of some edge~$e$ from~$\mathcal T$ has a nondegenerate intersection with $\mathcal F \cdot \mathcal Y_{\mathcal Z}$.
Fix $m' \gg 1$ so that, for some $L > 0$, $\tau^{\circ m'}(e) \cap \mathcal F \cdot \mathcal Y_{\mathcal Z}$ has a component with $\delta$-length $\ge 2C[f] + L$ for all edges from~$\mathcal T$.
For $m \ge 0$, let~$\beta(m)$ be the number of edges from~$\mathcal T$ in $[\tau^{\circ m}(p), \tau^{\circ m}(q)]$;
note that~$\beta(m)$ grows exponentially in~$m$ with rate~$\lambda[\tau]$ --- the growth of~$[\psi]$ rel.~$\mathcal Z$.
By the train track property of~$\tau^\circ$ and bounded cancellation for~$f$, $\lambda^{-m}\|\psi^{m+m'}(x)\|_{\delta} \ge \sum_{i=0}^m \beta(i)\lambda^{-i} L$ tends to infinity as $m \to \infty$ since $\lambda \le \lambda[\tau]$.
\end{proof}

Thus there is no $\psi$-equivariant homothety of $(\mathcal Y, \delta)$:

\begin{lem}\label{lem-nonconvergence}
Let $\psi\colon \mathcal F \to \mathcal F$ be an automorphism, $\tau \colon \mathcal T \to \mathcal T$ an expanding irreducible train track for~$[\psi]$, $\mathcal Z \defeq \mathcal F[\mathcal T]$,  and $(\mathcal Y, \delta)$ an expanding forest for~$[\psi]$ with stretch factor~$\lambda$.
If $\mathcal L_{\mathcal Z}^+[\psi]$ and~$\delta$ are dependent, then $\lambda > \lambda[\tau]$. \qed
\end{lem}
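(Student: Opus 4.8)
The plan is to argue by contradiction, feeding the hypotheses into Claim~\ref{claim-blowup}. Suppose $(\mathcal Y, \delta)$ is an expanding forest for~$[\psi]$ with stretch factor~$\lambda$, that $\mathcal L_{\mathcal Z}^+[\psi]$ and~$\delta$ are dependent, and --- contrary to what we want --- that $\lambda \le \lambda[\tau]$. Since $(\mathcal Y, \delta)$ is expanding we automatically have $\lambda > 1$, so in fact $1 < \lambda \le \lambda[\tau]$, which is precisely the range of stretch factors assumed throughout Section~\ref{SubsecNonConverge}.

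The first step is to verify that $(\mathcal Y, \delta)$ carries all the standing data of Section~\ref{SubsecNonConverge}. By the opening remark of Section~\ref{SecExpForests}, expanding forests have trivial arc stabilizers, so the characteristic subforest $(\mathcal Y_{\mathcal Z}, \delta)$ of $(\mathcal Y, \delta)$ for the $[\psi]$-invariant free factor system $\mathcal Z \defeq \mathcal F[\mathcal T]$ is defined, and it is a minimal $\mathcal Z$-forest with trivial arc stabilizers. Since $\psi$ permutes (up to conjugacy) the components of~$\mathcal Z$, the union of these characteristic subforests is a $\psi$-invariant subforest, so the $\psi$-equivariant expanding $\lambda$-homothety on $(\mathcal Y, \delta)$ restricts to a $\psi_{\mathcal Z}$-equivariant $\lambda$-homothety $h_{\mathcal Z}\colon (\mathcal Y_{\mathcal Z}, \delta) \to (\mathcal Y_{\mathcal Z}, \delta)$ (replacing $\psi$ inside its outer class if necessary). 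Together with the expanding irreducible train track~$\tau$ and the inequality $1 < \lambda \le \lambda[\tau]$, this is exactly the input of Section~\ref{SubsecNonConverge}; the remaining standing hypothesis there --- that $\mathcal L_{\mathcal Z}^+[\psi]$ and~$\delta$ are dependent, read relative to the pair $(\mathcal Y_{\mathcal Z}, \delta)$ --- is what we assumed. Finally, $(\mathcal Y, \delta)$ is itself a minimal $\mathcal F$-forest with trivial arc stabilizers whose characteristic subforest for~$\mathcal Z$ is (tautologically) $(\mathcal Y_{\mathcal Z}, \delta)$, so it is a legitimate choice for the $\mathcal F$-forest called $(\mathcal Y, \delta)$ in Claim~\ref{claim-blowup}.

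With the bookkeeping done, Claim~\ref{claim-blowup} produces an element $x \in \mathcal F$ with $\lambda^{-m}\|\psi^m(x)\|_{\delta} \to \infty$ as $m \to \infty$. On the other hand, the $\psi$-equivariant $\lambda$-homothety on $(\mathcal Y, \delta)$ forces $\|\psi^m(x)\|_{\delta} = \lambda^m\|x\|_{\delta}$ for every~$m$, so $\lambda^{-m}\|\psi^m(x)\|_{\delta} = \|x\|_{\delta}$ is constant --- the desired contradiction. Hence $\lambda > \lambda[\tau]$.

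All the substance of the argument is contained in Claim~\ref{claim-blowup}, which has already been established; the present lemma merely repackages it. Consequently I expect no genuine difficulty beyond the verification in the second paragraph --- that the ambient expanding forest $(\mathcal Y, \delta)$ supplies, through its characteristic subforest for~$\mathcal Z$ and the restriction of its homothety, every piece of standing data assumed in Section~\ref{SubsecNonConverge}, and in particular that the dependency hypothesis is to be interpreted relative to the pair $(\mathcal Y_{\mathcal Z}, \delta)$ cut out of $(\mathcal Y, \delta)$.
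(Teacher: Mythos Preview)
Your proposal is correct and follows exactly the paper's approach: the lemma is stated with a \qed immediately after the line ``Thus there is no $\psi$-equivariant homothety of $(\mathcal Y, \delta)$,'' and the entire argument is the contradiction you describe, drawing directly on Claim~\ref{claim-blowup}. Your second paragraph spells out the bookkeeping (trivial arc stabilizers, characteristic subforest for~$\mathcal Z$, restricted homothety) that the paper leaves implicit in its standing setup for Section~\ref{SubsecNonConverge}.
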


\subsection{Expanding is dominating}\label{SubsecExpIsDom}

Fix an automorphism $\psi \colon \mathcal F \to \mathcal F$ and an expanding forest $(\mathcal Y, \delta)$ for~$[\psi]$. 
Our remaining goal is to generalize Corollary~\ref{cor-unique2}: 
$(\mathcal Y, \delta)$ must be some dominating forest for~$[\psi]$.

Let $\tau \colon \mathcal T \to \mathcal T$ be an expanding irreducible train track for~$[\psi]$, $\mathcal Z \defeq \mathcal F[\mathcal T]$, $\mathcal L_{\mathcal Z}^+[\psi]$ the stable lamination for~$[\psi]$ in~$\mathbb R(\mathcal F, \mathcal Z)$, $(\mathcal Y_\tau, d_\infty)$ the limit forest for~$[\psi]$ rel.~$\mathcal Z$, and $\mathcal G \defeq \mathcal G[\mathcal Y_\tau]$.

For induction, assume the characteristic subforest of $(\mathcal Y, \delta)$ for~$\mathcal G$ is equivariantly isometric to the dominating forest for the restriction~$[\psi_{\mathcal G}]$ (of~$[\psi]$ to~$\mathcal G$) with respect to some orbits~$\{ \mathcal A_i^{dom}[\psi_{\mathcal G}] \}_{i=1}^k$ with the same stretch factor $\lambda > 1$;
denote the subforest by $(\mathcal Y_{\mathcal G}, \delta)$.
Suppose~$\mathcal L_{\mathcal Z}^+[\psi]$ and~$\delta$ are dependent.
By Lemma~\ref{lem-nonconvergence}, $\lambda > \lambda[\tau]$ and each $\mathcal A_i^{dom}[\psi_{\mathcal G}] $ is actually a $\psi_*$-orbit~$\mathcal A_i^{dom}[\psi]$ of dominating attracting laminations for~$[\psi]$.
By Lemma~\ref{lem-convergence4}, $(\mathcal Y, \delta)$ is equivariantly isometric to the dominating forest for~$[\psi]$ with respect to~$\{ \mathcal A_i^{dom}[\psi] \}_{i=1}^k$.

\smallskip
We may now assume~$\mathcal L_{\mathcal Z}^+[\psi]$ and~$\delta$ are independent.
So $\mathcal A_i^{dom}[\psi_{\mathcal G}] $ is a $\psi_*$-orbit~$\mathcal A_i^{dom}[\psi]$ of dominating attracting laminations for~$[\psi]$.
Let $\mathcal A_0^{dom}[\psi]  \subset \mathbb R(\mathcal F)$ be the closure of~$\mathcal L_{\mathcal Z}^+[\psi]$ and $(\mathcal Y^*, d_\infty \oplus \delta)$ the unique equivariant metric blow-up of $(\mathcal Y_\tau, d_\infty)$ rel. $(\mathcal Y_{\mathcal G}, \delta)$ that admits a $\psi$-equivariant expanding dilation.
By construction, the blow-up is equivariantly isometric to the dominating forest for~$[\psi]$ with respect to $\{ \mathcal A_i^{dom}[\psi] \}_{i=0}^k$.
Recall that independence of~$\mathcal L_{\mathcal Z}^+[\psi]$ and~$\delta$ implies~$\mathcal Y^*$ is a graph of actions with vertex forests coming from $\widehat{\mathcal Y}_{\mathcal G}$ and overlapping classes for~$\mathcal L_{\mathcal Z}^+[\psi]$ --- these are $\mathcal G$- and $\operatorname{\underline{supp}}[\psi; \mathcal Z]$-forests respectively;
let~$\mathcal S$ be the skeleton for this graph of actions.

If the lower-support~$\operatorname{\underline{supp}}[\psi; {\mathcal Z}]$ is $\mathcal Y$-elliptic, then $(\mathcal Y, \delta)$ is equivariantly isometric to the associated $\mathcal F$-forest for~$\delta$ on~$\mathcal Y$ by Lemma~\ref{lem-convergence3};
in particular, $(\mathcal Y, \delta)$ is equivariantly isometric to the dominating forest for~$[\psi]$ with respect to $\{ \mathcal A_i^{dom}[\psi] \}_{i=1}^k$.
Otherwise,~$\operatorname{\underline{supp}}[\psi; {\mathcal Z}]$ is not $\mathcal Y$-elliptic.
Let $\mathcal T' \subset \mathcal T$ be the characteristic convex subset for the upper-support~$\operatorname{\overline{supp}} \mathcal L^+[\psi]$ of $\mathcal L^+[\psi]$ (defined at the end of Section~\ref{SubsecTopmost}) and~$[\psi']$ the restriction of~$[\psi]$ to the upper-support.
Independence of $\mathcal L_{\mathcal Z}^+[\psi]$ and~$\delta$ implies $\mathcal Z' \defeq \mathcal F[\mathcal T']$ is $\mathcal Y$-elliptic.
So the characteristic subforests of $(\mathcal Y, \delta)$ and $(\mathcal Y_\tau, d_\infty)$ for the upper-support~$\operatorname{\overline{supp}} \mathcal L^+[\psi]$ are expanding forests for~$[\psi']$ rel.~$\mathcal Z'$;
by Corollary~\ref{cor-unique2}, they are equivariantly homothetic and $\lambda[\tau] = \lambda$.
Thus the characteristic subforests of $(\mathcal Y, \delta)$ and $(\mathcal Y_\tau, c\,d_\infty)$ for $\operatorname{\underline{supp}}[\psi; {\mathcal Z}]$ are equivariantly isometric for some $c > 0$.
A minor modification of Lemma~\ref{lem-convergence3} implies $(\mathcal Y, \delta)$ is equivariantly isometric to $(\mathcal Y^*, c\, d_\infty \oplus \delta)$ --- details are left to the reader;
therefore, $(\mathcal Y, \delta)$ is equivariantly isometric to the dominating forest for~$[\psi]$ with respect to $\{ \mathcal A_i^{dom}[\psi] \}_{i=0}^k$.

\smallskip
Generally,~$[\psi]$ has a descending sequence of irreducible train tracks $(\tau_i \colon \mathcal T_i \to \mathcal T_i)_{i=1}^n$.
If $(\mathcal Y, \delta)$ is degenerate, then there is nothing to show.
Otherwise, the $\psi$-expanding homothety on~$(\mathcal Y, \delta)$ implies $\lambda[\tau_n] > 1$.
Set $\mathcal F_1 \defeq \mathcal F$ and $\mathcal F_{i+1} \defeq \mathcal F[\mathcal T_i]$.
The preceding discussion proves that the characteristic subforest of $(\mathcal Y, \delta)$ for~$\mathcal F_n$ is equivariantly isometric to some dominating forest for the restriction~$[\psi_n]$.
Lemma~\ref{lem-convergence4} implies $(\mathcal Y, \delta)$ is equivariantly isometric to some dominating forest for~$[\psi]$.
Conversely, it follows from Theorem~\ref{thm-dominatinglimitexist}(\ref{cond-dominating-expanding}) that the dominating forest for~$[\psi]$ with respect to a subset of $\psi_*$-orbits of dominating attracting laminations with the same stretch factor is an expanding forest for~$[\psi]$:

\begin{thm}\label{thm-unique}
An $\mathcal F$-forest $(\mathcal Y, \delta)$ is an expanding forest for an automorphism $\psi \colon \mathcal F \to \mathcal F$ if and only if it is equivariantly isometric to the dominating forest for~$[\psi]$ with respect to a subset of $\psi_*$-orbits of dominating attracting laminations with the same stretch factor. \qed
\end{thm}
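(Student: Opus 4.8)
The plan is to prove both implications, with essentially all of the work in the forward direction, carried out by a nested induction on complexity. The backward direction is short: given a subset $\{\mathcal A_j^{dom}[\psi]\}_{j=1}^k$ of $\psi_*$-orbits of dominating attracting laminations whose members all share one stretch factor $\lambda > 1$, form the dominating forest $(\mathcal Y, \Sigma_{j=1}^k \delta_j)$ for this subset via Theorem~\ref{thm-dominatinglimitexist}. Its $\psi$-equivariant expanding dilation is, by definition, a $\lambda_j$-homothety on each factor $\delta_j$; since every $\mathcal A_j^{dom}[\psi]$ has stretch factor $\lambda$, each $\lambda_j = \lambda$, so the dilation is a genuine $\lambda$-homothety of the associated $\mathcal F$-tree for the honest metric $\delta \defeq \sum_{j=1}^k \delta_j$. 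As that forest is minimal with trivial arc stabilizers, hence minimal very small, it is an expanding forest for $[\psi]$.

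For the forward direction, let $(\mathcal Y, \delta)$ be an expanding forest for $[\psi]$ with stretch factor $\lambda > 1$. First I would record that $(\mathcal Y,\delta)$ has dense orbits, hence trivial arc stabilizers, by the final paragraph of the proof of Corollary~\ref{cor-unique2}. If $(\mathcal Y,\delta)$ is degenerate there is nothing to prove (the empty subset works). Otherwise fix a descending sequence of irreducible train tracks $(\tau_i\colon\mathcal T_i\to\mathcal T_i)_{i=1}^n$ for $[\psi]$, note that the expanding homothety forces $\lambda[\tau_n] > 1$, and set $\mathcal F_1 \defeq \mathcal F$, $\mathcal F_{i+1} \defeq \mathcal F[\mathcal T_i]$. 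The induction is on the complexity of $\mathcal F$: applying the inductive hypothesis to the restriction $[\psi_n]$ on $\mathcal F_n$ (of strictly smaller complexity), the characteristic subforest of $(\mathcal Y,\delta)$ for $\mathcal F_n$ is equivariantly isometric to a dominating forest for $[\psi_n]$ whose chosen laminations all have stretch factor $\lambda$; Lemma~\ref{lem-convergence4} then upgrades that identification from $\mathcal F_n$ to $\mathcal F$. So it suffices to treat the base case $n = 1$, where $[\psi]$ has an expanding irreducible train track $\tau\colon\mathcal T\to\mathcal T$ with $\mathcal Z\defeq\mathcal F[\mathcal T]$.

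For the case $n=1$, let $(\mathcal Y_\tau, d_\infty)$ be the limit forest for $[\psi]$ rel $\mathcal Z$, $\mathcal L_{\mathcal Z}^+[\psi]$ its stable lamination, $\mathcal G \defeq \mathcal G[\mathcal Y_\tau]$, and $[\psi_{\mathcal G}]$ the restriction. Running the complexity induction on $[\psi_{\mathcal G}]$, the characteristic subforest $(\mathcal Y_{\mathcal G}, \delta)$ of $(\mathcal Y,\delta)$ for $\mathcal G$ is a dominating forest for $[\psi_{\mathcal G}]$ with respect to orbits $\{\mathcal A_i^{dom}[\psi_{\mathcal G}]\}_{i=1}^k$ of common stretch factor $\lambda$. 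I would then split on whether $\mathcal L_{\mathcal Z}^+[\psi]$ and $\delta$ are \emph{dependent} or \emph{independent}. In the dependent case, Lemma~\ref{lem-nonconvergence} forces $\lambda > \lambda[\tau]$; since $\lambda$ then strictly exceeds the exponential growth rate of $[\psi]$ rel $\mathcal Z$, the orbits $\mathcal A_i^{dom}[\psi_{\mathcal G}]$ are actually $\psi_*$-orbits $\mathcal A_i^{dom}[\psi]$ of dominating attracting laminations for $[\psi]$, and Lemma~\ref{lem-convergence4} identifies $(\mathcal Y,\delta)$ with the dominating forest for $\{\mathcal A_i^{dom}[\psi]\}_{i=1}^k$. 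In the independent case, each $\mathcal A_i^{dom}[\psi_{\mathcal G}]$ is again a $\psi_*$-orbit $\mathcal A_i^{dom}[\psi]$, and $\mathcal A_0^{dom}[\psi]\defeq$ the closure of $\mathcal L_{\mathcal Z}^+[\psi]$ in $\mathbb R(\mathcal F)$ is one more such orbit; the preferred metric blow-up $(\mathcal Y^*, d_\infty\oplus\delta)$ of $(\mathcal Y_\tau, d_\infty)$ rel $(\mathcal Y_{\mathcal G},\delta)$ is precisely the dominating forest for $\{\mathcal A_i^{dom}[\psi]\}_{i=0}^k$, a graph of actions with skeleton $\mathcal S$ whose vertex forests are the $\widehat{\mathcal Y}_{\mathcal G}$-copies and the overlapping classes for $\mathcal L_{\mathcal Z}^+[\psi]$. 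If $\operatorname{\underline{supp}}[\psi;\mathcal Z]$ is $\mathcal Y$-elliptic, Lemma~\ref{lem-convergence3} identifies $(\mathcal Y,\delta)$ with the dominating forest for $\{\mathcal A_i^{dom}[\psi]\}_{i=1}^k$ (the extra orbit $\mathcal A_0$ contributing nothing); otherwise the characteristic subforests of $(\mathcal Y,\delta)$ and $(\mathcal Y_\tau, d_\infty)$ for $\operatorname{\overline{supp}}\mathcal L^+[\psi]$ are expanding forests rel $\mathcal Z'\defeq\mathcal F[\mathcal T']$, hence equivariantly homothetic by Corollary~\ref{cor-unique2}, forcing $\lambda[\tau]=\lambda$; rescaling $d_\infty$ by the resulting $c>0$ and applying a minor variant of Lemma~\ref{lem-convergence3} identifies $(\mathcal Y,\delta)$ with the dominating forest for $\{\mathcal A_i^{dom}[\psi]\}_{i=0}^k$.

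The main obstacle is the bookkeeping in the last subcase: one must reconcile the homothety factor $\lambda$ of $(\mathcal Y,\delta)$ with the train-track eigenvalue $\lambda[\tau]$, which is only possible after isolating the upper-support subforests and invoking the uniqueness of expanding forests rel a free factor system, then rescaling and feeding the result into a modified form of the convergence criterion. More broadly, the whole scheme hinges on the nonconvergence Lemma~\ref{lem-nonconvergence} being exactly complementary to the convergence Lemmas~\ref{lem-convergence3} and~\ref{lem-convergence4}, so that the dependent/independent dichotomy together with the elliptic/non-elliptic dichotomy for $\operatorname{\underline{supp}}[\psi;\mathcal Z]$ exhausts all cases consistently; checking this exhaustiveness and the compatibility of the identifications across the two layers of induction is where the real care is required.
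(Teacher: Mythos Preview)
Your proposal is correct and follows essentially the same approach as the paper: the backward direction via Theorem~\ref{thm-dominatinglimitexist}(\ref{cond-dominating-expanding}), and the forward direction via the same nested induction on complexity, reducing to the single-train-track case and then splitting on dependent/independent and on ellipticity of $\operatorname{\underline{supp}}[\psi;\mathcal Z]$, invoking Lemmas~\ref{lem-nonconvergence}, \ref{lem-convergence3}, \ref{lem-convergence4}, and Corollary~\ref{cor-unique2} exactly as the paper does.
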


\appendix
\section{Recognizing and centralizing atoroidal automorphisms}\label{SecAtoroidal}
For a given outer automorphism, restrict it to point stabilizers of a complete topmost tree and inductively construct the descending sequence of complete topmost forests.
The blow-up construction applied to this descending sequence produces the universal topmost pseudotree (whose underlying pretree is the limit pretree).
For an application of this universal construction, we prove a recognition theorem for atoroidal outer automorphisms.

\begin{cor} If~$[\phi]$ and~$[\psi]$ are  atoroidal outer automorphisms of~$F$ with the same universal topmost pseudotree, and the pseudotree admits a $\phi\psi^{-1}$-equivariant isometry fixing each orbit of branches, then $[\phi] = [\psi]$.
\end{cor}

\noindent The hypothesis is akin to assuming two pseudo-Anosov mapping classes have the same stable measured foliation, stretch factor, and action on singular leaves.

\begin{proof}
Let $(T, (\oplus_{j=1}^{k_i} \delta_{i,j})_{i=1}^n)$ be the universal topmost pseudotree for $[\phi], [\psi]$ and denoted by~$\iota$ the $\phi\psi^{-1}$-equivariant isometry on $(T, (\oplus_{j=1}^{k_i} \delta_{i,j})_{i=1}^n)$ that fixes each orbit of branches.
Choose $\phi' \in [\phi]$ such that the $\phi'\psi^{-1}$-equivariant isometry~$\iota'$ on $(T, (\oplus_{j=1}^{k_i} \delta_{i,j})_{i=1}^n)$ fixes a branch point.
The $F$-action on the limit pretree~$T$ is free since~$[\phi]$ is atoroidal.
Adapting Kapovich--Lustig's Proposition~4.1 in~\cite{KL11} to pseudotrees, we conclude~$\iota'$ fixes all points of~$T$, i.e.~$\iota'$ is the identity map on~$T$ and $\phi' = \psi$;
therefore, $[\phi] = [\psi]$.
\end{proof}

We call this a recognition theorem because it lists a set of dynamical invariants (universal topmost pseudotree, stretch factors of the factored pseudometrics, and  action on orbits of branches) that determine an atoroidal outer automorphism.
Feighn--Handel's recognition theorem \cite[Theorem~5.3]{FH11} gives related dynamical invariants (attracting laminations, their stretch factors, non-repelling fixed points at infinity, and twist coordinates) that determine a {forward rotationless} outer automorphisms; 
their theorem can also be extended to all atoroidal outer automorphisms as in our corollary.

A minor change introducing {twist coordinates} extends our corollary (or Feighn--Handel's recognition theorem) to outer automorphisms whose limit pretrees have cyclic point stabilizers.
With more care, the corollary should generalize to outer automorphisms whose restrictions to the point stabilizers of limit pretrees is linearly growing --- linearly growing automorphisms have canonical representatives~\cite{CL95}.
Generalizing to all outer automorphisms would require having canonical nondegenerate representatives for all polynomially growing automorphisms.

\begin{cor}\label{cor-central}If $\phi \colon F \to F$ is an atoroidal automorphism, then the centralizer of~$[\phi]$ in the outer automorphism group $\operatorname{Out}(F)$ is virtually a free abelian group with rank at most the number of $[\phi]$-orbits of attracting laminations for~$[\phi]$.
\end{cor}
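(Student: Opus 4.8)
The plan is to bound the centralizer $C([\phi]) \le \operatorname{Out}(F)$ by first producing a homomorphism from it to a group acting on the finite combinatorial data attached to the universal topmost pseudotree $T$, and then showing the kernel is free abelian of controlled rank. First I would invoke Corollary~\ref{cor-limitpretreeunique} and Theorem~\ref{thm-topmostexpandunique} (together with the universality packaged in the Main Theorem) to note that any $[\theta]$ commuting with $[\phi]$ must send the universal (complete topmost, or normalized) limit pseudoforest $(T, (\oplus_{j=1}^{k_i}\delta_{i,j})_{i=1}^d)$ for $[\phi]$ to one equivariantly dilation-equivalent to it; since that pseudoforest is \emph{canonical} for $[\phi]$, a representative $\theta$ of $[\theta]$ determines a $\theta$-equivariant dilation $T \to T$, unique up to the (trivial, since the action on $T$ is free — $[\phi]$ atoroidal) equivariant isometries isotopic to the identity. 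This realizes $C([\phi])$ as (outer classes of) the group $\operatorname{Dil}_\phi(T)$ of $\phi$-commuting equivariant dilations of $T$.

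Next I would analyze $\operatorname{Dil}_\phi(T)$ using its action on the finite set $B$ of $F$-orbits of branches of $T$ and on the finite set of $\psi_*$-orbits of attracting laminations. A dilation permutes the hierarchy levels preserving the partial order on attracting laminations, and on each factor $\delta_{i,j}$ it acts by a scalar; recording these scalars gives a homomorphism $\operatorname{Dil}_\phi(T) \to \operatorname{Sym}(B) \times \prod_{i,j}\mathbb{R}_{>0}$, and by Corollary~\ref{cor-cyclicimage} (applied factorwise, since each factor-forest $(Y_{i,j}^{dom}, \delta_{i,j})$ has a cyclic stretch-factor image) the image in the $\mathbb{R}_{>0}$-factors is contained in a finitely generated — hence virtually $\mathbb{Z}^N$ with $N \le \sum_i k_i = $ number of $\psi_*$-orbits of attracting laminations $\le$ number of $[\phi]$-orbits of attracting laminations — subgroup. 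So after passing to the finite-index subgroup that fixes every orbit of branches and acts trivially on the lamination poset, we are reduced to the kernel $K$ of "record the stretch scalars," i.e. $\phi$-commuting equivariant \emph{isometries} of $T$ fixing each orbit of branches.

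Then I would show this kernel $K$ is trivial (or finite), exactly as in the first Corollary's proof: for $[\theta] \in K$, pick a representative $\theta$ with $\theta\phi^{-1}$-equivariant isometry $\iota$ fixing a branch; since the $F$-action on $T$ is free (atoroidality) and $\iota$ fixes each orbit of branches, the Kapovich--Lustig-style rigidity argument (cited, adapting \cite[Proposition~4.1]{KL11}) forces $\iota = \operatorname{id}$, so $\theta = \phi^0 \cdot(\text{something in the pointwise stabilizer})$ — more precisely $\theta$ agrees with the identity on $T$ so $[\theta]$ lies in the subgroup of outer automorphisms acting trivially on $T$, which for a free action is trivial. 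Combining: $C([\phi])$ has a finite-index subgroup injecting into a finitely generated free abelian group of rank $\le \#\{[\phi]\text{-orbits of attracting laminations}\}$, so $C([\phi])$ is virtually free abelian of at most that rank.

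The main obstacle I anticipate is the bookkeeping in the middle step — verifying that an equivariant dilation of the \emph{pseudoforest} really does act diagonally on the factors with each scalar constrained by Corollary~\ref{cor-cyclicimage}, and that the finitely-many-orbits-of-branches finiteness (from Gaboriau--Levitt, \cite{GL95}) genuinely cuts $\operatorname{Dil}_\phi(T)$ down to a virtually abelian group rather than merely a virtually solvable one; one must be careful that the level-permutation and branch-permutation actions are on genuinely \emph{finite} sets and that passing to the common finite-index subgroup does not inflate the rank beyond the lamination count. The rigidity input (triviality of $K$) is essentially already supplied by the argument for the recognition corollary above, so it should go through verbatim once the atoroidal hypothesis is in force.
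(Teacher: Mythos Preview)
Your proposal is essentially the same as the paper's proof: pass to a finite-index subgroup of $C[\phi]$ acting trivially on the (finite) set of attracting laminations and on the (finitely many, by Gaboriau--Levitt index theory) $F$-orbits of branches in~$T$; record the stretch factors to get a homomorphism $\ell \colon C[\phi] \to \mathbb R_{>0}^k$ whose image is discrete in each coordinate by Corollary~\ref{cor-cyclicimage}; and conclude that $\ker(\ell)$ is trivial via the Kapovich--Lustig rigidity since the $F$-action on~$T$ is free. One small slip: for $[\theta] \in K$ the isometry $\iota$ is $\theta$-equivariant, not ``$\theta\phi^{-1}$-equivariant'' --- you imported the language from the recognition corollary (where two automorphisms are compared) but here the argument is simply that a $\theta$-equivariant isometry fixing each orbit of branches must be the identity, forcing $\theta$ to be inner; the paper cites \cite[Proposition~4.2]{KL11} rather than 4.1 for this step.
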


\noindent
Feighn--Handel do not explicitly state this corollary, but it follows from \cite[Lemma 5.5]{FH09}.
Bestvina--Feighn--Handel previously proved that centralizers of {fully irreducible} outer automorphisms are virtually cyclic~\cite[Theorem~2.14]{BFH97}.
In the first version of this paper, we claimed Corollary~\ref{cor-central} as a new result, and
a referee told us the corollary follows from Feighn--Handel's work on CT maps.
Our new proof uses the universal topmost pseudotree.

\begin{proof}
Let $(T, (\oplus_{j=1}^{k_i} \delta_{i,j})_{i=1}^n)$ be the universal topmost pseudotree for~$[\phi]$,~$C[\phi]$ the centralizer for~$[\phi]$ in $\operatorname{Out}(F)$, and $k \defeq \sum_{i=1}^n k_i$.
Replace $C[\phi]$ with a finite index subgroup and assume  it acts trivially on the attracting laminations for~$[\phi]$.
If $[\phi'] \in C[\phi]$, then the universal pseudotree supports a $\phi'$-equivariant dilation by uniqueness of the pseudotree for~$[\phi]$.
Thus we can define a group homomorphism $\ell \colon C[\phi] \to \mathbb R_{>0}^k$ that maps~$[\phi']$ to $(\lambda_{i,j}'\,:\,1 \le i \le n, 1 \le j \le k_i)$.
The image of~$C[\phi]$ under each coordinate projection~$\ell_{i,j}$ of~$\ell$ is a cyclic subgroup of~$\mathbb R_{>0}$ by Corollary~\ref{cor-cyclicimage}.

By index theory, we can replace~$C[\phi]$ with a finite index subgroup again and assume it fixes the orbits of branches in~$T$.
As the $F$-action on~$T$ is free, the kernel~$\ker(\ell)$ is trivial --- see Proposition~4.2 in~\cite{KL11}.
So~$C[\phi]$ is free abelian with rank $\le k$.
\end{proof}

Again, the corollary can be adapted to work for outer automorphisms whose limit pretrees have cyclic point stabilizers.
Yassine Guerch recently gave another proof of this more general statement using different methods~\cite[Theorem~5.3]{Gue22}.
With more care, our work or Feighn--Handel's can combine with Andrew--Martino's paper~\cite[Theorem~1.5]{AM22} to characterize the centralizer of an outer automorphism whose restriction to point stabilizers of limit pretrees is linearly growing.

We think it is open whether arbitrary centralizers are finitely generated.
For a complete description of arbitrary centralizers, one needs canonical nondegenerate representatives for polynomially growing automorphisms.
Presumably, a polynomially growing automorphism of degree $d \ge 2$ has a canonical fixed free splitting whose loxodromics are exactly the elements that grow with degree~$d$.

\bibliography{zrefs}
\bibliographystyle{plain}

\end{document}